\newcounter{todocounter}
\newcommand{\Mod}[1]{\ (\mathrm{mod}\ #1)}
\newcounter{dader}[section]
\theoremstyle{plain}
\newaliascnt{thms}{dader}
\newtheorem{thm}[thms]{Theorem}
\newcounter{lettercounter}
\newtheorem*{thm*}{Theorem}
\newaliascnt{prop}{dader}
\newtheorem{prop}[prop]{Proposition}
\newtheorem{lemme}[dader]{Lemma}
\newtheorem*{lemme*}{Lemma}
\newaliascnt{cor}{dader}
\newtheorem{cor}[cor]{Corollary}
\theoremstyle{definition}
\newaliascnt{de}{dader}
\newtheorem{de}[de]{Definition}
\newaliascnt{app}{dader}
\newtheorem{rem}[dader]{Remark}
\newaliascnt{ex}{dader}
\def\equationautorefname~#1\null{(#1)\null}
\newcommand{\R}{\mathbb{R}}
\newcommand{\C}{\mathbb{C}}
\newcommand{\N}{\mathbb{N}}
\newcommand{\Z}{\mathbb{Z}}
\newcommand{\s}{\mathbb{S}}
\newcommand{\e}{\mathrm{e}}
\newcommand{\somm}[2]{s_{q}^{{#1},{#2}}}
\newcommand{\som}[1]{s_{q}^{{#1}}}
\theoremstyle{plain}
\newtheorem{thmx}{Theorem}
\begin{document}
\title{The level of distribution of the sum-of-digits function in arithmetic progressions}
\author{Nathan Toumi}

\date{}
\maketitle

\begin{abstract}
For $q \geq 2$, $n \in \N$, let $s_{q}(n)$ denote the sum of the digits of $n$ written in base $q$. Spiegelhofer (2020) proved that the Thue--Morse sequence has level of distribution $1$, improving on a former result of Fouvry and Mauduit (1996). In this paper we generalize this result to sequences of type $\left\{\exp\left(2\pi i\ell s_q(n)/b\right)\right\}_{n \in \N}$ and provide an explicit exponent in the upper bound.
\end{abstract}

\tableofcontents
\section{Introduction}

For $q \geq 2$, each integer $n$ can be uniquely written as 
\begin{equation*}
    n=\sum_{k=0}^{\infty}n_{k}q^{k},
\end{equation*}
where 
$n_{k}\in \{0,\dots,q-1\}$ for all $k\geq 0$ are the digits which are zero starting from some finite rank. The sum-of-digits function $s_q$ is a well-studied object in number theory, it is defined for all $n \in \N$ by
$$
    s_{q}(n)=s_{q}\left(\sum_{k=0}^{\infty}n_{k}q^{k}\right)
    =\sum_{k=0}^{\infty}n_{k}.
$$
Although quite simple to define, the sum-of-digits function has raised many questions. 
A natural question that could be asked is how this function distributes itself within arithmetic progressions. In an influential paper, Gelfond~\cite{Gel} provided a new method to study the sum-of-digits function via exponential sums and product representations.
He obtained the following result concerning the distribution of the sum-of-digits function in arithmetic progressions. 

\begin{thmx}[Gelfond, 1967/68]
\label{Gelfond}
Let $q,b,m\geq 2$ be integers such that $(b,q-1)=1$. Then, for all $a \in \{0,\dots,b-1\}$ and for all $r \in \{0,\dots,m-1\}$, we have
\begin{equation*}
    |\{n<N\,:\, s_{q}(n) \equiv a \Mod{b},\ n \equiv r \Mod{m}\}|=\dfrac{N}{bm}+O(N^{\lambda}),
\end{equation*}
where 
\begin{equation*}
    \lambda=\dfrac{1}{2\log(q)}\log\dfrac{q\sin(\pi/2b)}{\sin(\pi/2bq)}<1.
\end{equation*}
\end{thmx}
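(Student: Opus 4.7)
The plan is to follow Gelfond's original approach, which recasts the counting problem as an estimate for exponential sums. First I would use orthogonality to detect both congruences, writing
\begin{equation*}
    \mathbf{1}_{s_q(n)\equiv a\,(b)}\,\mathbf{1}_{n\equiv r\,(m)}
    = \frac{1}{bm}\sum_{h=0}^{b-1}\sum_{k=0}^{m-1} \e^{2\pi i(h(s_q(n)-a)/b + k(n-r)/m)}.
\end{equation*}
Summing over $n<N$, the pair $(h,k)=(0,0)$ yields the main term $N/(bm)$; the pairs with $h=0$, $k\neq 0$ contribute $O(1)$ via a geometric progression. It therefore suffices to bound, uniformly for $h\not\equiv 0\Mod{b}$ and $k\in\{0,\dots,m-1\}$, the exponential sum
\begin{equation*}
    S_N(\alpha,\beta):=\sum_{n<N}\e^{2\pi i(\alpha s_q(n)+\beta n)},\qquad \alpha=h/b,\ \beta=k/m,
\end{equation*}
by $O(N^\lambda)$.

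The second step is Gelfond's product formula. For $N=q^L$, expanding $n=\sum_{j<L}n_j q^j$ in base $q$ and using that both $s_q$ and $n\mapsto n$ split additively over the digits gives
\begin{equation*}
    S_{q^L}(\alpha,\beta)=\prod_{j=0}^{L-1}\Phi_q(\alpha+\beta q^j),\qquad \Phi_q(x):=\sum_{d=0}^{q-1}\e^{2\pi i dx},
\end{equation*}
with $|\Phi_q(x)|=|\sin(\pi q x)|/|\sin(\pi x)|$. For a general $N<q^L$ with $L:=\lceil\log N/\log q\rceil$, I would write $N$ in base $q$ and split $[0,N)$ into $L$ subintervals on which the leading digits of $n$ are prescribed; on each subinterval the sum factorises in the same way, up to an explicit phase, at the cost of an $O(L)=O(\log N)$ factor that can be absorbed into the exponent by a slight enlargement.

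The technical core is the pointwise estimate
\begin{equation*}
    \prod_{j=0}^{L-1}\frac{|\Phi_q(\alpha+\beta q^j)|}{q}\leq q^{L(\lambda-1)},
\end{equation*}
equivalent to $|S_{q^L}(\alpha,\beta)|\leq N^\lambda$. Setting $\gamma_j:=\alpha+\beta q^j\bmod 1$, one has the recursion $\gamma_{j+1}\equiv q\gamma_j-(q-1)\alpha\pmod{1}$, which governs the orbit of arguments. The coprimality assumption $(b,q-1)=1$ intervenes precisely here: it ensures $h(q-1)/b\notin\Z$ whenever $h\not\equiv 0\Mod{b}$, preventing the orbit $(\gamma_j)$ from stabilising at integers, which are the only points where $|\Phi_q(\gamma_j)|$ attains its maximum $q$. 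Pairing consecutive factors and optimising the resulting two-variable trigonometric expression, with the extremum attained at $\alpha=1/(2b)$, produces the explicit constant in $\lambda$. The strict inequality $\lambda<1$ then follows from $\sin(\pi/(2b))<q\sin(\pi/(2bq))$, itself a consequence of the monotonicity of $x\mapsto\sin(x)/x$ on $(0,\pi)$.

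The main obstacle is precisely this constrained trigonometric optimisation that produces the exact exponent $\lambda$: everything else — the orthogonality relation, the product formula, the digit decomposition of a general $N$, and the summation over the pairs $(h,k)$ with $h\not\equiv 0\Mod{b}$ — is essentially bookkeeping and only affects the implied $O$-constant.
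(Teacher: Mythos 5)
The paper does not prove Theorem~\ref{Gelfond}; it quotes it as a classical result from Gelfond's article~\cite{Gel}, so there is no internal argument to compare your sketch against. That said, your proposal follows the standard Gelfond route (orthogonality, the digit product formula $S_{q^L}(\alpha,\beta)=\prod_{j<L}\Phi_q(\alpha+\beta q^j)$, and a pointwise estimate of the factors using $(b,q-1)=1$), and its overall architecture is sound, including the observation that $\lambda<1$ reduces to the monotonicity of $\sin(x)/x$. Two points need tightening before this is a proof.

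The passage to general $N$ is stated too loosely. Splitting $[0,N)$ according to the highest digit position being varied, the piece at level $j$ contributes at most $q\prod_{i<j}|\Phi_q(\alpha+\beta q^i)|\leq q\cdot q^{j\lambda}$, so the total is a \emph{convergent geometric series} $\ll q^{L\lambda}\ll N^{\lambda}$; there is no $\log N$ loss. Your phrasing ``an $O(\log N)$ factor absorbed into the exponent by a slight enlargement'' would only deliver $O(N^{\lambda+\varepsilon})$, which is weaker than the stated exact exponent.

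More importantly, the core estimate is hand-waved in a way that is not correct as written: there is no ``two-variable optimisation,'' and $\alpha$ is not free (it equals $h/b$), so ``extremum at $\alpha=1/(2b)$'' does not parse. The actual mechanism is a one-variable monotonicity argument around the recursion $q\gamma_j-\gamma_{j+1}\equiv(q-1)\alpha\pmod 1$ for $\gamma_j:=\alpha+\beta q^j$. If both $\|\gamma_j\|<\tfrac{1}{2bq}$ and $\|\gamma_{j+1}\|<\tfrac{1}{2bq}$, then
$\|(q-1)\alpha\|\leq q\|\gamma_j\|+\|\gamma_{j+1}\|<\tfrac{q+1}{2bq}<\tfrac1b$,
contradicting $\|(q-1)\alpha\|\geq 1/b$ (this is precisely where $(b,q-1)=1$ enters). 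Hence for every $j$ one has $\max(\|\gamma_j\|,\|\gamma_{j+1}\|)\geq\tfrac{1}{2bq}$. Since $x\mapsto|\Phi_q(x)|$ is decreasing in $\|x\|$ on $[0,1/q]$ and bounded by $1/\sin(\pi/q)\leq|\Phi_q(\tfrac{1}{2bq})|$ on $[1/q,1/2]$, each consecutive pair satisfies
\begin{equation*}
|\Phi_q(\gamma_j)|\,|\Phi_q(\gamma_{j+1})|\;\leq\; q\cdot\Big|\Phi_q\Big(\tfrac{1}{2bq}\Big)\Big|
=\frac{q\sin(\pi/2b)}{\sin(\pi/2bq)}=q^{2\lambda},
\end{equation*}
and pairing $(0,1),(2,3),\dots$ yields $\prod_{j<L}|\Phi_q(\gamma_j)|\ll q^{L\lambda}$. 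You need to spell out this (or an equivalent) computation; as written, the step that actually produces the constant $\lambda$ is missing.
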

\begin{rem}
Rebuilding on work of Morgenbesser, Shallit and Stoll~\cite{MorgShaStoll} it can be shown that there exist infinitely many $m\geq 1$ such that
$\min \{n\geq 1\,:\, s_{q}(n) \equiv 1 \pmod{b},\ n \equiv 0 \pmod{m}\}\geq c_b m^{1/(b-1)}$, for some $c_b>0$ only depending on $b$ (one may consider $m=q^{\ell(b-1)}+q^{\ell(b-2)}+\cdots + q^\ell+1$, for instance). 
Therefore, the implied constant cannot be independent of $m$ (at least for the Thue-Morse sequence). This dependence of the constant on the modulus $m$ is the object of our Bombieri-Vinogradov-type average result, see Theorem~\ref{Thm0}.
\end{rem}
In addition to his results in~\cite{Gel}, Gelfond proposed several open questions related to the sum-of-digits function. In particular, can we substitute in Theorem~\ref{Gelfond}, $s_q(n)$ by $s_q(p)$ for $p$ primes or by $s_q(P(n))$ where $P$ is a polynomial $P$ such that $P(n) \in \N$ for all $n \in \N$? 
These questions have been addressed in many recent works.

The most emblematic instance of a sequence related to the sum-of-digits function is the (Prouhet--)Thue--Morse sequence defined (for instance) in its multiplicative way by
\begin{equation*}
t(n)=(-1)^{s_{2}(n)}=\e\left(\dfrac{1}{2}s_{2}(n)\right),
\end{equation*}
where $s_{2}(n)$ is the sum of the binary  digits of the integer $n$, and  $\e(t)=e^{2\pi i t}$. The Thue--Morse sequence is a fundamental object in various areas of mathematics, and in particular in number theory and combinatorics on words (see \cite{AS99} for an overview). Mauduit and Rivat~\cite{MauRivGelf} solved Gelfond's problem on the distribution of $s_q(p)$, $p$ primes, in arithmetic progressions. In the proof of their breakthrough result, the authors introduced several new tools, and most notably an ingenious use of the van der Corput inequality paired with a ``carry propogation lemma'', which, simply put, states that in the addition of a large and a small integer expressed in base-$q$, the highly significant digits of the large number are rarely affected by the addition. In respect of Gelfond's problem, we cite the result of Drmota, Mauduit and Rivat~\cite{DMauRiv}, who showed that the Thue--Morse sequence is normal along squares.
\\Spiegelhofer~\cite{Spi} observed that an iterative use of the van der Corput inequality followed by several digits shiftings leads to Gowers norms. He then was able to use the estimate of Gowers norms for the Thue--Morse provided by Konieczny in~\cite{Kon} to conclude. Bounds on these norms have far-reaching applications (see, for instance,~\cite{Spi23+} for an application for the sum of digits of cubes).

\subsection{Results on the level of distribution of the sum of digits}
Fouvry and Mauduit \cite{FM96} started the investigation of the level of distribution of the Thue--Morse sequence and its generalizations along arithmetic progressions. For $q,b$ two integers such that $(b,q-1)=1$, let
\begin{equation*}
     A_q(y,z;a,b,r,m)=\Big|\{y \leq n<z\,:\, s_{q}(n)\equiv a \Mod{b},\;\; n\equiv r \Mod{m}\}\Big|.
\end{equation*}

The paper~\cite{FM96} is devoted to the base 2.
\begin{thmx}[Fouvry/Mauduit, 1996]\label{ThmFM2}
Let $C_b$ be a constant such that for all $\xi \in \{1/b,\dots,(b-1)/b\}$ we have 
\begin{equation*}
    \int_{0}^{1}\prod_{0 \leq n < N}|\cos(\pi(2^{n}t+\xi))|dt=O(C_b^N), \qquad N \rightarrow \infty.
\end{equation*}
Let
\begin{equation*}
    \gamma(\alpha)=1+\dfrac{\log\left(\beta(\alpha)\right)}{\log 2},
\end{equation*}
where
\begin{equation*}
    \beta(\alpha):= \sqrt{\max_{t \in \R}|\cos(\pi\left(t+\alpha\right))\cos(\pi(2t+\alpha))|}, 
\end{equation*}
and set
\begin{equation*}
    \gamma_{b}=\max\{\gamma(\alpha)\;:\; \e(\alpha)^b=1\;,\; \alpha \not \in \mathbb{Z}\}.
\end{equation*}
For $A \in \R$, we set $D=x^{1/2}\log(x)^{-A}$. Then, as $x\to +\infty$,
\begin{equation*}
    \sum_{1 \leq m \leq D}\max_{1 \leq z \leq x}\;\max_{0 \leq r < m}\Big|A_2(0,z;a,b,r,m)-\dfrac{z}{bm}\Big|=O\left(x^{1+\log(C_b)/\log(2)}D+x^{\gamma_{b}}\left(D^{3-2\gamma_b+2\log(C_b)/\log(2)}+\log x\right)\right).
\end{equation*}
\end{thmx}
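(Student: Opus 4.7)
The plan is to use orthogonality to reduce $A_{2}$ to a double exponential sum, decompose the summation range dyadically to obtain a clean product identity, and then combine a pointwise bound (governed by $\gamma_{b}$) with an $L^{1}$ average bound (governed by $C_{b}$) in an optimized way. First I would expand the two congruence conditions using additive characters:
\[
\mathbb{1}_{s_{2}(n)\equiv a\,(b)}=\frac{1}{b}\sum_{k=0}^{b-1}\e\!\left(\frac{k(s_{2}(n)-a)}{b}\right),\qquad \mathbb{1}_{n\equiv r\,(m)}=\frac{1}{m}\sum_{h=0}^{m-1}\e\!\left(\frac{h(n-r)}{m}\right).
\]
The diagonal term $(k,h)=(0,0)$ produces $z/(bm)+O(1)$, and the terms with $k=0$ and $h\neq 0$ contribute $O(\log m)$ by geometric summation; these are absorbed in the claimed bound. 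The remaining contribution, after taking moduli, is controlled by
\[
\frac{1}{bm}\sum_{k=1}^{b-1}\sum_{h=0}^{m-1}\bigl|T_{k/b}(z,h/m)\bigr|,\qquad T_{\alpha}(z,\beta):=\sum_{n<z}\e(\alpha s_{2}(n)+\beta n),
\]
which is independent of $r$, so the $\max_{r}$ vanishes for free.

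Next I would exploit the binary expansion of $z$ to decompose $[0,z)$ into at most $N_{0}:=\lceil\log_{2}x\rceil$ \emph{clean} dyadic blocks $[s,s+2^{j})$ on which the digits of $n$ above position $j$ are constant. On each such block, the sum factorises as $\sum_{s\leq n<s+2^{j}}\e(\alpha s_{2}(n)+\beta n)=\e(\psi_{s})\prod_{k=0}^{j-1}(1+\e(\alpha+2^{k}\beta))$, which yields
\[
\max_{z\leq x}|T_{\alpha}(z,\beta)|\;\leq\;\sum_{j=0}^{N_{0}}2^{j}Q_{\alpha}^{j}(\beta),\qquad Q_{\alpha}^{j}(\beta):=\prod_{k=0}^{j-1}\bigl|\cos(\pi(\alpha+2^{k}\beta))\bigr|.
\]
Two complementary estimates for $Q_{\alpha}^{j}$ are then at hand. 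Pairing consecutive factors and invoking the very definition of $\beta(\alpha)$ gives the pointwise bound $Q_{\alpha}^{j}(\beta)\leq\beta(\alpha)^{j}=2^{j(\gamma(\alpha)-1)}\leq 2^{j(\gamma_{b}-1)}$. The hypothesis gives the integral bound $\int_{0}^{1}Q_{\alpha}^{j}(t)\,dt\leq C_{b}^{j}$, and a Lipschitz discretization (using that $Q_{\alpha}^{j}$ has Lipschitz constant $O(2^{j})$) upgrades this to $\tfrac{1}{m}\sum_{h=0}^{m-1}Q_{\alpha}^{j}(h/m)\leq C_{b}^{j}+O(2^{j}/m)$.

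It then remains to insert these bounds into $\sum_{m\leq D}\sum_{j\leq N_{0}}2^{j}\tfrac{1}{m}\sum_{h}Q_{\alpha}^{j}(h/m)$ and optimise. The integral contribution sums to $D\sum_{j\leq N_{0}}(2C_{b})^{j}\ll D\cdot x^{1+\log(C_{b})/\log(2)}$, which is the first term of the theorem. The remaining contribution is split at the threshold $2^{j(2-\gamma_{b})}=m$, where the pointwise estimate $\beta(\alpha)^{j}$ crosses the discretization error $2^{j}/m$; careful summation yields the second term $x^{\gamma_{b}}\bigl(D^{3-2\gamma_{b}+2\log(C_{b})/\log(2)}+\log x\bigr)$, with the $\log x$ summand coming from the boundary range of the threshold.

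The main obstacle will be this last optimisation: producing the precise exponent $3-2\gamma_{b}+2\log(C_{b})/\log(2)$ in $D$ requires combining the pointwise and integral bounds in tandem rather than using their pointwise minimum. A natural tool here is the observation that $Q_{\alpha}^{j}(\beta)^{2}$ is a trigonometric polynomial of degree at most $2^{j}-1$, so exact Marcinkiewicz--Zygmund sampling $\sum_{h=0}^{m-1}Q_{\alpha}^{j}(h/m)^{2}=m\int_{0}^{1}Q_{\alpha}^{j}(t)^{2}\,dt$ becomes available as soon as $m\geq 2^{j}$, sharpening the discrete-integral comparison in the crossover regime. Tracking the discretization errors through the split, and dealing with the edge cases where $2^{j}$ is comparable to $m$, is where the delicate bookkeeping lies.
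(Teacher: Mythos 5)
This statement is Theorem~\ref{ThmFM2}, which the paper attributes to Fouvry and Mauduit and cites from~\cite{FM96} without giving a proof. There is therefore no in-paper proof to compare against; what follows assesses your sketch on its own merits.

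Your high-level strategy is sound and is indeed the Fouvry--Mauduit framework: detect both congruences by additive characters (killing the $\max_r$ by taking absolute values), note that the $k=0$ terms contribute the main term plus $O(\log m)$, then handle the remaining exponential sums $T_\alpha(z,\beta)$ by decomposing $[0,z)$ into at most $\lceil\log_2 x\rceil$ dyadic blocks, using the product identity $\sum_{n<2^j}\e(\alpha s_2(n)+\beta n)=\prod_{k<j}(1+\e(\alpha+2^k\beta))$ to reduce to the trigonometric products $Q^j_\alpha(\beta)$, and then combining a pointwise bound (the definition of $\beta(\alpha)$ applied to pairs of consecutive factors gives $Q^j\leq\beta(\alpha)^j\leq 2^{(\gamma_b-1)j}$) with the $L^1$ bound $\int_0^1 Q^j\,dt=O(C_b^j)$ from the hypothesis. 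All of these reductions are correct.

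The gap is exactly where you admit it is: the closing optimisation. Your sketch does not actually produce the exponent $3-2\gamma_b+2\log_2 C_b$. If one splits the product as $Q^j=Q^{j_0}\cdot R$ with $R\leq\beta(\alpha)^{j-j_0}$, uses the Lipschitz/variation discretisation $\frac{1}{m}\sum_h Q^{j_0}(h/m)\leq C\,C_b^{j_0}+C'\,2^{j_0}/m$, and then optimises $j_0=j_0(m)$, the balance occurs at $j_0=\log_2 m/(1-\log_2 C_b)$, and summation over $j$ and $m$ yields a term of the form $x^{\gamma_b}D^{(2-\gamma_b)/(1-\log_2 C_b)}$, which is in general \emph{strictly worse} than the claimed $x^{\gamma_b}D^{3-2\gamma_b+2\log_2 C_b}$. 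So a further idea, beyond the direct pointwise/discretisation dichotomy, is needed to obtain Fouvry--Mauduit's exponent. Your proposed Marcinkiewicz--Zygmund refinement does not fill this gap: while you are right that $(Q^j_\alpha)^2$ is a trigonometric polynomial of degree $2^j-1$ (so exact sampling applies once $m\geq 2^j$), this identity controls $\frac{1}{m}\sum_h Q^j(h/m)^2=\int (Q^j)^2$, i.e.\ an $L^2$ quantity, whereas the hypothesis only bounds $\int Q^j$; passing back via Cauchy--Schwarz and the trivial $(Q^j)^2\leq Q^j$ gives $\frac{1}{m}\sum_h Q^j\leq(C\,C_b^j)^{1/2}$, which, since $C_b<1$, is \emph{weaker} than $C_b^j$. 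So MZ sampling in the form you invoke it does not sharpen the discrete-to-integral comparison in the needed direction. Either a genuinely different combination of the two bounds, or an $L^2$ hypothesis in place of the stated $L^1$ one, seems to be required to reach the exact exponent in the theorem.
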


Theorem~\ref{ThmFM2} allowed them to show that $0.5924$ is a level of distribution of the Thue--Morse sequence $t$. More precisely, they showed in~\cite{FM96} the following theorem:
\begin{thmx}[Fouvry/Mauduit, 1996]\label{ThmFM1}
Let $A \in \R$ and $D=x^{0.5924}$. There exists $C>0$ such that
\begin{equation*}
    \sum_{1 \leq m \leq D}\max_{1 \leq z \leq x}\;\max_{0 \leq r < m}\Big|A_2(0,z;0,2,r,m)-\dfrac{z}{2m}\Big| \leq Cx(\log 2x)^{-A},\qquad x\to \infty.
\end{equation*}
\end{thmx}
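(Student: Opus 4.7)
The plan is to derive Theorem~\ref{ThmFM1} as a specialisation of Theorem~\ref{ThmFM2} to $b=2$, $a=0$, with $D=x^{0.5924}$, after evaluating the two constants $\gamma_2$ and $C_2$ appearing in Theorem~\ref{ThmFM2}. (Although Theorem~\ref{ThmFM2} is stated only for $D=x^{1/2}(\log x)^{-A}$, an inspection of its proof shows the same error bound to hold for any $D=x^\theta$ in a suitable polynomial range of $\theta$.)

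For $b=2$ the only $\alpha\notin\Z$ with $e(\alpha)^2=1$ is $\alpha=1/2$, and
\[
\cos(\pi(t+\tfrac12))\cos(\pi(2t+\tfrac12))=\sin(\pi t)\sin(2\pi t)=2\sin^2(\pi t)\cos(\pi t).
\]
Maximising $2(1-u^2)|u|$ over $u=\cos(\pi t)\in[-1,1]$ gives $\beta(1/2)=\sqrt{4/(3\sqrt 3)}$, hence $\gamma_2=1+\tfrac{1}{2\log 2}\log(4/(3\sqrt 3))\approx 0.811$. For $C_2$, the only choice $\xi\in\{1/b,\dots,(b-1)/b\}$ is $\xi=1/2$, and the defining integral reduces to
\[
I_N:=\int_0^1\prod_{n<N}|\sin(\pi 2^n t)|\,dt,
\]
for which the substitution $s=2t\bmod 1$ applied to the $n=0$ factor yields the identity
\[
I_N=\tfrac12\int_0^1\bigl(|\sin(\pi u/2)|+|\cos(\pi u/2)|\bigr)\prod_{m<N-1}|\sin(\pi 2^m u)|\,du.
\]
Iterating this identity $k$ times produces an expression $I_N=\int_0^1 w_k(u)\prod_{m<N-k}|\sin(\pi 2^m u)|\,du$ for an explicit non-negative trigonometric polynomial $w_k$, whence $I_N\leq\|w_k\|_\infty I_{N-k}$ and $C_2\leq\|w_k\|_\infty^{1/k}$. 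For $k$ sufficiently large a finite numerical computation gives $C_2\leq 2^{-\sigma}$ for some explicit $\sigma>0.5924$.

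Plugging $D=x^{0.5924}$ and the above values into Theorem~\ref{ThmFM2}, the first main term satisfies
\[
x^{1+\log(C_2)/\log 2}\cdot D\leq x^{1-(\sigma-0.5924)}=x^{1-\eta}
\]
for some fixed $\eta>0$, and the second main term has exponent
\[
\gamma_2+0.5924\cdot(3-2\gamma_2+2\log(C_2)/\log 2)\leq 0.811+0.5924\cdot 0.193\approx 0.93,
\]
also strictly below $1$; the $x^{\gamma_2}\log x$ contribution is trivially negligible. Thus the full sum of Theorem~\ref{ThmFM1} is $O(x^{1-\eta'})$ for some fixed $\eta'>0$, which is $\leq Cx(\log 2x)^{-A}$ for every $A$ and all $x$ large enough.

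The main obstacle is the sharp estimate $C_2\leq 2^{-\sigma}$ with $\sigma>0.5924$: the naive bound $|\sin|+|\cos|\leq\sqrt 2$ yields only $C_2\leq 2^{-1/2}$, reaching the classical exponent $1/2$ but not the $0.5924$ required here. Passing this threshold demands a careful spectral or numerical analysis of the iterated weight $w_k$, which is the quantitative heart of the Fouvry--Mauduit method and the step that fixes the final level of distribution at $0.5924$.
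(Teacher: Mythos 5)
The paper does not prove Theorem~\ref{ThmFM1}; it is quoted verbatim from Fouvry--Mauduit~\cite{FM96}, with only the remark that it is obtained via Theorem~\ref{ThmFM2}. Your strategy of deriving Theorem~\ref{ThmFM1} by specialising Theorem~\ref{ThmFM2} to $b=2$, $a=0$ and inserting explicit values of $\gamma_2$ and $C_2$ is therefore exactly the route the paper points at, and your preliminary computations are correct: the only $\alpha\notin\Z$ with $\e(\alpha)^2=1$ is $\alpha=1/2$, the identity $\cos(\pi(t+\tfrac12))\cos(\pi(2t+\tfrac12))=2\sin^2(\pi t)\cos(\pi t)$ holds, the optimisation gives $\beta(1/2)=\sqrt{4/(3\sqrt 3)}$ and $\gamma_2\approx 0.811$, and the integral defining $C_2$ reduces to $I_N=\int_0^1\prod_{n<N}|\sin(\pi 2^n t)|\,dt$.

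There is, however, a genuine gap, which you yourself flag as ``the main obstacle'': the estimate $C_2\leq 2^{-\sigma}$ with $\sigma>0.5924$ is never established, and this is precisely the number that fixes the level of distribution at $0.5924$. Two remarks sharpen the difficulty. First, Theorem~\ref{ThmFM2} as stated in the paper has $D=x^{1/2}(\log x)^{-A}$; to apply it at $D=x^{0.5924}$ you would have to re-examine the Fouvry--Mauduit argument, not merely assert that ``an inspection of its proof'' suffices. Second, your scheme $I_N\leq\|w_k\|_\infty\, I_{N-k}$, hence $C_2\leq\|w_k\|_\infty^{1/k}$, replaces the iterate $w_k$ of the transfer operator $T\phi(u)=\tfrac12\bigl(\phi(u/2)|\sin(\pi u/2)|+\phi((u+1)/2)|\cos(\pi u/2)|\bigr)$ by its sup norm, whereas $C_2$ is the $L^1$ growth rate $\lim_{N\to\infty}\bigl(\int_0^1 w_N\bigr)^{1/N}$. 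One step gives only $\|w_1\|_\infty=2^{-1/2}$, and it is not evident that $\|w_k\|_\infty^{1/k}$ drops below $2^{-0.5924}$ for any finite $k$, nor at what rate it does if it converges to $C_2$ at all; the margin $\sigma-0.5924$ over the trivial $\sigma=1/2$ is small. Until that computation is carried out (or a finer decomposition of the product is analysed, as in~\cite{FM96}), the claimed exponent $0.5924$ is unproven. The surrounding arithmetic (balancing the two error terms in Theorem~\ref{ThmFM2}, checking that $\gamma_2+0.5924\cdot(3-2\gamma_2-2\sigma)<1$) is correct modulo this missing input.
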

They also investigated the level of distribution of the sequence $(s_{q}(n))_{n \in \N}$ for general base of numeration $q$. In their paper~\cite[p.340]{FM962} they proved the following result.

\begin{thmx}[Fouvry/Mauduit, 1996]
Let $q\geq 2$, $a$, $b$ be integers such that $(b,q-1)=1$. Then, for all $x \geq 1$, for all $A \in \R$ and for all $\varepsilon>0$, we have
\begin{equation*}
    \sum_{1 \leq m \leq x^{\theta_{q}-\varepsilon}}\max_{1 \leq z \leq x}\;\max_{0 \leq r <m}\Big|\sum_{\substack{n<y\\s_{q}(n) \equiv a \Mod{b}\\n \equiv r \Mod{m}}}1-\dfrac{1}{m}\sum_{\substack{n<y\\s_{q}(n) \equiv a \Mod{b}}}1\Big|=O(x(\log 2x)^{-A}),\qquad x\to \infty,
\end{equation*}    
where $\theta_{q}$ is defined by
\begin{equation*}
    \theta_{q}=1-\dfrac{\log(M(q))}{\log(q)},
\end{equation*}
where
\[
M(q)=
\begin{cases}
\dfrac{1}{n}\sum\limits_{k=0}^{n-1}\cos\left(\frac{2k+1}{4n}\pi)\right)^{-1}, & \text{if } q=2n;\\
\dfrac{1}{2n+1}\left(1+2\sum\limits_{k=1}^{n}\cos\left(\frac{k}{2n+1}\pi \right)^{-1}\right), &\text{if } q=2n+1.
\end{cases}
\]  
Moreover, $\theta_q\rightarrow 1$ for $q\rightarrow \infty$.

\end{thmx}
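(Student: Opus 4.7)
The plan is to reduce, via orthogonality, the discrepancy inside the absolute value to a weighted sum of exponential sums
\[
S(\alpha,\beta,z):=\sum_{n<z}\e\!\bigl(\alpha\,s_q(n)+\beta n\bigr),
\]
and to invoke a Gelfond-type pointwise bound of shape $|S(h/b,\beta,z)|\ll z^{1-\theta_q}$, uniform in $\beta\in\R$ and $h\in\{1,\dots,b-1\}$. First I would detect both congruence conditions via characters (multiplicative of order $b$ for $s_q(n)\equiv a\Mod{b}$, additive mod $m$ for $n\equiv r\Mod{m}$). After removing the $d=0$ main term, the quantity inside $|\cdot|$ equals
\[
\frac{1}{bm}\sum_{d=1}^{m-1}\e(-dr/m)\sum_{h=0}^{b-1}\e(-ah/b)\,S(h/b,d/m,z).
\]
The $h=0$ slice is the geometric sum $\sum_{n<z}\e(dn/m)$, bounded by $m/(2d)$ for $d\leq m/2$; summation over $d$ and $m\leq D$ contributes $O(D\log D)$, absorbed in $O(x(\log 2x)^{-A})$. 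The Gelfond coprimality hypothesis $(b,q-1)=1$ is precisely what ensures that the sums $S(h/b,\beta,z)$ with $h\in\{1,\dots,b-1\}$ genuinely exhibit cancellation in the sum-of-digits variable.

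The core analytic step is the pointwise bound
\[
\sup_{\beta\in\R}\;\max_{z\leq x}\;\bigl|S(h/b,\beta,z)\bigr|\;\ll\;x\cdot M(q)^{\lfloor\log_q x\rfloor}\;=\;O\!\bigl(x^{1-\theta_q}\bigr).
\]
For $\beta=0$ one has the exact digit factorization $S(\alpha,0,q^K)=q^K\prod_{k<K}\varphi_q(\alpha)$ with $\varphi_q(\alpha)=q^{-1}\sum_{j<q}\e(\alpha j)$. To accommodate the twist $\e(\beta n)$, one applies a Van der Corput--Cauchy iteration combined with a digit-shift $n\mapsto n+q^\ell$ that pushes the $\beta$-contribution into perturbations of the digital factors. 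The argument reduces to evaluating $\sup_{\xi\in\R}\,q^{-1}\sum_{j<q}|\cos\pi(\xi+j\alpha)|$; a Chebyshev-type spectral optimization identifies this supremum with $M(q)$. The parity split recorded in the statement records whether the optimal $\xi$ can avoid all zeros of the cosine grid: the even case $q=2n$ forces the half-integer grid $(2k+1)/(4n)$, while the odd case $q=2n+1$ admits the symmetric grid $k/(2n+1)$.

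Substituting the pointwise bound and taking absolute values termwise,
\[
\sum_{m\leq D}\max_{z\leq x}\max_r|\cdots|\;\ll\;\sum_{m\leq D}\frac{1}{m}\sum_{d=1}^{m-1}x^{1-\theta_q}\;\ll\;D\cdot x^{1-\theta_q}.
\]
With $D=x^{\theta_q-\varepsilon}$ this is $O(x^{1-\varepsilon})$, hence $O(x(\log 2x)^{-A})$ for every $A>0$. The asymptotic $\theta_q\to 1$ as $q\to\infty$ follows from $M(q)=O(\log q)$, a routine Euler--Maclaurin estimate of the reciprocal-cosine sums in the closed forms near the singularity at $\pi/2$. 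The main obstacle is the pointwise bound with the sharp exponent $1-\theta_q$, holding uniformly in $\beta$: the Van der Corput iteration depth must be balanced carefully against the digit count to avoid losing factors in $q$, and the uniform handling of $\beta$ --- which generically does not align with the scales $q^k$ --- requires a Diophantine partition of $\beta$ against the $q$-adic structure so that the worst-case $\xi$ in each digital factor is the one realizing $M(q)$. Identifying $M(q)$ as the \emph{exact} supremum, with the parity-dependent closed forms rather than a weaker universal constant, is the technical heart of the argument.
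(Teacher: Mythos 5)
The paper does not prove this statement; it is quoted from Fouvry--Mauduit's article~\cite{FM962}, so there is no internal argument of the paper to compare against.

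Your opening reduction via characters, and the geometric estimate for the $h=0$ slice, are fine, but the central step --- the claimed pointwise bound $\sup_{\beta\in\R}\max_{z\le x}|S(h/b,\beta,z)|\ll x^{1-\theta_q}$ uniformly in $h$ and $b$ --- is false, and the whole argument rests on it. From the product formula $|S(\alpha,\beta,q^K)|=\prod_{k<K}\bigl|\sum_{0\le j<q}\e\bigl(j(\alpha+\beta q^k)\bigr)\bigr|$, take $\alpha=1/b$ with $b$ large and $(b,q-1)=1$, and $\beta=1/m$ with $m$ close to $D$: the first $\asymp\log_q m$ factors are each close in modulus to $\bigl|\sin(\pi q/b)/\sin(\pi/b)\bigr|$, which tends to $q$ as $b\to\infty$, so $|S|$ carries an exponent that necessarily depends on $b$ and cannot be $\ll z^{1-\theta_q}$ with $\theta_q$ a function of $q$ alone. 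This is exactly why Gelfond's pointwise exponent $\lambda$ in Theorem~\ref{Gelfond} depends on $b$ and tends to $1$ as $b\to\infty$. There is also an arithmetic slip in the claim itself: $x\cdot M(q)^{\lfloor\log_q x\rfloor}=x^{2-\theta_q}$, not $x^{1-\theta_q}$.

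The idea Fouvry and Mauduit actually use, and which your plan is missing, is an $L^1$ bound in $\beta$ rather than a pointwise one. The digit recursion $|S(\alpha,\beta,q^K)|=\bigl|\sum_j\e\bigl((\alpha+\beta)j\bigr)\bigr|\cdot|S(\alpha,q\beta,q^{K-1})|$, integrated over $\beta\in[0,1]$ and after the substitution $\gamma=q\beta$, yields
\[
\int_0^1|S(\alpha,\beta,q^K)|\,d\beta\ \le\ M(q)\int_0^1|S(\alpha,\gamma,q^{K-1})|\,d\gamma\ \le\ M(q)^K,
\qquad
M(q)=\sup_{\xi}\frac1q\sum_{j_0=0}^{q-1}\Bigl|\frac{\sin(q\pi(\xi+j_0/q))}{\sin(\pi(\xi+j_0/q))}\Bigr|,
\]
and crucially $M(q)$ is independent of $\alpha$, which is why $\theta_q$ depends only on $q$. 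One then uses that the Farey points $d/m$ with $m\le D$, weighted by $1/m$, spread total mass $\asymp D$ essentially uniformly over $[0,1]$, so that (after a local-constancy argument to pass from the discrete sum to the integral) $\sum_{m\le D}\frac1m\sum_{0<d<m}|S(\alpha,d/m,z)|\ll D\,M(q)^{\log_q z}=D\,z^{1-\theta_q}$, and the conclusion follows for $D=x^{\theta_q-\varepsilon}$. The van der Corput/digit-shift machinery you invoke is not needed at this level of result; the key idea you are missing is replacing $\sup_\beta$ by $\int_0^1\,d\beta$.
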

 Spiegelhofer~\cite{Spi} improved largely Theorem \ref{ThmFM1} in 2020. He obtained that the Thue--Morse sequence has level of distribution equal to $1$, which can be seen as an optimal result. This means that for all $\varepsilon>0$ there exists $\eta>0$ such that
    \begin{equation}\label{introeq}
        \sum_{1 \leq m \leq x^{1-\varepsilon}}\max_{\substack{y,z \geq 0\\z-y \leq x}}\;\max_{0 \leq r < m}\Big|\sum_{\substack{y \leq n<z\\n \equiv r \Mod{m}}}(-1)^{s_{2}(n)}\Big|\ll_{\varepsilon} x^{1-\eta}.
    \end{equation}
Since $(-1)^{s_2(n)}=1-2 (s_2(n) \bmod 2)$ for all $n\geq 0$, the bound~\eqref{introeq} follows at once from ~\cite[Theorem 2.1]{Spi} recalled below 
\begin{thmx}[Spiegelhofer, 2020]\label{Spiegel20}
    Let $\varepsilon>0$. There exists $\eta>0$ such that
    \begin{equation*}
         \sum_{1 \leq m \leq x^{1-\varepsilon}}\max_{\substack{y,z \geq 0\\z-y \leq x}}\max_{0 \leq r < m}\Big|A(y,z;r,m)-\dfrac{z-y}{2m}\Big| \ll x^{1-\eta},\qquad x\to \infty,
    \end{equation*}
    where 
    \begin{equation*}
       A(y,z;r,m)=\Big|\{y \leq n<z\,:\, t(n)=0,\;\; n\equiv r \Mod{m}\}\Big|.
    \end{equation*}

\end{thmx}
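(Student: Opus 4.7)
The plan is to follow Spiegelhofer's strategy outlined in the introduction: reduce the discrepancy in each arithmetic progression to a shifted Thue--Morse signal, then iterate the van der Corput inequality with interleaved digit shifts until a Gowers norm of the Thue--Morse sequence appears, and finally invoke Konieczny's bound.

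First, expand $\mathbf{1}_{t(n)=0}=\tfrac{1}{2}(1+(-1)^{s_2(n)})$ to obtain
\[
A(y,z;r,m)-\frac{z-y}{2m}=\frac{1}{2}\sum_{\substack{y\leq n<z\\ n\equiv r \Mod{m}}}(-1)^{s_2(n)}+O(1).
\]
It is enough to save a uniform power $x^{\eta_0}$ on this inner signed sum, with $\eta_0>\varepsilon$ depending only on $\varepsilon$, since summation over $m\leq M:=x^{1-\varepsilon}$ then absorbs the $m$-range. Apply van der Corput's inequality $K=K(\varepsilon)$ times to the inner sum, choosing the shifts $h_1,\ldots,h_K$ to be multiples of $m$ so that the arithmetic-progression constraint is preserved at each step, and interleaving them with digit shifts $n\mapsto 2^{\ell_j}n+s_j$ that relocate the analysis onto fresh blocks of binary digits.

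The Mauduit--Rivat carry-propagation lemma localizes each digit-sum difference $s_2(n+h_j)-s_2(n)$ to a short window of the binary expansion of $n$, except on a sparse exceptional set that contributes negligibly. After $K$ iterations the $2^K$-th power of the sum is bounded, up to acceptable error, by a multilinear average majorized by the Gowers seminorm $\|t-\tfrac{1}{2}\|_{U^{K+1}[N']}$ on a dyadic subinterval, independently of $m$ and $r$. Konieczny's theorem~\cite{Kon} then supplies the power decay $\|t-\tfrac{1}{2}\|_{U^d[N']}\ll (N')^{-c_d}$ for each $d\geq 2$ and some explicit $c_d>0$; choosing $K$ so that the final saving $c_{K+1}/2^K$ exceeds $\varepsilon$ produces the required $\eta_0$.

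The main obstacle is reconciling the two competing demands on the van der Corput shifts: the arithmetic-progression constraint forces them to be multiples of $m$, while the digit-localization afforded by carry propagation favors shifts that are powers of $2$, or at least have few binary digits. This tension is resolved by intercalating a digit shift $n\mapsto 2^{\ell_j}n$ between consecutive van der Corput steps, converting an $m$-shift in the $n$-variable into a shift of a known digit block, but keeping track of the accumulated error across all $K$ iterations is delicate. A secondary difficulty is that Konieczny's exponent $c_d$ likely decays with $d$, so $K$ must be chosen to balance the size of $c_{K+1}$ against the van der Corput loss factor $2^K$.
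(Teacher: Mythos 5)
Your high-level outline (iterated van der Corput, carry propagation, reduction to a Gowers norm, then Konieczny's bound) matches the strategy of Spiegelhofer and of this paper, but one premise is wrong and one essential mechanism is missing. The proposal to ``save a uniform power $x^{\eta_0}$ on the inner signed sum'' and then sum trivially over $m$ cannot work: as the remark following Theorem~\ref{Gelfond} explains (building on Morgenbesser--Shallit--Stoll), there are infinitely many $m$ for which no power saving holds uniformly in $(y,z,r)$ --- the shortest $n\equiv 0\pmod m$ with $s_2(n)$ odd can be $\gg m$, so the relative error is comparable to the main term for suitable $(y,z)$. The Bombieri--Vinogradov feature of the theorem is precisely that the saving is only available \emph{on average} over $m$, and the proof exploits the $m$-average from the very first step: the van der Corput iterations in Lemmas~\ref{VDC1}--\ref{VDCK} already carry the sum over $m$ inside, and the ultimate reduction to a Gowers norm passes through a split into good and bad moduli controlled by Lemma~\ref{CardA}. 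Your bookkeeping claim ``$\eta_0>\varepsilon$ absorbs the $m$-range'' also does not come out: an absolute uniform bound would need $\eta_0>1-\varepsilon$, and a relative one would need no lower bound at all but is false.

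Second, the key technical device that reconciles $m$-shifts with digit cutting is absent from your sketch. In Spiegelhofer's proof and in Section~\ref{Theo2.5} here, the van der Corput shift multipliers $K_1,\ldots,K_{k-1}$ are built from Farey approximations of $m/q^{j\mu}$ (cf.\ \eqref{defK1}--\eqref{defKk-1}), chosen so that $K_i m$ is within $q^{-\sigma}$ of a multiple of a high power of $q$; only then can Lemma~\ref{diffab} discard the low digit block, and Lemma~\ref{CardA} guarantees this succeeds for all $m$ outside a small exceptional set. Your phrase ``intercalating a digit shift $n\mapsto 2^{\ell_j}n+s_j$'' does not convert an $m$-shift into a digit-block shift for generic $m$ --- the Farey approximation is what accomplishes this, and without it the carry propagation does not localize. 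Finally, the target $c_{K+1}/2^K>\varepsilon$ is neither necessary nor achievable: the exponent in \eqref{value-eta} is astronomically smaller than $\varepsilon$, and the deduction in Sections~\ref{secpreuveThm1}--\ref{secthm0} (via Lemma~\ref{Lemma37}, a $q$-adic decomposition of the $m$-range, and Gelfond's Theorem~\ref{Gelfond} for small $m$) only requires a positive average power saving, not one that dominates $\varepsilon$.
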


The very remarkable part in this theorem is that $m$ can be almost as large as $x$ and there is a maximum over the residues $r$ modulo $m$. The proof of Spiegelhofer allows in principle to get a lower bound for $\eta$ as a function of $\varepsilon$. 
Explicit results existed before the work of Spiegelhofer. In 2014, Martin, Mauduit, and Rivat~\cite[Proposition 3]{Mart2014} determined such an estimate for sums of type II. 
Although Spiegelhofer did not provide an explicit value for $\eta$, as we will see, his method allows us to do so. The main challenge in obtaining an explicit value of $\eta$ is to have an effective version of the estimate of the Gowers norm associated with the generalization of the Thue--Morse sequence.

\begin{thmx}[Martin/Mauduit/Rivat, 2014]\label{MMR2014}
Let $\alpha \in \R/\Z$. Let $(a_{n})_{n \in \N}$ and $(b_{n})_{n \in \N}$ be sequences of complex numbers such that for all $n \geq 1$, we have $|a_{n}| \leq 1$ and $|b_{n}| \leq 1$. Let $x\geq 2$, $0<\varepsilon \leq 1/2$, $x^{\varepsilon} \leq M,N \leq x$ and $MN \leq x$.
We set
$$
\Theta_{q}:=\left(1-\dfrac{1}{q}\right)\left(1-\sqrt{1-\dfrac{2q-1}{3q(q-1)}}\right),
$$
$$
\eta_{q}:=\max\left(\dfrac{1}{2}-\dfrac{\log(4-2\sqrt{2})}{2\log 2},\dfrac{1}{2}+\dfrac{\log\left(1-\Theta_{q})\right)}{4\log 2}\right)
$$
and let $\gamma_{q} \in\mathbb{R}$ be such that
$$
q^{\gamma_{q}}:=2\max_{t \in \R}\sqrt{\Big|\dfrac{\sin\left(q(\alpha-qt)\pi\right)\sin\left(q(\alpha-t)\pi\right)}{\sin\left((\alpha-qt)\pi\right)\sin\left((\alpha-t)\pi\right)}\Big|}.
$$

Finally, set
\begin{equation}\label{xi2eps}
    \xi_{q,\varepsilon}:=\min\left(\dfrac{\varepsilon}{6},\dfrac{1}{20}\right)\min\left(\dfrac{1}{2}-\eta_{q},2(1-\gamma_{q})\right).
\end{equation}

Then
\begin{equation*}
    \sum_{M<m \leq 2M}\sum_{N<n \leq 2N}a_{m}b_{n}\e(\alpha s_{q}(mn)) \ll x^{1-\xi_{q,\varepsilon}}\log(x).
\end{equation*}
\end{thmx}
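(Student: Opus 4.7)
The plan is to follow the Mauduit--Rivat strategy for bilinear exponential sums in base $q$, combining Cauchy--Schwarz on the outer variable, a carry propagation truncation of the digit sum, and a van der Corput iteration that reduces everything to one-dimensional Gelfond-type exponential sums whose exponent is precisely $\gamma_q$.

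First, I would fix a truncation level $\lambda\approx\frac{1}{2}\log_q x$ and write $s_q(mn)=s_{q,\lambda}(mn)+s_q(\lfloor mn/q^\lambda\rfloor)$ up to a carry error controlled by the classical carry propagation lemma, where $s_{q,\lambda}$ denotes the sum of the first $\lambda$ digits. The low part $s_{q,\lambda}(mn)$ is periodic in $m$ of period dividing $q^\lambda$, hence expandable in a discrete Fourier series in $m$, while the high part depends essentially only on the leading digits of $mn$. Next, I apply Cauchy--Schwarz in $m$ to eliminate $a_m$:
\[
\Big|\sum_{m,n}a_m b_n\e(\alpha s_q(mn))\Big|^2 \leq M\sum_m\Big|\sum_n b_n\e(\alpha s_q(mn))\Big|^2,
\]
expand the square, and invoke van der Corput with a shift $H$ of size $q^\lambda$. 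This produces an inner sum of the form $\sum_m\e(\alpha(s_q(m(n_1+h))-s_q(mn_2)))$, which, after Fourier detection of the divisibility conditions imposed by the carry truncation, reduces to one-dimensional sums bounded by $M^{\gamma_q}$ via the Gelfond/Mauduit--Rivat inequality.

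The two alternatives inside $\eta_q$ correspond to two ways of closing the inner estimate. The value $1/2-\log(4-2\sqrt{2})/(2\log 2)$ comes from a direct Cauchy--Schwarz plus elementary trigonometric bound on the product representation $\prod_j\cos(\pi(q^j t+\alpha))$ evaluated at its natural maximum. The value $1/2+\log(1-\Theta_q)/(4\log 2)$ comes from a refined estimate that exploits one extra digit shift, producing the sharper quantity $\Theta_q$ as the resulting saving. Combining the best of these two savings with the Gelfond saving $2(1-\gamma_q)$, weighted by the factor $\min(\varepsilon/6,1/20)$ that absorbs the loss of a power of $x$ in the parameter choice, yields the exponent $\xi_{q,\varepsilon}$ of~\eqref{xi2eps}.

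The main technical obstacle will be the balancing of parameters---truncation level $\lambda$, van der Corput shift $H$, number of digit shifts---so that every error term, in particular the carry error whose size depends delicately on $M/N$, is smaller than the main saving. The assumption $x^{\varepsilon}\leq M,N\leq x$ with $MN\leq x$ is what permits a uniform choice; the bound degenerates near the endpoints, which is precisely why the factor $\min(\varepsilon/6,1/20)$ appears.
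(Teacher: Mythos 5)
This is a theorem that the paper imports from Martin--Mauduit--Rivat (cited as \cite[Proposition 3]{Mart2014}); the paper does not prove it, so there is no internal proof to compare against. Your proposal is therefore evaluated on its own merits.

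You correctly identify the Mauduit--Rivat machinery (carry-propagation truncation, Cauchy--Schwarz in one variable, van der Corput with a shift matched to the truncation level, reduction to Gelfond-type one-dimensional sums) as the framework behind such type-II estimates, and the appearance of $\gamma_q$ as the exponent in the Gelfond bound is right. But what you have written is a strategy outline, not a proof: every quantitative step is asserted rather than carried out. The part that would actually require work is precisely what you gloss over. In particular, your attribution of the two branches of $\eta_q$ --- the first to ``direct Cauchy--Schwarz plus elementary trigonometric bound,'' the second to ``one extra digit shift'' --- is speculative and almost certainly not accurate. In the Mauduit--Rivat framework these two exponents arise as the $L^\infty$ and $L^1$ bounds (on two different scales) for the discrete Fourier transform $F_\lambda(h,\alpha)=q^{-\lambda}\sum_{u<q^\lambda}\e(\alpha s_q(u)-hu/q^\lambda)$ of the truncated digit sum, not from two different combinatorial closing moves in the bilinear reduction. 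The quantity $\Theta_q$ and the specific constant $4-2\sqrt{2}$ come from optimizing those Fourier estimates, and the factor $\min(\varepsilon/6,1/20)$ is the exact output of balancing the truncation level $\lambda$, the van der Corput shift length, and the carry error against $M$ and $N$ within the window $x^\varepsilon\le M,N\le x$, $MN\le x$. None of this balancing is performed in your sketch; it is simply described as ``the main technical obstacle.'' A genuine proof would have to carry out these computations, and without them the theorem's precise constants cannot be recovered.
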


 The number $\xi_{q,\varepsilon}$ is entirely explicit. Spiegelhofer showed that Theorem \ref{MMR2014} allows to get a weak version of Theorem 1.1 in~\cite{Spi}:

For $0<\varepsilon<1/2$ and for $D=x^{\varepsilon}$ we have
\begin{equation*}
    \sum_{1 \leq m \leq D}\Big|\sum_{\substack{0 \leq n \leq x\\n \equiv 0 \Mod{m}}}(-1)^{s_{2}(n)}\Big|\leq Cx^{1-\xi'_{2,\varepsilon}},
\end{equation*}
where
\begin{equation*}
\label{xi1def}
\xi'_{2,\varepsilon}=\dfrac{1}{1+\varepsilon}\min\left(\dfrac{\varepsilon}{6},\dfrac{1}{20}\right)\min\left(\dfrac{1}{2}-\eta_{2},2(1-\gamma_{2})\right).  
\end{equation*}

Since $\alpha$ defined in Theorem~\ref{MMR2014} belongs to $\R/\Z$, Theorem~\ref{MMR2014} gives information for more general sequences than the Thue--Morse sequence (case $\alpha=1/2$).

\bigskip
The aim of the present article is to establish the distribution result for the base-$q$ generalization of the Thue--Morse sequence of the strength of Theorem \ref{Spiegel20}. 
\bigskip

Let $b$ be a nonnegative integer\footnote{Note that we do not impose any condition on $\gcd (\ell, b)$.
}  and $\ell$ be an integer such that 
\begin{equation}\label{hypob}
(b,q-1)=1 \mbox{ and } 0<\ell<b. 
\end{equation}
We consider the sequence $(t_{q}(n))_{n\geq 0}$ defined by
 \begin{equation}\label{gentqdef}
     t_{q}(n)=\e\left(\dfrac{\ell}{b}s_{q}(n)\right).
 \end{equation}

This generalization of the classical Thue--Morse sequence has been studied for a long time under various angles. The first appearance of this sequence was in the framework of $q$-multiplicative sequences in work of Bellman and Shapiro \cite{Bel} in 1948. Since then, many mathematicians have been interested in the properties of the sequence $(\e(\alpha s_{q}(n)))_{n \in \mathbb{N}}$ (for $\alpha \in \R)$. We mention Queff\'elec in 1979 (see \cite{Que}) or Coquet in the same year (see \cite{Coq}). Mauduit and Rivat \cite{MauRiv95} obtained a result of Gelfond type for $q$-multiplicative functions along sequences of the form $(\lfloor n^{c} \rfloor)_{n \in \mathbb{N}}$ for $c > 1$ (so-called Piatetski--Shapiro sequences). Their result has been sharpened by M\"{u}llner and Spiegelhofer and coauthors in a series of papers, see~\cite{MS17} or~\cite{Desou}.
More recent references can be found in the article of Spiegelhofer~\cite{Spi}. The main difficulties of generalization in base-$q$ lie particularly in handling the Gowers norm associated with our sequence. We require a new recurrence relation (see~\ref{Relsure}) and the weights involved in it are complex  without necessarily being real, which makes the proof more difficult since we can no longer use ergodic theorems on Markov chains as it was the case in the work of Konieczny~\cite[Corollary 2.4]{Kon}.

\subsection{Notation}\label{not}
This section gathers some notation that will be used in this article. First of all, truncations of the sum-of-digits function play a crucial part in recent works, it will be essential as well in our work. For $\alpha>0$ an integer, we write
\begin{equation*}
\som{\alpha}(n)=s_{q}(n'),
\end{equation*}
where $n' \equiv n \bmod {q^{\alpha}}$ and $0\le n' <{q^{\alpha}}$; moreover, for $\beta> \alpha$, we  write
\begin{equation*}
\somm{\alpha}{\beta}(n)=\som{\beta}(n)-\som{\alpha}(n).
\end{equation*}

Here is a list of more standard notation.
\begin{itemize}

    \item $\N$ denotes the set of the integers $\geq 1$. 
    \item $\mathbb{P}$ denotes the set of prime numbers.
    \item Unless stated otherwise, $p$ will denote a prime number.
    \item For $A \subset \R$ and $x \in \R$, $\mathbb{1}_{A}(x)=1$ if $x \in A$ and $0$ otherwise.
    \item For sets $A_i \subset \R$ for $1\leq i\leq N$, $\biguplus A_i$ is $\bigcup A_i$ and indicates that $A_i \cap A_j=\emptyset$ for all $i\neq j$. 
    \item For  $x \in \R$, $\{x\}=x-\lfloor x \rfloor$ and $\langle x \rangle=\Big\lfloor x+\dfrac{1}{2}\Big\rfloor$.
\item For $x \in \R$, $\|x\|=\min\limits_{n \in \Z}|x-n|$.
\item For $x>0$, $\log^{+}(x)=\max(1,\log(x))$.
\item For $x>0$ and $q>0$, $\log_{q}(x)=\dfrac{\log(x)}{\log(q)}$.
\item For two integers $a$ and $b$, the number  $(a,b)$ denotes the greatest common divisor of $a$ and $b$.
\item For $n \in \N$, $\omega(n)$ denotes the number of prime divisors of $n$ without multiplicity.
\item For $n \geq 2$, $P^{-}(n)$ is the smallest prime divisor of $n$.
\item For $n \geq 2$ and for $p \in \mathbb{P}$, $v_{p}(n)$ is the largest integer $s$ such that $p^{s} \mid n$.
\item For $n \in \N$, $u_{q}(n)=\som{1}(n)$.
\item For $w=\sum\limits_{i=0}^{k-1}w_{i}2^{i}$, where $w_{i} \in \{0,1\}$, we will write
\begin{equation*}
\mathbf{w}=(w_{0},\dots,w_{k-1})
\end{equation*}
as a shorthand (to emphasize the binary digit expansion of the integer $w$). We will sometimes, without further notice, use both the digits vector $\mathbf{w}$ and the represented integer $w$.
\item We write, as usual,
$\e(t)=e^{2\pi i t}$. 
\end{itemize}
 

\subsection{Main results}

The aim of this work is to generalize the result of Spiegelhofer~\cite{Spi} to base-$q$ and general modulus. 
Moreover, we want to find a value of $\eta$ that is almost optimal (as $\varepsilon$ approaches $0$) with respect to the implemented method. Let $m,b \geq 2$ be two integers and $r \in \{0,\dots,m-1\}$. Let $x, y, z$ be three real numbers such that $0\leq y<z$ with $z-y \leq x$. We define
\begin{equation}\label{defN}
    N_{y,z}(a,b;r,m)=\Big|\{y \leq n <z\ :\ s_{q}(n) \equiv a \Mod{b},\ n \equiv r \Mod{m} \}\Big|.
\end{equation}

\begin{thm}
\label{Thm0}
Let $0<\varepsilon<1$. Let $b,q \geq 2$ be two integers such that $(b,q-1)=1$. There exist a constant $C=C(\varepsilon,b,q)>0$ and an exponent $\eta=\eta(\varepsilon,b,q)>0$ such that

\begin{equation*}
 \sum_{1 \leq m \leq x^{1-\varepsilon}}\max_{\substack{y,z\\0\leq y<z\\z-y\leq x}}\max_{r \geq 0}\Big|N_{y,z}(a,b;r,m)-\dfrac{z-y}{bm}\Big|
 \leq C x^{1-\eta}.
\end{equation*}
An admissible value for $\eta$ is given for $\varepsilon<\dfrac{2}{3}(1-\lambda)$ (where $\lambda$ is defined in Theorem~\ref{Gelfond}) by
\begin{equation}
\label{value-eta}
  \eta\left(\varepsilon\right)=\dfrac{\varepsilon^{3}\min\left(1/4,3\log_q\left(P^{-}(q)\right)\right)}{7200\times 8^{1/\varepsilon}\left(\log(4q/\varepsilon)+5b\log(q)/\varepsilon\right)}\times \exp\left({-\dfrac{5}{\varepsilon}\left(\log(4q/\varepsilon)+\dfrac{5b\log(q)}{\varepsilon}\right)}\right).
\end{equation}
\end{thm}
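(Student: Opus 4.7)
The plan is to follow the strategy of Spiegelhofer~\cite{Spi} developed for the binary Thue--Morse sequence, adapting it to base $q$ and a general character modulus $b$, while tracking every constant explicitly. First, by orthogonality of additive characters modulo $b$,
\begin{equation*}
N_{y,z}(a,b;r,m) = \frac{1}{b}\,\#\{y \leq n < z : n \equiv r \Mod{m}\} + \frac{1}{b}\sum_{\ell=1}^{b-1}\e(-\ell a/b)\sum_{\substack{y \leq n < z \\ n \equiv r \Mod{m}}}\e\left(\frac{\ell}{b}s_q(n)\right).
\end{equation*}
The first summand contributes $(z-y)/(bm)+O(1)$, whose error term summed over $m \leq x^{1-\varepsilon}$ is absorbed into $Cx^{1-\varepsilon}$. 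Since every $\ell \in \{1,\dots,b-1\}$ satisfies the hypothesis \eqref{hypob} when $(b,q-1)=1$, it remains to bound, for each such $\ell$,
\begin{equation*}
T_\ell := \sum_{1 \leq m \leq x^{1-\varepsilon}} \max_{\substack{0 \leq y < z \\ z - y \leq x}} \max_{r\geq 0}\Bigl| \sum_{\substack{y \leq n < z \\ n \equiv r \Mod m}} t_q(n) \Bigr|.
\end{equation*}

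Next, I would strip off the three maxima: a completion-of-sums argument in $y,z$ together with a dyadic discretization of $r$ (at a cost of at most logarithmic factors, absorbed by slack in the final exponent) reduces the task to bounding, on average over $m \leq x^{1-\varepsilon}$, the exponential sum
\begin{equation*}
S(m,r) = \sum_{k < x/m} t_q(mk+r)
\end{equation*}
uniformly in $r$. Invoking the carry-propagation lemma, one then replaces $s_q(mk+r)$ by a truncated version $\somm{\alpha}{\beta}(mk+r)$ at the cost of $O(q^{-(\beta-\alpha)})$ on average; this localizes the analysis in a digit window of length $\beta-\alpha = O(1/\varepsilon)$, which is the origin of the $q^{-c/\varepsilon}$-type factors appearing in \eqref{value-eta}.

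The heart of the argument is an iterated van der Corput inequality combined with digit shifts. Each application introduces a multiplicative difference $t_q(n+h_j m)\overline{t_q(n)}$, and after $s$ iterations, coupled with horizontal shifts of the truncated digit block, $|S(m,r)|^{2^s}$ is controlled by a Gowers-type uniformity norm $\|t_q\|_{U^{s+1}}$ of $(t_q(n))$ evaluated along arithmetic progressions of step $q^\beta$. Here I would invoke the effective estimate on the Gowers norms of the generalized Thue--Morse sequence announced in the introduction. Balancing the number of iterations $s \asymp 1/\varepsilon$ against the carry-propagation window and the Gowers gain per step gives the precise shape of the exponent in \eqref{value-eta}: the factor $8^{1/\varepsilon}$ compounds the losses across the $O(1/\varepsilon)$ van der Corput applications, the term $\exp(-5\varepsilon^{-1}(\log(4q/\varepsilon)+5b\log(q)/\varepsilon))$ reflects the logarithmic cost per iteration of aligning truncations, and $\min(1/4,3\log_q P^-(q))$ is the effective per-step gain coming from the Gowers norm bound.

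The main obstacle is the effective Gowers norm estimate for $(t_q(n))$ itself in this general setting. In base $q$ with a genuinely complex weight $\e(\ell/b)$, the matrix entering the recurrence~\ref{Relsure} is no longer self-adjoint, so one cannot directly import Konieczny's ergodic/Markov-chain argument from the binary case~\cite{Kon}; a direct quantitative analysis of~\ref{Relsure} is required to produce a per-step geometric decay with an \emph{explicit} ratio depending on $b$, $q$ and $P^-(q)$. Matching this decay against the van der Corput and carry-propagation losses to obtain the closed form \eqref{value-eta}, rather than a merely qualitative saving, is where the real technical difficulty lies. Once this effective Gowers bound is available, the remaining parameter optimization ($\alpha,\beta,s$ against $\varepsilon$) is essentially the same bookkeeping as in Spiegelhofer's proof, now carried out uniformly in $q$ and $b$.
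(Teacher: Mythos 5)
Your sketch describes, in broad strokes, the route from the Gowers norm estimate through the iterated van der Corput argument to the exponential sum bound — but that is essentially the proof of Theorem~\ref{Thm2} (and then Theorem~\ref{Thm1}), not the proof of Theorem~\ref{Thm0} itself, and a key step is missing. The iterated van der Corput machinery gives a saving $\eta(\rho_1,\rho_2)$ that is \emph{proportional to $\rho_1$}, the lower bound on $\log D/\log N$ in the range $N^{\rho_1}\leq D\leq N^{\rho_2}$. So the savings degenerate as $m$ (equivalently $D$) becomes small compared to the inner summation length $N\approx x/D$: for $m=O(1)$ the method yields nothing. Your proposal applies this machinery to the entire range $1\leq m\leq x^{1-\varepsilon}$, and therefore would not close.

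The paper resolves this by splitting the sum at $m=x^{\delta_1}$ with $\delta_1=\varepsilon/2$: for $1\leq m<x^{\delta_1}$ it applies Gelfond's individual bound (Theorem~\ref{Gelfond}) to each term, contributing $O(x^{\delta_1+\lambda})$; for $x^{\delta_1}\leq m\leq x^{\delta_2}$ ($\delta_2=1-\varepsilon$) it covers the range by $q$-adic intervals and applies Theorem~\ref{Thm1} to each, contributing $Cx^{1-\eta/(1+\varepsilon_3)}$. Comparing the two exponents forces $\delta_1+\lambda<1-\eta/(1+\varepsilon_3)$, which is where the hypothesis $\varepsilon<\tfrac{2}{3}(1-\lambda)$ comes from — a constraint your sketch neither uses nor explains. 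Without the split and the invocation of Gelfond's theorem for small $m$, the argument has a genuine hole, and there is no way to account for the role of $\lambda$ in the statement. Two smaller points: the maxima over $y,z$ are handled in the paper by Lemma~\ref{Lemma37} (introducing the extra twist $\e(n\xi)$ in Theorem~\ref{Thm2}), not by a generic "completion of sums", and the maximum over $r$ is absorbed via a $\max_{a\geq0}$ kept inside the sum in Theorem~\ref{Thm2} and controlled by iterated Cauchy--Schwarz, not by a dyadic discretization of $r$.
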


Spiegelhofer \cite[p.2568, (i)]{Spi}
asked whether we can choose $D=x\log(x)^{-B}$ for some $B>0,$ and have $x\log(x)^{-A}$ as an error term. For our result, it might be possible to get an explicit $\varepsilon$ depending on $x$  tending to $0$ ($\varepsilon=1/(A\log\log x))$, for instance, for some constant $A$). However, our method gives $C$ as a function of $\varepsilon$. It would be interesting to study $C$ in Theorem~\ref{Thm0} and its degree of dependence in $\varepsilon$.

\begin{rem}
For $\varepsilon \rightarrow 0$, we have the following equivalent for $\eta$,
\begin{equation*}
    \eta \sim \dfrac{\varepsilon^{4}\exp(-\varepsilon^{-2}(25b\log(q)+o(1)))}{36000b\log(q)8^{\varepsilon^{-1}}}\min\left(1/4,3\log_q\left(P^{-}(q)\right)\right).
\end{equation*}

This is evidently a very small quantity. For $q=b=2$, we used a Python-program to calculate some approximate values for $\eta$ given by \eqref{value-eta}, see  Annexe~\ref{appendixeta}. 
\end{rem}

In order to prove Theorem~\ref{Thm0}, we will prove the following theorem, which is at the heart of the article of Spiegelhofer~\cite{Spi}.
\begin{thm}
\label{Thm1}
Let $ 0< \delta_{1}\leq \delta_{2}<1$ be real numbers. Let $b,q \geq 2$ be two integers such that $(b,q-1)=1$. Let $x>0$ be a real number and $D$ be an integer such that
\begin{equation*}
   x^{\delta_1} \leq D \leq x^{\delta_2}. 
\end{equation*}
Then there exist a constant $C=C(\delta_1,\delta_2,b,q)>0$ and an exponent $\eta=\eta(\delta_1,\delta_2,b,q)>
0$ such that 
\begin{equation*}
     \sum_{D<m\leq qD} \;\max_{\substack{y,z\\0\leq y<z\\z-y\leq x}}\;\max_{r \geq 0}\Big|N_{y,z}(a,b;r,m)-\dfrac{z-y}{bm}\Big| \leq Cx^{1-\eta}.
\end{equation*}
An admissible value for $\eta$ is given by
\begin{align}
\label{defetadd}
\eta&:=\dfrac{\delta_{1}(1-\delta_{2})^{2}\min\left(1/4,3\log_q\left(P^{-}(q)\right)\right)}{1800\times 8^{(1-\delta_{2})^{-1}}\left(\log(4q/(1-\delta_{2}))+5b\log(q)/(1-\delta_2)\right)} \nonumber\\
    &\quad \times \exp\left({-\dfrac{5}{1-\delta_{2}}\left(\log(4q(1-\delta_{2})^{-1})+\dfrac{5b\log(q)}{1-\delta_{2}}\right)}\right). \nonumber\\
\end{align}
\end{thm}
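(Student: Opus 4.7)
The first reduction is to eliminate the residue condition on $s_q$ by the standard roots-of-unity trick
$$\mathbb{1}_{s_q(n)\equiv a\Mod{b}}=\frac{1}{b}\sum_{\ell=0}^{b-1}\e\!\left(-\frac{\ell a}{b}\right)\e\!\left(\frac{\ell}{b}s_q(n)\right),$$
so that the $\ell=0$ term produces the main term $(z-y)/(bm)$, while for each $1\le\ell\le b-1$ one has to control
$$\Sigma_\ell:=\sum_{D<m\le qD}\max_{y,z,r}\left|\sum_{\substack{y\le n<z\\ n\equiv r\Mod{m}}}t_q(n)\right|,\qquad t_q(n)=\e(\ell s_q(n)/b).$$
The hypothesis $(b,q-1)=1$ guarantees that $t_q$ is a genuinely oscillating $q$-multiplicative sequence. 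I would then remove the inner maxima by Fourier completion: writing $\mathbb{1}_{n\equiv r\Mod m}=\frac{1}{m}\sum_{h}\e(h(n-r)/m)$ and handling the $y,z$ maxima by standard partial summation against $\e(\beta n)$ with $\beta=h/m$, the quantity $\Sigma_\ell$ is bounded, up to logarithmic losses, by a sum over $D<m\le qD$ and $0\le h<m$ of Type~I sums
$$S(\alpha,I):=\sum_{n\in I}t_q(n)\e(\alpha n),\qquad \alpha=h/m,\ |I|\le x.$$

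The core is thus to obtain an effective estimate $|S(\alpha,I)|\ll |I|^{1-\eta_0}$ uniform in $\alpha$, for a suitable explicit $\eta_0=\eta_0(\delta_1,\delta_2,b,q)>0$. Following Spiegelhofer's strategy, I would iterate the van der Corput inequality $K$ times (with $K\asymp (1-\delta_2)^{-1}$ chosen so that the van der Corput parameter $x^{1-\delta_2}$ stays just below the modulus range). Each iteration replaces $t_q(n)\overline{t_q(n+h)}$ by a translate; after $K$ steps, one is left with a $(K+1)$-fold autocorrelation sum of $t_q$ along an arithmetic-progression configuration in the high digits. The carry propagation lemma in base $q$ then allows the replacement of $s_q(n+h_1)+\cdots$ by its truncated version $\somm{\alpha}{\beta}$ on a digit window $[\alpha,\beta)$, with an error controlled by the density of carry-prone patterns, which is saved by the factor $P^{-}(q)$ entering \eqref{defetadd} via $\min(1/4,3\log_q P^{-}(q))$.

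The principal obstacle, as stressed in the introduction, is to extract from this reduction a \emph{quantitative} bound for the Gowers-norm-type quantity
$$\sum_{n}\prod_{\mathbf{\varepsilon}\in\{0,1\}^{K}}\CC^{|\mathbf{\varepsilon}|}t_q^{(\mathrm{trunc})}(n+\mathbf{\varepsilon}\cdot\mathbf{h}),$$
where $\CC$ denotes complex conjugation and $t_q^{(\mathrm{trunc})}$ is built from $\somm{\alpha}{\beta}$. Konieczny's ergodic argument on a Markov chain exploits real, nonnegative transition weights and does not transfer as is, because the weights produced by the base-$q$ generating-function recurrence carry the complex factors $\e(\ell/b)$. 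I would replace the ergodic input by a direct spectral-radius estimate on the block-transfer operator: bound its operator norm by the maximum over characters $\alpha$ with $\e(\alpha)^b=1$ of a base-$q$ analogue of $\gamma_b$ in \autoref{ThmFM2}, and iterate on the truncation length $\beta-\alpha$. This yields geometric decay of the correlation sum with an explicit rate of the form $q^{-c_0(\beta-\alpha)}$, where $c_0$ is controlled uniformly over the nontrivial characters modulo $b$; this uniformity is why the factor $5b\log q$ appears in the exponent of \eqref{defetadd}.

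Finally, I would optimize the three parameters $K$, $\alpha$ and $\beta$ against the constraints imposed by (i)~the modulus range $D\le x^{\delta_2}$ (which dictates $q^{\beta}\sim x^{1-\delta_2}$ roughly), (ii)~the van der Corput bootstrap (which costs a factor $2^{-K}$ in the exponent and so forces $K\asymp(1-\delta_2)^{-1}$), and (iii)~the quality of the Gowers-type bound (linear in $\log_q P^{-}(q)$ and in $1/b$). The resulting $\eta_0$, fed back through the Fourier-completion step where one loses a further factor linear in $\delta_1$ when summing over $D<m\le qD$ (cf.\ the $\delta_1$ factor at the front of \eqref{defetadd}), produces the value announced in \eqref{defetadd}; the triple-exponential shape $\exp(-5(1-\delta_2)^{-2}\cdots)$ tracks the concatenation of the van der Corput iteration depth with the length of the truncation window needed to absorb the characters modulo $b$.
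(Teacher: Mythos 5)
Your very first reduction diverges from the one that makes this theorem possible, and the divergence is fatal. You propose to detect the congruence $n\equiv r\Mod{m}$ by Fourier completion, $\mathbb{1}_{n\equiv r\Mod m}=\frac{1}{m}\sum_{h}\e(h(n-r)/m)$, which leads you to Type~I sums $S(\alpha,I)=\sum_{n\in I}t_q(n)\e(\alpha n)$ and then to the claim that a \emph{uniform} bound $|S(\alpha,I)|\ll |I|^{1-\eta_0}$ is the heart of the matter. This cannot work. After the completion step you have $\sum_{D<m\le qD}\frac{1}{m}\sum_{0\le h<m}|S(h/m,I)|$, and even plugging in a uniform bound $|S|\ll x^{1-\eta_0}$ gives $\ll D\,x^{1-\eta_0}$. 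Since $D$ ranges up to $x^{\delta_2}$ with $\delta_2$ close to $1$, you would need $\eta_0>\delta_2$, whereas uniform bounds of Gelfond type give only $\eta_0\le 1-\lambda$ with $\lambda$ close to $1$ (e.g.\ $\lambda\approx 0.943$ for $q=b=2$). Moreover, the Remark following Theorem~A shows that for suitable $m$ the quantity $\min\{n:\,s_q(n)\equiv 1\Mod b,\ n\equiv 0\Mod m\}$ is already of size $\gg m^{1/(b-1)}$, so individual estimates genuinely cannot be strong for all $m$; the theorem is a Bombieri--Vinogradov--type statement and the cancellation must be extracted \emph{on average over $m$}. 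A uniform estimate in $\alpha$ forfeits exactly this averaging, and this is the essential point of Spiegelhofer's method that your plan loses at the outset.

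The paper's proof instead substitutes $n=r+km$, reducing to the inner sum $\sum_{k\le u/m}\e(\ell s_q(r+km)/b)$ (a sum over the \emph{index} of the progression, with $s_q$ evaluated at $nm+a$), uses the completion lemma (Lemma~\ref{Lemma37}) to remove the $\max_{y,z}$ by a single integral against $\min(x/D+1,\|\xi\|^{-1})$, and only \emph{then} invokes Theorem~\ref{Thm2}, which bounds $\sum_{D\le m<qD}\max_{a}\big|\sum_n\e(\ell s_q(nm+a)/b)\e(n\xi)\big|$ \emph{on average over $m$}. Crucially, the argument of $s_q$ there is $nm+a$ (the modulus appears multiplicatively inside $s_q$), which is what makes the van der Corput shift by multiples $h_iK_i$ — together with Farey approximation of $K_im/q^{\alpha}$ — become a shift in the high digits. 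Your $S(\alpha,I)=\sum_n t_q(n)\e(\alpha n)$ has no such structure. You should also note that the $\delta_1$ in the numerator of~\eqref{defetadd} does not come from losing a factor when summing over $D<m\le qD$, as you suggest; it enters as $\rho_1=\delta_1/(1-\delta_1)$ in Theorem~\ref{Thm2}, a constraint on the digit-window size, i.e.\ another manifestation of the average-over-$m$ structure. Separately, your proposed fix for the complex transition weights (a direct spectral-radius estimate on the block-transfer operator) is not the route the paper takes: the paper instead constructs an explicit coupling between the trivial loop and Konieczny's loop so that the attached $b$-th roots of unity sum to zero; that may be a minor stylistic difference, but it is subordinate to the structural gap above.
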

\begin{rem}
For $\delta_2 \rightarrow 1$, we have the following equivalent for $\eta$,
\begin{equation*}
    \eta \sim \dfrac{\delta_1(1-\delta_2)^{3}}{ 9000b\log (q) 8^{(1-\delta_2)^{-1}}}\times \exp\left(\dfrac{-25b\log (q)+o(1))}{(1-\delta_2)^{2}}\right)\times  \min\left(1/4,3\log_q\left(P^{-}(q)\right)\right).
\end{equation*}
\end{rem}

To prove this theorem, we will follow Spiegelhofer's approach  \cite{Spi}. A standard way to detect arithmetic progressions is to use exponential sums. In the following theorems we prove some cancellation on average over the modulus of the arithmetic progression. Theorem~\ref{Thm1} is a consequence of Theorem~\ref{Thm2} below, which might have applications in other problems as well.

\begin{thm}
\label{Thm2}
Let $\rho_{2}\geq\rho_{1}>0$. Let $b,q \geq 2$ be two integers such that $(b,q-1)=1$ and let $N, D>1$ be two integers 
such that
\begin{equation*}
    N^{\rho_{1}} \leq D \leq N^{\rho_{2}}.
\end{equation*}
Let $\ell \in \{1,\dots,b-1\}.$ and $\xi $ be a real number. We define $S_{0}(N,D,\xi$) by
\begin{equation*}
     S_{0}=S_{0}(N,D,\xi)=\sum\limits_{D \leq m <qD}\max_{a \geq 0}\Big|\sum\limits_{0 \leq n < N}\e\left(\dfrac{\ell}{b}s_q(nm+a)\right)\e(n\xi)\Big|.
\end{equation*}
Then there exist $C=C(b,q,\rho_{1},\rho_{2})>0$ and $\eta=\eta(b,q,\rho_1,\rho_2)>0$ such that
\begin{equation*}
\Big|\dfrac{S_{0}(N,D,\xi)}{D}\Big| \leq CN^{1-\eta}.
\end{equation*}
An admissible value for $\eta$ is given by
\begin{align*}
    \eta&=\dfrac{\rho_{1}\min\left(1/4,3\log_q\left(P^{-}(q)\right)\right)}{8^{1+\rho_{2}}(288\rho_{2}+300)(3\rho_{2}+2)\left(\log((3\rho_{2}+4)q)+b\log(q)(3\rho_{2}+5)\right)}\\
    &\qquad \times\exp\left({-(3\rho_{2}+5)\left(\log((3\rho_{2}+4)q)+(3\rho_{2}+5)b\log(q)\right)}\right).
\end{align*}
\end{thm}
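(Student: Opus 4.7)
The plan is to generalize Spiegelhofer's approach~\cite{Spi} from base $2$ to arbitrary base $q$ and from the multiplier $\ell/b = 1/2$ to arbitrary $\ell/b$ satisfying~\eqref{hypob}. First, I would dispose of the maximum over $a$ via a standard completion argument (Fourier expansion of $\mathbb{1}_{[0,a)}$ in residues modulo a suitable power of $q$, or equivalently a Dirichlet kernel), reducing to a linear combination of sums $\sum_{0 \leq n < N} \e((\ell/b) s_q(nm+a)) \e(n\xi)$ with a controlled number of phases. Next, I would iterate the van der Corput inequality $k$ times, where $k$ is chosen as a function of $\rho_1,\rho_2$; each application replaces the summand by a shifted difference $\e((\ell/b)(s_q(nm+hm+a)-s_q(nm+a)))$ and multiplies the bound by a square root of the sum size. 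After $k$ such iterations one has a $k$-fold correlation of $t_q$ along an arithmetic progression of common difference $m$.

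The next step is to invoke the carry propagation lemma so as to replace $s_q$ by the truncated function $\somm{\alpha}{\beta}$ on a digit window chosen so that the shifts $h_1 m + \cdots + h_k m$ produced by van der Corput do not overflow into the top digits, while $q^{\beta}$ remains small compared with $N$. This makes the inner phase depend only on the middle block of digits of $nm+a$. One then exchanges the roles of $n$ and $m$: a Cauchy--Schwarz over $m$ followed by orthogonality on $\Z/q^{\beta}\Z$ rewrites the resulting double sum as a correlation of $t_q$ on an interval of length comparable to $q^{\beta}$, i.e.\ a quantity controlled by the Gowers $U^{k}$-type seminorm of the sequence $t_q$ twisted by $\e(n\xi)$.

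The genuine difficulty, and the main novelty relative to the Thue--Morse case, is obtaining an effective power-saving bound on this Gowers correlation for the generalized sequence $t_q$. In the case $q=2$, $\ell/b = 1/2$, Konieczny~\cite{Kon} exploited the fact that the weights appearing in the recurrence for the $U^{k}$-norm form a real non-negative matrix amenable to a Perron--Frobenius / Markov-chain analysis. Here the corresponding transfer operator has complex, non-self-adjoint entries, so its spectral radius must be estimated directly. I would establish a new digitwise recurrence (the relation~\ref{Relsure} flagged in the introduction) and bound the spectral radius by elementary trigonometric estimates on the $q^{k}$ exponentials attached to each digit block, arriving at an inequality of the form $\|t_q\|_{U^{k}}^{2^{k}} \ll N^{-c_{k}}$ with an explicit $c_k>0$ depending on $\min(1/4, 3\log_q P^{-}(q))$.

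Finally, one balances the parameters: the number $k$ of van der Corput iterations, the truncation window $[\alpha,\beta]$, and the shift sizes are all calibrated against $\rho_1$ and $\rho_2$. Each ingredient contributes an explicit loss---factor $8$ per van der Corput step, factor depending on $b\log(q)$ from each Fourier expansion, and an exponential dependence on $k$ from the Gowers bound---leading, after optimization with $k \asymp (1-\rho_2)^{-1}$ and $\beta - \alpha \asymp (3\rho_2+5)\log N$, to an admissible exponent $\eta$ of the stated doubly-exponential shape. The hard part is not the bookkeeping of these losses but the spectral-radius estimate in the complex case, which is the point where Konieczny's method breaks down.
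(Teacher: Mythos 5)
Your high-level skeleton---iterate van der Corput $k$ times, use carry propagation to truncate $s_q$ to a digit window, reduce to a Gowers-type correlation of $t_q$, then feed in an explicit Gowers-norm estimate and optimize parameters---matches the paper's architecture. But there are two substantive gaps. First, the treatment of $\max_{a\geq 0}$: there is no indicator $\mathbb{1}_{[0,a)}$ to Fourier-expand, since $a$ enters as an additive shift inside $s_q(nm+a)$, not as a summation cutoff; the paper instead carries the $\max_a$ through the Cauchy--Schwarz steps of the van der Corput iteration and ultimately absorbs $a$ into a fractional-part/residue decomposition of $(nm+a)/q^{k\mu}$ (Lemma~\ref{6.6}). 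More importantly, your ``Cauchy--Schwarz over $m$ followed by orthogonality on $\Z/q^\beta\Z$'' omits the mechanism that actually decouples the common difference $m$: the paper chooses the van der Corput shifts $K_i$ to be products of Farey denominators $Q_{q^{\cdot}}(m/q^{\cdot})$ precisely so that $K_i m/q^{i\mu}$ is within $q^{-\sigma}$ of an integer, allowing the digit windows to be cut successively (Lemma~\ref{T1}); then Lemmas~\ref{PropA} and~\ref{CardA} count the multiplicities of $h_i \mathfrak{M}_i \bmod q^\rho$. Without this Farey machinery one does not arrive at the Gowers correlation of $t_q$ \emph{itself} but rather at a correlation along the progression $m\Z$, which is not what Theorem~\ref{NDGG} bounds.

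The deeper gap is in the step you yourself flag as ``the hard part.'' You propose to ``bound the spectral radius directly by elementary trigonometric estimates.'' This cannot work as stated: the transition matrix $(p_{\mathbf{r_0},\mathbf{r_1}})$ from Lemma~\ref{Relsurr} has $\sum_{\mathbf{r_1}}|p_{\mathbf{r_0},\mathbf{r_1}}|=1$ for every row (each $|p_{\mathbf{r_0},\mathbf{r_1}}|$ is a nonnegative count over $q^{k+1}$), so its $\ell^\infty$-operator norm is exactly $1$ and there is no ``trigonometric'' saving at the level of single transitions. The strict inequality $\sum_{\mathbf{r_j}}|p^{(j)}_{\mathbf{r_0},\mathbf{r_j}}|<1$ for $j=K+b(k+1)$ comes from genuine cancellation among the weights of \emph{different paths of length $j$} sharing the same endpoints: the paper partitions paths according to whether they contain the synchronizing block $\omega_0^{K+b(k+1)}$, and couples each such path with $b-1$ others obtained by replacing ``little loops'' at $\mathbf{0}$ by Konieczny-type ``great loops'' (Definition~\ref{Switch}); the $b$ coupled weights are $\e((-1)^{k+1}\lambda(q-1)\ell/b)$ for $\lambda=0,\dots,b-1$, which sum to $0$ exactly because $(b,q-1)=1$ and $0<\ell<b$. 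This is a combinatorial coupling argument, not a spectral estimate, and it is the point where the hypothesis~\eqref{hypob} is actually used. Your sketch identifies the obstruction correctly (the matrix is complex and non-self-adjoint, so Konieczny's Markov-chain argument breaks down) but does not supply a workable substitute.
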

The proof idea is to apply the van der Corput inequality (and variations thereof) a certain number of times in order to remove digits in the underlying quantities and to reduce the estimate of the exponential sums to estimates of Gowers norms for the sequence $(t_{q}(n))_{n\geq 1}$.
In this article, we prove the following property of Gowers norms for this sequence.
\begin{thm}
\label{NDGG}
Let $k\geq 3$ be an integer and $0<\ell<b$.
and set $$K=\left\lfloor\dfrac{\log(k)}{\log(q)}\right\rfloor+1.$$ Let $b,q$ be two integers such that $(b,q-1)=1$.
We define
\begin{equation*}
\eta_{0}:=\dfrac{1}{\log(q)(K+(k+1)b)q^{(k+1)(K+b(k+1))}}.
\end{equation*}
Then, as $\rho \rightarrow \infty$,
\begin{equation*}
    \dfrac{1}{q^{(k+1)\rho}}\sum_{\substack{0\leq n < q^{\rho}\\0\leq h_{0},\dots,h_{k-1}<q^{\rho}}}\e\left(\dfrac{\ell}{b}\sum_{\mathbf{w}=(w_{0},\dots,w_{k-1}) \in \{0,1\}^{k}}(-1)^{s_{2}(w)}s_{q}(n+\mathbf{w}\cdot \mathbf{h})\right) \ll q^{-\eta_{0} \rho}.
\end{equation*}
Moreover, we get the same formula for the function $\som{\rho}$: as $\rho \rightarrow +\infty$,
\begin{equation*}
    \dfrac{1}{q^{(k+1)\rho}}\sum_{\substack{0\leq n < q^{\rho}\\0\leq h_{0},\dots,h_{k-1}<q^{\rho}}}\e\left(\dfrac{\ell}{b}\sum_{\mathbf{w}=(w_{0},\dots,w_{k-1}) \in \{0,1\}^{k}}(-1)^{s_{2}(w)}\som{\rho}(n+\mathbf{w}\cdot \mathbf{h})\right) \ll q^{-\eta_{0} \rho}.
\end{equation*}
\end{thm}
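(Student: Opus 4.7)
The plan is to establish the truncated version (involving $\som{\rho}$) first and then to deduce the non-truncated version via a carry-propagation argument. Since $0\le h_i<q^\rho$ and $q^K>k$, the integer $n+\mathbf{w}\cdot\mathbf{h}$ is bounded by $(k+1)q^\rho\le q^{\rho+K}$, so its digits beyond position $\rho$ are entirely determined by the carry out of position $\rho$. A standard carry-propagation lemma shows that for a proportion $1-O(kq^{-\Lambda})$ of tuples $(n,\mathbf{h})$, this carry is the same for every $\mathbf{w}\in\{0,1\}^{k}$, in which case the alternating sum $\sum_{\mathbf{w}}(-1)^{s_{2}(w)}s_{q}(n+\mathbf{w}\cdot\mathbf{h})$ coincides with $\sum_{\mathbf{w}}(-1)^{s_{2}(w)}\som{\rho}(n+\mathbf{w}\cdot\mathbf{h})$. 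Choosing $\Lambda\asymp\eta_{0}\rho$ absorbs the discrepancy into $O(q^{-\eta_{0}\rho})$, so it is enough to prove the second statement.

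For the truncated statement I would set up a recurrence in $\rho$. Decompose $n=qn'+n_{0}$ and $h_{i}=qh_{i}'+h_{i}^{(0)}$ with $n_{0},h_{i}^{(0)}\in\{0,\dots,q-1\}$; the carry into position $1$ is $c_{\mathbf{w}}=\lfloor(n_{0}+\mathbf{w}\cdot\mathbf{h}^{(0)})/q\rfloor\in\{0,\dots,k\}$, and one has the identity
\[
\som{\rho}(n+\mathbf{w}\cdot\mathbf{h})=\bigl((n_{0}+\mathbf{w}\cdot\mathbf{h}^{(0)})\bmod q\bigr)+\som{\rho-1}(n'+\mathbf{w}\cdot\mathbf{h}'+c_{\mathbf{w}}).
\]
Summing the exponential over the $q^{k+1}$ choices of innermost digits groups the contributions by their carry vector $\mathbf{c}=(c_{\mathbf{w}})_{\mathbf{w}\in\{0,1\}^{k}}\in\{0,\dots,k\}^{2^{k}}$ and produces a transfer operator $T$ on $\C^{(k+1)^{2^{k}}}$: the quantity in the theorem equals $q^{-(k+1)\rho}(T^{\rho}\mathbf{v})(\mathbf{0})$ for a fixed initial vector $\mathbf{v}$, where $\mathbf{0}$ indexes the zero-carry coordinate. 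The trivial bound is $\|T\|\le q^{k+1}$.

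The core analytic step is to show that the normalised operator $T/q^{k+1}$ has spectral radius at most $q^{-\eta_{0}}$. Because $\ell/b$ need not equal $1/2$, the entries of $T$ are complex numbers of modulus one, and the Perron--Frobenius / Markov-chain arguments used by Konieczny in the base-$2$ Thue--Morse case do not apply. Instead, I would estimate $\|T^{L}\|$ directly for $L$ of order $K+b(k+1)$, using the coprimality $(b,q-1)=1$: this hypothesis guarantees that one can find, in any window of $b$ consecutive digit positions, a digit pattern for which the alternating sum $\sum_{\mathbf{w}}(-1)^{s_{2}(w)}\bigl((n_{0}+\mathbf{w}\cdot\mathbf{h}^{(0)}+c_{\mathbf{w}})\bmod q\bigr)$ is not a multiple of $b$, making the associated weight $\e(\ell/b\cdot\,\cdot\,)$ uniformly bounded away from $1$. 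Enumerating all carry paths of length $L$ then yields $\|T^{L}\|\le q^{(k+1)L}\bigl(1-q^{-(k+1)L}\bigr)$, and Gelfand's formula $\rho(T)=\lim_{n}\|T^{n}\|^{1/n}$ gives the desired bound with $\eta_{0}\asymp 1/\bigl(L\log q\cdot q^{(k+1)L}\bigr)$, matching the explicit constant in the statement.

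The main obstacle is this last contraction estimate. In a real-valued setting it would follow from ergodicity of the induced Markov chain on carry states, but with genuinely complex weights one must identify, for \emph{every} possible starting carry configuration, an explicit ``resetting'' sequence of innermost digits that produces definite cancellation in the exponential, and then bound the worst-case gain quantitatively. The lengths $K$ and $b(k+1)$ appearing in $\eta_{0}$ reflect, respectively, how many digits are needed to absorb an arbitrary initial carry configuration and how many are needed for the condition $(b,q-1)=1$ to force a non-trivial residue pattern on the digit sum.
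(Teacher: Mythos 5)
Your overall structure — reduce to a carry-state recursion, view it as a transfer operator on carry vectors bounded by $k$, and prove a quantitative contraction on a block of length of order $K+b(k+1)$ — is the same skeleton as the paper's proof. However, the proposal stops exactly at the step that carries the mathematical content, and you in fact say so yourself ("the main obstacle is this last contraction estimate"). Identifying that one needs a resetting sequence plus coprimality is not a proof; the paper's Section~\ref{effectiveeta0} is devoted precisely to constructing this explicitly: it pairs the trivial loop $\omega_0^{k+1}$ at $\mathbf{0}$ with a base-$q$ generalization of Konieczny's path, verifies by direct computation that the weight of a single Konieczny loop is $\e((-1)^{k+1}(q-1)\ell/b)$, codes paths of length $j$ as words over $\{0,\ldots,q^{k+1}-1\}$, and introduces the $\lambda$-switching function $\varphi_\lambda$ that replaces $\lambda$ trivial loops by $\lambda$ Konieczny loops. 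The crux is the cancellation identity $\sum_{\lambda=0}^{b-1}\e(\lambda(q-1)\ell/b)=0$, which is where $(b,q-1)=1$ is genuinely used; one then counts, via the subword argument, how many length-$j$ words avoid the pattern $\omega_0^{K+b(k+1)}$ to obtain $M\le 1-q^{-(k+1)(K+b(k+1))}$ and hence the stated $\eta_0$. None of this appears in the proposal.

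A secondary issue: your reduction from $s_q$ to $\som{\rho}$ via a carry-propagation lemma is not how the paper handles the two variants and is not obviously sound as stated. The shifts $\mathbf{w}\cdot\mathbf{h}$ are of size up to $kq^\rho$, so a naive application of Lemma~\ref{PropaRetenue} with a fresh cutoff $\Lambda$ does not produce a saving of the order $q^{-\eta_0\rho}$; you would need to argue more carefully. The paper sidesteps this entirely by observing (via Lemma~\ref{recsrho}) that $s_q$ and $\som{\rho}$ satisfy the identical one-step recursion $s(n)=s(\lfloor n/q\rfloor)+u_q(n)$, so the same transfer-operator analysis proves both statements simultaneously. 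You should adopt that observation rather than a carry-propagation detour.
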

This theorem generalizes the result obtained by Konieczny in his article~\cite{Kon} in 2019. As Konieczny commented in~\cite[Remark 2.5]{Kon}, the calculation of the spectral gap of the matrix $(p_{\mathbf{r_{0}},\mathbf{r_{1}}})_{(\mathbf{r_{0}},\mathbf{r_{1}}) \in \mathfrak{R}^{2}}$, where $p_{\mathbf{r_{0}},\mathbf{r_{1}}}$ is defined in \eqref{Ptrans} and $\mathfrak{R}$ is defined Lemma~\ref{GFini}, yields another admissible value for $q=b=2$. However, we will use a different approach, since $p_{\mathbf{r_{0}},\mathbf{r_{1}}}$ is not necessarily a real number for general $q$ and $b$. The case $q=b=2$ leads us to the Gowers norm estimate for the classical Thue--Morse sequence. We mention also the result of Byszewski, Konieczny and Müllner~[\cite{Bykomu}, Theorem A]. They showed that all automatic sequence orthogonal to periodic sequences are highly Gowers uniform (in the sense of [\cite{Bykomu}, (1), p.2]). Note that  $\eta_0$ is not explicit in their work (as it is in Konieczny's result in~\cite{Kon}), and the case $s_q \bmod b$ is not mentioned explicitly.

\vspace{0.2cm}

The structure of the article is as follows. In Sections \ref{AuxRes} and \ref{sectionqlemmas} we collect several lemmas of a technical nature that we use throughout our article. They concern classical (or standard) results on discrepancy estimates, various types of the van der Corput inequality, as well as base-$q$ generalizations of some lemmas of~\cite{Spi} regarding carry propagations, the cutting procedure of digits etc. We will proceed along the lines of~\cite{Spi} and will make frequent use of Farey fractions. The reader will find a short account on Farey fractions in Annexe \ref{Appendice}. Section \ref{NormesSec} is devoted to the Proof of Theorem~\ref{NDGG}, in Section~\ref{Theo2.5} we prove Theorem~\ref{Thm2}. Finally, in Sections \ref{secpreuveThm1} and \ref{secthm0} we give the proofs of Theorem~\ref{Thm1} and Theorem~\ref{Thm0}.

\section{Tools}\label{AuxRes}
In this section we collect various definitions and lemmas that will be used throughout the article. 
\subsection{Some technical lemmas}
The following lemma will be used at the end of the proof of Theorem~\ref{Thm2}. This inequality allows us to reduce it to Theorem~\ref{Thm1}.
\begin{lemme}
\label{Lemma37}
Let $x\leq  y \leq z$ be real numbers and $(a_{n})_{n \in \N} \in \C^{\N}$ for $x \leq n < z$. Then,
\begin{equation*}
    \Big|\sum_{x \leq n<y}a_{n}\Big| \leq \int_{0}^{1}\min(y-x + 1,\|\xi\|^{-1})\Big|\sum_{x \leq n<z}a_{n}\e(n\xi)\Big|d\xi.
\end{equation*}
\end{lemme}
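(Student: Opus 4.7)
The strategy is a standard Fourier orthogonality argument. The key observation is that the partial sum $\sum_{x\leq n<y} a_n$ can be recovered from the longer twisted sum $\sum_{x\leq n<z} a_n \e(n\xi)$ by integrating against the right Fourier kernel on $[0,1]$.

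First, I would use the orthogonality relation $\int_0^1 \e((n-m)\xi)\,d\xi = \delta_{n,m}$ for integers $n,m$. Multiplying out and interchanging sum and integral gives the exact identity
\begin{equation*}
    \sum_{\substack{x\leq n<y\\ n\in\Z}} a_n \;=\; \int_0^1 \left(\sum_{\substack{x\leq m<y\\ m\in\Z}}\e(-m\xi)\right)\left(\sum_{\substack{x\leq n<z\\ n\in\Z}} a_n\,\e(n\xi)\right) d\xi.
\end{equation*}
Applying the triangle inequality under the integral sign reduces the problem to controlling the kernel $K(\xi):=\sum_{x\leq m<y,\, m\in\Z}\e(-m\xi)$ pointwise by $\min(y-x+1,\|\xi\|^{-1})$.

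Two bounds on $K(\xi)$ then suffice. The trivial bound gives $|K(\xi)| \leq \#\{m\in\Z: x\leq m<y\}\leq y-x+1$. For the second bound, since $K(\xi)$ is a geometric progression with ratio $\e(-\xi)$, we have $|K(\xi)|\leq 2/|1-\e(-\xi)| = 1/|\sin(\pi\xi)|$, and the standard inequality $|\sin(\pi\xi)|\geq 2\|\xi\|$ (valid for $\xi\in\R$ by periodicity, using $\sin(t)\geq 2t/\pi$ on $[0,\pi/2]$) yields $|K(\xi)|\leq 1/(2\|\xi\|)\leq \|\xi\|^{-1}$. Taking the minimum of the two bounds and plugging into the integral concludes the proof.

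There is no real obstacle here; the only point requiring mild care is ensuring the geometric sum bound indeed produces $\|\xi\|^{-1}$ rather than something slightly larger, which is handled by the elementary inequality above. The lemma is essentially a quantitative form of Parseval applied to the indicator of $[x,y)\cap \Z$, and will be used at the end of the proof of Theorem~\ref{Thm2} to replace partial sums on intervals $[x,y)$ by full sums on $[x,z)$ twisted by an additive character.
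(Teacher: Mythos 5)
Your proof is correct, and since the paper itself only cites~\cite[Lemma 3.7]{MS17} for this statement without reproducing an argument, your Fourier-orthogonality derivation (integrate the twisted sum against the Dirichlet-type kernel $K(\xi)=\sum_{x\leq m<y}\e(-m\xi)$ and bound $|K(\xi)|$ by $\min(y-x+1,\|\xi\|^{-1})$ via the geometric-sum estimate $|K(\xi)|\leq 1/|\sin(\pi\xi)|\leq 1/(2\|\xi\|)$) is exactly the standard route one would expect to find there. The only thing worth making fully explicit in a written-up version is the use of $y\leq z$ to guarantee that the delta condition $n=m$ with $x\leq m<y$ automatically lands in the range $x\leq n<z$, which you implicitly rely on when collapsing the double sum.
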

\begin{proof}
We refer the reader to~\cite[Lemma 3.7]{MS17}.
\end{proof}

The following elementary lemma gives a bound on the number of solutions of a certain congruence. It will be used during the proof of Theorem $\ref{Thm2}$.

\begin{lemme}
\label{PropA}
Let $k \in \N^{*}$, $\rho \geq \gamma>0$ be integers. Let $q=p_{1}^{\alpha_{1}}\cdots p_{r}^{\alpha_{r}} \geq 2$ and $M \in \N^{*}$ such that for all $1 \leq i \leq r,$ $\alpha_i\geq 1$, $p_{i}^{\gamma} \nmid M$. Then, for all $0 \leq a<q^{\rho}$, we have
\begin{equation*}
    |\{h \in \{0,\dots,q^{\rho}-1\}\,:\, hM\equiv a \Mod{q^{\rho}}\}| \leq q^{\gamma}. 
\end{equation*}
\end{lemme}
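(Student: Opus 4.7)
The plan is to reduce the statement to the standard fact about the number of solutions of a linear congruence, and then to control the relevant gcd via the prime factorization of $q$. More precisely, the number of $h \in \{0,\dots,q^{\rho}-1\}$ satisfying $hM \equiv a \pmod{q^{\rho}}$ is either $0$ (if $\gcd(M,q^{\rho}) \nmid a$) or exactly $\gcd(M,q^{\rho})$. So it suffices to prove that $\gcd(M,q^{\rho}) \leq q^{\gamma}$ under the hypothesis $p_{i}^{\gamma} \nmid M$ for every prime $p_i$ dividing $q$.

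The key steps I would carry out are the following. First, write
$$\gcd(M,q^{\rho}) = \prod_{i=1}^{r} p_{i}^{\min(v_{p_{i}}(M),\,\alpha_{i}\rho)},$$
using the fact that the only primes appearing in $q^{\rho}$ are $p_1,\dots,p_r$ (with multiplicities $\alpha_{i}\rho$). Second, invoke the hypothesis $p_{i}^{\gamma} \nmid M$, which is exactly the statement that $v_{p_{i}}(M) \leq \gamma - 1 < \gamma$ for each $i$. Third, combine these to obtain
$$\gcd(M,q^{\rho}) \leq \prod_{i=1}^{r} p_{i}^{v_{p_{i}}(M)} \leq \prod_{i=1}^{r} p_{i}^{\gamma} \leq \prod_{i=1}^{r} p_{i}^{\alpha_{i}\gamma} = q^{\gamma},$$
where the last inequality uses $\alpha_{i} \geq 1$.

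To justify the first reduction rigorously, I would briefly recall the argument: if $h_{0}$ is one solution of $hM \equiv a \pmod{q^{\rho}}$, then the full set of solutions modulo $q^{\rho}$ is $h_{0} + \frac{q^{\rho}}{d}\mathbb{Z}/q^{\rho}\mathbb{Z}$ where $d = \gcd(M,q^{\rho})$, giving exactly $d$ solutions in $\{0,\dots,q^{\rho}-1\}$.

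There is really no main obstacle here; the statement is a purely elementary counting fact, and the only point worth emphasizing is that the hypothesis $\rho \geq \gamma$ together with $\alpha_i \geq 1$ is exactly what guarantees $\alpha_{i}\rho \geq \gamma$, so that the cap at $\alpha_{i}\rho$ in the minimum never forces a bound smaller than what the hypothesis on $M$ already provides; the binding constraint is $v_{p_i}(M) < \gamma$.
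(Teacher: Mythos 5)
Your proof is correct and takes essentially the same route as the paper's: both reduce to the standard fact that the number of solutions is $\gcd(M,q^{\rho})$ (or zero) and then bound that gcd using the prime factorization of $q$ together with $v_{p_i}(M)<\gamma$ and $\alpha_i\geq 1$. Your version merely spells out the product formula for the gcd a bit more explicitly than the paper does.
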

\begin{proof}

If $(M,q^\rho)\nmid a$, then the congruence $hM\equiv a\Mod{q^\rho}$ has no solution $h$. Otherwise this congruence is equivalent to 
\[ 
h\frac{M}{(M,q^\rho)}\equiv \frac{a}{(M,q^\rho)}
\Mod{\frac{q^\rho}{(M,q^\rho)}}.
\]
Since 
 $\Big (\frac{M}{(M,q^\rho)}, \frac{q^\rho}{(M,q^\rho)}\Big )=1 $
 the number of solutions $h\in\{ 0,\ldots ,q^\rho -1\}$ is $(M,q^\rho)$. Since $p_{i}^{\gamma} \nmid M$ for all $1 \leq i \leq r$,  we have $v_{p_{i}}(M)< \gamma$. Moreover, $v_{p_i}(q^{\rho}) \geq \rho$. Therefore, $(M, q^{\rho}) \leq q^{\gamma}$.
 \end{proof}

\begin{prop}
\label{aplusb}
The following properties hold true:
\begin{enumerate}
    \item[(1)] Let $0 < \varepsilon \leq \dfrac{1}{2}$. We suppose that $a, b \in \mathbb{R}$ such that $\|a\| < \varepsilon$ and $\|b\| \geq \varepsilon$. Then
\begin{equation}
\label{petit1}
\lfloor a+b \rfloor = \left\lfloor a+\frac{1}{2}\right\rfloor + \lfloor b\rfloor.\\
\end{equation}
\item[(2)] Let $a \in \R$ and $n \in \N$. Then
\begin{equation}
\label{petit2}
\|na\| \leq n\|a\|.\\
\end{equation}
\item[(3)] Let $a \in \R$, $\varepsilon >0$ and $n \in \N$ such that $\|a\| < \varepsilon$ and $2n\varepsilon <1$. Then
\begin{equation}
\label{petit3}
\langle na \rangle=n\langle a \rangle.
\end{equation}
\end{enumerate}
\end{prop}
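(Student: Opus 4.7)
\textbf{Proof plan for Proposition~\ref{aplusb}.} The three assertions are elementary facts about $\lfloor\cdot\rfloor$, $\langle\cdot\rangle$, and $\|\cdot\|$, and they all flow from a single decomposition: for any $a\in\R$ one can write $a=\langle a\rangle + r_a$ where the residual $r_a:=a-\langle a\rangle$ satisfies $|r_a|=\|a\|$. This identity is immediate from the definition $\langle a\rangle=\lfloor a+1/2\rfloor$ (which singles out the nearest integer to $a$, breaking ties upward), combined with $\|a\|=\min_{n\in\Z}|a-n|$. I will use this decomposition to handle (2) and (3) in one line each, and then turn to (1).

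For (2), write $na=n\langle a\rangle + n r_a$ with $n\langle a\rangle\in\Z$, so that $\|na\|\le |na-n\langle a\rangle|=n|r_a|=n\|a\|$. For (3), under the further assumptions $\|a\|<\varepsilon$ and $2n\varepsilon<1$, the residual piece satisfies $|n r_a|=n\|a\|<n\varepsilon<1/2$. Hence $na$ lies strictly within $1/2$ of the integer $n\langle a\rangle$, which forces
$$\langle na\rangle=\lfloor na+1/2\rfloor=n\langle a\rangle,$$
as claimed.

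For (1), I would decompose both $a$ and $b$, but differently: use $a=\langle a\rangle+r_a$ with $|r_a|=\|a\|<\varepsilon$, and $b=\lfloor b\rfloor+\{b\}$ with $\{b\}\in[0,1)$. The hypothesis $\|b\|\ge\varepsilon$ forces $\varepsilon\le\{b\}\le 1-\varepsilon$ (because $\|b\|=\min(\{b\},1-\{b\})$, and $\varepsilon\le 1/2$). Adding,
$$a+b=\bigl(\langle a\rangle+\lfloor b\rfloor\bigr)+\bigl(r_a+\{b\}\bigr),$$
and the combined fractional-like term lies in $(0,1)$: indeed $r_a+\{b\}\ge -\varepsilon+\varepsilon>0$ and $r_a+\{b\}<\varepsilon+(1-\varepsilon)=1$ (with both inequalities strict using $|r_a|<\varepsilon$). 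Therefore the first bracket is precisely $\lfloor a+b\rfloor$, i.e.\ $\lfloor a+b\rfloor=\langle a\rangle+\lfloor b\rfloor=\lfloor a+1/2\rfloor+\lfloor b\rfloor$.

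There is no real obstacle. The only point that needs care is the strict containment $r_a+\{b\}\in(0,1)$ in (1), where one must use that \emph{both} endpoints of the interval $[\varepsilon,1-\varepsilon]$ for $\{b\}$ cooperate with the two-sided bound $|r_a|<\varepsilon$ (and that the inequalities on $r_a$ are strict). Once that is in place, the three statements are one-line deductions from the decomposition $a=\langle a\rangle+r_a$.
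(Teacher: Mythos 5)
Your proof is correct. The paper itself gives no argument for this proposition: its ``proof'' is the single line referring the reader to~\cite[Lemma 4.1]{Spi}, so there is no internal argument to compare against. Your self-contained treatment via the decomposition $a=\langle a\rangle+r_a$ with $|r_a|=\|a\|$ (valid because $\langle a\rangle=\lfloor a+1/2\rfloor$ is a nearest integer to $a$) is a natural unifying route: part (2) is immediate since $n\langle a\rangle\in\Z$; part (3) follows because $|nr_a|<1/2$ puts $na+1/2$ strictly in $(n\langle a\rangle,\,n\langle a\rangle+1)$, so $\lfloor na+1/2\rfloor=n\langle a\rangle$; and part (1) follows because the hypotheses force $r_a+\{b\}\in(0,1)$, so the integer part of $a+b$ is exactly $\langle a\rangle+\lfloor b\rfloor$.

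One small notational slip in part (1): you wrote $r_a+\{b\}\ge -\varepsilon+\varepsilon>0$, which literally asserts $0>0$. The chain you actually need is $r_a+\{b\}>-\varepsilon+\varepsilon=0$, using the strict bound $r_a>-\varepsilon$ (from $|r_a|<\varepsilon$) together with $\{b\}\ge\varepsilon$; your parenthetical remark already indicates that the strictness comes from $r_a$, so the underlying reasoning is sound, but the displayed inequality chain should reflect it. Everything else is fine.
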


\begin{proof}
We refer the reader to~\cite[Lemma 4.1]{Spi}.
\end{proof}
\subsection{The discrepancy of a real number}
\begin{de}
\label{DefDisc}
Let $\alpha \in \R$ and $N \geq 1$ be an integer. We define the $N$-discrepancy of $\alpha$, denoted as $D_{N}(\alpha)$, by 
\begin{equation*}
D_{N}(\alpha)=\sup_{\substack{0 \leq x \leq 1\\y \in \R}}\Big|\dfrac{1}{N}\sum_{0 \leq n < N}\mathbb{1}_{[0,x[+y+\Z}(n\alpha)-x\Big|.
\end{equation*}
\end{de}
The notion of $N$-discrepancy appears naturally in the following result which is used by Müllner and Spiegelhofer~\cite[Lemma 3.3]{MS17}.
\begin{prop}
\label{PropUnNEUF}
Let $J$ be an interval in $\R$ containing $N$ integers. Let $\alpha, \beta \in \R$. Let $t, T, \ell, L$ be integers such that $0 \leq t < T$ and $0 \leq \ell < L$. Then
\begin{equation*}
\Big|\{n \in J\ : \dfrac{t}{T} \leq \{n\alpha+\beta\}< \dfrac{t+1}{T},\ \lfloor n\alpha+\beta\rfloor \equiv \ell \Mod{L}\}\Big| =\dfrac{N}{LT}+O\left(ND_{N}\left(\dfrac{\alpha}{L}\right)\right)
\end{equation*}
with an absolute implied constant.
\end{prop}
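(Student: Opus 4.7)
The strategy is to collapse the two simultaneous conditions on the integer and fractional parts of $n\alpha+\beta$ into a single condition on the fractional part of $(n\alpha+\beta)/L$, so that the count becomes a direct application of Definition~\ref{DefDisc}.

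First, I would establish the following elementary equivalence: for $y\in\R$, the conditions $\lfloor y\rfloor \equiv \ell \Mod{L}$ and $\{y\}\in [t/T,(t+1)/T)$ hold simultaneously if and only if
\[
\left\{\frac{y}{L}\right\}\in I:=\left[\frac{\ell}{L}+\frac{t}{LT},\ \frac{\ell}{L}+\frac{t+1}{LT}\right).
\]
The proof is a one-line computation: write $y=L\lfloor y/L\rfloor +L\{y/L\}$ and observe that $\{y/L\}\in [\ell/L,(\ell+1)/L)$ is equivalent to $\lfloor y\rfloor =L\lfloor y/L\rfloor +\ell$ together with $\{y\}=L\{y/L\}-\ell\in [0,1)$; a further refinement of the subinterval matches the condition $\{y\}\in [t/T,(t+1)/T)$. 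Since $0\leq \ell<L$ and $0\leq t<T$, the interval $I$ has length $1/(LT)$ and is contained in $[0,1)$.

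Next, I would shift the index to bring the counting onto $\{0,\dots,N-1\}$. Writing $J\cap \Z=\{n_0,\dots,n_0+N-1\}$ and setting $\beta'=(n_0\alpha+\beta)/L$, the quantity under consideration equals
\[
\Big|\{0\leq m<N\,:\,\{m(\alpha/L)+\beta'\}\in I\}\Big|.
\]
Choosing $x=1/(LT)$ and $y=\ell/L+t/(LT)-\beta'$ in Definition~\ref{DefDisc}, the set $[0,x[+y+\Z$ coincides with $I+\Z$, so the definition of the discrepancy directly yields the required estimate $N/(LT)+O(N\,D_N(\alpha/L))$, with an absolute implied constant coming from the supremum in the definition.

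There is no substantive obstacle here: the argument is purely a change of viewpoint that encodes the joint equidistribution of $\lfloor n\alpha+\beta\rfloor \Mod{L}$ and $\{n\alpha+\beta\}$ as the one-dimensional equidistribution of the sequence $(m\alpha/L)_{m}$ modulo $1$. The only minor points to verify are that $I$ fits inside $[0,1)$ and that the translation by $\beta'$ is absorbed by the parameter $y$ in Definition~\ref{DefDisc}, both of which are immediate.
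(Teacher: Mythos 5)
Your proof is correct. The paper itself does not prove Proposition~\ref{PropUnNEUF}; it only cites Müllner--Spiegelhofer \cite[Lemma 3.3]{MS17}, so there is no paper-internal argument to compare against. Your approach — encoding the pair of conditions ($\lfloor y\rfloor\equiv\ell\Mod L$ and $\{y\}\in[t/T,(t+1)/T)$) as the single condition $\{y/L\}\in I$ with $I$ an interval of length $1/(LT)$ contained in $[0,1)$, then shifting $n\mapsto n_0+m$ and reading off the count directly from Definition~\ref{DefDisc} with $x=1/(LT)$ — is the standard and natural derivation of this estimate, and all the elementary verifications (the equivalence of conditions, $I\subset[0,1)$, absorption of $\beta'$ into the translate parameter $y$) check out.
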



\subsection{The van der Corput inequality}

As in previous articles on the sum-of-digits function, the van der Corput inequality and its variations play a major role. The original version can for example be found in the book of Graham/Kolesnik~\cite[(2.3.4)]{Graham}.

\begin{lemme}
\label{VDCC}
Let $0\leq a<b$ be two integers and set $I=[a,b[$. Let $f\,:\, \N \rightarrow \C$ be a complex-valued function such that $f(n)=0$ for $n \notin I$.
Let $0<H \leq |I|$ be an integer. We have 
\begin{equation*}
\Big|\sum_{n \in I}\e(f(n))\Big|^{2} \leq 2\dfrac{|I|^{2}}{H}+4\dfrac{|I|}{H}\sum_{1 \leq h<H}\Big|\sum_{\substack{n \in I\\n+h \in I}}\e(f(n+h)-f(n))\Big|.
\end{equation*}
\end{lemme}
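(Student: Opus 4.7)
The plan is to follow the classical averaging-then-Cauchy--Schwarz argument. Let $S=\sum_{n\in I}\e(f(n))$ and extend $f$ by $0$ outside $I$, writing $g(n)=\e(f(n))\mathbb{1}_{n\in I}$ so that $g$ is supported on $I$ and $S=\sum_{n\in\Z}g(n)$. For every shift $0\leq h<H$ the change of variable $n\mapsto n+h$ gives $S=\sum_{n}g(n+h)$, where $g(n+h)$ vanishes unless $n+h\in I$, i.e.\ unless $n\in J:=[a-H+1,b)$. Averaging over $h$ yields the identity $HS=\sum_{n\in J}\sum_{0\leq h<H}g(n+h)$, with $|J|=|I|+H-1\leq 2|I|$ since $H\leq |I|$.

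Next I would apply the Cauchy--Schwarz inequality on the outer sum over $n\in J$ to obtain
\begin{equation*}
H^{2}|S|^{2}\leq |J|\sum_{n\in J}\Big|\sum_{0\leq h<H}g(n+h)\Big|^{2}.
\end{equation*}
Expanding the square and swapping the order of summation produces a double sum over $(h_{1},h_{2})\in\{0,\dots,H-1\}^{2}$; after the substitution $d=h_{2}-h_{1}$ (and a second change of variable to eliminate $h_{1}$) the number of pairs giving a fixed $d$ is $H-|d|$, so the right-hand side becomes
\begin{equation*}
|J|\sum_{|d|<H}(H-|d|)\sum_{m\in\Z}g(m)\overline{g(m+d)}.
\end{equation*}

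Finally I would isolate the diagonal contribution $d=0$, which is exactly $H\cdot|I|$, and group terms with $d$ and $-d$ on the off-diagonal (they are complex conjugates, so they contribute at most twice the absolute value of either). Since $g(m)\overline{g(m+d)}=\e(f(m)-f(m+d))\mathbb{1}_{m\in I}\mathbb{1}_{m+d\in I}$, and the modulus is invariant under swapping $m$ and $m+d$, the off-diagonal piece is bounded by $2H\sum_{1\leq d<H}\bigl|\sum_{m\in I,\,m+d\in I}\e(f(m+d)-f(m))\bigr|$. Combining these estimates with $|J|\leq 2|I|$ and dividing by $H^{2}$ produces the stated inequality.

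There is no serious obstacle here: the lemma is a textbook identity (as noted, it appears explicitly in Graham--Kolesnik~\cite{Graham}), and the only subtlety is purely bookkeeping---making sure that after extending $f$ by zero the shift identity holds for every $h$ in the allowed range, which is precisely what forces the outer sum onto $J$ rather than $I$ and produces the factor $|J|\leq 2|I|$ that appears in the final constants $2$ and $4$.
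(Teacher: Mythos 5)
Your proof is correct, and it is precisely the classical shift-averaging and Cauchy--Schwarz argument from Graham--Kolesnik that the paper cites for this lemma without reproducing a proof. The bookkeeping is sound: extending by zero makes $HS=\sum_{n\in J}\sum_{0\leq h<H}g(n+h)$ with $|J|=|I|+H-1\leq 2|I|$; Cauchy--Schwarz then gives $H^2|S|^2\leq |J|\sum_{|d|<H}(H-|d|)\sum_m g(m)\overline{g(m+d)}$; the diagonal contributes $H|I|$ and the $\pm d$ terms pair up as complex conjugates with coefficient $H-d\leq H$, which after dividing by $H^2$ produces exactly the stated constants $2$ and $4$.
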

 Mauduit and Rivat~\cite[Lemme 17]{MAURIV} generalized the van der Corput inequality for general shifts, and showed that it can be used, in an elegant manner, to remove lower-placed digits. As we will see later, removing iteratively windows of digits via this inequality, allows to get a small window of digits only to consider in the end. For the purpose of handling better these iterations, we rewrite their inequality with a single sum over all  $h \in \{0,\dots, H-1\}$, without isolating the diagonal term ($h=0$).

\begin{lemme}
\label{VDCG}
Let $0\leq a<b$ be two integers and set $I=[a,b[$. Let $f\,:\, \N \rightarrow \C$ be a complex valued function such that $f(n)=0$ for $n \notin I$. Let $K>0$ be an integer.
Let $0<H \leq |I|$ be an integer. We have 
\begin{equation*}
\Big|\sum_{n \in I}\e(f(n))\Big|^{2} \leq 2 \dfrac{|I|+K(H-1)}{H}\sum_{0 \leq h<H}\Big|\sum_{\substack{n \in I\\n+Kh \in I}}\e(f(n+Kh)-f(n))\Big|.
\end{equation*}
\end{lemme}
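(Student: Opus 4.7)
The plan is to follow the standard van der Corput template, adapted to the general shift $K$ and rearranged so that the diagonal term $h=0$ is absorbed into the single sum on the right-hand side (rather than split off as in Lemma~\ref{VDCC}). Set $g(n)=\e(f(n))\mathbb{1}_I(n)$ so that $g(n)=0$ outside $I$ and $S:=\sum_{n\in\Z}g(n)=\sum_{n\in I}\e(f(n))$. The first step is to note that a translation of the summation variable gives $\sum_{n\in\Z}g(n+Kh)=S$ for every integer $h$, hence
\begin{equation*}
HS=\sum_{h=0}^{H-1}\sum_{n\in\Z}g(n+Kh)=\sum_{n\in\Z}\sum_{h=0}^{H-1}g(n+Kh).
\end{equation*}
The outer sum can be restricted to $n\in[a-K(H-1),b[$, an interval containing at most $|I|+K(H-1)$ integers, because $g(n+Kh)$ vanishes otherwise for all $h\in\{0,\dots,H-1\}$.

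Next I would apply the Cauchy--Schwarz inequality to this restricted sum, yielding
\begin{equation*}
|HS|^{2}\leq \bigl(|I|+K(H-1)\bigr)\sum_{n\in\Z}\Bigl|\sum_{h=0}^{H-1}g(n+Kh)\Bigr|^{2}.
\end{equation*}
Expanding the square and interchanging the order of summation produces
\begin{equation*}
\sum_{n\in\Z}\Bigl|\sum_{h=0}^{H-1}g(n+Kh)\Bigr|^{2}=\sum_{0\leq h_{1},h_{2}<H}T(h_{2}-h_{1}),
\end{equation*}
where, after the change of variable $n\mapsto n-Kh_{1}$,
\begin{equation*}
T(d):=\sum_{\substack{n\in I\\ n+Kd\in I}}\e\bigl(f(n+Kd)-f(n)\bigr).
\end{equation*}
Grouping pairs $(h_{1},h_{2})$ by their difference $d=h_{2}-h_{1}\in\{-(H-1),\dots,H-1\}$, there are $H-|d|$ contributing pairs, and the identity $T(-d)=\overline{T(d)}$ gives $|T(-d)|=|T(d)|$.

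Putting this together and using the trivial bounds $H-|d|\leq H$ and $|T(0)|=|I|$, one obtains
\begin{equation*}
H^{2}|S|^{2}\leq \bigl(|I|+K(H-1)\bigr)\Bigl(H|T(0)|+2H\sum_{d=1}^{H-1}|T(d)|\Bigr)\leq 2H\bigl(|I|+K(H-1)\bigr)\sum_{h=0}^{H-1}|T(h)|,
\end{equation*}
and dividing by $H^{2}$ yields exactly the inequality of Lemma~\ref{VDCG}. There is no real obstacle in the argument; the only point that requires a little care is accounting for the correct range of $n$ once the range of the shift $Kh$ is taken into account, and keeping the diagonal contribution $T(0)$ inside the single sum rather than extracting it as is done in the classical formulation.
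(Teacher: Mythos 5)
Your proof is correct and follows the standard van der Corput argument (translate and average over the shifts $Kh$, restrict the outer range, Cauchy--Schwarz, expand, regroup by difference $d=h_2-h_1$, and then absorb the diagonal $T(0)$ into the sum at the cost of a factor $2$), which is exactly the route behind the cited Lemme 17 of Mauduit--Rivat that the paper refers to without reproducing a proof. The only inessential remark is that the observation $|T(0)|=|I|$ is not actually used in your final chain of inequalities (you only need $H|T(0)|\leq 2H|T(0)|$); everything else is accurate, including the count $|I|+K(H-1)$ for the support of the outer sum.
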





\section{Base-$q$ generalization of several lemmas}\label{sectionqlemmas}

 In this section, we generalize to base-$q$ several lemmas used by Spiegelhofer in~\cite{Spi} for the base $2$. We refer the reader to the beginning of Section \ref{not} for the notation that we use in the following lemmas.

 The first lemma will be used when we begin to perform many digits shiftings, in order to remove a large portion of the digits of the integers and to be able to concentrate our attention to a small window of digits only (see~\cite[p.2575]{Spi}).
\begin{lemme}[Cutting out digits]
\label{diffab}
Let $\alpha, u_{0}, u_{1}\geq 0$ be integers. Let $\beta>\alpha.$ We have the equality
\begin{equation*}
\som{\beta}(u_{0}+q^{\alpha}u_{1})-\som{\beta}(u_{0})=\somm{\alpha}{\beta}(u_{0}+q^{\alpha}u_{1})-\somm{\alpha}{\beta}(u_{0}).
\end{equation*}
\end{lemme}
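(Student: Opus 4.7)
The plan is to reduce the claimed identity to the obvious statement that $s_q^\alpha$ is invariant under the addition of multiples of $q^\alpha$. Indeed, by the very definition $s_q^{\alpha,\beta}(n) = s_q^\beta(n) - s_q^\alpha(n)$, the equation we want to prove rearranges to
\begin{equation*}
s_q^\alpha(u_0 + q^\alpha u_1) = s_q^\alpha(u_0).
\end{equation*}
So the first step is simply to apply the definition of $s_q^{\alpha,\beta}$ on both terms of the right-hand side and cancel the $s_q^\beta$ contributions against those on the left-hand side.

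The second step is to establish the reduced identity. By the definition given in the Notation section, $s_q^\alpha(n) = s_q(n')$ where $n'$ is the unique integer in $\{0,\ldots,q^\alpha-1\}$ congruent to $n$ modulo $q^\alpha$. In other words, $s_q^\alpha(n)$ depends only on the residue class $n \bmod q^\alpha$. Since $q^\alpha u_1 \equiv 0 \Mod{q^\alpha}$, we have $u_0 + q^\alpha u_1 \equiv u_0 \Mod{q^\alpha}$, and consequently $s_q^\alpha(u_0 + q^\alpha u_1) = s_q^\alpha(u_0)$.

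Combining the two steps yields the lemma. There is no real obstacle here: the statement is a formal encoding of the fact that shifting the high-order digits of an integer does not alter its low-order digits, which is exactly what the truncated sum-of-digits function $s_q^\alpha$ records. The only conceptual content is to keep track of which window of digits each symbol $s_q^\alpha$, $s_q^\beta$, $s_q^{\alpha,\beta}$ refers to, and this is handled cleanly by the definition $s_q^{\alpha,\beta} = s_q^\beta - s_q^\alpha$.
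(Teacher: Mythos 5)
Your proof is correct and follows the same line of reasoning as the paper's: both reduce the claimed identity, via the definition $s_q^{\alpha,\beta} = s_q^\beta - s_q^\alpha$, to the observation that $s_q^\alpha(u_0 + q^\alpha u_1) = s_q^\alpha(u_0)$ because $u_0 + q^\alpha u_1 \equiv u_0 \Mod{q^\alpha}$.
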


\begin{proof}
By the definition of $s_q^{\alpha ,\beta}$, we have
\[ 
\somm{\alpha}{\beta}(u_{0}+q^{\alpha}u_{1})-\somm{\alpha}{\beta}(u_{0})=
\som{\beta}(u_{0}+q^{\alpha}u_{1})-\som{\beta}(u_{0})-
(\som{\alpha}(u_{0}+q^{\alpha}u_{1})-\som{\alpha}(u_{0})),
\]
and we observe that $\som{\alpha}(u_{0}+q^{\alpha}u_{1})-\som{\alpha}(u_{0})=0$
since $u_0+q^\alpha u_1\equiv u_0\Mod{q^\alpha}$.
\end{proof}
The next Lemma allows us to justify the stability of the recursion formula when we work with $\som{\rho}$ instead of $s_q$ in the proof of~\ref{NDGG}.
\begin{lemme}
\label{recsrho}
Let $\rho \geq 2$. For all $n \in \N$, we have
\begin{equation*}
    \som{\rho}(n)=\som{\rho-1}\left(\left\lfloor\dfrac{n}{q}\right\rfloor\right)+u_q(n).
\end{equation*}
\end{lemme}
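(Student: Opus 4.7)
The plan is to verify the identity digit by digit using the base-$q$ expansion of $n$. Write
\[
n=\sum_{k=0}^{\infty} n_k q^k, \qquad n_k\in\{0,\ldots,q-1\},
\]
with only finitely many nonzero digits. The claim then reduces to a tautology at the level of digit sums, so there is no real obstacle; the only thing to be careful about is correctly identifying which digits appear on each side.

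First I would unpack the left-hand side: by definition of $\som{\rho}$, we have $\som{\rho}(n)=s_q(n')$ where $n'\equiv n\Mod{q^\rho}$ and $0\le n'<q^\rho$, hence $n'=\sum_{k=0}^{\rho-1} n_k q^k$ and therefore
\[
\som{\rho}(n)=\sum_{k=0}^{\rho-1} n_k.
\]
Next I would compute $\lfloor n/q\rfloor=\sum_{k=0}^{\infty} n_{k+1} q^k$, whose base-$q$ digits are $(n_1,n_2,\ldots)$. Applying $\som{\rho-1}$ to this integer keeps exactly the digits $n_1,\ldots,n_{\rho-1}$, so
\[
\som{\rho-1}\!\left(\left\lfloor\tfrac{n}{q}\right\rfloor\right)=\sum_{k=1}^{\rho-1} n_k.
\]
Finally, $u_q(n)=\som{1}(n)=n_0$ is simply the units digit. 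Adding these two quantities yields $\sum_{k=0}^{\rho-1} n_k$, which matches the expression for $\som{\rho}(n)$ obtained above, finishing the proof.

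If one prefers a presentation that avoids explicit digit expansions, an alternative is to argue via Euclidean division. Writing $n=q\lfloor n/q\rfloor + u_q(n)$ with $0\le u_q(n)<q$, reduction modulo $q^\rho$ gives $n\bmod q^\rho = q\cdot(\lfloor n/q\rfloor \bmod q^{\rho-1}) + u_q(n)$, and applying $s_q$ to this identity yields the lemma, since multiplication by $q$ shifts digits without changing their sum and the units digit contributes $u_q(n)$.
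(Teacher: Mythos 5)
Your proof is correct. The paper states this lemma without proof, evidently regarding it as elementary; your digit-by-digit verification (and the alternative Euclidean-division argument) is exactly the kind of standard reasoning the author chose to omit, and both versions you give are sound.
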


The following property is useful to transform the sum-of-digits function $s_q$ into a periodic function $\som{\lambda}$, at the expense of a manageable error. This transformation will occur during the first iteration of the van der Corput inequality. Mauduit and Rivat~\cite[Lemme 5, p.1607]{MauRivGelf} have used this method in their resolution of the problem of Gelfond regarding the distribution of $s_q$ along primes. It has since then be applied in various contexts and shown to be very useful. The following version is a generalization of~\cite[Lemma 4.5]{Spi}. Note that we will only use the case $\alpha,\beta \in \N$ but we state the more general (Beatty sequence) version since the proof is not more difficult and the analogy to~\cite{Spi} is more apparent in that way.

\begin{lemme}[Carry propagation lemma]
\label{PropaRetenue}
Let $r > 0$, $\lambda \geq 1$, $N\geq 1$ be integers and $\alpha,\beta \in\mathbb{R}$ with $\alpha > 0$ and $\beta \geq 0$. Let $I \subset [0,+\infty[$ be an interval in $\mathbb{R}$ containing exactly $N$ consecutive integers.
Then
\begin{equation}\label{setexeptions}
\Big|\Big\{n \in I\ :\ s_{q}(\lfloor\alpha(n+r)+\beta \rfloor)-s_{q}(\lfloor\alpha n+\beta \rfloor) \neq \som{\lambda}(\lfloor \alpha(n+r)+\beta \rfloor)-\som{\lambda}(\lfloor\alpha n+\beta \rfloor)\Big\}\Big| < r\left(\dfrac{N\alpha}{q^{\lambda}}+2\right).
\end{equation}
\end{lemme}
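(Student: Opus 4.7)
The plan is to exploit the base-$q$ decomposition $s_q(m) = s_q^{\lambda}(m) + s_q(\lfloor m/q^\lambda \rfloor)$, which is immediate from the definitions of Section \ref{not}: $s_q^\lambda(m)$ is the sum of the last $\lambda$ digits, and $s_q(\lfloor m/q^\lambda \rfloor)$ sums the remaining higher digits. Setting $A(n) := \lfloor \alpha n + \beta \rfloor$ and $B(n) := \lfloor \alpha(n+r)+\beta \rfloor$, subtracting this identity at $m=A(n)$ and $m=B(n)$ transforms the inequality appearing in \eqref{setexeptions} into
$$s_q\!\left(\lfloor B(n)/q^\lambda\rfloor\right) \neq s_q\!\left(\lfloor A(n)/q^\lambda\rfloor\right),$$
which a fortiori implies $\lfloor A(n)/q^\lambda\rfloor \neq \lfloor B(n)/q^\lambda\rfloor$. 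Since $A(n) \leq B(n)$, the latter is equivalent to the existence of some integer $k\geq 1$ with $A(n) < kq^\lambda \leq B(n)$. Thus the exception set is contained in $\mathcal{E} := \{n\in I : \exists\,k\geq 1,\ A(n)<kq^\lambda\leq B(n)\}$.

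Next, I would switch the order of counting and bound $|\mathcal{E}| \leq \sum_{k\geq 1} |\{n\in I : A(n)<kq^\lambda\leq B(n)\}|$. For fixed $k$, since $kq^\lambda$ is an integer, the floor functions defining $A(n)$ and $B(n)$ are transparent: the condition $A(n) < kq^\lambda$ rewrites as $\alpha n + \beta < kq^\lambda$ and $B(n) \geq kq^\lambda$ as $\alpha(n+r)+\beta \geq kq^\lambda$, so together they become
$$\frac{kq^\lambda-\beta}{\alpha} - r \;\leq\; n \;<\; \frac{kq^\lambda-\beta}{\alpha}.$$
This is a half-open real interval of length exactly $r$, hence contains at most $r$ integers.

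It remains to count the admissible $k$'s, i.e.\ those for which the above interval meets $I$. This forces $kq^\lambda$ to lie in a real interval of length $\alpha(N+r)$, and the elementary estimate gives at most $\alpha(N+r)/q^\lambda+1$ multiples of $q^\lambda$ there. Multiplying,
$$|\mathcal{E}| \;\leq\; r\left(\frac{N\alpha}{q^\lambda}+\frac{\alpha r}{q^\lambda}+1\right).$$

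To reach the announced bound $r(N\alpha/q^\lambda + 2)$ I would conclude with a short dichotomy on the position of $\alpha r$ relative to $q^\lambda$: in the regime $\alpha r < q^\lambda$ the cross term $\alpha r/q^\lambda$ is $<1$ and is absorbed into the additive $2$, while in the opposite regime $\alpha r \geq q^\lambda$ one already has $rN\alpha/q^\lambda \geq N \geq |\mathcal{E}|$, so the trivial bound suffices. I expect this boundary bookkeeping to be the only subtle point; the rest of the argument is purely combinatorial, resting on the cleanness of the identity $s_q = s_q^\lambda + s_q(\lfloor\cdot/q^\lambda\rfloor)$ and on the elementary fact that a half-open interval of integer length $r$ contains exactly $r$ integers.
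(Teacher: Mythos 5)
Your proof is correct and follows essentially the same route as the paper's: identify the exceptional $n$ as those where some multiple $kq^{\lambda}$ is crossed between $\lfloor\alpha n+\beta\rfloor$ and $\lfloor\alpha(n+r)+\beta\rfloor$, count at most $r$ integers $n$ per admissible $k$, and bound the number of admissible $k$. The only cosmetic differences are that you make the digit decomposition $s_q = s_q^{\lambda} + s_q(\lfloor\cdot/q^{\lambda}\rfloor)$ explicit up front and place the dichotomy on $r\alpha$ versus $q^{\lambda}$ at the end, whereas the paper disposes of the case $r\alpha\geq q^{\lambda}$ before the main count.
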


\begin{proof}
If $r\alpha \geq q^{\lambda}$ then $r\left(\dfrac{N\alpha}{q^{\lambda}}+2\right)\geq N$ and the result follows immediately. We may therefore suppose that $r\alpha < q^{\lambda}$. Let $n \in I$ such that $n\alpha +\beta \in [kq^{\lambda},(k+1)q^{\lambda}[$ for some $k \in \mathbb{N}$ that we will make precise later.
\\If $n\alpha +\beta +r\alpha \in [kq^{\lambda},(k+1)q^{\lambda}[$, then $n$ is not in the set involved on the left hand side of~\eqref{setexeptions}.
We therefore only need to consider the integers $n$ such that
\begin{equation*}
n\alpha +\beta +r\alpha \geq (k+1)q^{\lambda}.
\end{equation*}
This second condition (combined with the condition $n\alpha +\beta \in [kq^{\lambda},(k+1)q^{\lambda}]$) allows us to estimate $n$. We obtain
\begin{equation*}
\dfrac{(k+1)q^{\lambda}-\beta-r\alpha}{\alpha} \leq n < \dfrac{(k+1)q^{\lambda}-\beta}{\alpha},
\end{equation*}
and the number of such integers is therefore bounded by
\begin{equation*}
\dfrac{(k+1)q^{\lambda}-\beta}{\alpha}-\dfrac{(k+1)q^{\lambda}-\beta-r\alpha}{\alpha}=r.
\end{equation*}
Finally, we need to bound the number of possible $k$. Write $I=[a_{1},a_{2}[$. Since $n \leq a_{2}$, we have
\begin{equation*}
a_{2}\alpha+\beta+r\alpha \geq (k+1)q^{\lambda},
\end{equation*}
which implies
\begin{equation*}
k \leq \dfrac{a_{2}\alpha+\beta+r\alpha}{q^{\lambda}}-1.
\end{equation*}
Moreover, we have
\begin{equation*}
(k+1)q^{\lambda}>a_{1}\alpha+\beta,
\end{equation*}
and
\begin{equation*}
k>\dfrac{a_{1}\alpha+\beta}{q^{\lambda}}-1.
\end{equation*}
Therefore, the number of possible $k$ is bounded by
\begin{equation*}
\dfrac{(a_{2}-a_{1})\alpha+r\alpha}{q^{\lambda}} \leq \dfrac{(N+1)\alpha}{q^{\lambda}}+1 < \dfrac{N\alpha}{q^{\lambda}}+2,
\end{equation*}
where in the last step we used $\alpha\leq r\alpha < q^{\lambda}$. This concludes the proof.

\end{proof}
The following lemma provides a bound on the number of $n<N$ such that $\|n\alpha+\beta\|$ is close to an integer. It is the analogue in base-$q$  of the arguments given by Spiegelhofer~\cite[p.2575]{Spi}. This lemma will be useful to us during the digit-shifting phase.
\begin{lemme}
\label{DisPetitN}
Let $N>1$ be an integer, $\sigma>1$ and $H \in \N$ such that $H<q^{\sigma-1}$. Moreover, let $\alpha, \beta \in \R$. We have 
\begin{equation*}
|\{n \in \{0,\dots,N-1\}\ :\  \|n\alpha+\beta\|<H/q^{\sigma}\}| \leq ND_{N}(\alpha)+\dfrac{2HN}{q^{\sigma}}.
\end{equation*}
\end{lemme}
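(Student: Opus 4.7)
The plan is to reduce the counting condition to the discrepancy framework of Definition~\ref{DefDisc} and apply it directly. Observe that $\|n\alpha+\beta\|<H/q^\sigma$ is equivalent to the fractional part of $n\alpha+\beta$ lying in the (mod $1$) interval $(-H/q^\sigma, H/q^\sigma)+\Z$, which has total length $2H/q^\sigma$. The assumption $H<q^{\sigma-1}$ gives $2H/q^\sigma<2/q\le 1$, ensuring that this interval does not wrap around twice and can be realized in the form $[0,x)+y+\Z$ with $x\le 1$.

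Concretely, I would choose
\[
x=\frac{2H}{q^\sigma},\qquad y=-\frac{H}{q^\sigma}-\beta,
\]
so that $n\alpha\in[0,x)+y+\Z$ is equivalent to $n\alpha+\beta+H/q^\sigma\in[0,2H/q^\sigma)+\Z$, which in turn is equivalent to $\|n\alpha+\beta\|\le H/q^\sigma$ (up to boundary points, which only enlarges the set). In particular, the set counted in the lemma statement (strict inequality) is contained in this set.

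Applying Definition~\ref{DefDisc} with these values of $x$ and $y$ yields
\[
\Big|\frac{1}{N}\big|\{0\le n<N\,:\,\|n\alpha+\beta\|\le H/q^\sigma\}\big|-\frac{2H}{q^\sigma}\Big|\le D_N(\alpha),
\]
from which the claimed bound $ND_N(\alpha)+2HN/q^\sigma$ follows immediately by dropping the absolute value. There is essentially no obstacle here: the only point to verify carefully is that the chosen interval $[0,x)+y+\Z$ really does cover the symmetric neighborhood of $0$ modulo $1$, which is where the hypothesis $H<q^{\sigma-1}$ is used.
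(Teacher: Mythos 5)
Your proof is correct and follows essentially the same route as the paper: both translate the condition $\|n\alpha+\beta\|<H/q^{\sigma}$ into membership of $n\alpha$ in an interval of length $2H/q^{\sigma}$ modulo $1$, then apply Definition~\ref{DefDisc} with $x=2H/q^{\sigma}$ and $y=-\beta-H/q^{\sigma}$. You are slightly more explicit than the paper in noting where the hypothesis $H<q^{\sigma-1}$ is used (to ensure $x\le 1$), but the argument is the same.
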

\begin{proof}
This follows by elementary interval translations and the definition of the discrepancy, i.e.
\begin{align*}
    |\{n \in \{0,\dots,N-1\}\ :\  \|n\alpha+\beta\|<H/q^{\sigma}\}| &= |\{n \in \{0,\dots,N-1\}\ :\ n\alpha+\beta \in (\Z+[-H/q^{\sigma},H/q^{\sigma}])\}|,\\
    &= |\{n \in \{0,\dots,N-1\}\ :\ n\alpha \in (\Z+[0,2H/q^{\sigma}]-\beta-H/q^{\sigma})\}|\\
    &\leq \dfrac{2HN}{q^{\sigma}}+ND_{N}(\alpha).
\end{align*}
\end{proof}

As each digit shift introduces an error term involving the discrepancy, and since we perform $(k-1)$ such shifts, the following lemma will be used to estimate the overall error generated by multiple digit shifts. The proof is available in the article of Müllner and Spiegelhofer~\cite[Lemma 3.4, p.704]{MS17} for $q=2$, but holds for each $q \geq 2$ (with the obvious modifications in the proof). 
\begin{prop}
\label{DiscRes}
Let $q \geq 2$. There exists an absolute constant C, such that for all $m, n, N\in \mathbb{N}$,
\begin{equation*}
\sum_{0 \leq d<q^{m}}D_{N}\left(\dfrac{d}{q^{m}}\right) \leq C\; \dfrac{N+q^{m}}{N}(\log^{+}N)^{2}.
\end{equation*}
\end{prop}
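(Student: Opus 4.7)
The plan is to follow the approach of Müllner--Spiegelhofer in~\cite[Lemma 3.4]{MS17}, which treats the case $q = 2$, and to verify that the argument transposes to general $q \geq 2$ with only cosmetic changes. The starting point is the Erd\H{o}s--Tur\'an inequality: for any real $\alpha$ and integer $K \geq 1$,
\begin{equation*}
D_{N}(\alpha) \ll \frac{1}{K+1} + \frac{1}{N} \sum_{k=1}^{K} \frac{1}{k} \Big| \sum_{n=0}^{N-1} \e(kn\alpha) \Big|.
\end{equation*}
I would apply this with $\alpha = d/q^{m}$, sum over $d \in \{0,\dots,q^{m}-1\}$, and exchange the summations over $d$ and $k$. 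The exponential sum is a geometric progression, hence bounded by $\min(N, 1/(2\|kd/q^{m}\|))$.

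For each fixed $k \leq K$ the inner sum $\sum_{d} \min(N, \|kd/q^{m}\|^{-1})$ is evaluated using the rational structure. Setting $g := (k,q^{m})$ and $M := q^{m}/g$, the map $d \mapsto kd \bmod q^{m}$ covers the multiples of $g$ in $\{0,\dots,q^{m}-1\}$ uniformly with multiplicity $g$. Reindexing by $j \in \{0,\dots,M-1\}$ gives
\begin{equation*}
\sum_{d=0}^{q^{m}-1} \min\!\bigl(N, \|kd/q^{m}\|^{-1}\bigr) = g \sum_{j=0}^{M-1} \min\!\bigl(N, M/\min(j, M-j)\bigr),
\end{equation*}
with the convention that the $j = 0$ term contributes $gN$. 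Splitting the remaining sum at the threshold $j \sim M/N$ produces the bound $O(gN + q^{m}\log^{+}\!N)$ regardless of whether $M \leq N$ or $M > N$.

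Summing over $k \leq K$ and dividing by $N$, the $\log^{+}\!N$ part yields $O\bigl(q^{m}(\log^{+}\!N)(\log^{+}\!K)/N\bigr)$ via $\sum_{k \leq K} 1/k \ll \log^{+}\!K$, while the $gN$ part gives $\sum_{k \leq K} (k,q^{m})/k$. The latter is handled by grouping $k$ by the value of its gcd with $q^{m}$: writing $(k,q^{m})=d$ with $d \mid q^{m}$ and $k = dk'$, $(k', q^{m}/d) = 1$, one obtains
\begin{equation*}
\sum_{k=1}^{K} \frac{(k,q^{m})}{k} \leq \sum_{d \mid q^{m},\, d\leq K} H_{K/d} \ll_{q} (\log^{+}\!K)^{\omega(q)+1},
\end{equation*}
which is absorbed into the $q$-dependence of $C$ and into the logarithmic factors. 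Choosing $K$ of order $\min(N, q^{m})$ balances the Erd\H{o}s--Tur\'an tail $q^{m}/(K+1)$ against the other contributions and gives the announced bound $C(N+q^{m})(\log^{+}\!N)^{2}/N$.

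The main obstacle is the careful bookkeeping in the second step: tracking the multiplicity with which each residue class modulo $q^{m}$ is hit and then recombining the resulting arithmetic progression sums cleanly. For $q = 2$ this is exactly the content of~\cite[Lemma 3.4]{MS17}, where $q^{m}$ is a prime power and $\tau(q^{m}) = m+1$ makes the divisor sums particularly transparent. For general $q$, one only replaces powers of $2$ by powers of $q$ in the arithmetic and uses the polynomial bound $\tau(q^{m}) \ll_{q} m^{\omega(q)}$, which is the precise sense in which the extension holds ``with the obvious modifications''.
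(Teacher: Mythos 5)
Your route is the same as the one the paper points to: the paper gives no proof of Proposition~\ref{DiscRes}, only the reference to~\cite[Lemma 3.4]{MS17}, and your Erd\H{o}s--Tur\'an argument, the reindexing of $d\mapsto kd\bmod q^{m}$ by multiples of $g=(k,q^{m})$, the split at $j\asymp M/N$, and the choice $K\asymp\min(N,q^{m})$ are exactly the ``obvious modifications'' of that proof. Those steps are all sound.

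The genuine gap is the last clause of your gcd computation. You correctly arrive at $\sum_{k\le K}(k,q^{m})/k\ll_{q}(\log^{+}K)^{\omega(q)+1}$, but this cannot be ``absorbed into the $q$-dependence of $C$ and into the logarithmic factors'': it is a power of $\log N$, not a constant, and the target bound carries exactly $(\log^{+}N)^{2}$. For $\omega(q)=1$ (in particular $q=2$) your bound is $(\log^{+}K)^{2}\le(\log^{+}N)^{2}$ and the proof closes; for $\omega(q)\ge 2$ it does not. Moreover this is not a defect of your method: the statement with exponent $2$ and an absolute constant is actually false when $\omega(q)\ge 3$. Indeed, writing $d/q^{m}=a/Q$ in lowest terms, one has $D_{N}(a/Q)\ge 1/(2Q)$ whenever $N\ge Q$ (test the interval $[0,1/(2Q))$, which among the points $j/Q$ contains only $j=0$, hit at least $N/Q$ times), so for $N=q^{m}$,
$$\sum_{0\le d<q^{m}}D_{N}\left(\dfrac{d}{q^{m}}\right)\;\ge\;\frac12\sum_{Q\mid q^{m}}\frac{\phi(Q)}{Q}\;\ge\;\frac12\prod_{p\mid q}\Big(1-\frac1p\Big)\,\tau(q^{m})\;\gg_{q}\;m^{\omega(q)},$$
whereas the right-hand side of the Proposition is then $\asymp_{q}m^{2}$. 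The correct general-$q$ statement --- and the one your argument genuinely proves --- has $C=C(q)$ and $(\log^{+}N)^{\omega(q)+1}$ in place of an absolute constant and $(\log^{+}N)^{2}$; this weaker form is all that is needed downstream (the discrepancy terms are beaten by power savings), so the right fix is to restate the proposition rather than to try to repair the final absorption step.
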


 The following lemma is at the heart of the approximation process when we discard digits in the iterations of the van der Corput inequality, see~\cite[Lemma 5.1]{Spi} . We use standard notation concerning Farey fractions, see Annexe~\ref{appendix1}. 
\begin{lemme}
\label{CardA}
Let $\mu, \nu, \sigma, \gamma$ be integers with $\nu+1 \geq 3\gamma$ and $k \geq 3$ be an integer.

We define, for $m \in \{0, \dots, q^{\nu+1}-1\}$,
\begin{equation*}
\mathfrak{M}_{1}=P_{q^{\sigma}}\left(\dfrac{P_{q^{2\mu+2\sigma}}(m/q^{2\mu})}{q^{(k-2)\mu}}\right).
\end{equation*}
For $1<i<k-1$ we set
\begin{equation*}
\mathfrak{M}_{i}=P_{q^{\sigma}}\left(\dfrac{P_{q^{\mu+2\sigma}}(m/q^{(i+1)\mu})}{q^{(k-i-1)\mu}}\right)
\end{equation*}
and
\begin{equation*}
\mathfrak{M}_{k-1}=P_{q^{\sigma}}\left(\dfrac{P_{q^{\mu+\sigma}}(m/q^{k\mu})}{q^{k\mu}}\right),
\end{equation*}
where $P_{n}(\alpha)$  is the numerator of the Farey fraction of order $n$ ``closest'' to $\alpha$, according to Definition~\ref{DefFarey}.
Let
\begin{equation*}
A=\{m \in \{0,\dots,q^{\nu+1}-1\}\,:\,\exists i \in \{1,\dots,k-1\},\;\exists p\mid q\,,\;p^{3\gamma}| \,\mathfrak{M}_{i}\}.
\end{equation*}
Then
\begin{equation}\label{Aestimate}
|A|=O_{q}(q^{\nu+1-3\gamma\log_{q}(P^{-}(q))}).
\end{equation}
\end{lemme}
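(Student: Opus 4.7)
My plan is to decompose $A$ by the index $i \in \{1,\ldots,k-1\}$ and the prime $p\mid q$. Setting
$$A_{i,p} := \{m \in \{0,\ldots,q^{\nu+1}-1\}\,:\, p^{3\gamma}\mid \mathfrak{M}_i(m)\},$$
the definition of $A$ and a union bound give $|A| \leq \sum_{i}\sum_{p\mid q}|A_{i,p}|$. Since both $k-1$ and $\omega(q)$ are $O_q(1)$, it suffices to prove $|A_{i,p}| = O_q(q^{\nu+1}/p^{3\gamma})$ for each fixed $(i,p)$; summing over $p\mid q$ and bounding $p\geq P^{-}(q)$ then yields $|A| = O_q(q^{\nu+1}/P^{-}(q)^{3\gamma})$, which rewrites to the claimed $O_q(q^{\nu+1-3\gamma\log_q(P^-(q))})$.

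For a fixed pair $(i,p)$ I would count $|A_{i,p}|$ by the value $P := \mathfrak{M}_i(m)$, obtaining
$$ |A_{i,p}| \leq \bigl|\{P\text{ in the range of }\mathfrak{M}_i :\, p^{3\gamma}\mid P\}\bigr|\cdot \max_P\bigl|\{m : \mathfrak{M}_i(m)=P\}\bigr|. $$
Since $\mathfrak{M}_i$ is the numerator of a Farey fraction of order $q^\sigma$, its range lies in a set of size $O(q^\sigma)$, where the implicit constant depends only on the (bounded by powers of $q$) size of the argument fed to the outer $P_{q^\sigma}$; hence the first factor is $O(q^\sigma/p^{3\gamma})$. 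The crux is the multiplicity bound
$$ \bigl|\{m : \mathfrak{M}_i(m)=P\}\bigr| = O(q^{\nu+1-\sigma}), $$
which I would establish by unpacking the nested Farey operations. The inner Farey fraction, of order $q^{2\mu+2\sigma}$, $q^{\mu+2\sigma}$ or $q^{\mu+\sigma}$ according to $i$, applied to $m/q^{(\cdot)\mu}$, is pinned down -- via the classical interval-length bound $1/(Q(Q+n))$ for the set of reals sharing a given best Farey approximation of order $n$ -- by a controlled range of high-order digits of $m$. The subsequent shift by the indicated power of $q$ and outer approximation of order $q^\sigma$ constrain a window of approximately $\sigma$ consecutive base-$q$ digits of $m$ to $O(1)$ possibilities, leaving the remaining $\nu+1-\sigma$ digits free. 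Multiplying the two factors gives $|A_{i,p}|=O(q^{\nu+1}/p^{3\gamma})$.

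The main obstacle is this multiplicity bound: it requires a careful base-$q$ adaptation of the digit-level analysis in \cite[Lemma 5.1]{Spi}, tracking precisely how the composition of two Farey extractions with intermediate division by a power of $q$ constrains a specific block of digits of $m$. The three cases $i=1$, $1<i<k-1$, and $i=k-1$ differ only in the orders of the Farey fractions and the shift exponents and can be treated in parallel. The hypothesis $\nu+1\geq 3\gamma$ is used to keep the target bound nontrivial, since otherwise $q^{\nu+1-3\gamma\log_q(P^-(q))}\geq q^{\nu+1}$ and the conclusion reduces to the trivial estimate $|A|\leq q^{\nu+1}$.
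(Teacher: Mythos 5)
Your union bound over pairs $(i,p)$ and the reduction to showing $|A_{i,p}|=O(q^{\nu+1}/p^{3\gamma})$ match the paper. The problem is the counting of $|A_{i,p}|$ by the product $(\text{number of admissible values of }\mathfrak{M}_i)\times(\text{max multiplicity})$: this decomposition is intrinsically lossy here, and moreover your bound on the first factor is incorrect.

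Concretely, for $i=1$ and $(m,q)=1$ the inner Farey operation is trivial, $P_{q^{2\mu+2\sigma}}(m/q^{2\mu})=m$, so the argument fed to the outer $P_{q^\sigma}$ is $m/q^{(k-2)\mu}$, which ranges up to $q^{\nu+1-(k-2)\mu}$. Since the paper works with Farey fractions translated to all of $\mathbb{R}$, the numerator $\mathfrak{M}_1=P_{q^\sigma}(m/q^{(k-2)\mu})$ has size on the order of $q^{\nu+1-(k-2)\mu+\sigma}$, not $O_q(q^\sigma)$. The quantity $\nu+1-(k-2)\mu$ is not $O_q(1)$ (it is at least $\tilde\rho$, which tends to infinity with $\nu$), so your claim that the range of $\mathfrak{M}_i$ has size $O(q^\sigma)$ up to a $q$-dependent constant fails. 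With the corrected range $\sim q^{\nu+1-(k-2)\mu+\sigma}$ and the best multiplicity bound you can hope for from the structure ($\leq q^{(k-2)\mu}$, one preimage of $m_1$ per choice of $m_0$), the product is $q^{\nu+1+\sigma}/p^{3\gamma}$, losing a factor $q^\sigma$; this loss is fatal, since downstream the saving $q^{-3\gamma\log_q P^-(q)}$ is far smaller than $q^{\sigma}$ for the chosen parameters ($\gamma\asymp\eta_0\mu$, $\sigma\asymp\mu/4$, with $\eta_0$ tiny).

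The paper avoids the loss by not passing through a ``values $\times$ multiplicity'' bound at all. The key is the structural identity $\mathfrak{M}_1=P_{q^\sigma}(m_0/q^{(k-2)\mu})+m_1\,Q_{q^\sigma}(m_0/q^{(k-2)\mu})$, obtained via the Euclidean split $m=m_0+q^{(k-2)\mu}m_1$ and the translation-invariance of the Farey mediant. Once this is in hand, for each fixed $m_0$ the condition $p^{3\gamma}\mid\mathfrak{M}_1$ becomes a single linear congruence in $m_1$ modulo $p^{3\gamma}$, solvable because $p\nmid Q_{q^\sigma}(m_0/q^{(k-2)\mu})$ (coprimality of Farey numerator and denominator). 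This gives at most $q^{\nu+1-(k-2)\mu}/p^{3\gamma}$ admissible $m_1$ per $m_0$, hence $q^{\nu+1}/p^{3\gamma}$ overall, with no wasted factor. Your sketch does not reach this affine-in-$m_1$ structure, which is the essential content of the lemma.
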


\begin{proof}
To make the proof easier to read, we will work only with $\mathfrak{M}_{1}$, the other cases ($2 \leq i \leq k-1$) are handled almost similarly. We refer the reader to the article by Spiegelhofer~\cite[Lemma 5.1]{Spi} to see the slight changes that appear for the cases $2\leq i\leq k-1$. We first deal with the case where $(m,q)=1$ (Case 1). Some minor changes and estimates will allow us to handle the general case (Case 2).

\vspace{0.3cm}
\underline{Case 1}: $(m,q)=1$.

As a first step, we establish the following formula for $\mathfrak{M}_{1}$:
\begin{equation}\label{frakM1formula}
\mathfrak{M}_{1}=P_{q^{\sigma}}(m_{0}/q^{(k-2)\mu})+m_{1}Q_{q^{\sigma}}(m_{0}/q^{(k-2)\mu}),
\end{equation}
with $m_0$ defined below.
To begin with, we remark that $\dfrac{m}{q^{2\mu}} \in \mathcal{F}_{q^{2\mu+2\sigma}}$, thus
\begin{equation*}
\dfrac{m}{q^{2\mu}}=\dfrac{P_{q^{2\mu+2\sigma}}(m/q^{2\mu})}{Q_{q^{2\mu+2\sigma}}(m/q^{2\mu})}.
\end{equation*}
Moreover, since $(m,q)=1$, we have
\begin{equation}\label{simplifyM1}
P_{q^{2\mu+2\sigma}}(m/q^{2\mu})=m.
\end{equation}
By performing the Euclidean division of $m$ by $q^{(k-2)\mu}$ and using the condition $m< q^{\nu+1}$, we get
\begin{equation*}
m=m_{0}+q^{(k-2)\mu}m_{1},
\end{equation*}
with
\begin{align*}
&m_{0} \in \{0,\dots,q^{(k-2)\mu}-1\},\\
&m_{1} \in \{0,\dots,q^{\nu+1-(k-2)\mu}-1\}.
\end{align*}

We write
\begin{equation*}
\dfrac{P}{Q}\leq \dfrac{m_{0}}{q^{(k-2)\mu}} \leq \dfrac{P'}{Q'},
\end{equation*}
where $\dfrac{P}{Q}$ and $\dfrac{P'}{Q'}$ are neighbours in $\mathcal{F}_{q^{\sigma}}$.
Thus, by adding $m_{1}$ to each term in the double inequality, we have
\begin{equation}
\label{Doubleineq}
\dfrac{P+m_{1}Q}{Q}\leq \dfrac{m}{q^{(k-2)\mu}}\leq \dfrac{P'+m_{1}Q'}{Q'}.
\end{equation}
Since $(P,Q)=1$ and $(P',Q')=1$, all the fractions are irreducible. Also, since $\dfrac{P}{Q}$ and $\dfrac{P'}{Q'}$ are neighbours in $\mathcal{F}_{q^{\sigma}}$, then, by translation, $(P+m_{1}Q)/Q$ and $(P'+m_{1}Q')/Q'$ are still neighbours in $\mathcal{F}_{q^{\sigma}}$.

We distinguish two cases according to the place of $m_0/q^{(k-2)\mu}$ with respect to the midpoint of the Farey interval $[P/Q, P'/Q'[$. First we suppose that
\begin{equation}
\label{midpointineq}
    \dfrac{m_{0}}{q^{(k-2)\mu}} \leq \dfrac{P+P'}{Q+Q'}.
\end{equation}
Then, by Definition~\ref{DefFarey}, we have 
\begin{equation*}
\dfrac{P}{Q}=\dfrac{P_{q^{\sigma}}(m_{0}/q^{(k-2)\mu})}{Q_{q^{\sigma}}(m_{0}/q^{(k-2)\mu})}.
\end{equation*}
Moreover, since we have
\begin{align*}
\dfrac{m}{q^{(k-2)\mu}}&=\dfrac{m_{0}}{q^{(k-2)\mu}}+m_{1}\\
&\leq \dfrac{P+P'}{Q+Q'}+m_{1}\\
&\leq \dfrac{(P+m_{1}Q)+(P'+m_{1}Q')}{Q+Q'},
\end{align*}
the number $\dfrac{m}{q^{(k-2)\mu}}$ is smaller than the midpoint of $\dfrac{P+m_{1}Q}{Q}$ and $\dfrac{P'+m_{1}Q'}{Q'}$.
This implies by~\eqref{simplifyM1} and~\eqref{Doubleineq} that
\begin{align*}
    \mathfrak{M}_{1}&=P_{q^{\sigma}}\left(\dfrac{P_{q^{2\mu+2\sigma}}(m/q^{2\mu})}{q^{(k-2)\mu}}\right)\\
    &=P+m_{1}Q\\
    &=P_{q^{\sigma}}(m_{0}/q^{(k-2)\mu})+m_{1}Q_{q^{\sigma}}(m_{0}/q^{(k-2)\mu}),
\end{align*}
as announced in~\eqref{frakM1formula}. For the second case, we suppose now that
\begin{equation}
\label{midpointineq2}
    \dfrac{m_{0}}{q^{(k-2)\mu}}>\dfrac{P+P'}{Q+Q'}.
\end{equation}
A similar calculation as above shows that
\begin{align*}
    \mathfrak{M}_{1}&=P_{q^{\sigma}}\left(\dfrac{P_{q^{2\mu+2\sigma}}(m/q^{2\mu})}{q^{(k-2)\mu}}\right)\\
    &=P'+m_{1}Q'\\
    &=P_{q^{\sigma}}(m_{0}/q^{(k-2)\mu})+m_{1}Q_{q^{\sigma}}(m_{0}/q^{(k-2)\mu}).
\end{align*}
\\In both cases we arrive at the same conclusion, identity~\eqref{frakM1formula}.
\\We now turn our attention to~\eqref{Aestimate}. We suppose that there exists $p|q$ such that $p^{3\gamma} \mid \mathfrak{M}_{1}$. We observe that 
$p \nmid Q_{q^{\sigma}}(m_{0}/q^{(k-2)\mu})$
since otherwise, by~\eqref{frakM1formula}, $p $ would divide $P_{q^{\sigma}}(m_{0}/q^{(k-2)\mu})$ which contradicts $$(Q_{q^{\sigma}}(m_{0}/q^{(k-2)\mu}),P_{q^{\sigma}}(m_{0}/q^{(k-2)\mu}))=1.$$

For a fixed number $m_{0} \in \{0,\dots,q^{(k-2)\mu}-1\}$, the number of $m_{1} \in \{0, \dots, q^{\nu+1-(k-2)\mu}-1\}$ such that 
\begin{equation*}
    m_{1}Q_{q^{\sigma}}(m_{0}/q^{(k-2)\mu}) \equiv -P_{q^{\sigma}}(m_{0}/q^{(k-2)\mu}) \Mod{p^{3\gamma}}
\end{equation*}
is bounded by $q^{\nu+1-(k-2)\mu}/{p^{3\gamma}}.$ Thus, the number of $m \in \{0, \dots, q^{\nu+1}-1\}$ such that $p^{3\gamma} \mid \mathfrak{M}_{1}$ is at most
${q^{\nu+1}}/{p^{3\gamma}}$.

\medskip

\underline{Case 2:} $(m,q)>1$.
\\The only difference here with respect to the former case lies in the beginning of the argument. The fraction ${m}/{q^{2\mu}}$ is no longer irreducible but it suffices to reduce it to ${m'}/{q'}$ (where $(m',q')=1$) and to proceed exactly in the same way as above with the previous argument with $q',m'$ in place of $q,m$.

\bigskip

We now collect all the information to estimate the cardinality of the set $A$ of the statement. We get
\begin{align*}
    |A| &\leq \sum_{\substack{p \in \mathbb{P} \\p|q}}|\{m \in \{0,\dots,q^{\nu+1}-1\}\,: \quad \exists i \in \{1,\dots,k-1\},\quad p^{3\gamma}| \,\mathfrak{M}_{i}\}|\\
    &\leq q^{\nu+1}\sum_{\substack{p \in \mathbb{P} \\p|q}}p^{-3\gamma}\\
    &\leq \omega(q)q^{\nu+1}P^{-}(q)^{-3\gamma}.
\end{align*}
\end{proof}

\section{Proof of Theorem~\ref{NDGG}}\label{NormesSec}

\subsection{Recursion formula and setting up the graph}

Throughout this section, in view of a further use of Lemma~\ref{CardA}, we use $k \geq 3$ (The arguments of this section still remain valid for $k=2$). Recall that we use throughout the article the assumption~\eqref{hypob},
$$(b,q-1)=1 \mbox{ and } 0<\ell<b. $$For $w \in \{0,\dots,2^{k}-1\}$ we write $w=\sum_{0\leq i\leq k-1 }{w_i} 2^i $ with $w_i\in\{0,1\}$ and $\mathbf{w}=(w_0,\ldots,w_{k-1})$ the vector of the binary digits of the integer $w$ (we will often switch from the vector representation to the integer without giving further indication.) We denote
\begin{equation}\label{tqshiftw}
    t_{q}^{(w)}(n)=\e\left((-1)^{s_{2}(w)}\dfrac{\ell}{b}s_{q}(n)\right).
\end{equation}
We observe that~\eqref{tqshiftw} is a generalization of~\eqref{gentqdef}. The factor $(-1)^{s_{2}(w)}$ appears naturally in the context of Gowers norms, as we will see later.
Let $\mathbf{r_{0}}=(r_{0,w})_{w \in \{0,\dots,2^{k}-1\}}\in \Z^{2^{k}}$. For $\rho \geq 0$, we consider 
\begin{equation}\label{Arhor0}
    A(\rho,\mathbf{r_{0}})=\dfrac{1}{q^{(k+1)\rho}}\sum_{\substack{0 \leq n<q^{\rho}\\\mathbf{h}=(h_{0},\ldots, h_{k-1}) \in \{0,\dots,q^{\rho}-1\}^{k}}}\prod_{w=0}^{2^{k}-1}t_{q}^{(w)}(n+\mathbf{w}\cdot \mathbf{h}+r_{0,w}),
\end{equation}
where $\mathbf{w}\cdot\mathbf{h}$ is the usual dot-product defined by
\begin{equation*}
    \mathbf{w}\cdot \mathbf{h}=\sum_{i=0}^{k-1}w_{i}h_{i}.
\end{equation*}

In the first step we establish a recurrence formula for $A(\rho,\mathbf{r_0})$ based on the $q$-multiplicativity of $(t_q^{(w)}(n))_{n \in \N}$. This recurrence relation  can be interpreted as a recurrence relation of the type
\begin{equation*}
X_{\rho}=MX_{\rho-1},
\end{equation*}
where $X_{\rho}$ is a vector and $M$ a  matrix such that $|M|$ is stochastic.
This will lead us to examine the underlying ``probabilistic graph'' related to $M$. For the first step, we follow the argument given by Konieczny in the proof of~\cite[Lemma 2.1, p.1902]{Kon}.

\begin{lemme}
\label{Relsure}
Let $k\geq 3$ be an integer. For any fixed $\mathbf{r_{0}} \in \mathbb{Z}^{2^k}$, the sequence $(A(\rho,\mathbf{r_{0}}))_{\rho\geq 1}$ satisfies the recursion formula
\begin{align*}
A(\rho,\mathbf{r_{0}})=&\dfrac{1}{q^{k+1}}\;\e\left(\dfrac{\ell}{b}\sum\limits_{w=0}^{2^{k}-1}(-1)^{s_{2}(w)}r_{0,w}\right)\\
&\times \sum_{\substack{{\mathbf{e}=(e_0,\ldots,e_k)\in\{0,\ldots,q-1\}^{k+1}}}}A(\rho-1,\mathbf{\delta(r_{0};e)})\;\e\left(-\dfrac{q\ell}{b}\sum\limits_{w=0}^{2^{k}-1}(-1)^{s_{2}(w)}\delta(\mathbf{r_{0}};\mathbf{e})_{w}\right),
\end{align*}
where $\mathbf{\delta(r_{0};e)}=(\mathbf{\delta(r_{0};e)}_{w})_{w\in \{0,\dots,2^{k}-1\}}$ $\in \mathbb{Z}^{2^k}$ is the vector defined by 
\begin{equation}\label{Defdelta}
\mathbf{\delta(r_{0};e)}_{w}=\Big\lfloor\dfrac{r_{0,w}+(1,\mathbf{w})\cdot \mathbf{e}}{q}\Big\rfloor,
\end{equation}
for all $w \in \{0,\ldots,2^{k}-1\}$, with the shorthand notation $(1,\mathbf{w})\cdot \mathbf{e}=e_0+\sum\limits_{i=1}^k w_{i-1} e_i$.
\end{lemme}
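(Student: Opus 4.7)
The plan is to prove this by performing a single base-$q$ digit decomposition of $n$ and of each $h_i$, then exploiting the $q$-multiplicative nature of $t_q^{(w)}$.

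Concretely, I would write $n = e_0 + q\tilde{n}$ with $e_0 \in \{0,\ldots,q-1\}$ and $\tilde{n} \in \{0,\ldots,q^{\rho-1}-1\}$, and likewise $h_i = e_{i+1} + q\tilde{h}_i$ for $i=0,\ldots,k-1$. This reindexes the summation over $(n,\mathbf{h})$ as a sum over $\mathbf{e}=(e_0,\ldots,e_k) \in \{0,\ldots,q-1\}^{k+1}$ together with $(\tilde n,\tilde{\mathbf{h}}) \in \{0,\ldots,q^{\rho-1}-1\}^{k+1}$. For each fixed $w$, a direct computation gives
\begin{equation*}
n + \mathbf{w}\cdot\mathbf{h} + r_{0,w} = \bigl((1,\mathbf{w})\cdot \mathbf{e} + r_{0,w}\bigr) + q\bigl(\tilde n + \mathbf{w}\cdot\tilde{\mathbf{h}}\bigr).
\end{equation*}
Setting $M_w := (1,\mathbf{w})\cdot\mathbf{e} + r_{0,w}$, the Euclidean division $M_w = (M_w \bmod q) + q\,\mathbf{\delta(r_0;e)}_w$ produces exactly $\mathbf{\delta(r_0;e)}_w$ as defined in \eqref{Defdelta}. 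Since $M_w \bmod q$ is a single digit, the defining identity of $s_q$ (additivity when the last digit of the large summand is $0$) yields
\begin{equation*}
s_q(n + \mathbf{w}\cdot\mathbf{h} + r_{0,w}) = (M_w \bmod q) + s_q\bigl(\tilde n + \mathbf{w}\cdot\tilde{\mathbf{h}} + \mathbf{\delta(r_0;e)}_w\bigr).
\end{equation*}

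The next step is to plug this into the definition of $t_q^{(w)}$ and collect the phases. The ``inner'' factor is exactly $t_q^{(w)}(\tilde n + \mathbf{w}\cdot\tilde{\mathbf{h}} + \mathbf{\delta(r_0;e)}_w)$, so summing over $(\tilde n, \tilde{\mathbf{h}})$ reconstitutes $q^{(k+1)(\rho-1)} A(\rho-1, \mathbf{\delta(r_0;e)})$. The ``outer'' phase is
\begin{equation*}
\e\!\left(\frac{\ell}{b}\sum_{w=0}^{2^k-1}(-1)^{s_2(w)}(M_w \bmod q)\right) = \e\!\left(\frac{\ell}{b}\sum_w (-1)^{s_2(w)}\bigl[r_{0,w} + (1,\mathbf{w})\cdot\mathbf{e} - q\,\mathbf{\delta(r_0;e)}_w\bigr]\right).
\end{equation*}

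The one step that needs justification is that the linear-in-$\mathbf{e}$ contribution vanishes, i.e.\ that
\begin{equation*}
\sum_{w=0}^{2^k-1}(-1)^{s_2(w)}(1,\mathbf{w})\cdot\mathbf{e} = e_0 \sum_w (-1)^{s_2(w)} + \sum_{j=0}^{k-1} e_{j+1}\sum_w (-1)^{s_2(w)} w_j = 0.
\end{equation*}
Both inner sums are standard Walsh--Hadamard cancellations: $\sum_w(-1)^{s_2(w)} = \prod_{i=0}^{k-1}(1+(-1)) = 0$ for $k\geq 1$, and for any fixed $j$, $\sum_w(-1)^{s_2(w)}w_j = (0-1)\prod_{i\neq j}(1+(-1)) = 0$ as soon as $k\geq 2$, which is covered by our hypothesis $k\geq 3$. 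This kills the $\mathbf{e}$-dependent part of the outer phase, leaves the advertised $\e\bigl(\tfrac{\ell}{b}\sum_w(-1)^{s_2(w)}r_{0,w}\bigr)$ factor that can be pulled out of the $\mathbf{e}$-sum, and leaves $\e\bigl(-\tfrac{q\ell}{b}\sum_w(-1)^{s_2(w)}\mathbf{\delta(r_0;e)}_w\bigr)$ inside. Collecting the prefactor $q^{(k+1)(\rho-1)}/q^{(k+1)\rho} = q^{-(k+1)}$ from the normalizations yields exactly the stated recursion. This Walsh--Hadamard vanishing is the only mildly delicate point; everything else is bookkeeping on the single-digit split.
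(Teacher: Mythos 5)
Your proposal is correct and follows essentially the same route as the paper: decompose $n$ and each $h_i$ into last digit plus quotient, use $s_q(n)=s_q(\lfloor n/q\rfloor)+u_q(n)$ to split off a single-digit contribution, and show that the part of the phase depending on $\mathbf{e}$ cancels. The one cosmetic difference is in how that cancellation is established: you compute $\sum_w(-1)^{s_2(w)}(1,\mathbf{w})\cdot\mathbf{e}=0$ directly via the Walsh--Hadamard product $\prod_i(1+(-1))$, whereas the paper splits the sum into even/odd $s_2(w)$ and invokes a digit-switching bijection — your version is a bit cleaner, but it is the same identity.
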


\begin{proof}
Let $0\leq n< q^{\rho}$ and $h=(h_{0},\ldots,h_{k-1})$ with 
$0\leq h_{i}<q^{\rho}$ for $ 0 \leq i \leq k-1$. Using the Euclidean division, there exists a unique $(2k+2)$-tuple of integers $(n',h_{0}',\ldots,h_{k-1}',e_{0}',\ldots,e_{k}')$ satisfying $0 \leq n'<q^{\rho-1}$, $0 \leq h'_i<q^{\rho-1}$ ($1 \leq i \leq k-1)$ and $0 \leq e'_i<q $ ($0 \leq i \leq k$) and such that
\begin{align*}
    n&=qn'+e_{0},\\
    h_{0}&=qh'_{0}+e_{1},\\
    &\vdots\\
    h_{k-1}&=qh'_{k-1}+e_{k}.
\end{align*}
Summing up, for all $\mathbf{w}=(w_{0},\dots,w_{k-1})$, we have
\begin{align*}
    n+\mathbf{w}\cdot\mathbf{h}=q(n'+\mathbf{w}\cdot\mathbf{h'})+(1,\mathbf{w})\cdot \mathbf{e}.
\end{align*}
By the uniqueness of the parameters involved in the Euclidean division, we then observe that
\begin{align*}
  Q&:=\{n+\mathbf{w}\cdot\mathbf{h}\,|\, w \in \{0,\ldots,2^{k}-1\}\\ 
  &=\{q(n'+\mathbf{w}\cdot \mathbf{h'})+(1,\mathbf{w})\cdot \mathbf{e}\,|\, w \in \{0,\dots,2^{k}-1\}\}  
\end{align*}
is in bijection with the cube
\begin{equation*}
Q'=\{n'+\mathbf{w}\cdot\mathbf{h'}\,|\, w \in \{0,\dots,2^{k}-1\} \}. 
\end{equation*}

By the definition of $A(\rho,\mathbf{r_{0}})$ (see~\eqref{Arhor0}),
\begin{equation}
\label{arhofor}
    A(\rho,\mathbf{r_{0}})=\dfrac{1}{q^{(k+1)\rho}}\sum_{\substack{0 \leq n'<q^{\rho-1}\\\mathbf{h'}=(h'_{0},\dots, h'_{k-1}) \in \{0,\dots,q^{\rho-1}-1\}^{k}\\
    \mathbf{e}=(e_0,\ldots,e_k)\in\{0,\ldots,q-1\}^{k+1}}}\prod_{w=0}^{2^{k}-1}t_{q}^{(w)}(q(n'+\mathbf{w}\cdot\mathbf{h'})+(1,\mathbf{w})\cdot \mathbf{e}+r_{0,w}).
\end{equation}
For all $0\leq n'<q^{\rho-1}$, $\mathbf{h'} \in \{0,\dots,q^{\rho-1}-1\}^{k}$ and $\mathbf{e} \in \{0,\dots,q-1\}^{k+1}$, we recall that
\begin{equation*}
 t_{q}^{(w)}(q(n'+\mathbf{w}\cdot \mathbf{h'})+(1,\mathbf{w})\cdot \mathbf{e}+r_{0,w})=\e\left((-1)^{s_{2}(w)}\dfrac{\ell}{b}s_{q}(q(n'+\mathbf{w}\cdot \mathbf{h'})+(1,\mathbf{w})\cdot \mathbf{e}+r_{0,w})\right).
\end{equation*}
Since for all $n \in \N$ we have
\begin{equation}
\label{EquaFonction}
    s_{q}(n)=s_{q}\left(\Big\lfloor\dfrac{n}{q}\Big \rfloor\right)+u_{q}(n),
\end{equation}
where $u_{q}(n)$ is the unit place digit of $n$ in base-$q$, 
we obtain
\begin{align}
\label{tqfor}
t_{q}^{(w)}(q(n'+\mathbf{w}\cdot \mathbf{h'})+&(1,\mathbf{w})\cdot \mathbf{e}+r_{0,w})=\\ \nonumber
&\e\left((-1)^{s_{2}(w)}\dfrac{\ell}{b}(s_{q}(n'+\mathbf{w}\cdot \mathbf{h'}+\mathbf{\delta(r_{0};e)}_{w})+u_{q}((1,\mathbf{w})\cdot \mathbf{e}+r_{0,w}))\right),
\end{align}
where
\begin{equation*}
\mathbf{\delta(r_{0};e)}_{w}=\Big\lfloor\dfrac{r_{0,w}+(1,\mathbf{w})\cdot \mathbf{e}}{q}\Big\rfloor.
\end{equation*}

Inserting the identity~(\ref{tqfor}) into~\eqref{arhofor}, we get 
\begin{align*}
    A(\rho,\mathbf{r_{0}})=\dfrac{1}{q^{(k+1)\rho}}
    \sum_{\substack{0 \leq n'<q^{\rho-1}\\\mathbf{h'}=(h'_{0},\dots, h'_{k-1}) \in \{0,\dots,q^{\rho-1}-1\}^{k}\\
    \mathbf{e}=(e_0,\ldots,e_k)\in\{0,\ldots,q-1\}^{k+1}}}&\prod_{w=0}^{2^{k}-1}\e\left((-1)^{s_{2}(w)}\dfrac{\ell}{b} u_{q}((1,\mathbf{w})\cdot \mathbf{e}+r_{0,w})\right)\\
    &\times \prod_{w=0}^{2^{k}-1} t_{q}^{(w)}(n'+\mathbf{w}\cdot\mathbf{h'}+\mathbf{\delta(r_{0};e)}_{w}).
\end{align*}
Since the first product is independent of $n'$ and $\mathbf{h'}$, it follows that
\begin{equation}\label{Afirstformula}
A(\rho,\mathbf{r_{0}})=\dfrac{1}{q^{k+1}}\sum \limits _{\mathbf{e}=(e_0,\ldots,e_k)\in\{0,\ldots,q-1\}^{k+1}}\e\left(\dfrac{\ell}{b}S(\mathbf{r_{0}},\mathbf{e})\right)A(\rho-1,\mathbf{\delta(r_{0};e)}),
\end{equation}
where
\begin{equation*}
    S(\mathbf{r_{0}},\mathbf{e})=\sum_{w=0}^{2^{k}-1}(-1)^{s_{2}(w)}u_{q}((1,\mathbf{w})\cdot\mathbf{e}+r_{0,w}).
\end{equation*}
We note that~Lemma~\ref{recsrho} provides the same recursion formula \eqref{Afirstformula} when we handle $\som{\rho}$ instead of $s_q$.
Until the end of the proof, we will only consider $s_q$ knowing that we can replace any occurrence of $s_q$ by $\som{\rho}$.
\\We now rewrite the sum $S(\mathbf{r_0},\mathbf{e})$ in a more convenient way. By~\eqref{Defdelta},
\begin{equation*}
    u_{q}((1,\mathbf{w})\cdot \mathbf{e}+r_{0,w})=(1,\mathbf{w})\cdot \mathbf{e}+r_{0,w}-q\mathbf{\delta(r_{0};e)}_{w}
\end{equation*}
and therefore
\begin{align*}
    S(\mathbf{r_{0}},\mathbf{e})&=\sum_{w=0}^{2^{k}-1}(-1)^{s_{2}(w)}((1,\mathbf{w})\cdot \mathbf{e}+r_{0,w}-q\mathbf{\delta(r_{0};e)}_{w})\\
    &=\sum\limits_{w=0}^{2^{k}-1}(-1)^{s_{2}(w)}(r_{0,w}-q\mathbf{\delta(r_{0};e)}_{w})-\sum_{\substack{w=0\\2 \nmid s_{2}(w)}}^{2^{k}-1}(1,\mathbf{w})\cdot \mathbf{e}+\sum_{\substack{w=0\\2\mid s_{2}(w)}}^{2^{k}-1}(1,\mathbf{w})\cdot \mathbf{e}.
\end{align*}
We claim that
\begin{equation*}
 \sum_{\substack{w=0\\2\mid s_{2}(w)}}^{2^{k}-1}(1,\mathbf{w})\cdot \mathbf{e}=\sum_{\substack{w=0\\2\nmid s_{2}(w)}}^{2^{k}-1}(1,\mathbf{w})\cdot \mathbf{e}.
\end{equation*}
To see this, we write
\begin{align}
    \sum_{\substack{w=0\\2\nmid s_{2}(w)}}^{2^{k}-1}(1,\mathbf{w})\cdot \mathbf{e}&=e_{0}\,|\{0\leq w<2^{k} 
 \,:\, 2\nmid s_{2}(w)\}|+\sum_{\substack{\mathbf{w}=(w_{0},\ldots,w_{k-1})\in \{0,1\}^{k}\\2\nmid\sum\limits_{i=0}^{k-1}  w_{i}}}\;\sum_{r=1}^{k}w_{r-1}e_{r}\nonumber\\
    &=e_{0}\,|\{0\leq w<2^{k}\,:\, 2\nmid s_{2}(w)\}|+\sum_{r=1}^{k}e_{r}\sum_{\substack{\mathbf{w}=(w_{0},\ldots,w_{k-1})\in \{0,1\}^{k}\\2\nmid \sum\limits_{i=0}^{k-1} w_{i}}}w_{r-1}\nonumber\\
    &=e_{0}\,|\{0\leq w<2^{k}\,:\, 2\nmid s_{2}(w)\}|+\sum_{r=1}^{k}e_{r}\sum_{\substack{w_{0},\ldots,w_{k-1}\in \{0,1\}\\w_{r-1}=1\\2\mid \sum\limits_{i \neq r-1} w_{i}}}1.\label{sumsum1}
\end{align}
Analogously, we have
\begin{equation}\label{sumsum2}
    \sum_{\substack{w=0\\2\mid s_{2}(w)}}^{2^{k}-1}(1,\mathbf{w})\cdot \mathbf{e}=e_{0}|\;\{0\leq w<2^{k}\,:\, 2\mid s_{2}(w)\}|+\sum_{r=1}^{k}e_{r}\sum_{\substack{w_{0},\ldots,w_{k-1}\in \{0,1\}\\w_{r-1}=1\\2\nmid\sum\limits_{i \neq r-1} w_{i}}}1.
\end{equation}
We notice that the sets $\{0\leq w<2^{k}\,:\, 2\mid s_{2}(w)\}$ and $\{0\leq w<2^{k}\,:\,2\nmid s_{2}(w)\}$ are in bijection. Indeed, when we switch one fixed binary digit of $w$, we pass from one set to the other (or, equivalently, we can use $\sum_{0\leq j<2^{k}}(-1)^{s_{2}(j)}=0$).
A similar digit switching argument (noticing that $k\geq 2$) 
works to show that for fixed $r$ both inner sums in~\eqref{sumsum1} and~\eqref{sumsum2} are equal.

We therefore get
\begin{equation*}
 \sum_{\substack{w=0\\2\mid s_{2}(w)}}^{2^{k}-1}(1,\mathbf{w})\cdot \mathbf{e}=\sum_{\substack{w=0\\2\nmid s_{2}(w)}}^{2^{k}-1}(1,\mathbf{w})\cdot \mathbf{e}
\end{equation*}
and
\begin{equation*}
    S(\mathbf{r_{0}},\mathbf{e})=\sum\limits_{w=0}^{2^{k}-1}(-1)^{s_{2}(w)}(r_{0,w}-q\mathbf{\delta(r_{0};e)}_{w}).
\end{equation*}
Plugging this into~\eqref{Afirstformula} we get
the stated recursion formula.
\end{proof}
\begin{rem}
For the case $b=q=2$, which corresponds to the Thue--Morse sequence, the recursion formula in Lemma \ref{Relsure} becomes
\begin{equation*}
    A(\rho,\mathbf{r_{0}})=\dfrac{(-1)^{|\mathbf{r_{0}}|}}{2^{k+1}}\sum_{\substack{{\mathbf{e}=(e_0,\ldots,e_{k-1})\in\{0,1\}^{k+1}}}}A(\rho-1,\mathbf{\delta(r_{0};e)}),
\end{equation*}
where $|\mathbf{r_0}|:=\sum_{w \in \{0,\dots,2^{k-1}\}}r_{0,w}$.
This is the same formula as in the article of Konieczny~\cite[(2.1)]{Kon} and of Spiegelhofer~\cite[(5.13)]{Spi}. We note that in our more general case, the corresponding formula is more involved since there are additional multiplicative weights ($b$th roots of unity) attached to the sum and the arguments. Somewhat surprisingly, these $b$th roots of unity are connected to the binary digit sum of the summation index $w$.
\end{rem}
In order to have a summation ranging over the whole set $\mathbb{Z}^{2^k}$, we
gather the $\mathbf{e} \in \{0,\dots,q-1\}^{k+1}$ such that $\mathbf{r_{1}}=\mathbf{\delta(r_{0};e)}$. The formula in Lemma~\ref{Relsure} becomes a recursion with normalized weights attached to its terms.
\begin{lemme}
\label{Relsurr}
For $\mathbf{r_{0}} \in \mathbb{Z}^{2^k}$, the sequence $(A(\rho,\mathbf{r_{0}}))_{\rho>0}$ satisfies the recursion formula
\begin{equation}\label{recforumlafinal}
    A(\rho,\mathbf{r_{0}})=\sum\limits_{\mathbf{r_{1}} \in \mathbb{Z}^{2^k}}A(\rho-1,\mathbf{r_{1}})p_{\mathbf{r_{0}},\mathbf{r_{1}}},
\end{equation}
where

\begin{equation}
\label{Ptrans}
p_{\mathbf{r_{0}},\mathbf{r_{1}}}=\e\left(\dfrac{\ell}{b}\left(q\Tilde{S}(\mathbf{r_{1}})-\Tilde{S}(\mathbf{r_{0}})\right)\right)\dfrac{|\{\mathbf{e} \in \{0,\ldots,q-1\}^{k+1}\,:\, \mathbf{\delta(r_{0};e)}=\mathbf{r_{1}}\}|}{q^{k+1}},
\end{equation}
with
\begin{equation*}
    \Tilde{S}(\mathbf{r})=-\sum\limits_{w=0}^{2^{k}-1}(-1)^{s_{2}(w)}r_{w}
\end{equation*}
for 
\begin{equation*}
    \mathbf{r}=(r_{w})_{w \in \{0,\ldots,2^{k}-1\}} \in \mathbb{Z}^{2^k} .
\end{equation*}
\end{lemme}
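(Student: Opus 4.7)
The plan is to obtain \eqref{recforumlafinal} as a straightforward algebraic rearrangement of the recursion formula already established in Lemma~\ref{Relsure}. First I would rewrite the two exponential factors in Lemma~\ref{Relsure} using the definition $\Tilde{S}(\mathbf{r})=-\sum_{w=0}^{2^{k}-1}(-1)^{s_{2}(w)}r_{w}$. With this notation, the prefactor becomes $\e(-\frac{\ell}{b}\Tilde{S}(\mathbf{r_{0}}))$ and the factor inside the $\mathbf{e}$-sum becomes $\e(\frac{q\ell}{b}\Tilde{S}(\mathbf{\delta(r_{0};e)}))$, so that Lemma~\ref{Relsure} reads
\begin{equation*}
A(\rho,\mathbf{r_{0}})
=\dfrac{1}{q^{k+1}}\sum_{\mathbf{e}\in\{0,\dots,q-1\}^{k+1}}
\e\!\left(\dfrac{\ell}{b}\bigl(q\Tilde{S}(\mathbf{\delta(r_{0};e)})-\Tilde{S}(\mathbf{r_{0}})\bigr)\right) A(\rho-1,\mathbf{\delta(r_{0};e)}).
\end{equation*}

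Next I would regroup the sum over $\mathbf{e}$ according to the value of $\mathbf{r_{1}}:=\mathbf{\delta(r_{0};e)}\in\Z^{2^{k}}$. Since the whole exponential factor depends on $\mathbf{e}$ only through $\mathbf{\delta(r_{0};e)}=\mathbf{r_{1}}$, it can be pulled out of the inner sum. Collecting what remains yields
\begin{equation*}
A(\rho,\mathbf{r_{0}})
=\sum_{\mathbf{r_{1}}\in\Z^{2^{k}}} A(\rho-1,\mathbf{r_{1}})\,\e\!\left(\dfrac{\ell}{b}\bigl(q\Tilde{S}(\mathbf{r_{1}})-\Tilde{S}(\mathbf{r_{0}})\bigr)\right)\dfrac{|\{\mathbf{e}\in\{0,\dots,q-1\}^{k+1}:\mathbf{\delta(r_{0};e)}=\mathbf{r_{1}}\}|}{q^{k+1}},
\end{equation*}
which is exactly~\eqref{recforumlafinal} with $p_{\mathbf{r_{0}},\mathbf{r_{1}}}$ as defined in~\eqref{Ptrans}.

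To be completely rigorous I would also note that the sum over $\mathbf{r_{1}}\in\Z^{2^{k}}$ is in fact finite: if $\mathbf{r_{0}}$ has bounded entries then, by the definition \eqref{Defdelta} of $\mathbf{\delta(r_{0};e)}$, only finitely many $\mathbf{r_{1}}$ yield a nonzero count $|\{\mathbf{e}:\mathbf{\delta(r_{0};e)}=\mathbf{r_{1}}\}|$, so the indicated reindexing of the sum is unproblematic. There is no real obstacle here; the only thing to be careful with is correctly translating the two signs involved in $\Tilde{S}$ (one from the minus sign in its definition, one from the $-q$ inside the inner exponential of Lemma~\ref{Relsure}) so that the combination $q\Tilde{S}(\mathbf{r_{1}})-\Tilde{S}(\mathbf{r_{0}})$ comes out with the correct signs stated in~\eqref{Ptrans}.
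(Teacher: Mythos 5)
Your proposal is correct and is essentially the same argument the paper has in mind: the paper offers no separate proof of Lemma~\ref{Relsurr}, stating only that one ``gathers the $\mathbf{e}$ such that $\mathbf{r_1}=\delta(\mathbf{r_0};\mathbf{e})$'' to pass from Lemma~\ref{Relsure} to~\eqref{recforumlafinal}, which is precisely the regrouping you carry out. Your sign-tracking through $\Tilde{S}$ is accurate, and the observation that the sum over $\mathbf{r_1}\in\Z^{2^k}$ has only finitely many nonzero terms is a sound (and slightly more careful) remark than the paper makes explicit.
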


The vector $\mathbf{r}=(r_{w})_{w \in \{0,1\}^{k}}$ such that $r_{w}=0$ for all $w \in \{0,\ldots,2^{k}-1\}$ will be written as  $$\mathbf{0}:=(0,0,\ldots,0)\in\mathbb{Z}^{2^k}.$$
The recursion formula~\eqref{recforumlafinal} and the weights that can be seen as transition complex random variables, suggests to consider a directed graph modelled in the set $\Z^{2^{k}}$ consisting of the vectors $\mathbf{r}$ as vertices and being connected to each other whenever a specific condition on the associated weights is verified: there is an edge between $\mathbf{r_{0}}$ and $\mathbf{r_{1}}$ if and only if there exists $\mathbf{e} \in \{0,\dots,q-1\}^{k+1}$ such that
\begin{equation*}
 \mathbf{\delta(r_{0};e)}=\mathbf{r_{1}}.   
\end{equation*}
A vertex $\mathbf{r_{1}}=(r_{1,w})_{w}$ is said to be reachable from a vertex $\mathbf{r_{0}}$ if there exists $d>0$ and $\mathbf{r^{(1)}},\dots,\mathbf{r^{(d)}} \in \Z^{2^{k}}$ such that
\begin{equation*}
    p_{\mathbf{r_{0}},\mathbf{r^{(1)}}}\times p_{\mathbf{r^{(1)}},\mathbf{r^{(2)}}}\times \cdots \times p_{\mathbf{r^{(d-1)}},\mathbf{r^{(d)}}}\times p_{\mathbf{r^{(d)}},\mathbf{r_{1}}} \neq 0.
\end{equation*}
We write
\begin{equation*}
    \mathbf{r_{0}} \rightarrow \mathbf{r^{(1)}} \rightarrow \mathbf{r^{(2)}} \rightarrow \cdots \rightarrow \mathbf{r^{(d)}} \rightarrow \mathbf{r_{1}},
\end{equation*}
whenever $\mathbf{r_{1}}$ is reachable from $\mathbf{r_{0}}$ via $\mathbf{r^{(1)}}, \mathbf{r^{(2)}} ,\dots, \mathbf{r^{(d)}}$. We now turn our attention to the graph built by the elements $\mathbf{r_{1}} \in \Z^{2^{k}}$ reachable from $\mathbf{0}$. The following result shows that this is a finite subgraph in above graph.
\begin{lemme}
\label{GFini}
Let $\mathfrak{R}$ be the set of elements $\mathbf{r} \in \mathbb{Z}^{2^k}$ reachable from $\mathbf{0}$. Then $\mathfrak{R}$ is finite and for $\mathbf{r}=(r_{w})_{w \in \{0,\dots,2^{k}-1\}}$ reachable from $\mathbf{0}$ and for $0 \leq w<2^{k}$ we have
\begin{equation*}
    0 \leq r_{w} \leq s_{2}(w).
\end{equation*}
 Furthermore,
\begin{equation*}
    |\mathfrak{R}|\leq \prod_{i=0}^{k}(i+1)^{\binom{k}{i}}.
\end{equation*}
\end{lemme}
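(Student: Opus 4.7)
\medskip

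The plan is to show by induction that the bounds $0\le r_w\le s_2(w)$ are preserved by one step of the recursion~\eqref{recforumlafinal}, and then to count the vectors satisfying these bounds.

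First I would verify the \emph{invariance}: if $\mathbf{r_0}=(r_{0,w})_w$ satisfies $0\le r_{0,w}\le s_2(w)$ for every $w\in\{0,\ldots,2^k-1\}$, and if $\mathbf{e}\in\{0,\ldots,q-1\}^{k+1}$, then $\mathbf{r_1}:=\delta(\mathbf{r_0};\mathbf{e})$ defined by~\eqref{Defdelta} satisfies the same bounds. Nonnegativity is clear. For the upper bound, since $(1,\mathbf{w})\cdot\mathbf{e}=e_0+\sum_{i=1}^{k}w_{i-1}e_i\le (q-1)(1+s_2(w))$, we get
\begin{equation*}
\mathbf{\delta(r_0;e)}_w=\Big\lfloor\frac{r_{0,w}+(1,\mathbf{w})\cdot\mathbf{e}}{q}\Big\rfloor\le \frac{s_2(w)+(q-1)(1+s_2(w))}{q}=s_2(w)+\frac{q-1}{q},
\end{equation*}
and since the left-hand side is an integer, it is at most $s_2(w)$.

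Since the starting vertex $\mathbf{0}$ trivially satisfies $0\le 0\le s_2(w)$, a straightforward induction on the length of a path in the graph shows that every $\mathbf{r}=(r_w)_{w\in\{0,\ldots,2^k-1\}}$ reachable from $\mathbf{0}$ verifies the stated bound $0\le r_w\le s_2(w)$.

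It then remains to count. The number of vectors $\mathbf{r}\in\mathbb{Z}^{2^k}$ whose components satisfy $0\le r_w\le s_2(w)$ is exactly
\begin{equation*}
\prod_{w=0}^{2^k-1}(s_2(w)+1).
\end{equation*}
Grouping the indices $w\in\{0,\ldots,2^k-1\}$ by their binary digit sum, and using the fact that $|\{w\in\{0,\ldots,2^k-1\}:s_2(w)=i\}|=\binom{k}{i}$, this product equals $\prod_{i=0}^{k}(i+1)^{\binom{k}{i}}$, which yields the desired bound on $|\mathfrak{R}|$. There is no real obstacle here; the only subtle point is making sure in the invariance step that the elementary bound $(1,\mathbf{w})\cdot\mathbf{e}\le(q-1)(1+s_2(w))$ combines with the hypothesis on $r_{0,w}$ to give a numerator strictly less than $q(s_2(w)+1)$, so that the floor drops back into the range $\{0,\ldots,s_2(w)\}$.
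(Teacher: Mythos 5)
Your proof is correct and follows essentially the same approach as the paper: both rest on the elementary bound $(1,\mathbf{w})\cdot\mathbf{e}\leq(q-1)(1+s_2(w))$ and group the indices $w$ by binary digit sum for the cardinality count. The only difference is bookkeeping: you establish the invariant $0\le r_w\le s_2(w)$ by a forward one-step induction, whereas the paper unwinds the nested floors along a path back to $\mathbf{0}$ and sums a geometric series $\sum_{r\ge1}q^{-r}$; your version is slightly cleaner but not a genuinely different route.
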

\begin{proof}
We suppose that
\begin{equation*}
    \mathbf{0} \rightarrow \mathbf{r^{(1)}} \rightarrow \mathbf{r^{(2)}} \rightarrow \cdots \rightarrow \mathbf{r^{(d)}} \rightarrow \mathbf{r_{1}}.
\end{equation*}
Since $p_{\mathbf{r^{(d)}},\mathbf{r_{1}}}\neq 0$,  there exists an element $\mathbf{e^{(d)}} \in \{0,\dots,q-1\}^{k+1}$ such that for all $w \in \{0,\dots,2^{k}-1\},$ 
\begin{equation*}
\delta(\mathbf{r^{(d)}};\mathbf{e^{(d)}})_{w}
=\Bigg\lfloor \dfrac{r^{(d)}_{w}+(1,\mathbf{w})\cdot \mathbf{e^{(d)}}}{q}\Bigg\rfloor=r_{1,w}.
\end{equation*}
 In the same way, when $d \geq 2$ and if $p_{\mathbf{r^{(d-1)}},\mathbf{r^{(d)}}}\neq 0$, we deduce the existence of $\mathbf{e^{(d-1)}} \in \{0,\dots,q-1\}^{k+1}$ such that
\begin{equation*}
r_{1,w}=\Bigg\lfloor \dfrac{ \Big\lfloor \dfrac{r^{(d-1)}_{w}+(1,\mathbf{w})\cdot \mathbf{e^{(d-1)}}}{q}\Big\rfloor+(1,\mathbf{w})\cdot\mathbf{e^{(d)}}}{q}\Bigg\rfloor.
\end{equation*}
For all $\mathbf{e} \in \{0,\dots,q-1\}^{k+1}$ and all $0\leq w<2^{k}$ we have the simple bound
\begin{equation*}
(1,\mathbf{w})\cdot \mathbf{e} \leq (q-1)(1+s_{2}(w)).
\end{equation*}
Iterating the process above, we will reach the point $\mathbf{0}$, after a finite number of steps. We therefore have the bound
\begin{equation*}
    r_{1,w} < (q-1)(1+s_{2}(w))\sum_{r=1}^{\infty}\dfrac{1}{q^{r}}=1+s_2(w)
\end{equation*}
and we get $r_{1,w}\leq s_2(w)$. Moreover, we notice that for all $w \in \{0,\dots,2^{k}-1\}$, we have
\begin{equation*}
    r^{(1)}_{w}=\Big\lfloor\dfrac{(1,\mathbf{w})\cdot \mathbf{e^{(0)}}}{q}\Big\rfloor \geq 0,
\end{equation*}
and therefore $r^{(2)}_{w}, \ldots, r^{(d)}_{w}, r_{1,w} \geq 0$ for all $w \in \{0,\dots,2^{k}-1\}$. This implies that the set $\mathfrak{R}$ of the vertices reachable from $\mathbf{0}$ is finite and for all $\mathbf{r}=(r_{w})_{w} \in \mathfrak{R}$ we have
\begin{equation*}
    0 \leq r_{w} \leq s_{2}(w).
\end{equation*}
For the cardinality of $\mathfrak{R}$, we observe that 
for any $\mathbf{r}=(r_{w})_{w \in \{0,\dots,2^{k}-1\}} \in \mathfrak{R}$ and any fixed $ 0\leq i \leq k$, there are $\binom{k}{i}$ choices of elements $w$ such that $s_{2}(w)=i$. Thus, there are $(i+1)^{\binom{k}{i}}$ possible choices for the coordinates of $\mathbf{r}$ indexed by a number $w$ such that $s_{2}(w)=i$, leading to the stated bound. 
\end{proof}
In addition to the finiteness, our graph is strongly connected. To prove this, we introduce a new notation.
\\For $\mathbf{e} \in \{0,\dots,q-1\}^{k+1}$, we define the function $\delta_{\mathbf{e}}\,:\, \Z^{2^{k}}\rightarrow \Z^{2^{k}}$ by
\begin{equation*}
    \delta_{\mathbf{e}}(\mathbf{r})=\delta(\mathbf{r};\mathbf{e}).
\end{equation*}
\begin{lemme}
\label{FreeFall}
For $K:=\left\lfloor\dfrac{\log(k)}{\log(q)}\right\rfloor+1$ we have for all $\mathbf{r} \in \mathfrak{R}$:
\begin{equation*}
\underbrace{(\delta_{\mathbf{0}} \circ \delta_{\mathbf{0}} \circ \dots \circ \delta_{\mathbf{0}})}_{K\, \text{times}}(\mathbf{r})=\mathbf{0}.
\end{equation*}
\end{lemme}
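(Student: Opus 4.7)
The plan is to unfold the definition of $\delta_{\mathbf{e}}$ in the special case $\mathbf{e}=\mathbf{0}$ and observe that applying it reduces each coordinate by a factor of $q$. First I would note that by the definition~\eqref{Defdelta}, for any $\mathbf{r}=(r_w)_{w\in\{0,\dots,2^k-1\}}$ and for $\mathbf{e}=\mathbf{0}$, the dot-product $(1,\mathbf{w})\cdot\mathbf{0}$ vanishes, so
\begin{equation*}
\delta_{\mathbf{0}}(\mathbf{r})_w=\left\lfloor\frac{r_w}{q}\right\rfloor\qquad\text{for every }w\in\{0,\ldots,2^k-1\}.
\end{equation*}

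Next, an immediate induction on the number of iterations shows that, for any integer $j\geq 1$,
\begin{equation*}
\underbrace{(\delta_{\mathbf{0}}\circ\cdots\circ\delta_{\mathbf{0}})}_{j\text{ times}}(\mathbf{r})_w=\left\lfloor\frac{r_w}{q^{j}}\right\rfloor.
\end{equation*}
This is the only computational step of the argument; it simply uses the identity $\lfloor\lfloor x/q\rfloor/q\rfloor=\lfloor x/q^2\rfloor$ for $x\geq 0$, and the nonnegativity of the coordinates guaranteed by Lemma~\ref{GFini}.

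Finally, I would invoke Lemma~\ref{GFini}: if $\mathbf{r}\in\mathfrak{R}$ then $0\leq r_w\leq s_2(w)\leq k$ for every $w\in\{0,\ldots,2^k-1\}$ (using that $w<2^k$ has at most $k$ binary digits). The choice $K=\lfloor\log(k)/\log(q)\rfloor+1$ ensures $q^K>k\geq r_w$, hence $\lfloor r_w/q^K\rfloor=0$ for each $w$. Therefore the $K$-fold composition sends $\mathbf{r}$ to $\mathbf{0}$, which is the claim. There is no genuine obstacle here; the statement is essentially a bookkeeping corollary of the bound $r_w\leq s_2(w)$ obtained in Lemma~\ref{GFini} combined with the trivial behavior of $\delta_{\mathbf{e}}$ when $\mathbf{e}=\mathbf{0}$.
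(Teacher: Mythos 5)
Your proof is correct and follows essentially the same route as the paper: both unfold $\delta_{\mathbf{0}}$ to $r_w\mapsto\lfloor r_w/q\rfloor$, iterate (you via the exact identity $\lfloor\lfloor x/q\rfloor/q\rfloor=\lfloor x/q^2\rfloor$, the paper via the inequality $\lfloor r_w/q\rfloor\leq r_w/q$), and conclude from Lemma~\ref{GFini}'s bound $0\leq r_w\leq s_2(w)\leq k<q^K$.
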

\begin{proof}
Let $\mathbf{r} \in \mathfrak{R}$ and $w \in \{0,\dots,2^{k}-1\}$. Then
$$
    0\leq \mathbf{\delta_{0}(\mathbf{r})}_{w}=\Big\lfloor\dfrac{r_{w}}{q}\Big\rfloor
    \leq \dfrac{r_{w}}{q}.
$$
Thus,
\begin{equation*}
    0\leq \mathbf{\delta_{0}}(\mathbf{\delta_{0}}(\mathbf{r}))_{w} \leq \dfrac{r_{w}}{q^{2}}.
\end{equation*}
By induction on the number of iterations of $\mathbf{\delta_{0}}$, and using the fact that $r_{w} \leq s_{2}(w) \leq k$, we have
\begin{equation*}
    0 \leq \underbrace{(\delta_{\mathbf{0}} \circ \delta_{\mathbf{0}} \circ \dots \circ \delta_{\mathbf{0}})}_{K\, \text{times}}(\mathbf{r})_{w} \leq \dfrac{k}{q^{K}}.
\end{equation*}
Since $k<q^{K}$ and $\underbrace{(\delta_{\mathbf{0}} \circ \delta_{\mathbf{0}} \circ \dots \circ \delta_{\mathbf{0}})}_{K\, \text{times}}(\mathbf{r})_{w} \in \N$, we have
$
    \underbrace{(\delta_{\mathbf{0}} \circ \delta_{\mathbf{0}} \circ \dots \circ \delta_{\mathbf{0}})}_{K\, \text{times}}(\mathbf{r})_{w}=0.
$
\end{proof}
Lemma~\ref{FreeFall} assures us that after applying $K$ times the operator $\mathbf{\delta_{0}}$  to any vertex of the graph, we inevitably end up at vertex $\mathbf{0}$ (this is sometimes also described as the property that the graph is \textit{synchronizing}).
\begin{prop}
\label{ForteCo}
Let $\mathbf{r_{0}},\mathbf{r_{1}} \in \mathfrak{R}$. Then, $\mathbf{r_{1}}$ is reachable from $\mathbf{r_{0}}$. 
\end{prop}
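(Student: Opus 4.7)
The plan is to exploit Lemma~\ref{FreeFall} as a \emph{reset} mechanism: no matter which vertex $\mathbf{r_0}\in\mathfrak{R}$ we start from, iterating the transition $\delta_{\mathbf{0}}$ (the one corresponding to $\mathbf{e}=\mathbf{0}$) a total of $K$ times drops us onto $\mathbf{0}$. Composing with any path from $\mathbf{0}$ to $\mathbf{r_1}$ (which exists by the very definition of $\mathfrak{R}$) will then give the desired path from $\mathbf{r_0}$ to $\mathbf{r_1}$. The only thing to check is that each edge I want to use has nonzero weight $p_{\cdot,\cdot}$.

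More precisely, I would proceed in three short steps. First, set $\mathbf{s^{(0)}}=\mathbf{r_0}$ and $\mathbf{s^{(j+1)}}=\delta_{\mathbf{0}}(\mathbf{s^{(j)}})$ for $0\leq j<K$. Since in the definition \eqref{Ptrans} of $p_{\mathbf{s^{(j)}},\mathbf{s^{(j+1)}}}$ the cardinality in the numerator counts the $\mathbf{e}\in\{0,\dots,q-1\}^{k+1}$ with $\delta(\mathbf{s^{(j)}};\mathbf{e})=\mathbf{s^{(j+1)}}$, and since $\mathbf{e}=\mathbf{0}$ is by construction one such solution, this cardinality is $\geq 1$; combined with the fact that the exponential prefactor is a root of unity (hence nonzero), this yields $p_{\mathbf{s^{(j)}},\mathbf{s^{(j+1)}}}\neq 0$ for each $j$. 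By Lemma~\ref{FreeFall} we have $\mathbf{s^{(K)}}=\mathbf{0}$, so I get a chain
\begin{equation*}
\mathbf{r_0}=\mathbf{s^{(0)}}\to \mathbf{s^{(1)}}\to\cdots\to \mathbf{s^{(K)}}=\mathbf{0}.
\end{equation*}

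Second, since $\mathbf{r_1}\in\mathfrak{R}$, by the definition of $\mathfrak{R}$ there exist $d\geq 0$ and $\mathbf{t^{(1)}},\dots,\mathbf{t^{(d)}}\in\mathbb{Z}^{2^{k}}$ such that $\mathbf{0}\to \mathbf{t^{(1)}}\to\cdots\to \mathbf{t^{(d)}}\to \mathbf{r_1}$, all edges of nonzero weight. Third, concatenating the two paths,
\begin{equation*}
\mathbf{r_0}\to \mathbf{s^{(1)}}\to\cdots\to \mathbf{s^{(K-1)}}\to \mathbf{0}\to \mathbf{t^{(1)}}\to\cdots\to \mathbf{t^{(d)}}\to \mathbf{r_1},
\end{equation*}
exhibits $\mathbf{r_1}$ as reachable from $\mathbf{r_0}$, which proves the proposition.

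I expect no genuine obstacle here: the real work has been done in Lemma~\ref{FreeFall}, which provides the synchronizing property of the graph. The only subtle point to watch is that the weights $p_{\mathbf{r_0},\mathbf{r_1}}$ are complex and not real, so ``nonzero weight'' must be verified from the cardinality factor and not from positivity arguments; this is immediate from \eqref{Ptrans} as soon as one explicit $\mathbf{e}$ realizing $\delta(\mathbf{r_0};\mathbf{e})=\mathbf{r_1}$ is produced, which is precisely what $\mathbf{e}=\mathbf{0}$ gives us along the reset path.
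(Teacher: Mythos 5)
Your proof is correct and follows the same route as the paper: use Lemma~\ref{FreeFall} to reach $\mathbf{0}$ from $\mathbf{r_0}$, then use the definition of $\mathfrak{R}$ to reach $\mathbf{r_1}$ from $\mathbf{0}$, and concatenate. You spell out why each edge along the reset path has nonzero weight (the cardinality factor is $\geq 1$ and the exponential prefactor is a root of unity), which the paper leaves implicit; that is a welcome clarification but not a different argument.
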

\begin{proof}
Lemma~\ref{FreeFall} allows us to reach $\mathbf{0}$ in a finite number of steps. Then, the definition of the graph enables us to reach $\mathbf{r_{1}}$ in a finite number of steps from $\mathbf{0}$. We can thus concatenate the two previous paths and connect $\mathbf{r_{0}}$ to $\mathbf{r_{1}}$.
\end{proof}
We can now interpret  formula~\eqref{recforumlafinal} in Lemma~\ref{Relsurr} in a recursive way in our graph setting. For $1\leq j\leq \rho$, and for $\mathbf{r_{0}} \in \mathfrak{R}$, we have
\begin{equation}
    A(\rho,\mathbf{r_{0}})=\sum\limits_{\mathbf{r_{j}} \in \mathfrak{R}}A(\rho-j,\mathbf{r_{j}})p^{(j)}_{\mathbf{r_{0}},\mathbf{r_{j}}},
\end{equation}
where $p^{(j)}_{\mathbf{r_{0}},\mathbf{r_{j}}}$ denotes the sum of all weights of paths connecting $\mathbf{r_{0}}$ to $\mathbf{r_{j}}$ of length $j$. Recall that the weight $p(\gamma)$ of a path $\gamma$ connecting $\mathbf{r_{0}}$ to $\mathbf{r_{j}}$,
\begin{equation*}
 \gamma\ :\ \mathbf{r_{0}} \rightarrow \mathbf{r^{(1)}} \rightarrow \mathbf{r^{(2)}} \rightarrow \cdots \rightarrow \mathbf{r^{(d)}} \rightarrow \mathbf{r_{j}}
\end{equation*}
is defined by
\begin{equation*}
    p(\gamma)=p_{\mathbf{r_{0}},\mathbf{r^{(1)}}}\times p_{\mathbf{r^{(1)}},\mathbf{r^{(2)}}}\times \cdots \times p_{\mathbf{r^{(d-1)}},\mathbf{r^{(d)}}}\times p_{\mathbf{r^{(d)}},\mathbf{r_{j}}}.
\end{equation*}
Since $\mathfrak{R}$ is finite, the quantity $v_{\rho}=\max\limits_{\mathbf{r}\in \mathfrak{R}}(|A(\rho,\mathbf{r})|)<+\infty$ is well-defined. We obtain, for all $1\leq j\leq \rho$, 
\begin{equation}
\label{veroinneq}
v_{\rho} \leq v_{\rho-j}\max\limits_{\mathbf{r_{0}}\in \mathfrak{R}}\left(\sum\limits_{\mathbf{r_{j}}\in \mathfrak{R}}|p^{(j)}_{\mathbf{r_{0}},\mathbf{r_{j}}}|\right).
\end{equation}
In order to proceed with the proof of Theorem~\ref{NDGG}, we will show in the next section that there exists an integer $j \geq 1$ such that
\begin{equation}
\label{hyp}
\sum\limits_{\mathbf{r_{j}}\in \mathfrak{R}}|p^{(j)}_{\mathbf{r_{0}},\mathbf{r_{j}}}|<1
\end{equation}
for all $\mathbf{r_{0}} \in \mathfrak{R}$.

\subsection{An effective value of the exponent $\eta_0$ in Theorem~\ref{NDGG}}\label{effectiveeta0}

The aim of this section is to prove~(\ref{hyp}), and, more precisely, to provide an effective value for $\eta_{0}$. This allows us to get the effective version of Theorem~\ref{NDGG} with an explicit value for $\eta_{0}$ in the end. 
We will show that there exists $j \geq 1$ such that  
\begin{equation*}
M=\max\limits_{\mathbf{r_{0}}\in \mathfrak{R}}\left(\sum\limits_{\mathbf{r_{j}}\in \mathfrak{R}}|p^{(j)}_{\mathbf{r_{0}},\mathbf{r_{j}}}|\right)<1
\end{equation*}
with an effective bound on $M$ by a constant $<1$. Let $k$ and $j$ be two integers. Let $K$ be as defined in Lemma~\ref{FreeFall}. We suppose that $j \geq K+b(k+1)$.
We use two paths connecting $\mathbf{0}$ to $\mathbf{0}$ of length $(k+1)$ with different weights to get a saving.

\medskip

 We recall that a path of length $(k+1)$ connecting a point $\mathbf{r^{(0)}}$ to $\mathbf{r^{(k+1)}}$ by means of $\delta$ (which is defined in~(\ref{Defdelta})) is specified by $(k+1)$ vectors $\mathbf{e^{(0)}},\ldots,\mathbf{e^{(k)}}\in \{0,\ldots,q-1\}^{k+1}$ and $k$ vectors $\mathbf{r^{(1)}},\ldots,\mathbf{r^{(k)}}\in  \mathbb{Z}^{2^k}$ such that $\mathbf{r^{(1)}}=\delta(\mathbf{r^{0}};\mathbf{e^{(0)}})$, $\dots$, $\mathbf{r^{(k+1)}}=\delta(\mathbf{r^{(k)}};\mathbf{e^{(k)}})$.
This could be written as follows
\begin{equation*}
\mathbf{r^{(0)}} \xrightarrow[]{\mathbf{e^{(0)}}} \mathbf{r^{(1)}} \xrightarrow[]{\mathbf{e^{(1)}}} \cdots \xrightarrow[]{\mathbf{e^{(k-2)}}}\mathbf{r^{(k-1)}} \xrightarrow[]{\mathbf{e^{(k-1)}}} \mathbf{r^{(k)}}\xrightarrow[]{\mathbf{e^{(k)}}} \mathbf{r^{(k+1)}}.
\end{equation*}

\subsubsection{Trivial path and Konieczny's path}

As for the \textit{first path (trivial path)}, we take $(k+1)$ (\textit{little}) loops at $\mathbf{0}$, i.e. at each step $1 \leq t \leq k+1$ we choose $\mathbf{e^{(t)}}=\mathbf{0}$,

\begin{equation}\label{0path}
\mathbf{0} \xrightarrow[]{\mathbf{0}} \mathbf{0} \xrightarrow[]{\mathbf{0}} \cdots\xrightarrow[]{\mathbf{0}} \mathbf{0} \xrightarrow[]{\mathbf{0}} \mathbf{0}\xrightarrow[]{\mathbf{0}} \mathbf{0}.
\end{equation}
For the \textit{second path (Konieczny's path)}, we proceed via a \textit{great} loop. For that purpose, we adapt the construction given in the proof of Proposition 2.3 by Konieczny~\cite{Kon}. We set $\mathbf{r^{(0)}}=\mathbf{0}=\mathbf{r^{(k+1)}}$. For $1 \leq t \leq k$, we define $\mathbf{r^{(t)}}=(r^{(t)}_w)_w$ by
\begin{equation*}
\forall w \in \{0,\dots,2^{k}-1\},\qquad 
r^{(t)}_{w}=
\begin{cases}
    1, & \text{if }\,  w_{0}=\cdots=w_{t-1}=1; \\
    0, & \text{otherwise.}\,
\end{cases}
\end{equation*}
\underline{Case $t=0$}. We take $\mathbf{e_{Ko}^{(0)}}=(q-1,1,0,\dots,0)$. We have
\begin{equation*}
\mathbf{\delta(r^{(0)};e_{Ko}^{(0)})}=\mathbf{r^{(1)}}\\.
\end{equation*}
To prove this, we observe that for $w \in \{0,\dots,2^{k}-1\}$ we have
\begin{equation*}
 \delta(\mathbf{r^{(0)}};\mathbf{e_{Ko}^{(0)}})_{w}=\Big\lfloor\dfrac{0+q-1+w_{0}}{q}\Big\rfloor.
\end{equation*}
This is equal to $1$ if $w_{0}=1$, and $0$ if $w_{0}=0$. Hence, in this case,
\begin{equation*}
    \delta(\mathbf{r^{(0)};e_{Ko}^{(0)}})_{w}=r^{(1)}_{w}.
\end{equation*}
The transition weight from $\mathbf{r_{0}}$ to $\mathbf{r_{1}}$ is given by
\begin{equation*}
\e\left(\dfrac{\ell}{b}\left(q\Tilde{S}(\mathbf{r^{(1)}})-\Tilde{S}(\mathbf{r^{(0)}})\right)\right), 
\end{equation*}
where
$$
    \Tilde{S}(\mathbf{r^{(1)}})=-\sum_{w=0}^{2^{k}-1}(-1)^{s_{2}(w)}r_{w}^{(1)}=-\sum_{\substack{w=0\\w_{0}=1}}^{2^{k}-1}(-1)^{s_{2}(w)}
    =0$$
and
$$
    \Tilde{S}(\mathbf{r^{(0)}})=-\sum_{w=0}^{2^{k}-1}(-1)^{s_{2}(w)}r_{w}^{(0)}
    =0.$$
\underline{Case $1 \leq t <k-1$.} We choose $\mathbf{e_{Ko}^{(t)}}=(q-2,0,\dots,0,1,0,\dots,0)$, where $1$ is at the $(t+1)$ position.
For $w \in \{0,\dots,2^{k}-1\}$, we have 
$$
\delta(\mathbf{r^{(t)};e_{Ko}^{(t)}})_{w}=\Big\lfloor \dfrac{r^{(t)}_{w}+(1,\mathbf{w})\cdot \mathbf{e_{Ko}^{(t)}}}{q}\Big\rfloor
=\Big\lfloor\dfrac{r^{(t)}_{w}+q-2+w_{t}}{q}\Big\rfloor.
$$
Thus, $\delta(\mathbf{r^{(t)};e_{Ko}^{(t)}})_{w}=1$ if and only if $r_{w}^{(t)}=1$ and $w_{t}=1$. By definition of $r_{w}^{(t)}$,
\begin{equation*}
  \delta(\mathbf{r^{(t)};e_{Ko}^{(t)}})_{w}=1 \iff w_{0}=\dots=w_{t}=1. 
\end{equation*}
We deduce that
\begin{equation*}
    \mathbf{\delta(\mathbf{r^{(t)};e_{Ko}^{(t)}})}=\mathbf{r^{(t+1)}}.
\end{equation*}
According to~(\ref{Ptrans}), the weight attached to the transition from $\mathbf{r^{(t)}}$ to $\mathbf{r^{(t+1)}}$ is given by
\begin{equation*}
\e\left(\dfrac{\ell}{b}\left(q\Tilde{S}(\mathbf{r^{(t+1)}})-\Tilde{S}(\mathbf{r^{(t)}})\right)\right), 
\end{equation*}
where
$$
    \Tilde{S}(\mathbf{r^{(t+1)}})=-\sum_{w=0}^{2^{k}-1}(-1)^{s_{2}(w)}r_{w}^{(t+1)}=-\sum_{\substack{w=0\\w_{0}=\cdots=w_{t}=1}}^{2^{k}-1}(-1)^{s_{2}(w)}
    =0
$$
and
$$
    \Tilde{S}(\mathbf{r^{(t)}})=-\sum_{w=0}^{2^{k}-1}(-1)^{s_{2}(w)}r_{w}^{(t)}=-\sum_{\substack{w=0\\w_{0}=\cdots=w_{t-1}=1}}^{2^{k}-1}(-1)^{s_{2}(w)}
    =0.$$
Hence,
\begin{equation*}
\e\left(\dfrac{\ell}{b}\left(q\Tilde{S}(\mathbf{r^{(t+1)}})-\Tilde{S}(\mathbf{r^{(t)}})\right)\right)=1.
\end{equation*}
\underline{Let $t=k-1$.} We set $\mathbf{e_{Ko}^{(k-1)}}=(q-2,0,\dots,0,0,1)$. 

For $w \in \{0,\dots,2^{k}-1\}$, we have 
$$
\delta(\mathbf{r^{(t)};e_{Ko}^{(t)}})_{w}=\Big\lfloor \dfrac{r^{(k-1)}_{w}+(1,\mathbf{w})\cdot \mathbf{e_{Ko}^{(k-1)}}}{q}\Big\rfloor
=\Big\lfloor\dfrac{r^{(k-1)}_{w}+q-2+w_{k-1}}{q}\Big\rfloor.$$
Thus, $\delta(\mathbf{r^{(k-1)};e_{Ko}^{(k-1)}})_{w}=1$ if and only if $r_{w}^{(k-1)}=1$ and $w_{k-1}=1$. By definition of $r_{w}^{(k-1)}$ we have
\begin{equation*}
  \delta(\mathbf{r^{(k-1)};e_{Ko}^{(k-1)}})_{w}=1 \quad \iff \quad w_{0}=\dots=w_{k-1}=1. 
\end{equation*}
We deduce that
\begin{equation*}
    \mathbf{\delta(\mathbf{r^{(k-1)};e_{Ko}^{(k-1)}})}=\mathbf{r^{(k)}}.
\end{equation*}
Here, we have a change regarding the transition weights. 
While

$$
    \Tilde{S}(\mathbf{r^{(k-1)}})=-\sum_{w=0}^{2^{k}-1}(-1)^{s_{2}(w)}r_{w}^{(t)}=\sum_{\substack{w=0\\w_{0}=\cdots=w_{k-2}=1}}^{2^{k}-1}(-1)^{s_{2}(w)}
    =0$$
we here have
$$
    \Tilde{S}(\mathbf{r^{(k)}})=-\sum_{w=0}^{2^{k}-1}(-1)^{s_{2}(w)}r_{w}^{(t)}=-\sum_{\substack{w=0\\w_{0}=\cdots=w_{k-1}=1}}^{2^{k}-1}(-1)^{s_{2}(w)}
    =-(-1)^{k}
    =(-1)^{k+1}.$$
Thus, the weight attached to the transition from $\mathbf{r^{(k-1)}}$ to $\mathbf{r^{(k)}}$ equals
\begin{equation*}
\e\left(\dfrac{\ell}{b}\left(q\Tilde{S}(\mathbf{r^{(k)}})-\Tilde{S}(\mathbf{r^{(k-1)}})\right)\right)=\e\left((-1)^{k+1}\dfrac{q\ell}{b}\right).
\end{equation*}
\\\underline{Let $t=k$.} Here we set $\mathbf{e_{Ko}^{(k)}}=\mathbf{0}$. Thus,
$$
\delta(\mathbf{r^{(k)}};\mathbf{e_{Ko}^{(k)}})=\mathbf{0}=\mathbf{r^{(k+1)}}.$$
The weight attached to the transition from $\mathbf{r^{(k)}}$ to $\mathbf{r^{(k+1)}}$ is given by
\begin{equation*}
\e\left(\dfrac{\ell}{b}\left(q\Tilde{S}(\mathbf{r^{(k+1)}})-\Tilde{S}(\mathbf{r^{(k)}})\right)\right)=\e\left((-1)^{k}\dfrac{\ell}{b}\right).
\end{equation*}
Note that the weights $\tilde{S}$ only depend on $\mathbf{r^{(t)}}$ (not on ${\bf e}$). We therefore have 
\begin{align}
    p_{\mathbf{r^{(t)}},\mathbf{r^{(t+1)}}}&=\dfrac{|\{\mathbf{e} \in \{0,\dots,q-1\}^{k+1}\,:\, \mathbf{\delta(r^{(t)};e)}=\mathbf{r^{(t+1)}}\}|}{q^{k+1}},\qquad\qquad 0 \leq t<k,\nonumber\\ 
\label{Calcpoids}
p_{\mathbf{r^{(k-1)}},\mathbf{r^{(k)}}}&=\e\left((-1)^{k+1}\dfrac{q\ell}{b}\right)\dfrac{|\{\mathbf{e} \in \{0,\dots,q-1\}^{k+1}\,:\, \mathbf{\delta(r^{(k-1)};e)}=\mathbf{r^{(k)}}\}|}{q^{k+1}},\\
p_{\mathbf{r^{(k)}},\mathbf{r^{(k+1)}}}&=\e\left((-1)^{k}\dfrac{\ell}{b}\right)\dfrac{|\{\mathbf{e} \in \{0,\dots,q-1\}^{k+1}\,:\, \mathbf{\delta(r^{(k)};e)}=\mathbf{r^{(k+1)}}\}|}{q^{k+1}}.\nonumber
\end{align}

\subsubsection{Coding the paths by words}

Let $\mathbf{r_{0}},\mathbf{r_{j}} \in \mathfrak{R}$. Consider the set of all paths of length $j$ connecting $\mathbf{r_{0}}$ to $\mathbf{r_{j}}$. Proposition~\ref{ForteCo} shows that this set is non-empty. A path $\gamma$ belonging to this set can be described by words $\omega_{\gamma}$ of length $j$ over the alphabet $\{0,1,\ldots, q^{k+1}-1\}$ where we identify $\mathbf{e^{(i)}}\in \{0,\ldots,q-1\}^{k+1}$ to the letter $\sum_{j=0}^{k+1} e^{(i)}_j q^j$. Note that there might be several different words that describe the same path. 
We illustrate the word by
\begin{center}
\begin{tikzpicture}
    \node at (0.55,0.55){$\omega_{\gamma}\,:\,$};
\end{tikzpicture}
\begin{tikzpicture}
    \draw (0,0) rectangle (1.1,1.1); 
    \node  at (0.55,0.55) {$\mathbf{e^{(0)}}$};
\end{tikzpicture}
\begin{tikzpicture}
    \draw (0,0) rectangle (1.1,1.1); 
    \node  at (0.55,0.55) {$\mathbf{e^{(1)}}$};
\end{tikzpicture}
\begin{tikzpicture} 
    \node at (0.55,0.55) {$\cdots$};
\end{tikzpicture}
\begin{tikzpicture}
    \draw (0,0) rectangle (1.1,1.1); 
    \node  at (0.55,0.55) {$\mathbf{e^{(j)}}$};
\end{tikzpicture}
\end{center}
which relates to the path
\begin{equation*}
\mathbf{r_{0}} \longrightarrow \mathbf{r_{1}} \longrightarrow \dots \longrightarrow \mathbf{r_{j-1}}\longrightarrow \mathbf{r_{j}},
\end{equation*}
via $$\mathbf{r_{1}}=\mathbf{\delta(r_{0};e^{(0)})}, \quad \dots,\quad  \mathbf{r_{j}}=\mathbf{\delta(r_{j-1};e^{(j-1)})}.$$ We now define the concatenation of two paths.
\begin{de}
Let $\omega$ and $\omega'$ be two words of length $\alpha$ and $\beta$, respectively.
\begin{center}
\begin{tikzpicture}
    \node at (0.55,0.55){$\omega\,:\,$};
\end{tikzpicture}
\begin{tikzpicture}
    \draw (0,0) rectangle (1.1,1.1); 
    \node  at (0.55,0.55) {$\mathbf{e^{(0)}}$};
\end{tikzpicture}
\begin{tikzpicture}
    \draw (0,0) rectangle (1.1,1.1); 
    \node  at (0.55,0.55) {$\mathbf{e^{(1)}}$};
\end{tikzpicture}
\begin{tikzpicture} 
    \node at (0.55,0.55) {$\cdots$};
\end{tikzpicture}
\begin{tikzpicture}
    \draw (0,0) rectangle (1.1,1.1); 
    \node  at (0.55,0.55) {$\mathbf{e^{(\alpha)}}$};
\end{tikzpicture}
\end{center}
\begin{center}
\begin{tikzpicture}
    \node at (0.55,0.55){$\omega'\,:\,$};
\end{tikzpicture}
\begin{tikzpicture}
    \draw (0,0) rectangle (1.1,1.1); 
    \node  at (0.55,0.55) {$\mathbf{e'^{(0)}}$};
\end{tikzpicture}
\begin{tikzpicture}
    \draw (0,0) rectangle (1.1,1.1); 
    \node  at (0.55,0.55) {$\mathbf{e'^{(1)}}$};
\end{tikzpicture}
\begin{tikzpicture} 
    \node at (0.55,0.55) {$\cdots$};
\end{tikzpicture}
\begin{tikzpicture}
    \draw (0,0) rectangle (1.1,1.1); 
    \node  at (0.55,0.55) {$\mathbf{e'^{(\beta)}}$};
\end{tikzpicture}
\end{center}
We define $\omega \circ \omega'$ by
\begin{center}
\begin{tikzpicture}
    \node at (0.55,0.55){$\omega\circ \omega'\,:\,$};
\end{tikzpicture}
\begin{tikzpicture}
    \draw (0,0) rectangle (1.1,1.1); 
    \node  at (0.55,0.55) {$\mathbf{e^{(0)}}$};
\end{tikzpicture}
\begin{tikzpicture}
    \draw (0,0) rectangle (1.1,1.1); 
    \node  at (0.55,0.55) {$\mathbf{e^{(1)}}$};
\end{tikzpicture}
\begin{tikzpicture} 
    \node at (0.55,0.55) {$\cdots$};
\end{tikzpicture}
\begin{tikzpicture}
    \draw (0,0) rectangle (1.1,1.1); 
    \node  at (0.55,0.55) {$\mathbf{e^{(\alpha)}}$};
\end{tikzpicture}
\begin{tikzpicture}
    \draw (0,0) rectangle (1.1,1.1); 
    \node  at (0.55,0.55) {$\mathbf{e'^{(0)}}$};
\end{tikzpicture}
\begin{tikzpicture}
    \draw (0,0) rectangle (1.1,1.1); 
    \node  at (0.55,0.55) {$\mathbf{e'^{(1)}}$};
\end{tikzpicture}
\begin{tikzpicture} 
    \node at (0.55,0.55) {$\cdots$};
\end{tikzpicture}
\begin{tikzpicture}
    \draw (0,0) rectangle (1.1,1.1); 
    \node  at (0.55,0.55) {$\mathbf{e'^{(\beta)}}$};
\end{tikzpicture}
\end{center}
In a similar manner, we define, for a word $\omega$ and for $n \in \N$, the word $\omega^{n}$ by
\begin{equation*}
\omega^{n}=
\begin{cases}
    \omega\circ \omega, & \text{if }\,  n=2; \\
    \omega^{n-1}\circ \omega, & \text{if}\, n\geq3.
\end{cases}
\end{equation*}
\end{de}

Let us define some words which will be useful in the argument. First, we define the trivial word of length $1$ $\omega_{0}$ by 
\begin{center}
\begin{tikzpicture}
    \node at (0.55,0.55) {$\omega_{0}\,:\,$};
\end{tikzpicture}
\begin{tikzpicture}
    \draw (0,0) rectangle (1.1,1.1); 
    \node  at (0.55,0.55){$\mathbf{0}$};
\end{tikzpicture}
\end{center}
When $\mathbf{r_0}=\mathbf{0}$ then this word is related to the \textit{first} (trivial) path from $\mathbf{0}$ to itself, and $\omega_{0}^{k+1}$
encodes the path consisting of $(k+1)$ (\textit{little}) loops at $\mathbf{0}$.
Similarly, we define the word $\omega_{1}$ with respect to the \textit{second} path, which is linked to Konieczny's path~\cite{Kon} using the previous definition of $\mathbf{e_{Ko}^{(t)}}$:
\begin{center}
\begin{tikzpicture}
    \node at (0.55,0.55) {$\omega_{1}\,:\,$};
\end{tikzpicture}
\begin{tikzpicture}
    \draw (0,0) rectangle (1.1,1.1); 
    \node  at (0.55,0.55){$\mathbf{e_{Ko}^{(0)}}$};
\end{tikzpicture}
\begin{tikzpicture}
    \draw (0,0) rectangle (1.1,1.1); 
    \node  at (0.55,0.55){$\mathbf{e_{Ko}^{(1)}}$};
\end{tikzpicture}
\begin{tikzpicture} 
    \node at (0.55,0.55) {$\cdots$};
\end{tikzpicture}
\begin{tikzpicture}
    \draw (0,0) rectangle (1.1,1.1); 
    \node  at (0.55,0.55) {$\mathbf{e_{Ko}^{(k)}}$};
\end{tikzpicture}
\end{center}
Let us define the fundamental word of length $(K+b(k+1))$
\begin{center}
\begin{tikzpicture}
    \node at (0.55,0.55) {$\omega_{0}^{K+b(k+1)}\,:\,$};
\end{tikzpicture}
\begin{tikzpicture}
    \draw (0,0) rectangle (1.1,1.1); 
    \node  at (0.55,0.55){$\mathbf{0}$};
\end{tikzpicture}
\begin{tikzpicture} 
    \node at (0.55,0.55) {$\cdots$};
\end{tikzpicture}
\begin{tikzpicture}
    \draw (0,0) rectangle (1.1,1.1); 
    \node  at (0.55,0.55) {$\mathbf{0}$};
\end{tikzpicture}
\begin{tikzpicture}
    \draw (0,0) rectangle (1.1,1.1); 
    \node  at (0.55,0.55) {$\mathbf{0}$};
\end{tikzpicture}
\begin{tikzpicture}
     \draw[line width=2pt] (0.55,0)--(0.55,1.1);
\end{tikzpicture}
\begin{tikzpicture}
    \draw (0,0) rectangle (1.1,1.1); 
    \node  at (0.55,0.55){$\mathbf{0}$};
\end{tikzpicture}
\begin{tikzpicture} 
    \node at (0.55,0.55) {$\cdots$};
\end{tikzpicture}
\begin{tikzpicture}
    \draw (0,0) rectangle (1.1,1.1); 
    \node  at (0.55,0.55) {$\mathbf{0}$};
\end{tikzpicture}
\end{center}
The first $K$ $\mathbf{0}$'s appearing on the left side in $\omega_{0}^{K+b(k+1)}$ (until the bold vertical line) indicate the ``free fall'' of our path, i.e. the path leading to the vertex $\mathbf{0}$. The $b(k+1)$ other $\mathbf{0}$'s indicate the \textit{first} (trivial) path from $\mathbf{0}$ to itself, defined in~\eqref{0path}.
\vspace{0.1cm}

\subsubsection{Partitioning of the set of the words}
The set $W^{*}$ of words of length $j\geq K+b(k+1)$ which \textit{do not contain} the subword $\omega_{0}^{K+b(k+1)}$  satisfies the inequality
\begin{equation}
\label{cardW}
    |W^{*}| \leq (q^{(k+1)(K+b(k+1))}-1)^{\frac{j}{K+b(k+1)}}.
\end{equation}
Indeed, when subdividing a word of length $j$ into contiguous subwords of length $K+b(k+1)$ (leaving a possibly short subword at the end), each word not containing the word $\omega_{0}^{K+b(k+1)}$ must necessarily not contain $\omega_{0}^{K+b(k+1)}$ as these subwords. There are $q^{(k+1)(K+b(k+1))}-1$ different choices of words other than $\omega_{0}^{K+b(k+1)}$ per subword, and there are fewer than $j/(K+b(k+1))$ full subwords.

In what follows, we will focus on words containing $\omega_{0}^{K+b(k+1)}$, those that do not contain this subword will be trivially estimated.
\\We have
\begin{equation*}
    p_{\mathbf{r_{0}},\mathbf{r_{j}}}^{(j)}=\dfrac{1}{q^{(k+1)j}}\sum_{\substack{\mathbf{e}=(\mathbf{e^{(1)}},\dots,\mathbf{e^{(j)}}) \in (\{0,\dots,q-1\}^{k+1})^{j}\\\mathbf{r_{j}}=\mathbf{\delta_{e^{(j)}}\circ\dots\circ \delta_{e^{(1)}}(r_{0})}}}W(\mathbf{e}),
\end{equation*}
where
\begin{equation}
\label{Wecriture}
    W(\mathbf{e})=\e\left(\dfrac{\ell}{b}\sum_{t=1}^{j}\left(q\Tilde{S}(\mathbf{\delta_{e^{(t)}}\circ\dots\circ \delta_{e^{(1)}}(r_{0})})-\Tilde{S}(\mathbf{\delta_{e^{(t-1)}}\circ\dots\circ \delta_{e^{(1)}}(r_{0})})\right)\right),
\end{equation}
with the initialization, for $t=1$:
\begin{equation*}
    \mathbf{\delta_{e^{(t-1)}}\circ\dots\circ \delta_{e^{(1)}}(r_{0})}=\mathbf{r_{0}}.
\end{equation*}
Let us define $F(\mathbf{r_{j}})$ ($=F(\mathbf{r_{0}},\mathbf{r_{j}})$, to avoid cumbersome notation we will omit the dependency in $\mathbf{r_{0}}$) the set of the elements $\mathbf{e}=(\mathbf{e^{(1)}},\dots, \mathbf{e^{(j)}})$ such that there exists $1 \leq m \leq j-K-b(k+1)+1$ such that $\mathbf{e^{(m)}}=\cdots=\mathbf{e^{(m+K+b(k+1)-1)}}=\mathbf{0}$. For $0<m\leq j-K-b(k+1)+1$, we note $F_{m}(\mathbf{r_{j}}) \subset F(\mathbf{r_{j}})$ the set of the elements $\mathbf{e}=(\mathbf{e^{(1)}},\dots, \mathbf{e^{(j)}})$ such that $m$ is the smallest integer satisfying $\mathbf{e^{(m)}}=\cdots=\mathbf{e^{(m+K+b(k+1)-1)}}=\mathbf{0}$. With these definitions, we have the partition
\begin{equation*}
    F(\mathbf{r_{j}})=\biguplus_{m=1}^{j-K-b(k+1)}F_{m}(\mathbf{r_{j}}).
\end{equation*}
In the following subsection, we will associate to each ${\bf e}$ containing the subword $\omega_0^{K+b(k+1)}$, $(b-1)$  words $\varphi_1 ({\bf e}),\ldots , \varphi _{b-1} ({\bf e})$ such that for $\lambda=1,\ldots, b-1$, $\varphi_{\lambda}$ replaces the  $\lambda$ first subwords $\omega_0^{k+1}$ by $\lambda$ subwords $\omega_1$. Roughly speaking, $\varphi_{\lambda}$ replaces $\lambda$ \textit{little} loops at $\mathbf{0}$ by $\lambda$ \textit{great} loops at $\mathbf{0}$.  Next we will get some cancellation between the weights corresponding to all these words.
\subsubsection{Switching and coupling with Konieczny's path}

\begin{de}[$\lambda$-switching function]
\label{Switch}
For $\lambda \in \{1,\dots,b-1\}$, we define the function $\varphi_{\lambda} \,:\, F(\mathbf{r_{j}}) \longrightarrow (\{0,\dots,q-1\}^{k+1})^{j}$ as follows:
\\For $\mathbf{e}=(\mathbf{e^{(1)}},\ldots,\mathbf{e^{(j)}}) \in F_{m}(\mathbf{r_{j}})$, we set $\varphi_{\lambda}(\mathbf{e})=(\mathbf{g^{(1)}},\ldots,\mathbf{g^{(j)}})$, with
\begin{center}
$
\begin{cases}
 \mathbf{g^{(m+K+t)}}=\mathbf{e_{Ko}^{t \bmod{(k+1)}}}\ &\text{if }\, t \in \{0,\dots,\lambda(k+1)-1\};\\
 \mathbf{g^{(t)}}=\mathbf{e^{(t)}}\ &\text{if }\, t \in \{0,\dots,m-1\}\cup \{m+K+b(k+1),\ldots,j\},
\end{cases}
$
\end{center}
where $\mathbf{e_{Ko}^{(0)}},\ldots,\mathbf{e_{Ko}^{(k)}}$ are the vectors defined in the beginning of Section~\ref{effectiveeta0} and corresponding to \textit{Konieczny's path}.
\end{de}
We write
\begin{equation}
\label{Cassen2}
    p_{\mathbf{r_{0}},\mathbf{r_{j}}}^{(j)}=\dfrac{1}{q^{(k+1)j}}\sum_{\mathbf{e} \in F(\mathbf{r_{j}})\cup \varphi_{1}(F(\mathbf{r_{j}}))\cup\dots\cup \varphi_{b-1}(F(\mathbf{r_{j}}))}W(\mathbf{e})+\dfrac{1}{q^{(k+1)j}}\sum_{\mathbf{e} \notin F(\mathbf{r_{j}})\cup \varphi_{1}(F(\mathbf{r_{j}}))\cup\dots\cup \varphi_{b-1}(F(\mathbf{r_{j}}))}W(\mathbf{e}).\\
\end{equation}
For all $1\leq m \leq j-K-b(k+1)+1$, let us denotes $H_{m}(\mathbf{r_{j}})$ the set of the elements $\mathbf{e}=(\mathbf{e^{(1)}},\ldots,\mathbf{e^{(j)}}) \in  F(\mathbf{r_{j}})\cup \varphi_{1}(F(\mathbf{r_{j}}))\cup\dots\cup \varphi_{b-1}(F(\mathbf{r_{j}}))$ such that $m$ is the smallest integer such that the word linked to $(\mathbf{e^{(m)}},\dots, \mathbf{e^{(m+K+b(k+1)-1)}})$ belongs to the set 
\begin{equation*}
    \{\omega_{0}^{K+k+1},\omega_{0}^{K}\circ \omega_{1} \circ \omega_{0}^{(k+1)(b-1)},\dots, \omega_{0}^{K} \circ \omega_{1}^{b-1}\circ \omega_{0}\}.
\end{equation*}
With this definition, we have the partition
\begin{equation*}
    F(\mathbf{r_{j}})\cup \varphi_{1}(F(\mathbf{r_{j}}))\cup\dots\cup \varphi_{b-1}(F(\mathbf{r_{j}}))=\biguplus\limits_{m=1}^{j-K-b(k+1)+1}H_{m}(\mathbf{r_{j}}).
\end{equation*}
Thus,
\begin{align*}
    \sum_{\mathbf{e} \in  F(\mathbf{r_{j}})\cup \varphi_{1}(F(\mathbf{r_{j}}))\cup\cdots\cup \varphi_{b-1}(F(\mathbf{r_{j}}))}W(\mathbf{e})&=\sum_{m=1}^{j-K-b(k+1)+1}\sum_{\mathbf{e} \in H_{m}(\mathbf{r_{j}})}W(\mathbf{e})\\
    &=\sum_{m=1}^{j-K-b(k+1)+1}\sum_{\mathbf{e} \in F_{m}(\mathbf{r_{j}})}(W(\mathbf{e})+W(\varphi_{1}(\mathbf{e}))+\dots+W(\varphi_{b-1}(\mathbf{e}))).
\end{align*}
Moreover, we have the following lemma.

\begin{lemme}
For all $\mathbf{e} \in F_{m}(\mathbf{r_{j}})$ and for $1\leq m \leq j-K-b(k+1)+1$, we have 
\begin{equation}
\label{sumpoids}
W(\mathbf{e})+W(\varphi_{1}(\mathbf{e}))+\dots+W(\varphi_{b-1}(\mathbf{e}))=0.
\end{equation}
\end{lemme}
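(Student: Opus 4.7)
The plan is to compare $W(\mathbf{e})$ with each $W(\varphi_\lambda(\mathbf{e}))$ by first making the internal structure of the weight visible through a partial telescoping, and then reducing the claim to the vanishing of a geometric sum of $b$-th roots of unity.

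First, I would rewrite $W(\mathbf{e})$ from \eqref{Wecriture} by splitting the two index-shifted sums to obtain
\begin{equation*}
W(\mathbf{e})=\e\!\left(\frac{\ell}{b}\Big(q\tilde S(\mathbf{r_{j}})-\tilde S(\mathbf{r_{0}})+(q-1)\sum_{t=1}^{j-1}\tilde S(\mathbf{r^{(t)}})\Big)\right).
\end{equation*}
By Definition~\ref{Switch}, the map $\varphi_{\lambda}$ only alters the letters $\mathbf{e^{(t)}}$ for $m+K\le t\le m+K+\lambda(k+1)-1$, so the endpoints $\mathbf{r_{0}}$ and $\mathbf{r_{j}}$ are preserved. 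Consequently the ratio $W(\varphi_{\lambda}(\mathbf{e}))/W(\mathbf{e})$ depends only on the difference between the middle sums of $\tilde S$-values along the two trajectories.

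Next, I would pin down exactly where the two trajectories differ. Since $\mathbf{e}\in F_{m}(\mathbf{r_{j}})$ contains the subword $\omega_{0}^{K+b(k+1)}$ starting at position $m$, the free-fall Lemma~\ref{FreeFall} forces $\mathbf{r^{(m+K-1)}}=\mathbf{0}$ for both $\mathbf{e}$ and $\varphi_{\lambda}(\mathbf{e})$. From position $m+K$ on, $\mathbf{e}$ applies $b(k+1)$ zero letters and stays at $\mathbf{0}$, while $\varphi_{\lambda}(\mathbf{e})$ runs $\lambda$ successive Konieczny cycles of length $k+1$ before reverting to $(b-\lambda)(k+1)$ zero letters. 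Each Konieczny cycle departs from and returns to $\mathbf{0}$, so beyond position $m+K+\lambda(k+1)-1$ the trajectories coincide again. Within one Konieczny block, the computation done just before Definition~\ref{Switch} yields $\tilde S(\mathbf{r^{(t)}_{Ko}})=0$ for $t\neq k$ and $\tilde S(\mathbf{r^{(k)}_{Ko}})=(-1)^{k+1}$. Therefore each of the $\lambda$ blocks contributes a net excess of $(-1)^{k+1}$ to $\sum_{t=1}^{j-1}\tilde S$, which gives
\begin{equation*}
W(\varphi_{\lambda}(\mathbf{e}))=W(\mathbf{e})\cdot\e\!\left(\frac{\ell(q-1)(-1)^{k+1}}{b}\lambda\right).
\end{equation*}

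Finally, setting $\zeta:=\e(\ell(q-1)(-1)^{k+1}/b)$, the left-hand side of \eqref{sumpoids} equals $W(\mathbf{e})\sum_{\lambda=0}^{b-1}\zeta^{\lambda}$. Hypothesis \eqref{hypob} gives $(b,q-1)=1$ and $0<\ell<b$, so $b\nmid\ell(q-1)$, i.e.\ $\zeta\neq 1$; since $\zeta^{b}=1$, the geometric sum vanishes, which establishes the lemma (with $\varphi_{0}(\mathbf{e})=\mathbf{e}$ understood). The only delicate point is the bookkeeping in the middle paragraph --- correctly identifying which positions are genuinely modified by $\varphi_{\lambda}$, and matching them block-by-block with Konieczny's trajectory so that the $\tilde S$-computations from the construction of $\omega_{1}$ apply verbatim; everything else is a formal consequence of $q$-multiplicativity and the root-of-unity identity.
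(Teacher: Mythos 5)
Your proof is correct and takes essentially the same route as the paper: both compare $W(\mathbf{e})$ and $W(\varphi_\lambda(\mathbf{e}))$ by isolating the segment altered by $\varphi_\lambda$, compute that each Konieczny block contributes a multiplicative factor $\e\bigl((-1)^{k+1}(q-1)\ell/b\bigr)$, and then invoke the vanishing of the geometric sum of $b$-th roots of unity, using $(b,q-1)=1$ and $0<\ell<b$. Your telescoping rewriting of the exponent in terms of $(q-1)\sum_{t=1}^{j-1}\tilde S(\mathbf{r^{(t)}})$ is a cleaner packaging of the paper's statement that the transition weights outside the modified window are common to all $\varphi_\lambda(\mathbf{e})$.
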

\begin{proof}
We prove the case $2\leq m \leq j-K-b(k+1)$. The cases $m=1$ and $m=j-K-b(k+1)+1$ are proven in the same way. The only difference lies in the number of parts unchanged by our function (only one unchanged part). 
\\According to~\eqref{Wecriture} and the definition of $\varphi_\lambda$, we have, for $2\leq m \leq j-K-b(k+1)$, $\lambda \in \{0,\dots,b-1\}$ and for $t \in \{0,\dots,m-1\}\cup \{m+K+b(k+1),\ldots,j\}$,
\begin{align}
&q\Tilde{S}(\mathbf{\delta_{\varphi_\lambda(e^{(t)})}\circ\dots\circ \delta_{\varphi_\lambda(e^{(1)})}(r_{0})})-\Tilde{S}(\mathbf{\delta_{\varphi_\lambda(e^{(t-1)})}\circ\dots\circ \delta_{\varphi_\lambda(e^{(1)})}(r_{0})})\nonumber\\
&\qquad =q\Tilde{S}(\mathbf{\delta_{e^{(t)}}\circ\dots\circ \delta_{e^{(1)}}(r_{0})})-\Tilde{S}(\mathbf{\delta_{e^{(t-1)}}\circ\dots\circ \delta_{e^{(1)}}(r_{0})}).\label{pointfixe}
\end{align}
These two quantities, which are unchanged by the function $\varphi_\lambda$, will constitute the common weights (of modulus 1) for the numbers $W(\mathbf{e}),W(\varphi_{1}(\mathbf{e})),\ldots,W(\varphi_{b-1}(\mathbf{e}))$. They will not intervene in the calculation of the quantity
\begin{equation*}
|W(\mathbf{e})+W(\varphi_{1}(\mathbf{e}))+\dots+W(\varphi_{b-1}(\mathbf{e}))|. 
\end{equation*}
Moreover, the part changed by the function $\varphi_\lambda$ (which corresponds to $\lambda$ Konieczny's paths) 
has, according to Definition~\ref{Switch} and (\ref{Calcpoids}), a weight which is equal to $$\e\left((-1)^{k+1}\lambda(q-1)\ell/b\right),$$
since one Koniezcny's path has a weight equal to $\e\left((-1)^{k+1}(q-1)\ell/b\right)$.
We then deduce that

\begin{align*}
    |W(\mathbf{e})+W(\varphi_{1}(\mathbf{e}))+&\dots+W(\varphi_{b-1}(\mathbf{e}))|\\&=\Big|1+\e\left((-1)^{k+1}\dfrac{(q-1)\ell}{b}\right)+\e\left((-1)^{k+1}\dfrac{2(q-1)\ell}{b}\right)+\cdots+ \e\left((-1)^{k+1}\dfrac{(b-1)(q-1)\ell}{b}\right)\Big|\\
    &=0.
\end{align*}
\end{proof}
Since $(b,q-1)=1$ and $0<\ell<b$ (see~\eqref{hypob}), the number $(q-1)\ell/b$ is not an integer.
Hence,
\begin{equation*}
     \sum_{\mathbf{e} \in  F(\mathbf{r_{j}})\cup \varphi_{1}(F(\mathbf{r_{j}}))\cup\cdots\cup \varphi_{b-1}(F(\mathbf{r_{j}}))}W(\mathbf{e})=0.
\end{equation*}

\subsubsection{Final estimates}

We write
$F=\bigcup\limits_{\mathbf{r_{j}}}F(\mathbf{r_{j}})$ and reconsider equation~(\ref{Cassen2}). We trivially bound the second sum in~(\ref{Cassen2}). We have, according to the inequality~(\ref{cardW}),
\begin{align*}
    \sum_{\mathbf{r_{j}}\in \mathfrak{R}}\Big|\sum_{\mathbf{e} \notin F(\mathbf{r_{j}})\cup \varphi_{1}(F(\mathbf{r_{j}}))\cup\dots\cup \varphi_{b-1}(F(\mathbf{r_{j}}))}W(\mathbf{e})\Big| &\leq \sum_{\mathbf{r_{j}}\in \mathfrak{R}}\;\sum_{\mathbf{e} \notin F(\mathbf{r_{j}})\cup \varphi_{1}(F(\mathbf{r_{j}}))\cup\dots\cup \varphi_{b-1}(F(\mathbf{r_{j}}))}1\\
    &\leq \Big|\sum_{\mathbf{e} \notin F\cup \varphi_{1}(F)\cup\dots\cup \varphi_{b-1}(F)}1\Big|\\
    &\leq (q^{(k+1)(K+b(k+1))}-1)^{\frac{j}{K+b(k+1)}}.
\end{align*}
We insert this information in~(\ref{Cassen2}) and get 
\begin{equation*}
\sum\limits_{\mathbf{r_{j}}\in \mathfrak{R}}|p^{(j)}_{\mathbf{r_{0}},\mathbf{r_{j}}}| \leq \dfrac{1}{q^{(k+1)j}}(q^{(k+1)(K+b(k+1))}-1)^{\frac{j}{K+b(k+1)}}.
\end{equation*}

At this point, we take
\begin{equation*}
    j=K+b(k+1).
\end{equation*}
By taking the maximum over $\mathbf{r_{0}} \in \mathfrak{R}$, we obtain
\begin{equation}
\label{MajoM}
M=\max\limits_{\mathbf{r_{0}}\in \mathfrak{R}}\left(\sum\limits_{\mathbf{r_{j}}\in \mathfrak{R}}|p^{(j)}_{\mathbf{r_{0}},\mathbf{r_{j}}}|\right) \leq 1-\dfrac{1}{q^{(k+1)(K+b(k+1))}}.
\end{equation}

Using inequality~(\ref{veroinneq}), we get, for each term of the sequence $(v_{\rho})_{\rho \geq 0}=(\max\limits_{\mathbf{r}\in \mathfrak{R}}|A(\rho,\mathbf{r})|)_{\rho \geq 0}$, the bound   
\begin{align*}
v_{\rho} &\leq v_{\rho-j}\max\limits_{\mathbf{r_{0}}\in \mathfrak{R}}\left(\sum\limits_{\mathbf{r_{j}}\in \mathfrak{R}}|p^{(j)}_{\mathbf{r_{0}},\mathbf{r_{j}}}|\right)\\
&\leq M v_{\rho-j}.
\end{align*}
Thus, by iterating, and using the fact that for all $r \geq 0$, $v_r \leq 1$, we get
\begin{equation}
\label{vineq}
    v_{\rho} \leq M^{\lfloor\frac{\rho}{K+b (k+1)}\rfloor}
\end{equation}

and
\[
A(\rho, {\bf r_0})\leq \left ( 1-\frac{1}{q^{(k+1)(K+b(k+1))}}\right )^{\lfloor\frac{\rho}{K+b (k+1)}\rfloor}
\leq 2 \left ( 1-\frac{1}{q^{(k+1)(K+b(k+1))}}\right )^{\frac{\rho}{K+b (k+1)}}\ll 
q^{ -\eta_0\rho},
\]
with
\[ 
\eta_0 =-\frac{\log (1-\frac{1}{q^{(k+1)(K+b(k+1))}})}{(\log q) (K+b(k+1))}
.
\]
Using the inequality $\log(1-x) \leq -x$ (valid for all $0\leq x<1$), we have 
\[
\eta_0  \geq \frac{1}{(\log q)(K+b(k+1))q^{(k+1)(K+b(k+1))}}.
\]

\section{Proof of Theorem~\ref{Thm2}}\label{Theo2.5}
Let $\rho_{2}\geq\rho_{1}>0$ and let $N, D>1$ be two integers
such that
\begin{equation*}
    N^{\rho_{1}} \leq D \leq N^{\rho_{2}}.
\end{equation*}
We recall that
\begin{equation*}
S_{0}(N,q^{\nu},\xi)=\sum\limits_{q^{\nu} \leq m <q^{\nu+1}}\max_{a \geq 0}\Big|\sum\limits_{0 \leq n < N}\e\left(\dfrac{\ell}{b}s_{q}(nm+a)\right)\e(n\xi)\Big|.
\end{equation*}
It is sufficient to show that there exists $\eta_{1}=\eta_{1}(\rho_1,\rho_2)>0$ and $C=C(\rho_1,\rho_2)>0$ such that
\begin{equation}
\label{butsuff}
\dfrac{S_{0}(N,q^{\nu},\xi)}{Nq^{\nu}} \leq CN^{-\eta_{1}}
\end{equation}

for $\nu \in \N$ satisfying $D<q^{\nu}\leq qD.$
Indeed, if we suppose  that the inequality~(\ref{butsuff}) is true, then we have
\begin{align*}
S_{0}(N,D,\xi)&=\sum\limits_{D\leq m <qD}\max_{a \geq 0}\Big|\sum\limits_{0 \leq n < N}\e\left(\dfrac{\ell}{b}s(nm+a)\right)\e(n\xi)\Big|\\
&\leq \sum_{q^{\nu-1} \leq m<q^{\nu}}\max_{a \geq 0}\Big|\sum\limits_{0 \leq n < N}\e\left(\dfrac{\ell}{b}s(nm+a)\right)\e(n\xi)\Big|+\sum_{q^{\nu} \leq m<q^{\nu+1}}\max_{a \geq 0}\Big|\sum\limits_{0 \leq n < N}\e\left(\dfrac{\ell}{b}s(nm+a)\right)\e(n\xi)\Big|\\
&\leq S_{0}(N,q^{\nu-1},\xi)+S_{0}(N,q^{\nu},\xi)\\
&\leq CN^{1-\eta_{1}}q^{\nu-1}+CN^{1-\eta_{1}}q^{\nu}\\
&\leq (q+1)C N^{1-\eta_{1}}D.
\end{align*}

We follow the proof method of~\cite{Spi}. As a first step we iterate the van der Corput inequality $k$ times. After that, the next step consists in rewriting the final expression as a Gowers norm. In the last step, we will choose the various parameters and use the Gowers norm estimate to conclude the proof.

\subsection{Step 1: Iterating the van der Corput inequality}

\subsubsection{The first iteration}
The first iteration of the van der Corput inequality allows to reduce the initial problem to a problem where $s_q$ is replaced by the periodic function $s_q^\lambda$ with
\begin{equation}
\label{defLambda}
\lambda>\nu+1,
\end{equation}
at the cost of an error generated by carry propagation (Lemma~\ref{PropaRetenue}). We rewrite the error term in a form that is convenient with respect to the iterations that we wish to perform.

\begin{lemme}
\label{VDC1}
Let $1<H_{0}\leq N$ and $\lambda>\nu+1$ be integers. We define for $h_0 \in \N$,
\begin{equation*}
S_{1}=S_1(h_0)=\sum_{{0 \leq n<N}}\e\left(\dfrac{\ell}{b}\left(\som{\lambda}((n+h_{0})m+a)-\som{\lambda}(nm+a)\right)\right)
\end{equation*}
and
\begin{equation}\label{exprE0}
E_{0}=\dfrac{3}{H_{0}}+\dfrac{4H_{0}}{q^{\lambda-\nu-1}}+\dfrac{8H_{0}}{N}.
\end{equation}
Then, we have
\begin{equation*}
\Big|S_{0}(N,q^{\nu},\xi)\Big|^{2} \leq (q^{\nu+1}N)^{2}E_{0}+\dfrac{2^{2}Nq^{\nu+1}}{H_{0}}\sum_{m=q^{\nu}}^{q^{\nu+1}}\max_{a \geq 0}\sum\limits_{1\leq h_{0}<H_{0}}
|S_{1}|.
\end{equation*}
\end{lemme}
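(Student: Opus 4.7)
The plan is to combine three standard ingredients in this order: Cauchy--Schwarz on the outer sum over $m$, the van der Corput inequality (Lemma~\ref{VDCC}) applied to the inner exponential sum, and the carry propagation lemma (Lemma~\ref{PropaRetenue}) to replace $s_q$ by its truncation $s_q^\lambda$.

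First, since the range $[q^\nu, q^{\nu+1})$ contains at most $q^{\nu+1}$ integers, Cauchy--Schwarz (together with the fact that the square commutes with $\max_a$) yields
\[
|S_0|^2 \le q^{\nu+1}\sum_{q^\nu \le m<q^{\nu+1}}\max_{a\ge 0}\Big|\sum_{0\le n<N}\e\big(\tfrac{\ell}{b}s_q(nm+a)\big)\,\e(n\xi)\Big|^2.
\]
For fixed $m$ and $a$, I would set $f(n) := \tfrac{\ell}{b} s_q(nm+a) + n\xi$ and apply Lemma~\ref{VDCC} with the parameter $H_0$; the linear contribution $(n+h_0)\xi - n\xi = h_0\xi$ produces only a unit-modulus factor which may be discarded, so the inner sum depends solely on the digit-sum difference $s_q((n+h_0)m+a) - s_q(nm+a)$. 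For each $1 \le h_0 < H_0$, I would then invoke Lemma~\ref{PropaRetenue} with $\alpha = m < q^{\nu+1}$, $r = h_0$, $\beta = a$, to replace each occurrence of $s_q$ by $s_q^\lambda$: the substitution fails for fewer than $h_0(Nm/q^\lambda + 2) \le h_0 N/q^{\lambda-\nu-1} + 2h_0$ values of $n$, each such term contributing at most $2$ in modulus to the difference of the exponential sums, and the extension of the summation range from $\{n : n+h_0 < N\}$ to $\{0 \le n < N\}$ (needed to produce the quantity $S_1(h_0)$ as defined in the statement) adds at most $h_0$ further unit-modulus terms.

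Combining these estimates, the per-$(m,a)$ bound takes the shape $\tfrac{2N^2}{H_0} + \tfrac{4N}{H_0}\sum_{1 \le h_0 < H_0}\big(|S_1(h_0)| + \tfrac{2h_0 N}{q^{\lambda-\nu-1}} + O(h_0)\big)$. A key observation is that the error quantities $O(h_0)$ and $2h_0 N/q^{\lambda-\nu-1}$ introduced by carry propagation and by the range extension are independent of $a$, so that taking $\max_a$ after isolating the $|S_1|$ terms produces exactly the structure $\sum_m\max_a\sum_{h_0}|S_1|$ required on the right-hand side of the claim. Using $\sum_{1 \le h_0 < H_0} h_0 \le H_0^2/2$, summing over $m$ (at most $q^{\nu+1}$ terms) and multiplying by the outer factor $q^{\nu+1}$ from Cauchy--Schwarz converts the total error into $(q^{\nu+1}N)^2 E_0$, with the constants $3/H_0$, $4H_0/q^{\lambda-\nu-1}$ and $8H_0/N$ chosen loose enough so that the first absorbs the extra $O(H_0)$ and $O(NH_0)$ contributions arising from the carry and the range-extension step. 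The main delicate point is precisely this bookkeeping, and in particular maintaining the correct ordering of $\max_a$ across the summations so that it appears between $\sum_m$ and $\sum_{h_0}$, as required.
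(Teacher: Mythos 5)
Your proposal is correct and follows the same route as the paper: Cauchy--Schwarz to open up the sum over $m$, then Lemma~\ref{VDCC} (noting that the constant phase $\e(h_0\xi)$ factors out), then the carry-propagation lemma and a range-extension to arrive at $S_1$, with the observation that the carry and range-extension error terms are independent of $a$ so that $\max_a$ can be interchanged correctly. The only slight difference is in the accounting of the exceptional terms from carry propagation: you include (correctly) a factor $2$ for each excluded $n$, whereas the paper records the error as $h_0(Nm/q^\lambda+3)$ rather than $h_0(2Nm/q^\lambda+5)$; this is a minor bookkeeping discrepancy on the paper's side which your more careful version absorbs harmlessly into the loosely chosen constants in $E_0$.
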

\begin{proof}
By the Cauchy--Schwarz inequality, we have
\begin{align*}
\Big|S_{0}(N,q^{\nu},\xi)\Big|^{2} &\leq (q^{\nu+1}-q^{\nu})\sum_{m=q^{\nu}}^{q^{\nu+1}}\left(\max_{a \geq 0}\Big|\sum\limits_{0\leq n < N}\e\left(\dfrac{\ell}{b}s_{q}(nm+a)\right)\e(n\xi)\Big|\right)^{2}\\
&\leq q^{\nu+1}\sum_{m=q^{\nu}}^{q^{\nu+1}}\left(\max_{a \geq 0}\Big|\sum\limits_{0\leq n < N}\e\left(\dfrac{\ell}{b}s_{q}(nm+a)\right)\e(n\xi)\Big|\right)^{2}.
\end{align*}
Let $1<H_{0}\leq N$ be an integer. We apply Lemma~\ref{VDCC} and get
\begin{align*}
\Big|\sum\limits_{0\leq n < N}\e\left(\dfrac{\ell}{b}s_{q}(nm+a)\right)\e(n\xi)\Big|^{2} &\leq \dfrac{2N^{2}}{H_{0}}+\dfrac{4N}{H_{0}}\sum\limits_{1\leq h_{0}<H_{0}}
\Big|\sum_{{0 \leq n<N-h_{0}}}\e\left(\dfrac{\ell}{b}\left(s_{q}((n+h_{0})m+a)-s_{q}(nm+a)\right)\right)\Big|\\
&\leq \dfrac{2N^{2}}{H_{0}}+\dfrac{4N}{H_{0}}\sum\limits_{1\leq h_{0}<H_{0}}\Big|\sum_{{0 \leq n<N}}\e\left(\dfrac{\ell}{b}\left(s_{q}((n+h_{0})m+a)-s_{q}(nm+a)\right)\right)\Big|\\
&\quad+\dfrac{4N}{H_{0}}\sum\limits_{1\leq h_{0}<H_{0}}h_{0}.
\end{align*}
Thus, by the carry propagation lemma (Lemma~\ref{PropaRetenue})  we get, for $\lambda>\nu+1$, 
\begin{align*}
\Big|\sum\limits_{0\leq n < N}\e\left(\dfrac{\ell}{b}s_{q}(nm+a)\right)\e(n\xi)\Big|^{2} &\leq \dfrac{2N^{2}}{H_{0}}+\dfrac{4N}{H_{0}}\sum\limits_{1\leq h_{0}<H_{0}}
\Big|\sum_{{0 \leq n<N}}\e\left(\dfrac{\ell}{b}\left(\som{\lambda}((n+h_{0})m+a)-\som{\lambda}(nm+a)\right)\right)\Big|\\
&\quad +\dfrac{4N}{H_{0}}\sum\limits_{1\leq h_{0}<H_{0}}h_{0}\left(\dfrac{Nm}{q^{\lambda}}+3\right).
\end{align*}
This implies 
\begin{align*}
\Big|\sum\limits_{0\leq n < N}\e\left(\dfrac{\ell}{b}s_{q}(nm+a)\right)\e(n\xi)\Big|^{2}&\leq \dfrac{2N^{2}}{H_{0}}+4NH_{0}\left(\dfrac{Nm}{q^{\lambda}}+3\right)\\
&\quad +\dfrac{4N}{H_{0}}\sum\limits_{1\leq h_{0}<H_{0}}
\Big|\sum_{{0 \leq n<N}}\e\left(\dfrac{\ell}{b}\left(\som{\lambda}((n+h_{0})m+a)-\som{\lambda}(nm+a)\right)\right)\Big|.
\end{align*}

 This leads to

\begin{equation*}
\Big|S_{0}(N,q^{\nu},\xi)\Big|^{2} \leq (q^{\nu+1}N)^{2}E_{0}+\dfrac{2^{2}Nq^{\nu+1}}{H_{0}}\sum_{m=q^{\nu}}^{q^{\nu+1}}\max_{a \geq 0}\sum\limits_{1\leq h_{0}<H_{0}}
|S_{1}|,
\end{equation*}
where
\begin{equation*}
S_{1}=\sum_{{0 \leq n<N}}\e\left(\dfrac{\ell}{b}\left(\som{\lambda}((n+h_{0})m+a)-\som{\lambda}(nm+a)\right)\right)
\end{equation*}
and
\begin{equation*}
E_{0}=\dfrac{3}{H_{0}}+\dfrac{4H_{0}}{q^{\lambda-\nu-1}}+\dfrac{8H_{0}}{N}.
\end{equation*}
\end{proof}
\subsubsection{The further iterations}\label{furtheriterations}
Following Spiegelhofer~\cite{Spi}, we continue to apply the van der Corput inequality.
At this point, for the $(k-1)$ next applications of the van der Corput inequality, we will use Lemma~\ref{VDCG}. Let $\mu>0$, and another parameter
\begin{equation}
\label{defSigma}
 \sigma<\mu.   
\end{equation}
For $q^{\nu}\leq m\leq q^{\nu+1}$, we define 
\begin{equation}
\label{defK1}
K_{1}=Q_{q^{2\mu+2\sigma}}\left(\dfrac{m}{q^{2\mu}}\right)Q_{q^{\sigma}}\left(\dfrac{P_{q^{2\mu+2\sigma}}(m/q^{2\mu})}{q^{(k-2)\mu}}\right) \,,\qquad M_{1}=P_{q^{2\mu+2\sigma}}\left(\dfrac{m}{q^{2\mu}}\right)Q_{q^{\sigma}}\left(\dfrac{P_{q^{2\mu+2\sigma}}(m/q^{2\mu})}{q^{(k-2)\mu}}\right)
\end{equation}
and for $2\leq i<k-1$, we define\\

\begin{equation}\label{defKi}
K_{i}=Q_{q^{\mu+2\sigma}}\left(\dfrac{m}{q^{(i+1)\mu}}\right)Q_{q^{\sigma}}\left(\dfrac{P_{q^{\mu+2\sigma}}(m/q^{(i+1)\mu})}{q^{(k-1-i)\mu}}\right)\,,\qquad M_{i}=P_{q^{\mu+2\sigma}}\left(\dfrac{m}{q^{(i+1)\mu}}\right)Q_{q^{\sigma}}\left(\dfrac{P_{q^{\mu+2\sigma}}(m/q^{(i+1)\mu})}{q^{(k-i-1)\mu}}\right).
\end{equation}
Finally, we define
\begin{equation}
\label{defKk-1}
K_{k-1}=Q_{q^{\mu+\sigma}}\left(\dfrac{m}{q^{k\mu}}\right)\,,\qquad  M_{k-1}=P_{q^{\mu+\sigma}}\left(\dfrac{m}{q^{k\mu}}\right).
\end{equation}
The numbers $K_{1},\dots,K_{k-1}$  will be the shifts appearing in Lemma~\ref{VDCG}.
The choice of these shifts is motivated by our wish to approximate, during the numerous digit cutting phases, the emerging fractions by integers (multiples of powers of $q$), using Proposition \ref{ApproxFarey}. Classical results on Farey fractions will later allow us to count the number of repetitions of an element modulo $q^{\rho}$, by Lemma \ref{CardA}.

To begin with, since $Q_{n}(\alpha) \leq n$ for all $\alpha \in \R$ and for all $n>0$, we see that
\begin{equation}
\label{MajoKi}
    K_{1} \leq q^{2\mu+3\sigma}
\end{equation}
and for all $2 \leq i \leq k-1$, we have
\begin{equation}
\label{MajoKi2}
K_{i} \leq q^{\mu+3\sigma}.
\end{equation}
The second iteration of the van der Corput inequality is described in the following lemma. This time, we use Lemma~\ref{VDCG}.
\begin{lemme}
\label{VDC2}
We take the same notations as in Lemma~\ref{VDC1}, with the additional condition $E_{0} \leq 1$. Let $1<H_{1}\leq N$ be an integer. We define for $h_0,h_1 \in \N$,
\begin{equation*}
S_{2}=S_2(h_0,h_1)=\sum_{0\leq n<N}\e\left(\dfrac{\ell}{b}\sum_{w_{0},w_{1}\in \{0,1\}}(-1)^{w_{0}+w_{1}}\som{\lambda}((n+w_{0}h_{0}+w_{1}h_{1}K_{1})m+a)\right)
\end{equation*}
and
\begin{equation*}
E_{1}=\dfrac{3\times 2^{5}H_{1}q^{2\mu+3\sigma}}{N}.
\end{equation*}
Then, we have
\begin{equation*}
\Big|S_{0}(N,q^{\nu},\xi)\Big|^{4} \leq 2(q^{\nu+1}N)^{4}(E_{0}+E_{1})+\dfrac{2^{6}N^{3}q^{3(\nu+1)}}{H_{0}H_{1}}\sum_{m=q^{\nu}}^{q^{\nu+1}}\max_{a \geq 0}\sum_{\substack{1 \leq h_{0}<H_{0}\\0 \leq h_{1}<H_{1}}}|S_{2}|.
\end{equation*}
\end{lemme}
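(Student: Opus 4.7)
The plan is to square the bound of Lemma~\ref{VDC1}, re-organize via Cauchy--Schwarz so that the square enters the summations, and then apply the generalized van der Corput inequality Lemma~\ref{VDCG} once more with the prescribed large shift $K_1$ and summation range $H_1$. Concretely, I first square Lemma~\ref{VDC1}'s estimate via $(A+B)^2\le 2A^2+2B^2$; since $E_0\le 1$, one has $E_0^2\le E_0$, so the first piece contributes $2(q^{\nu+1}N)^4 E_0$. For the second piece, two successive Cauchy--Schwarz steps---one on the sum over $m$ (which has $\le q^{\nu+1}$ terms, using $(\max_a g(a))^2=\max_a g(a)^2$ for nonnegative $g$) and one on the sum over $h_0$ (of length $H_0$)---pull the square inside and yield
\[
|S_0|^4 \le 2(q^{\nu+1}N)^4 E_0 + \frac{2^5 N^2 q^{3(\nu+1)}}{H_0}\sum_{m=q^{\nu}}^{q^{\nu+1}} \max_{a\ge 0} \sum_{1\le h_0<H_0}|S_1|^2.
\]

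Next, for each fixed triple $(h_0,m,a)$, I apply Lemma~\ref{VDCG} to $|S_1|^2$ with phase function $f(n)=\frac{\ell}{b}\bigl(\som{\lambda}((n+h_0)m+a)-\som{\lambda}(nm+a)\bigr)$ on the interval $I=[0,N)$, shift $K=K_1$ and length $H=H_1$. A direct expansion identifies $f(n+K_1 h_1)-f(n)$ with the signed sum $\frac{\ell}{b}\sum_{w_0,w_1\in\{0,1\}}(-1)^{w_0+w_1}\som{\lambda}((n+w_0 h_0+w_1 h_1 K_1)m+a)$ appearing in $S_2$. The van der Corput constraint $n,n+K_1 h_1\in[0,N)$ discards at most $K_1 h_1\le K_1 H_1$ summands of modulus~$1$, so the resulting restricted inner sum differs from $S_2$ by at most $K_1 H_1$ in modulus.

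Finally I bundle everything. Splitting the van der Corput prefactor as $2(N+K_1(H_1-1))/H_1\le 2N/H_1+2K_1$ and using $|S_2|\le N$ to bound both the $2K_1$-factor contribution and the boundary defect gives
\[
|S_1|^2 \le \frac{2N}{H_1}\sum_{h_1}|S_2| + 6\,K_1 H_1 N
\]
in the useful regime $K_1 H_1\le N$ (in the complementary regime one has $E_1>96$, so the stated inequality follows trivially from $|S_0|^4\le (q^{\nu+1}N)^4$). Inserting this into the previous display and using $K_1\le q^{2\mu+3\sigma}$ from~(\ref{MajoKi}), the main term organizes cleanly into $\frac{2^6 N^3 q^{3(\nu+1)}}{H_0 H_1}\sum_m \max_a \sum_{h_0,h_1}|S_2|$, while the combined error contribution is at most $192\,(q^{\nu+1}N)^4 H_1 q^{2\mu+3\sigma}/N = 2(q^{\nu+1}N)^4 E_1$ with $E_1=3\cdot 2^5 H_1 q^{2\mu+3\sigma}/N$, as announced. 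The main obstacle is purely bookkeeping: three distinct error sources---the van der Corput residual $2K_1$, the boundary mismatch between the restricted inner sum and $S_2$, and their cross term---must all be packaged into the single stated expression for $E_1$, while the doubling factors coming from squaring and from the two Cauchy--Schwarz steps have to be tracked through the entire computation.
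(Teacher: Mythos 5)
Your proposal is correct and follows essentially the same route as the paper: square Lemma~\ref{VDC1} via $(A+B)^2\le 2A^2+2B^2$, use $E_0\le 1$ to absorb $E_0^2$, apply Cauchy--Schwarz twice to move the square inside the $m$- and $h_0$-sums, and then a second application of the generalized van der Corput inequality (Lemma~\ref{VDCG}) with shift $K=K_1$, length $H=H_1$ to produce $S_2$. One small point where you go beyond the paper: the paper simply asserts $K_1(H_1-1)<N$ when cleaning up the error terms, whereas you add the explicit case distinction (if $K_1H_1>N$ then $E_1>96$ and the inequality is trivial from $|S_0|\le q^{\nu+1}N$), which closes a gap that the paper leaves implicit. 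The bookkeeping of constants matches: your $192=3\cdot 2^6$ gives precisely $2(q^{\nu+1}N)^4E_1$ with $E_1=3\cdot 2^5H_1q^{2\mu+3\sigma}/N$.
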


\begin{proof}
By the inequality 
$(a+b)^{2} \leq 2a^{2}+2b^{2}$,
valid for $a,b \geq 0$,
we square both sides of the inequality in Lemma~\ref{VDC1} to get
\begin{align*}
\Big|S_{0}(N,q^{\nu},\xi)\Big|^{4} &\leq \left((q^{\nu+1}N)^{2}E_{0}+\dfrac{2^{2}Nq^{\nu+1}}{H_{0}}\sum_{m=q^{\nu}}^{q^{\nu+1}}\max_{a \geq 0}\sum\limits_{1\leq h_{0}<H_{0}}
|S_{1}|\right)^{2}\\
&\leq 2(q^{\nu+1}N)^{4}E_{0}^{2}+\dfrac{2^{5}N^{2}q^{2(\nu+1)}}{H_{0}^{2}}\left(\sum_{m=q^{\nu}}^{q^{\nu+1}}\max_{a \geq 0}\sum\limits_{1\leq h_{0}<H_{0}}
|S_{1}|\right)^{2}.
\end{align*}
By the Cauchy--Schwarz inequality (applied twice in order to handle the ``$\max_{a \geq 0}$''),
\begin{equation*}
\left(\sum_{m=q^{\nu}}^{q^{\nu+1}}\max_{a \geq 0}\sum\limits_{1\leq h_{0}<H_{0}}
|S_{1}|\right)^{2} \leq q^{\nu+1}H_{0}\sum_{m=q^{\nu}}^{q^{\nu+1}}\max_{a \geq 0}\sum\limits_{1\leq h_{0}<H_{0}}|S_{1}|^{2},
\end{equation*}
we have
\begin{equation}
\label{Ineq0}
\Big|S_{0}(N,q^{\nu},\xi)\Big|^{4} \leq 2(q^{\nu+1}N)^{4}E_{0}^{2}+\dfrac{2^{5}N^{2}q^{3(\nu+1)}}{H_{0}}\sum_{m=q^{\nu}}^{q^{\nu+1}}\max_{a \geq 0}\sum\limits_{1\leq h_{0}<H_{0}}
|S_{1}|^{2}.
\end{equation}
We will use this inequality later.
We apply Lemma~\ref{VDCG} for $S_{1}$, $H=H_{1}$ and $K=K_{1}$, and we get
\begin{align*}
|S_{1}|^{2} &\leq 2\dfrac{N+K_{1}(H_{1}-1)}{H_{1}}\sum_{0\leq h_{1}<H_{1}}\left(\Big|\underbrace{\sum_{0\leq n<N}\e\left(\dfrac{\ell}{b}\sum_{w_{0},w_{1}\in \{0,1\}}(-1)^{w_{0}+w_{1}}\som{\lambda}((n+w_{0}h_{0}+w_{1}h_{1}K_{1})m+a)\right)}_{S_{2}}\Big|+h_{1}K_{1}\right)\\
&\leq 2\dfrac{N+K_{1}(H_{1}-1)}{H_{1}}\left(\sum_{0\leq h_{1}<H_{1}}|S_{2}|+H^{2}_{1}K_{1}\right)\\
&\leq \dfrac{2N}{H_{1}}\sum_{0\leq h_{1}<H_{1}}|S_{2}|+\dfrac{2}{H_{1}}\left(NH_{1}^{2}K_{1}+H_{1}^{2}K_{1}^{2}(H_{1}-1)+K_{1}(H_{1}-1)\sum_{0\leq h_{1}<H_{1}}|S_{2}|\right).
\end{align*}
We trivially estimate
\begin{align*}
\sum_{0\leq h_{1}<H_{1}}|S_{2}|\leq NH_{1}
\end{align*}
and get 
\begin{align*}
|S_{1}|^{2} &\leq \dfrac{2N}{H_{1}}\sum_{0\leq h_{1}<H_{1}}|S_{2}|+\dfrac{2}{H_{1}}\left(NH_{1}^{2}K_{1}+H_{1}^{2}K_{1}^{2}(H_{1}-1)+NH_{1}K_{1}(H_{1}-1)\right)\\
&\leq \dfrac{2N}{H_{1}}\sum_{0\leq h_{1}<H_{1}}|S_{2}|+ 2\left(NH_{1}K_{1}+H_{1}K_{1}^{2}(H_{1}-1)+NK_{1}(H_{1}-1)\right).
\end{align*}
Since $H_{1} \leq N$, $K_{1} \geq 2$ and $K_{1}(H_{1}-1)<N$ we have
\begin{equation*}
|S_{1}|^{2} \leq \dfrac{2N}{H_{1}}\sum_{0\leq h_{1}<H_{1}}|S_{2}|+6NH_{1}K_{1}.
\end{equation*}
Using the inequalities~(\ref{MajoKi}) and~(\ref{MajoKi2}), we get
\begin{equation}
\label{majosommax}
\sum_{m=q^{\nu}}^{q^{\nu+1}}\max_{a \geq 0}\sum\limits_{1\leq h_{0}<H_{0}}
|S_{1}|^{2} \leq \dfrac{2N}{H_{1}}\sum_{m=q^{\nu}}^{q^{\nu+1}}\max_{a \geq 0}\sum_{\substack{1 \leq h_{0}<H_{0}\\0 \leq h_{1}<H_{1}}}|S_{2}|+6Nq^{\nu+1}H_{1}H_{0}q^{2\mu+3\sigma}.
\end{equation}
Since $E_{0} \leq 1$, we have $E_{0}^{2} \leq E_{0}$.
Hence, injecting this inequality and~(\ref{majosommax}) in~(\ref{Ineq0}), we have
\begin{equation*}
\Big|S_{0}(N,q^{\nu},\xi)\Big|^{4} \leq2(q^{\nu+1}N)^{4}(E_{0}+E_{1})+\dfrac{2^{6}N^{3}q^{3(\nu+1)}}{H_{0}H_{1}}\sum_{m=q^{\nu}}^{q^{\nu+1}}\max_{a \geq 0}\sum_{\substack{1 \leq h_{0}<H_{0}\\0 \leq h_{1}<H_{1}}}|S_{2}|,
\end{equation*}
where
\begin{equation*}
S_{2}=\sum_{0\leq n<N}\e\left(\dfrac{\ell}{b}\sum_{w_{0},w_{1}\in \{0,1\}}(-1)^{w_{0}+w_{1}}\som{\lambda}((n+w_{0}h_{0}+w_{1}h_{1}K_{1})m+a)\right)
\end{equation*}
and
\begin{equation*}
E_{1}=\dfrac{3\times 2^{5}H_{1}q^{2\mu+3\sigma}}{N}.
\end{equation*}
\end{proof}
\noindent Until the end of the proof, we set
\begin{equation}
\label{defRho}
\rho=\lambda-k\mu.  
\end{equation}
We choose $H_{1}=q^{\rho}$. This choice will allow us to use the bound $E_{1}+E_{0} \leq 1$ and
thus
$(E_{1}+E_{0})^{2} \leq E_{1}+E_{0}.$
By applying exactly the same argument as before, we have, by a second application of Lemma~\ref{VDCG}, the following lemma:


\begin{lemme}
We take the same notations as in Lemma~\ref{VDC1} and Lemma~\ref{VDC2}, with the condition $E_{0}+E_{1} \leq 1$. Let $1<H_{2}\leq N$ be an integer. We set
\begin{equation*}
S_{3}=\sum\limits_{0 \leq n<N}\e\left(\dfrac{\ell}{b}\sum\limits_{w_{0},w_{1},w_{2}\in \{0,1\}}(-1)^{w_{0}+w_{1}+w_{2}}\som{\lambda}((n+w_{0}h_{0}+w_{1}h_{1}K_{1}+w_{2}h_{2}K_{2})m+a)\right)
\end{equation*}
and
\begin{equation*}
E_{2}=\dfrac{3\times 2^{12}H_{2}K_{2}}{N}.
\end{equation*}
Then, we have
\begin{equation*}
\Big|S_{0}(N,q^{\nu},\xi)\Big|^{8} \leq 2^{2}(q^{\nu}N)^{8}(E_{0}+E_{1}+E_{2})+\dfrac{2^{14}N^{7}q^{7(\nu+1)}}{H_{0}H_{1}H_{2}}\sum_{\substack{1\leq h_{0}<H_{0}\\0\leq h_{1}<H_{1}\\0\leq h_{2}<H_{2}}}\;\sum_{m=q^{\nu}}^{q^{\nu+1}}\max_{a \geq 0}|S_{3}|.
\end{equation*}
\end{lemme}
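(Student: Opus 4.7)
The plan is to iterate once more the two-step procedure used to pass from Lemma~\ref{VDC1} to Lemma~\ref{VDC2}: first I would square the inequality of Lemma~\ref{VDC2} (via $(a+b)^{2}\leq 2a^{2}+2b^{2}$) and apply Cauchy--Schwarz twice to the right-hand side; then I would apply Lemma~\ref{VDCG} to $|S_{2}|$ with shift $K=K_{2}$ and window $H=H_{2}$. Just as the shift by $h_{0}$ in Lemma~\ref{VDC1} and the shift by $K_{1}h_{1}$ in Lemma~\ref{VDC2} introduced the binary variables $w_{0}$ and $w_{1}$ in the exponent, the shift by $K_{2}h_{2}$ here produces a new binary variable $w_{2}\in\{0,1\}$ with corresponding sign $(-1)^{w_{2}}$, upgrading $S_{2}$ into $S_{3}$.

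For the squaring step, the assumption $E_{0}+E_{1}\leq 1$ gives $(E_{0}+E_{1})^{2}\leq E_{0}+E_{1}$, so the main error retains its clean linear shape. Two successive applications of Cauchy--Schwarz (first on the sum over $m$ to handle the maximum over $a$, then on the sum over $(h_{0},h_{1})$) yield
\begin{equation*}
\Big(\sum_{m=q^{\nu}}^{q^{\nu+1}}\max_{a\geq 0}\sum_{h_{0},h_{1}}|S_{2}|\Big)^{2}\leq q^{\nu+1}H_{0}H_{1}\sum_{m=q^{\nu}}^{q^{\nu+1}}\max_{a\geq 0}\sum_{h_{0},h_{1}}|S_{2}|^{2},
\end{equation*}
and hence an intermediate bound of the form $|S_{0}|^{8}\leq C_{1}(q^{\nu+1}N)^{8}(E_{0}+E_{1})+C_{2}\frac{N^{6}q^{7(\nu+1)}}{H_{0}H_{1}}\sum_{m}\max_{a\geq 0}\sum_{h_{0},h_{1}}|S_{2}|^{2}$ with explicit absolute constants $C_{1},C_{2}$. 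For the inner step, Lemma~\ref{VDCG} applied to $|S_{2}|^{2}$ gives
\begin{equation*}
|S_{2}|^{2}\leq \frac{2N}{H_{2}}\sum_{0\leq h_{2}<H_{2}}|S_{3}|+6NH_{2}K_{2}
\end{equation*}
after using the trivial bound $\sum_{h_{2}}|S_{3}|\leq NH_{2}$ together with $H_{2}\leq N$, $K_{2}\geq 2$, and $K_{2}(H_{2}-1)<N$, exactly as in the proof of Lemma~\ref{VDC2}. Substituting and using $K_{2}\leq q^{\mu+3\sigma}$ from~\eqref{MajoKi2}, the error $6NH_{2}K_{2}$ collapses into a contribution of the form $(q^{\nu+1}N)^{8}E_{2}$ with $E_{2}=3\cdot 2^{12}H_{2}K_{2}/N$, while the main $|S_{3}|$-term reaches the stated form $\frac{2^{14}N^{7}q^{7(\nu+1)}}{H_{0}H_{1}H_{2}}\sum_{m,h_{0},h_{1},h_{2}}\max_{a\geq 0}|S_{3}|$.

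The main obstacle is purely combinatorial bookkeeping: tracking the powers of $2$, of $q^{\nu+1}$ and of $N$ through the squaring step, the two Cauchy--Schwarz applications, and the van der Corput step, while respecting the constraints $H_{i}\leq N$ and $K_{i}(H_{i}-1)<N$ so that the crude error estimates go through. No new idea is needed beyond those already used in the passage from Lemma~\ref{VDC1} to Lemma~\ref{VDC2}; this lemma is a direct iteration of the same pattern, and the further iterations outlined in Section~\ref{furtheriterations} will continue in exactly the same way until a Gowers-norm-type expression is reached, amenable to Theorem~\ref{NDGG}.
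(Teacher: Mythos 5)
Your approach is exactly the one the paper intends: the paper states this lemma with only the remark ``By applying exactly the same argument as before, we have, by a second application of Lemma~\ref{VDCG}, the following lemma,'' and your squaring / double Cauchy--Schwarz / van der Corput iteration is precisely that argument. Your identification of the main term $\dfrac{2^{14}N^{7}q^{7(\nu+1)}}{H_{0}H_{1}H_{2}}\sum|S_{3}|$ and of $E_{2}=3\cdot 2^{12}H_{2}K_{2}/N$ is correct.

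One caveat, though, since you explicitly state that ``the main obstacle is purely combinatorial bookkeeping'': you leave $C_{1}$ unspecified, and the bookkeeping does not in fact return the stated constant. Squaring Lemma~\ref{VDC2} with $(a+b)^{2}\leq 2a^{2}+2b^{2}$ gives $2\cdot\bigl(2(q^{\nu+1}N)^{4}(E_{0}+E_{1})\bigr)^{2}\leq 2^{3}(q^{\nu+1}N)^{8}(E_{0}+E_{1})$ after using $(E_{0}+E_{1})^{2}\leq E_{0}+E_{1}$, whereas the error term produced by Lemma~\ref{VDCG} works out to $2^{2}(q^{\nu+1}N)^{8}E_{2}$; so the cleanest correct statement is $|S_{0}|^{8}\leq 2^{3}(q^{\nu+1}N)^{8}(E_{0}+E_{1}+E_{2})+\dfrac{2^{14}N^{7}q^{7(\nu+1)}}{H_{0}H_{1}H_{2}}\sum|S_{3}|$, not $2^{2}(q^{\nu}N)^{8}(E_{0}+E_{1}+E_{2})+\dotsb$. (The $q^{\nu}$ in the paper looks like a typo for $q^{\nu+1}$, and the $2^{2}$ versus $2^{3}$ mismatch is inherited from the prefactor $2^{k-1}$ in Lemma~\ref{VDCK}, which the squaring recursion $c_{k+1}=2c_{k}^{2}$ does not actually produce.) This discrepancy is harmless downstream because the paper passes immediately to $\ll$-estimates, but a careful rendering of the ``bookkeeping'' you invoke would surface it.
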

At this point, we can have a recursive formula for the $k$-th iteration of the van der Corput inequality. In the following lemma, we first sum over $m$. The next step will consist in carrying out some digits shiftings for a fixed parameter $m$.
\begin{lemme}
\label{VDCK}
Let $k \geq 2$ be an integer. Let $H_{0}, H_{1},\dots, H_{k-1}>1$ be integers. We set
\begin{align}
    E_{0}&=\dfrac{3}{H_{0}}+\dfrac{4H_{0}}{q^{\lambda-\nu}}+\dfrac{8H_{0}}{N},\nonumber\\
    E_{i}&=\dfrac{3\times 2^{2^{i+2}-1}}{2^{i+1}}\dfrac{H_{i}q^{2\mu+3\sigma}}{N},\qquad \qquad 1\leq i \leq k-1.\label{exprEi}
\end{align}
Moreover, we assume that
\begin{equation}\label{sumEi}
    E_{0}+E_{1}+\cdots+E_{k-1} \leq 1.
\end{equation}
Set
\begin{equation*}
\label{S4}
    S_{4}=\sum_{0 \leq n<N}\e\left(\dfrac{\ell}{b}\sum\limits_{\mathbf{w}=(w_{0},\dots,w_{k-1})\in \{0,1\}^{k}}(-1)^{s_{2}(w)}\som{\lambda}\left(nm+a +w_{0}h_{0}m+\sum_{i=1}^{k-1}w_iK_i m\right) \right).
\end{equation*}

Then we have the inequality
\begin{equation}
\label{EqInd}
\Big|\dfrac{S_{0}(N,q^{\nu},\xi)}{q^{\nu+1}N}\Big|^{2^{k}} \leq 2^{k-1}(E_{0}+\dotsb+E_{k-1})+\dfrac{2^{2^{k+2}-2}}{H_{0}\dotsb H_{k-1}q^{\nu+1}N}\sum_{m=q^{\nu}}^{q^{\nu+1}}\sum_{\substack{1 \leq h_{0}<H_{0}\\ 0\leq h_{i}<H_{i},\ 1 \leq i \leq k-1}}\max_{a \geq 0}|S_{4}|.
\end{equation}
\end{lemme}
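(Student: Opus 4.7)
The plan is to prove this by induction on $k \geq 2$. The base case $k=2$ follows from Lemma~\ref{VDC2} upon noting that $2^6 \leq 2^{14} = 2^{2^{k+2}-2}\big|_{k=2}$ and that the $E_i$ defined in~(\ref{exprEi}) coincide with those in Lemma~\ref{VDC2}. For the inductive step at level $k$, I will start from the assumed inequality at level $k-1$, which reads
\begin{equation*}
\Bigl|\dfrac{S_0}{q^{\nu+1}N}\Bigr|^{2^{k-1}} \leq 2^{k-2}\sum_{i=0}^{k-2} E_i + \dfrac{2^{2^{k+1}-2}}{H_0 \cdots H_{k-2}\, q^{\nu+1} N} \sum_{m=q^\nu}^{q^{\nu+1}} \sum_{\mathbf{h}'} \max_a |S_{k-1}'|,
\end{equation*}
where $S_{k-1}'$ denotes the corresponding $(k-1)$-fold differenced sum indexed by $\mathbf{h}' = (h_0, \ldots, h_{k-2})$.

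The first step is to square both sides via $(a+b)^2 \leq 2a^2 + 2b^2$; by the standing assumption~(\ref{sumEi}), $(\sum_{i \leq k-2} E_i)^2 \leq \sum_{i \leq k-2} E_i$, so the linear error term is preserved. For the squared sum I then plan to apply the Cauchy--Schwarz inequality twice, exactly as in the proof of Lemma~\ref{VDC2}: once to pass $\max_a$ outside the square, and once to exchange the order with $\sum_m \sum_{\mathbf{h}'}$, at a cost of a factor at most $q^{\nu+1} H_0 \cdots H_{k-2}$ (the number of indices). This gives a bound involving $\sum_m \sum_{\mathbf{h}'} \max_a |S_{k-1}'|^2$. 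The next move is to apply Lemma~\ref{VDCG} to $|S_{k-1}'|^2$ with shift $K = K_{k-1}$ and window length $H = H_{k-1}$; the shift by $h_{k-1} K_{k-1}$ introduces the new binary coordinate $w_{k-1}$ with sign $(-1)^{s_2(w)}$, completing the alternating product over $\mathbf{w} \in \{0,1\}^k$ found in $S_4$. The ``diagonal'' error $O(N H_{k-1} K_{k-1})$ coming out of Lemma~\ref{VDCG} is absorbed into $2^{k-1} E_{k-1}$ after invoking the uniform bound $K_{k-1} \leq q^{2\mu + 3\sigma}$ from~(\ref{MajoKi})--(\ref{MajoKi2}).

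The principal obstacle will be keeping careful track of the constants. The doubly-exponential shapes $2^{2^{i+2}-1}/2^{i+1}$ in $E_i$ and $2^{2^{k+2}-2}$ in the main coefficient are tailored to absorb the various doublings introduced at each iteration: squaring the main coefficient $2^{2^{k+1}-2}$ produces $2^{2^{k+2}-4}$, and the small extra factors of $2$ from $(a+b)^2 \leq 2a^2+2b^2$, from Cauchy--Schwarz, and from the leading constant in Lemma~\ref{VDCG} together contribute at most $2^2$, giving precisely the claimed $2^{2^{k+2}-2}$. The growth law $E_{i-1} \mapsto E_i$ is likewise designed to accommodate the bound $K_i \leq q^{2\mu+3\sigma}$ together with the cumulative doublings; verifying each comparison is a routine but painstaking calculation that closes the induction and delivers~(\ref{EqInd}).
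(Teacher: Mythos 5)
Your approach — formalize the iteration underlying Lemmas~\ref{VDC1}, \ref{VDC2}, and the unnumbered third lemma via a squaring-plus-Cauchy--Schwarz-plus-van-der-Corput inductive step — is exactly what the paper intends (it only writes out three iterations and then asserts the general statement ``by applying exactly the same argument''). Your accounting of the main coefficient is consistent: $(2^{2^{k+1}-2})^2 = 2^{2^{k+2}-4}$, and the two doublings (one from $(a+b)^2 \le 2a^2+2b^2$, one from the leading $2$ in Lemma~\ref{VDCG}) produce $2^{2^{k+2}-2}$. One small remark on framing: because $K_1,\dots,K_{k-1}$ are defined in terms of the final $k$, this is a finite iteration with $k$ fixed rather than a genuine induction on the statement of the lemma; but since only the bounds $K_i \le q^{2\mu+3\sigma}$ from \eqref{MajoKi}--\eqref{MajoKi2} enter, this distinction is harmless.

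The place where your verification does not actually close is the error coefficient $2^{k-1}$. Starting from level $k-1$ with coefficient $2^{k-2}$, squaring and applying $(a+b)^2 \le 2a^2+2b^2$ produces $2\cdot(2^{k-2})^2 = 2^{2k-3}$ in front of $\big(\sum_{i\le k-2} E_i\big)^2 \le \sum_{i\le k-2} E_i$, and $2^{2k-3} > 2^{k-1}$ as soon as $k\ge 3$. The true recursion is $B_{j+1}=2B_j^2$, $B_1=1$, i.e.\ $B_j = 2^{2^{j-1}-1}$, doubly exponential — the same growth rate as the main coefficient, not singly exponential. This mismatch is already visible in the paper's own unnumbered lemma (third iteration), whose coefficient $2^2$ should really be $2^3$. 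The slip is inherited from the paper's statement rather than introduced by you, and it is ultimately innocuous because \eqref{Equafinale} hides all $k$-dependent constants under $\ll$ before the parameters are chosen; but your final sentence that the ``routine but painstaking calculation \ldots closes the induction'' would not survive a literal check of the factor $2^{k-1}$. Replacing $2^{k-1}$ by $2^{2^{k+2}-2}$ (the already-available slack in the main term) would make the induction actually close without affecting anything downstream.
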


Lemma~\ref{VDCK} finishes the first step. Our final choice of the parameters will well respect \eqref{sumEi}. We will verify all arising inequalities in Annexe \ref{choosingsection}. 

\subsection{Step 2: Digits shifting and cutting}
The next goal is to transform $S_{4}$ in order to discard many blocks of digits by digits shifting. We recall the notation from Lemma \ref{CardA}:
\begin{equation*}
\mathfrak{M}_{1}=P_{q^{\sigma}}\left(\dfrac{P_{q^{2\mu+2\sigma}}(m/q^{2\mu})}{q^{(k-2)\mu}}\right).
\end{equation*}
For $1<i<k-1$,
\begin{equation*}
\mathfrak{M}_{i}=P_{q^{\sigma}}\left(\dfrac{P_{q^{\mu+2\sigma}}(m/q^{(i+1)\mu})}{q^{(k-i-1)\mu}}\right)
\end{equation*}
and
\begin{equation*}
\mathfrak{M}_{k-1}=P_{q^{\sigma}}\left(\dfrac{P_{q^{\mu+\sigma}}(m/q^{k\mu})}{q^{k\mu}}\right).
\end{equation*}
We have the following lemma, which is the first transformation of $S_{4}$.
\begin{lemme}
\label{T1}
We take the same notations as in Lemma~\ref{VDCK} and the definition (\ref{defRho}) of $\rho$.
Then, we have
\begin{equation}
\label{S4Decompo}
S_{4}=S_{5}+O_{k}\left(N\sum_{i=2}^{k-1}D_{N}\left(\dfrac{m}{q^{i\mu}}\right)+\dfrac{2N}{q^{\sigma}}\left(H_{1}+H_{2}+\dotsb+H_{k-1}\right)\right),
\end{equation}
where
\begin{equation}
\label{S5}
    S_{5}=\sum_{0 \leq n<N}\e\left(\dfrac{\ell}{b}\sum\limits_{\mathbf{w}=(w_{0},\dots,w_{k-1})\in \{0,1\}^{k}}(-1)^{s_{2}(w)}\som{\rho}\left(\Big\lfloor\dfrac{nm+a+w_{0}h_{0}m}{q^{k\mu}}+\sum\limits_{i=1}^{k-1}w_{i}h_{i}\mathfrak{M}_{i}\Big\rfloor\right)\right).
\end{equation}
\end{lemme}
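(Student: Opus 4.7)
Proof plan. I follow Spiegelhofer's strategy~\cite{Spi} transcribed to base~$q$. Writing $X_{\mathbf{w}}:=nm+a+w_0h_0m+\sum_{i=1}^{k-1}w_ih_iK_im$ and $Y_{w_0}:=nm+a+w_0h_0m$, the goal is to show that, except on an exceptional set of $n<N$ of cardinality at most $O_k\bigl(N\sum_{i=2}^{k-1}D_N(m/q^{i\mu})+(2N/q^\sigma)\sum_{i=1}^{k-1}H_i\bigr)$, the inner $\mathbf{w}$-sum $\sum_{\mathbf{w}}(-1)^{s_2(w)}\som{\lambda}(X_{\mathbf{w}})$ of $S_4$ coincides exactly with the corresponding inner $\mathbf{w}$-sum of $S_5$. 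Bounding the summand trivially by $1$ on the exceptional set then gives~\eqref{S4Decompo}.

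The backbone is a Farey computation. The two-level Farey definitions~\eqref{defK1}--\eqref{defKk-1}, combined with Proposition~\ref{ApproxFarey}, give the key bound $|K_im/q^{k\mu}-\mathfrak{M}_i|\leq c_k/q^\sigma$, so the integer residue $r_i:=K_im-q^{k\mu}\mathfrak{M}_i$ satisfies $|r_i|\leq c_kq^{k\mu-\sigma}$. Introducing the ``approximated'' argument $X'_{\mathbf{w}}:=Y_{w_0}+q^{k\mu}\sum_{i=1}^{k-1}w_ih_i\mathfrak{M}_i$, which is congruent to $Y_{w_0}$ modulo~$q^{k\mu}$ by construction, Lemma~\ref{diffab} yields
\[
\som{\lambda}(X'_{\mathbf{w}})-\som{\lambda}(Y_{w_0})=\som{\rho}\Bigl(\Bigl\lfloor\tfrac{Y_{w_0}}{q^{k\mu}}\Bigr\rfloor+\sum_{i=1}^{k-1}w_ih_i\mathfrak{M}_i\Bigr)-\som{\rho}\Bigl(\Bigl\lfloor\tfrac{Y_{w_0}}{q^{k\mu}}\Bigr\rfloor\Bigr).
\]
Since the $Y_{w_0}$-only contributions depend only on $w_0$, they cancel against the alternating weights $(-1)^{s_2(w)}$ for $k\geq 2$, so the $\mathbf{w}$-sum of $\som{\lambda}(X'_{\mathbf{w}})$ matches the inner sum of~$S_5$ exactly.

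It remains to pass from $X'_{\mathbf{w}}$ back to the true $X_{\mathbf{w}}$ on a good set of~$n$. Since $|X_{\mathbf{w}}-X'_{\mathbf{w}}|=|\sum_{i=1}^{k-1}w_ih_ir_i|\leq(\sum_i H_i)q^{k\mu-\sigma}$, Proposition~\ref{aplusb}(1) applied level by level gives $\lfloor X_{\mathbf{w}}/q^{k\mu}\rfloor=\lfloor X'_{\mathbf{w}}/q^{k\mu}\rfloor$ and identical digit expansions in the range $[k\mu,\lambda)$ as soon as~$n$ satisfies the separations $\|nm/q^{i\mu}+\gamma_i\|\geq H_i/q^\sigma$ for every $i=2,\ldots,k-1$, the real shifts $\gamma_i$ being accumulated from the lower Farey levels; the index~$i=1$ requires no discrepancy term thanks to the larger Farey order $q^{2\mu+2\sigma}$ built into the definition of~$K_1$, and produces only the linear piece $2NH_1/q^\sigma$. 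On this good set, a careful carry analysis exploiting the alternating structure $(-1)^{s_2(w)}$ and the smallness $|r_i|\ll q^{k\mu-\sigma}$ forces the residual low-digit difference $\som{k\mu}(X_{\mathbf{w}})-\som{k\mu}(X'_{\mathbf{w}})$ to depend only on~$w_0$, hence to cancel in the $\mathbf{w}$-sum. Lemma~\ref{DisPetitN}, invoked at each level with $\alpha=m/q^{i\mu}$ and $H=H_i$, bounds the exceptional $n$ by $ND_N(m/q^{i\mu})+2NH_i/q^\sigma$, recovering the announced error.

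The main obstacle I anticipate is the precise control of the compounded Farey errors through the two nested approximations so that $|K_im/q^{k\mu}-\mathfrak{M}_i|\leq c_k/q^\sigma$ holds with explicit constants, and the verification that the separations produced by Proposition~\ref{aplusb} at each level~$i$ collapse cleanly into the form $\|nm/q^{i\mu}+\gamma_i\|\geq H_i/q^\sigma$ required by Lemma~\ref{DisPetitN}; the indexing shift by one between the outer Farey scale $m/q^{(i+1)\mu}$ and the discrepancy $D_N(m/q^{i\mu})$ arises naturally from this telescoping, and the cancellation of the low-digit residue~$\som{k\mu}(X_{\mathbf{w}})-\som{k\mu}(X'_{\mathbf{w}})$ in the $(-1)^{s_2(w)}$-sum is the delicate point that must be checked carefully.
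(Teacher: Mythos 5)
Your proposal contains a genuine gap at the final stage, precisely at the point you yourself flagged as delicate. After establishing (correctly) that the inner $\mathbf{w}$-sum built from $X'_{\mathbf{w}}=Y_{w_0}+q^{k\mu}\sum_{i=1}^{k-1}w_ih_i\mathfrak{M}_i$ reproduces the summand of $S_5$, you must compare it with the inner $\mathbf{w}$-sum built from the true $X_{\mathbf{w}}$. On your good set the floor-splitting handles the contribution of the digits above $k\mu$, so what remains is $\sum_{\mathbf{w}}(-1)^{s_2(w)}\bigl(\som{k\mu}(X_{\mathbf{w}})-\som{k\mu}(X'_{\mathbf{w}})\bigr)$. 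Since $X'_{\mathbf{w}}\equiv Y_{w_0}\pmod{q^{k\mu}}$, the second piece $\som{k\mu}(X'_{\mathbf{w}})=\som{k\mu}(Y_{w_0})$ indeed depends only on $w_0$ and cancels. But the first piece does not: $\som{k\mu}(X_{\mathbf{w}})$ is a function of $X_{\mathbf{w}}\bmod q^{k\mu}=(Y_{w_0}+\sum_{i\geq1}w_ih_iK_im)\bmod q^{k\mu}$, which depends on \emph{all} of $w_1,\dots,w_{k-1}$, because for $i\geq 2$ the shift $K_im$ is not a multiple of any positive power of $q$, and $K_1m=q^{2\mu}M_1$ is a multiple of $q^{2\mu}$ but not of $q^{k\mu}$. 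There is no reason for $\sum_{\mathbf{w}}(-1)^{s_2(w)}\som{k\mu}(X_{\mathbf{w}})$ to vanish, so the equality $S_4$-summand $=$ $S_5$-summand fails on your good set, and bounding the complement does not rescue the claim.

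The paper avoids this by never comparing $X_{\mathbf{w}}$ to a single surrogate at the scale $q^{k\mu}$. It instead iterates: first it uses the exact identity $K_1m=q^{2\mu}M_1$ together with Lemma~\ref{diffab}, applied to the alternating sum over $w_1$ alone, to delete the bottom $2\mu$ digits without any error (the $\som{2\mu}$-block is literally independent of $w_1$); this lands the argument inside a floor $\lfloor\cdot/q^{2\mu}\rfloor$. Only then does it approximate $K_2m/q^{2\mu}$ by the \emph{integer} $q^\mu M_2$ \emph{inside the floor}, using Proposition~\ref{aplusb} on the good set, and again Lemma~\ref{diffab} applied to $w_2$ removes the next $\mu$ digits exactly; and so on. At every stage, the digit window being removed is insensitive to exactly the coordinate $w_i$ just treated, because the corresponding shift is by construction an exact multiple of the relevant power of $q$. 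Your one-shot version has no such mechanism for $w_2,\dots,w_{k-1}$: to make your ``careful carry analysis'' go through, you would need $K_im$ to be a multiple of $q^{k\mu}$ for all $i$, which is false. Repairing this forces you back to the iterative structure of~\cite{Spi} as in the paper's proof. The Farey bound $|K_im/q^{k\mu}-\mathfrak{M}_i|\ll q^{-\sigma}$ that you set up is correct and is indeed what drives the estimates, but it must be deployed one scale $q^{i\mu}$ at a time, not collapsed into a single comparison at scale $q^{k\mu}$.
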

\begin{proof}
By~(\ref{defK1}), and by the definition of Farey sequences (see Annexe \ref{appendix1}), we have 
\begin{equation*}
\dfrac{M_{1}}{K_{1}}=\dfrac{m}{q^{2\mu}},
\end{equation*}
so that
\begin{equation*}
    K_{1}m=q^{2\mu}M_{1}.
\end{equation*}
In $S_{4}$, defined in~(\ref{S4}), we replace $w_{1}h_{1}K_{1}m$ by $w_{1}h_{1}q^{2\mu}M_{1}$ and apply Lemma~\ref{diffab} with $u_{1}=h_{1}M_{1}$ and $\alpha=2\mu$. This gives for any  $w_0,w_2,\ldots,w_{k-1}$, $n,h_i$:
\begin{align*}
    &-\som{\lambda}(nm+a+w_0h_0m+h_1q^{2\mu}m+\sum_{j=2}^{k-1}w_ih_iK_im)+\som{\lambda}(nm+a+w_0h_0m+\sum_{j=2}^{k-1}w_ih_iK_im)\\
&=-\somm{2\mu}{\lambda}(nm+a+w_0h_0m+h_1q^{2\mu}m+\sum_{j=2}^{k-1}w_ih_iK_im)+\somm{2\mu}{\lambda}(nm+a+w_0h_0m+\sum_{j=2}^{k-1}w_ih_iK_im).
\end{align*}
We insert this in $S_4$ and recall that in the previous formula the first term of the right side   corresponds to $w_1=1$, the second to $w_1=0$,  with the ``$-$'' sign associated to the
corresponding $(-1)^{s_2 (w)}$.
Furthermore, in the second line below we will use the fact that for $u\in\N$, 
$\somm{2\mu}{\lambda} (u)=\som{\lambda-2\mu}(\lfloor u/q^{2\mu}\rfloor )$:

\begin{align}
S_{4}&=\sum_{0 \leq n<N}\e\left(\dfrac{\ell}{b}\sum\limits_{\mathbf{w}=(w_{0},\dots,w_{k-1})\in \{0,1\}^{k}}(-1)^{s_{2}(w)}\somm{2\mu}{\lambda}(nm+a +w_{0}h_{0}m+w_{1}h_{1}q^{2\mu}m+\dotsb +w_{k-1}h_{k-1}K_{k-1}m\right) \nonumber\\
&=\sum_{0 \leq n<N}\e\left(\dfrac{\ell}{b}\sum\limits_{\mathbf{w}=(w_{0},\dots,w_{k-1})\in \{0,1\}^{k}}(-1)^{s_{2}(w)}\som{\lambda-2\mu}\left(\Big\lfloor\dfrac{1}{q^{2\mu}}\left(nm+a+w_{0}h_{0}m+\sum\limits_{i=1}^{k-1}w_{i}h_{i}K_{i}m)\right)\Big\rfloor\right)\right) \nonumber\\
&=\sum_{0 \leq n<N}\e\left(\dfrac{\ell}{b}\sum\limits_{\mathbf{w}=(w_{0},\dots,w_{k-1})\in \{0,1\}^{k}}(-1)^{s_{2}(w)}\som{\lambda-2\mu}\left(\Big\lfloor\dfrac{nm+a+w_{0}h_{0}m}{q^{2\mu}}+w_{1}h_{1}M_{1}+w_{2}h_{2}\dfrac{K_{2}m}{q^{2\mu}}+\sum\limits_{i=3}^{k-1}w_{i}\dfrac{h_{i}K_{i}m}{q^{2\mu}}\Big\rfloor\right)\right) \label{S4A}.
\end{align}
We now wish to substitute ${K_{2}m}/{q^{2\mu}}$ by a multiple of a power of $q$. We first use Proposition~\ref{ApproxFarey} to approximate the fraction ${K_{2}m}/{q^{2\mu}}$ by the integer $q^{\mu}M_{2}$ at the cost of an admissible error. By~(\ref{defKi}) we have
\begin{align}
\Big|\dfrac{K_{2}m}{q^{2\mu}}-q^{\mu}M_{2}\Big|&=q^{\mu}Q_{q^{\sigma}}\left(\dfrac{P_{q^{\mu+2\sigma}}(m/q^{3\mu})}{q^{(k-3)\mu}}\right)\Big|\dfrac{m}{q^{3\mu}}Q_{q^{\mu+2\sigma}}\left(\dfrac{m}{q^{3\mu}}\right)-P_{q^{\mu+2\sigma}}\left(\dfrac{m}{q^{3\mu}}\right)\Big| \nonumber\\
&< q^{\sigma+\mu}\times \dfrac{1}{q^{\mu+2\sigma}} \nonumber\\
&= q^{-\sigma} \label{eqcc}.
\end{align}

Moreover, we introduce the set
\begin{equation*}
A=\left\{0 \leq n < N \, :\, \Big\|\dfrac{nm+a+w_{0}h_{0}m}{q^{2\mu}}+w_{1}h_{1}M_{1}+\sum\limits_{i=3}^{k-1}w_{i}\dfrac{h_{i}K_{i}m}{q^{2\mu}}\Big\| \geq \dfrac{H_{2}}{q^{\sigma}}\right\}.
\end{equation*}
For $n \in A$, the inequality~(\ref{eqcc}) shows that
\begin{equation*}
\Big\|\dfrac{K_{2}m}{q^{2\mu}}\Big \|<q^{-\sigma}.
\end{equation*}
At this point, we assume that for all $1 \leq i \leq k-1$, 
\begin{equation}
\label{HiCond}
H_{i}<q^{\sigma-1}. 
\end{equation}
In Annexe~\ref{choosingsection}, we choose for all $1 \leq i \leq k-1$, $H_{i}=q^{\rho}$. Hence, we have the condition 
\begin{equation}
\label{rhosigmaj}
    \rho<\sigma-1.
\end{equation}

As $0 \leq h_{2} \leq H_{2}<q^{\sigma-1}$, we have by the inequality~(\ref{petit3}) in  Proposition~\ref{aplusb},
$$
\Big\langle \dfrac{h_{2}K_{2}m}{q^{2\mu}}\Big\rangle =h_{2}\Big\langle\dfrac{K_{2}m}{q^{2\mu}}\Big\rangle
=h_{2}q^{\mu}M_{2}.
$$
Hence, by the inequality~(\ref{petit1}) in Proposition~\ref{aplusb}, we get

\begin{align*}
\Big\lfloor \dfrac{nm+a+w_{0}h_{0}m}{q^{2\mu}}+w_{1}h_{1}M_{1}+&\sum\limits_{i=3}^{k-1}w_{i}\dfrac{h_{i}K_{i}m}{q^{2\mu}}+w_2\dfrac{h_{2}K_{2}m}{q^{2\mu}} \Big\rfloor\\
&=\Big\lfloor \dfrac{nm+a+w_{0}h_{0}m}{q^{2\mu}}+w_{1}h_{1}M_{1}+\sum\limits_{i=3}^{k-1}w_{i}\dfrac{h_{i}K_{i}m}{q^{2\mu}}\Big\rfloor+w_2\Big\langle \dfrac{h_{2}K_{2}m}{q^{2\mu}}\Big\rangle \\
&=\Big\lfloor \dfrac{nm+a+w_{0}h_{0}m}{q^{2\mu}}+w_{1}h_{1}M_{1}+\sum\limits_{i=3}^{k-1}w_{i}\dfrac{h_{i}K_{i}m}{q^{2\mu}}\Big\rfloor+w_2h_{2}q^{\mu}M_{2}\\
&=\Big\lfloor \dfrac{nm+a+w_{0}h_{0}m}{q^{2\mu}}+w_{1}h_{1}M_{1}+w_2h_{2}q^{\mu}M_{2}+\sum\limits_{i=3}^{k-1}w_{i}\dfrac{h_{i}K_{i}m}{q^{2\mu}} \Big\rfloor.
\end{align*}
By separating the sum in~(\ref{S4A}) on whether $n \in A$ or not, and by using Lemma~\ref{DisPetitN} in order to bound the sum for $n \notin A$, we get 
\begin{align*}
S_{4}&=\sum_{0 \leq n<N}\e\left(\dfrac{\ell}{b}\sum\limits_{\mathbf{w}=(w_{0},\dots,w_{k-1})\in \{0,1\}^{k}}(-1)^{s_{2}(w)}\som{\lambda-2\mu}\left(\Big\lfloor\dfrac{nm+a+w_{0}h_{0}m}{q^{2\mu}}+w_{1}h_{1}M_{1}+w_{2}h_{2}q^{\mu}M_{2}+\sum\limits_{i=3}^{k-1}w_{i}\dfrac{h_{i}K_{i}m}{q^{2\mu}}\Big\rfloor\right)\right)\\
&\qquad +O\left(ND_{N}\left(\dfrac{m}{q^{2\mu}}\right)+\dfrac{2H_{1}N}{q^{\sigma}}\right).
\end{align*}
Repeating the same argument as before, we have 
\begin{align*}
S_{4}&=\sum_{0 \leq n<N}\e\left(\dfrac{\ell}{b}\sum\limits_{\mathbf{w}=(w_{0},\dots,w_{k-1})\in \{0,1\}^{k}}(-1)^{s_{2}(w)}\som{\lambda-3\mu}\left(\Big\lfloor\dfrac{nm+a+w_{0}h_{0}m}{q^{3\mu}}+w_{1}h_{1}\dfrac{M_{1}}{q^{\mu}}+w_{2}h_{2}M_{2}+\sum\limits_{i=3}^{k-1}w_{i}\dfrac{h_{i}K_{i}m}{q^{3\mu}}\Big\rfloor\right)\right)\\
&\qquad +O\left(N\left(D_{N}\left(\dfrac{m}{q^{2\mu}}\right)+D_{N}\left(\dfrac{m}{q^{3\mu}}\right)\right)+\dfrac{2N}{q^{\sigma}}(H_{1}+H_{2})\right).
\end{align*}
We continue the process and get
\begin{align*}
S_{4}&=\sum_{0 \leq n<N}\e\left(\dfrac{\ell}{b}\sum\limits_{\mathbf{w}=(w_{0},\dots,w_{k-1})\in \{0,1\}^{k}}(-1)^{s_{2}(w)}\som{\lambda-k\mu}\left(\Big\lfloor\dfrac{nm+a+w_{0}h_{0}m}{q^{k\mu}}+\sum\limits_{i=1}^{k-1}w_{i}\dfrac{h_{i}M_{i}}{q^{(k-1-i)\mu}}\Big\rfloor\right)\right)\\
&\qquad +O\left(N\sum_{i=2}^{k-1}D_{N}\left(\dfrac{m}{q^{i\mu}}\right)+\dfrac{2N}{q^{\sigma}}\left(H_{1}+\dotsb+H_{k-1}\right)\right).
\end{align*}
In order to remove ${M_{i}}/{q^{(k-1-i)\mu}}$ ($1 \leq i \leq k-1$), we use the Farey approximation again (Proposition~\ref{ApproxFarey}). We have, 
\begin{align*}
\Bigg|\underbrace{\dfrac{P_{q^{2\mu+2\sigma}}\left(\dfrac{m}{q^{2\mu}}\right)}{q^{(k-2)\mu}}Q_{q^{\sigma}}\left(\dfrac{P_{q^{2\mu+2\sigma}}(m/q^{2\mu})}{q^{(k-2)\mu}}\right)}_{M_{1}/q^{(k-2)\mu}}-P_{q^{\sigma}}\left(\dfrac{P_{q^{2\mu+2\sigma}}(m/q^{2\mu})}{q^{(k-2)\mu}}\right)\Bigg| < q^{-\sigma}.
\end{align*}
For $1<i<k-1$, we have
\begin{align*}
\Bigg|\underbrace{\dfrac{P_{q^{\mu+2\sigma}}\left(\dfrac{m}{q^{(i+1)\mu}}\right)}{q^{(k-i-1)\mu}}Q_{q^{\sigma}}\left(\dfrac{P_{q^{\mu+2\sigma}}(m/q^{(i+1)\mu})}{q^{(k-i-1)\mu}}\right)}_{M_{i}/q^{(k-i-1)\mu}}-P_{q^{\sigma}}\left(\dfrac{P_{q^{\mu+2\sigma}}(m/q^{(i+1)\mu})}{q^{(k-i-1)\mu}}\right)\Bigg| < q^{-\sigma}
\end{align*}
and
\begin{align*}
\Bigg|\underbrace{\dfrac{P_{q^{\mu+\sigma}}\left(\dfrac{m}{q^{k\mu}}\right)}{q^{k\mu}}Q_{q^{\sigma}}\left(\dfrac{P_{q^{\mu+\sigma}}(m/q^{k\mu})}{q^{k\mu}}\right)}_{M_{k-1}/q^{k\mu}}-P_{q^{\sigma}}\left(\dfrac{P_{q^{\mu+\sigma}}(m/q^{k\mu})}{q^{k\mu}}\right)\Bigg| < q^{-\sigma}.
\end{align*}

With the same argument as before, we get
\begin{align*}
S_{4}&=\underbrace{\sum_{0 \leq n<N}\e\left(\dfrac{\ell}{b}\sum\limits_{\mathbf{w}=(w_{0},\dots,w_{k-1})\in \{0,1\}^{k}}(-1)^{s_{2}(w)}\som{\lambda-k\mu}\left(\Big\lfloor\dfrac{nm+a+w_{0}h_{0}m}{q^{k\mu}}+\sum\limits_{i=1}^{k-1}w_{i}h_{i}\mathfrak{M}_{i}\Big\rfloor\right)\right)}_{S_{5}}\\
&\qquad +O_{k}\left(N\sum_{i=2}^{k-1}D_{N}\left(\dfrac{m}{q^{i\mu}}\right)+\dfrac{2N}{q^{\sigma}}\left(H_{1}+H_{2}+\dotsb+H_{k-1}\right)\right),
\end{align*}
which ends the proof of Lemma \ref{T1}.
\end{proof}
\noindent
Our next goal is to handle the term
\begin{equation*}
\Big\lfloor \dfrac{nm+a}{q^{k\mu}}\Big\rfloor
\end{equation*}
present in $S_{5}$ in order to simplify even further the argument of $s_q^{\lambda-k\mu}$. Recall that $\rho=\lambda-k\mu$, by \eqref{defRho}.  

\begin{lemme}
\label{6.6}
Let $T, h_0, m \geq 2$ be  integers. We set
\begin{equation}
\label{Lambdadef}
\Lambda=\Lambda(h_0,m)=\Big\{0 \leq t < T :\quad \Big[\dfrac{t}{T}+\dfrac{h_{0}m}{q^{k\mu}}, \,\dfrac{t+1}{T}+\dfrac{h_{0}m}{q^{k\mu}}\Big[\cap \Z=\emptyset \Big\}.
\end{equation} Then for all $k\geq 1$,
\begin{align*}
    \Big|\dfrac{S_{0}(N,\nu,\xi)}{q^{\nu+1}N}\Big|^{2^{k}}  &\leq 2^{k-1}(E_{0}+\dots+E_{k-1})+\dfrac{2^{2^{k+2}-2}}{T}+\dfrac{2^{2^{k+2}-2}}{H_{0}\cdots H_{k-1}q^{\nu+1+\rho}T}\sum_{m=q^{\nu}}^{q^{\nu+1}}\sum_{\substack{1 \leq h_{0}<H_{0}\\1 \leq i \leq k-1,\ 0\leq h_{i}<H_{i}}}\sum_{t \in \Lambda}|S_{8}|\\
    &+2^{2^{k+2}-2}O\left(\dfrac{T}{q^{\nu+1-\rho}}\sum_{m=q^{\nu}}^{q^{\nu+1}}D_{N}\left(\dfrac{m}{q^{\lambda}}\right)+\dfrac{1}{q^{\nu+1}}\sum_{m=q^{\nu}}^{q^{\nu+1}}\sum_{i=2}^{k-1}D_{N}\left(\dfrac{m}{q^{i\mu}}\right)+\dfrac{2(H_{1}+\dotsb+H_{k-1})}{q^{\sigma}}\right),
\end{align*}
where
\begin{equation*}
S_{8}=\sum\limits_{0 \leq n'<q^{\rho}}\e\left(\dfrac{\ell}{b}\sum\limits_{\mathbf{w}=(w_{0},\dots,w_{k-1})\in \{0,1\}^{k}}(-1)^{s_{2}(w)}\som{\rho}\left(n'+\Big\lfloor \dfrac{t}{T}+\dfrac{w_{0}h_{0}m}{q^{k\mu}}\Big\rfloor+\sum\limits_{1 \leq i \leq k-1}w_{i}h_{i}\mathfrak{M}_{i}\right)\right).
\end{equation*}
\end{lemme}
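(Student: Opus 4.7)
The plan is to start from the bound in Lemma~\ref{VDCK}, insert the reduction $S_4=S_5+O_k(\cdots)$ of Lemma~\ref{T1}, and then simplify $S_5$ further by fibering the summation variable $n$ over the pair
\[
(t,n'):=\Big(\lfloor T\{x\}\rfloor,\ \lfloor x\rfloor\bmod q^\rho\Big),\qquad x:=\frac{nm+a}{q^{k\mu}},
\]
with $(t,n')\in\{0,\dots,T-1\}\times\{0,\dots,q^\rho-1\}$. Recalling $\rho=\lambda-k\mu$, the $q^\rho$-periodicity of $\som{\rho}$, and the identity $\lfloor x+y\rfloor=\lfloor x\rfloor+\lfloor\{x\}+y\rfloor$ (valid for all real $x,y$), the summand of $S_5$ may be rewritten as
\[
\som{\rho}\Big(n'+\bigl\lfloor\{x\}+w_0h_0m/q^{k\mu}\bigr\rfloor+\sum_{i=1}^{k-1}w_ih_i\mathfrak{M}_i\Big).
\]

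The key observation is that for $t\in\Lambda(h_0,m)$ the inner floor $\lfloor\{x\}+w_0h_0m/q^{k\mu}\rfloor$ is \emph{constant} on the fiber $\{n:\{x\}\in[t/T,(t+1)/T[\}$ for \emph{both} choices $w_0\in\{0,1\}$: the case $w_0=0$ is trivial since $\lfloor\{x\}\rfloor=0$, while the case $w_0=1$ is exactly the content of the defining condition for $\Lambda$ (no integer lies inside the translated interval). On such a fiber the integrand matches the summand of $S_8$ verbatim, with the constant value $\lfloor t/T+w_0h_0m/q^{k\mu}\rfloor$.

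Applying Proposition~\ref{PropUnNEUF} with $\alpha=m/q^{k\mu}$, $\beta=a/q^{k\mu}$, $L=q^\rho$ yields that each fiber has cardinality $N/(Tq^\rho)+O(ND_N(m/q^\lambda))$. Summing the contribution of $(t\in\Lambda,n')$ produces the leading term $(N/(Tq^\rho))\sum_{t\in\Lambda}|S_8|$ up to a total discrepancy error of $O(Tq^\rho N D_N(m/q^\lambda))$. For the at-most-one exceptional $t\notin\Lambda$, one uses the trivial bound $|f|\le 1$ together with $\sum_{n'}|A_{t,n'}|=N/T+O(q^\rho ND_N(m/q^\lambda))$, so the bad $t$ contributes $O(N/T)$ plus a discrepancy error of the same shape as above.

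Plugging this estimate for $S_5$ into Lemma~\ref{T1} and then into Lemma~\ref{VDCK}, and dividing through by $H_0\cdots H_{k-1}q^{\nu+1}N$ after summation over $m\in[q^\nu,q^{\nu+1})$, $a$, and $(h_0,\dots,h_{k-1})$, one recovers exactly the four pieces of the claimed bound: the term $2^{2^{k+2}-2}/T$ stems from the trivial bad-$t$ count $N/T$; the term $T/q^{\nu+1-\rho}\sum_m D_N(m/q^\lambda)$ collects the discrepancy errors from Proposition~\ref{PropUnNEUF} on both good and bad $t$; the terms involving $\sum_{i=2}^{k-1}D_N(m/q^{i\mu})$ and $(H_1+\cdots+H_{k-1})/q^\sigma$ are transported unchanged from Lemma~\ref{T1}. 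The main obstacle will be checking cleanly the $\Lambda$-constancy of the inner floor for both values $w_0\in\{0,1\}$, as this is the sole point where the precise definition of $\Lambda$ is essential; after that, the proof is purely a careful book-keeping of the error contributions, with attention to the identity $\lambda=k\mu+\rho$ when matching the discrepancy factors $D_N(m/q^\lambda)$ to those arising from Proposition~\ref{PropUnNEUF}.
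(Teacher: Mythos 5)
Your proposal is correct and takes essentially the same route as the paper's proof: fiber $S_5$ over the pair $(t,n')$, observe that on each fiber with $t\in\Lambda$ the inner floor is constant and equals $\lfloor t/T+w_0h_0m/q^{k\mu}\rfloor$, count the fiber sizes with Proposition~\ref{PropUnNEUF}, handle the single $t\notin\Lambda$ trivially, and then feed the result back through Lemma~\ref{T1} and Lemma~\ref{VDCK}. The only slip is cosmetic: there is a maximum over $a$ (not a summation), but this does not affect the argument.
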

\begin{proof}
For all $0\leq n \leq N$, there exists a unique couple $(t,n')$ such that $0 \leq t < T$ and $0 \leq n'<q^{\rho}$ where $t/T \leq \{(nm+a)/q^{k\mu}\}<(t+1)/T$ and $\lfloor \frac{nm+a}{q^{k\mu}}\rfloor \equiv n'\Mod{q^{\rho}}$. We rewrite $S_{5}$ (cf.~(\ref{S5})) and split the sum into two parts according to the values of $1\leq  t<T$ whether they lie in $\Lambda$ or not:
\begin{align*}
S_{5}&=\sum_{0\leq t<T}\sum_{0\leq n'<q^{\rho}}\sum_{\substack{0 \leq n<N\\t/T \leq \{(nm+a)/q^{k\mu}\}<(t+1)/T\\\lfloor \frac{nm+a}{q^{k\mu}}\rfloor \equiv n'\Mod{q^{\rho}}}}\e\left(\dfrac{\ell}{b}\sum\limits_{\mathbf{w}=(w_{0},\dots,w_{k-1})\in \{0,1\}^{k}}(-1)^{s_{2}(w)}\som{\rho}\Biggl(\left\lfloor \dfrac{nm+a}{q^{k\mu}} +\dfrac{w_{0}h_{0}m}{q^{k\mu}}\right\rfloor 
\right.\\
&\qquad\qquad\qquad\qquad\qquad\qquad\qquad\qquad\qquad\qquad\qquad\qquad\qquad\qquad \left.+\sum\limits_{1 \leq i \leq k-1}w_{i}h_{i}\mathfrak{M}_{i}\Biggr)\right)\\
&=\sum_{t\in \Lambda } + \sum_{t\not \in \Lambda } =:S_{6}+S_{7}.
\end{align*}
Since
\begin{equation*}
    \bigcup_{t=0}^{T-1}\Big[\frac{t}{T}+\frac{h_{0}m}{q^{k\mu}}, \frac{t+1}{T}+\frac{h_{0}m}{q^{k\mu}}\Big[=\Big[\frac{h_{0}m}{q^{k\mu}}, \frac{h_{0}m}{q^{k\mu}}+1\Big[
\end{equation*}
is an interval of length $1$, this interval contains exactly one integer that must fall into exactly one of the $T$ subintervals. Thus,   $|\Lambda|= T-1.$ 

\medskip

\noindent $\bullet$ For $t \notin \Lambda$ (one single value of $t$), we trivially bound $S_{7}$ with Proposition~\ref{PropUnNEUF}:
\begin{align*}
|S_{7}| &\leq \sum_{0\leq n'<q^{\rho}}\sum_{\substack{0 \leq n<N\\t/T \leq \{(nm+a)/q^{k\mu}\}<(t+1)/T\\\lfloor \frac{nm+a}{q^{k\mu}}\rfloor \equiv n'\Mod{q^{\rho}}}} 1\\
&\leq q^{\rho}\left(\dfrac{N}{Tq^{\rho}}+O\left(ND_{N}\left(\dfrac{m}{q^{\rho+k\mu}}\right)\right)\right)\\
&=\dfrac{N}{T}+O\left(Nq^{\rho}D_{N}\left(\dfrac{m}{q^{\lambda}}\right)\right).
\end{align*}

\noindent $\bullet$ For $t \in \Lambda$ (which is, for $T-1$ values of $t$), we write
\begin{equation*}
\dfrac{nm+a}{q^{k\mu}}=\Big\lfloor \dfrac{nm+a}{q^{k\mu}}\Big\rfloor+\underbrace{\Big\{ \dfrac{nm+a}{q^{k\mu}}\Big\}}_{\in [t/T,(t+1)/T[},
\end{equation*}
we deduce that for $w_{0} \in \{0,1\}$
\begin{equation*}
\Big\lfloor \dfrac{nm+a}{q^{k\mu}}\Big\rfloor+\dfrac{t}{T}+\dfrac{w_{0}h_{0}m}{q^{k\mu}} \leq \dfrac{nm+a+w_{0}h_{0}m}{q^{k\mu}}<\Big\lfloor \dfrac{nm+a}{q^{k\mu}}\Big\rfloor+\dfrac{t+1}{T}+\dfrac{w_{0}h_{0}m}{q^{k\mu}}.
\end{equation*}
Since $t \in \Lambda$, we have for $w_0=1$,
\begin{equation*}
\Big[\Big\lfloor \dfrac{nm+a}{q^{k\mu}}\Big\rfloor+\frac{t}{T}+\frac{h_{0}m}{q^{k\mu}}, \Big\lfloor \dfrac{nm+a}{q^{k\mu}}\Big\rfloor+\frac{t+1}{T}+\frac{h_{0}m}{q^{k\mu}}\Big[ \cap \Z= \emptyset
\end{equation*}
and therefore necessarily
\begin{equation}\label{splitfloor}
\Big\lfloor \dfrac{nm+a}{q^{k\mu}} +\dfrac{w_{0}h_{0}m}{q^{k\mu}}\Big\rfloor=\Big\lfloor \dfrac{nm+a}{q^{k\mu}}\Big\rfloor+\Big\lfloor \dfrac{t}{T} +\dfrac{w_{0}h_{0}m}{q^{k\mu}}\Big\rfloor.
\end{equation}
We note that \eqref{splitfloor} is true also in the case of $w_0=0$ and all $0\leq t<T$ with one exception.
Thus, we have by periodicity of $s_q^{\rho}$, 
\begin{align*}
S_{6}&=\sum_{t \in \Lambda}\sum\limits_{0 \leq n'<q^{\rho}}\sum_{\substack{0\leq n<N\\t/T \leq \{(nm+a)/q^{k\mu}\}<(t+1)/T\\ \lfloor (nm+a)/q^{k\mu}\rfloor \equiv n' \Mod{q^{\rho}}}}\e\left(\dfrac{\ell}{b}\sum\limits_{\mathbf{w}=(w_{0},\dots,w_{k-1})\in \{0,1\}^{k}}(-1)^{s_{2}(w)}\som{\rho}\Biggl(n'+\left\lfloor \dfrac{t}{T}+\dfrac{w_{0}h_{0}m}{q^{k\mu}}\right\rfloor\right.\\
&\qquad\qquad\qquad\qquad\qquad\qquad\qquad\qquad\qquad\qquad\qquad\qquad\qquad\qquad+\left.\sum\limits_{1 \leq i \leq k-1}w_{i}h_{i}\mathfrak{M}_{i}\Biggr)\right)\\
&=\sum_{t \in \Lambda}{\sum\limits_{0 \leq n'<q^{\rho}}\e\left(\dfrac{\ell}{b}\sum\limits_{\mathbf{w}=(w_{0},\dots,w_{k-1})\in \{0,1\}^{k}}(-1)^{s_{2}(w)}\som{\rho}\Biggl(n'+\Big\lfloor \dfrac{t}{T}+\dfrac{w_{0}h_{0}m}{q^{k\mu}}\Big\rfloor+\sum\limits_{1 \leq i \leq k-1}w_{i}h_{i}\mathfrak{M}_{i}\Biggr)\right)}\\
&\qquad\qquad\qquad\qquad\qquad\times \sum_{\substack{0\leq n<N\\t/T \leq \{(nm+a)/q^{k\mu}\}<(t+1)/T\\\lfloor (nm+a)/q^{k\mu}\rfloor \equiv n' \Mod{q^{\rho}}}}1.
\end{align*}
By Proposition~\ref{PropUnNEUF}, the sum over $n$ is $N/Tq^{\rho}+O\left(ND_{N}\left(q^{\rho+k\mu}\right)\right)$. We trivially bound $|S_{8}|$ by $q^{\rho}$ in the estimate  of the contribution of the error term $ND_{N}\left(q^{\rho+k\mu}\right)$,
\begin{align*}
|S_{6}|&=\Big|\sum\limits_{t \in \Lambda}S_{8}\left(\dfrac{N}{q^{\rho}T}+O\left(ND_{N}\left(\dfrac{m}{q^{\rho+k\mu}}\right)\right)\right)\Big|\\
&\leq \dfrac{N}{Tq^{\rho}}\sum_{t \in \Lambda}|S_{8}|+O\left(q^{\rho}NTD_{N}\left(\dfrac{m}{q^{\lambda}}\right)\right).
\end{align*}
Hence, using the triangular inequality, we get
\begin{align*}
|S_{4}|&\leq |S_{6}|+|S_{7}|+O\left(N\sum_{i=2}^{k-1}D_{N}\left(\dfrac{m}{q^{i\mu}}\right)+\dfrac{2N}{q^{\sigma}}(H_{1}+\dotsb+H_{k-1})\right)\\
&\leq \dfrac{N}{Tq^{\rho}}\sum_{t \in \Lambda}|S_{8}|+\dfrac{N}{T}+O\left(q^{\rho}NTD_{N}\left(\dfrac{m}{q^{\lambda}}\right)+N\sum_{i=2}^{k-1}D_{N}\left(\dfrac{m}{q^{i\mu}}\right)+\dfrac{2N}{q^{\sigma}}(H_{1}+\dots+H_{k-1})\right).
\end{align*}

We inject the last inequality in the equation $(\ref{EqInd})$ and get 
\begin{align}
    \Big|\dfrac{S_{0}(N,\nu,\xi)}{q^{\nu+1}N}\Big|^{2^{k}} &\leq 2^{k-1}(E_{0}+\dots+E_{k-1})+\dfrac{2^{2^{k+2}-2}}{H_{0}\cdots H_{k-1}q^{\nu+1}N}\sum_{m=q^{\nu}}^{q^{\nu+1}}\sum_{\substack{1 \leq h_{0}<H_{0}\\ 0\leq h_{i}<H_{i},\ 1 \leq i \leq k-1}}\max_{a \geq 0}|S_{4}|\nonumber\\
    &\leq 2^{k-1}(E_{0}+\dots+E_{k-1})+\dfrac{2^{2^{k+2}-2}}{T} \nonumber\\
    &\qquad+\dfrac{2^{2^{k+2}-2}}{H_{0}\cdots H_{k-1}q^{\nu+1+\rho}T}\sum_{m=q^{\nu}}^{q^{\nu+1}}\sum_{\substack{1 \leq h_{0}<H_{0}\\ 0\leq h_{i}<H_{i},\ 1 \leq i \leq k-1}}\sum_{t \in \Lambda}|S_{8}|\label{estimS8}\\
    &\qquad+2^{2^{k+2}-2}O\Biggl(\dfrac{T}{q^{\nu+1-\rho}}\sum_{m=q^{\nu}}^{q^{\nu+1}}D_{N}\left(\dfrac{m}{q^{\lambda}}\right)+\dfrac{1}{q^{\nu+1}}\sum_{m=q^{\nu}}^{q^{\nu+1}}\sum_{i=2}^{k}D_{N}\left(\dfrac{m}{q^{i\mu}}\right) \nonumber\\
    &\qquad\qquad\qquad\qquad\qquad\qquad\qquad\qquad\qquad\qquad\qquad\qquad+\dfrac{2(H_{1}+\dotsb+H_{k-1})}{q^{\sigma}}\Biggr),\nonumber
\end{align}
where for the last term we use a very rough bound.

\end{proof}
\noindent Until the end of the proof, for all $1 \leq i \leq k-1$, we take
\begin{equation}\label{exprHi}
H_{i}=q^{\rho}.
\end{equation}
(We note that since $\rho\in \mathbb{N}$ we also have $H_i\in \mathbb{N}$.) In order to bound \eqref{estimS8}, we will focus on the quantity
\begin{equation*}
S_{9}:=\sum_{m=q^{\nu}}^{q^{\nu+1}}\,\sum\limits_{0 \leq h_{1},\dots, h_{k-1}<q^{\rho}}|S_{8}|\leq\sum_{m=0}^{q^{\nu+1}}\,\sum\limits_{0 \leq h_{1},\dots, h_{k-1}<q^{\rho}}|S_{8}|.
\end{equation*}
For a fixed integer $\gamma>0$ (to be chosen later), let
\begin{equation}
\label{defEnsB}
B=\{m \in \{0,\dots,q^{\nu+1}\}\,:\, \exists p|q,\,\exists i \in \{1,\dots,k-1\},\,\;p^{3\gamma}| \,\mathfrak{M}_{i}\}.
\end{equation}
We first split the sum $S_{9}$ based on whether $m \in B$ or not. For $m \notin B$, we replace $h_{i}\mathfrak{M}{i}$ (for $1 \leq i<k)$ by $h'_{i}$ and count the number of repetitions with Lemma~\ref{PropA}. For $m \in B$, we bound the sum with Lemma~\ref{CardA}. We will choose 
\begin{equation}
\label{condlaga}
  \nu+1 \geq 3\gamma, 
\end{equation}
see Annexe \ref{choosingsection}. We therefore get
\begin{equation}
\label{majoS9}
    S_{9} \leq q^{3\gamma (k-1)}\sum_{m=0}^{q^{\nu+1}}\;\sum\limits_{0 \leq h'_{1},\dots,h'_{k-1}<q^{\rho}}|S_{10}|+q^{\nu+1+k\rho-3\gamma\log_{q}(P^{-}(q))},
\end{equation}
where 
\begin{equation*}
    S_{10}:=\sum\limits_{0 \leq n'<q^{\rho}}\e\left(\dfrac{\ell}{b}\sum\limits_{\mathbf{w}=(w_{0},\dots,w_{k-1})\in \{0,1\}^{k}}(-1)^{s_{2}(w)}\som{\rho}\left(n'+\Big\lfloor \dfrac{t}{T}+\dfrac{w_{0}h_{0}m}{q^{k\mu}}\Big\rfloor+\sum\limits_{1 \leq i \leq k-1}w_{i}h'_{i}\right)\right).
\end{equation*}
We now want to simplify the terms 
$\theta_{h_0,m}:=\lfloor t/T+w_0h_{0}m/q^{k\mu}\rfloor$ when $w_0=1$.
Since $s_q^\rho$ is $q^\rho$-periodic, it is convenient to split the sums on the parameters $h_0$ and $m$ according to classes modulo
$q^\rho$ of the $\theta_{h_0,m}$:
\[
\sum_{m=0}^{q^{\nu+1}}|S_{10}|
=\sum_{0\le m'<q^\rho}
a(m')\left |
\sum\limits_{0 \leq n'<q^{\rho}}\e\left(\dfrac{\ell}{b}\sum\limits_{\mathbf{w}=(w_{0},\dots,w_{k-1})\in \{0,1\}^{k}}(-1)^{s_{2}(w)}\som{\rho}\left(n'+w_0 m'+\sum\limits_{1 \leq i \leq k-1}w_{i}h'_{i}\right)\right)\right |,
\]
with
\[
a(m')=\Big|\{m \in \{0,\dots,q^{\nu+1}\}\,:\,\lfloor t/T+h_{0}m/q^{k\mu}\rfloor \equiv m' \Mod{q^{\rho}}\}\Big|.
\]
Next we observe\footnote{For $h_0 \leq q^{k\mu}$ and $w_{0}=1$, the set of $m \in \{0,\dots,q^{\nu+1}\}$ such that $\lfloor t/T+h_{0}m/q^{k\mu}\rfloor \equiv m' \Mod{q^{\rho}}$ is the set of $m \in \{0,\dots,q^{\nu+1}\}$ such that $\lfloor t/T+h_{0}m/q^{k\mu}\rfloor=m'+\alpha q^{\rho}$ for some $\alpha \in \Z$. For fixed $\alpha$, the number of $m$ satisfying $m'+\alpha q^{\rho}\leq t/T+h_{0}m/q^{k\mu}<m'+\alpha q^{\rho}+1$ is  $O(q^{k\mu}/h_{0})$. Moreover, by the inequalities $m'+\alpha q^{\rho}\leq t/T+h_{0}m/q^{k\mu}$ and $m \leq q^{\nu+1}$, we have $\alpha \ll h_{0}q^{\nu+1}/q^{k\mu+\rho}$. Therefore, we have $\ll q^{k\mu}/h_{0}\times h_{0}q^{\nu+1}/q^{k\mu+\rho}=q^{\nu+1-\rho}$ integers $m \in \{0,\ldots,q^{\nu+1}\}$ such that $\lfloor t/T+w_{0}h_{0}m/q^{k\mu}\rfloor \equiv m' \Mod{q^{\rho}}$.}  that  for any $1\leq h_0\leq H_0$ we have
\begin{equation*}
    \Big|\{m \in \{0,\dots,q^{\nu+1}\}\,:\,\lfloor t/T+h_{0}m/q^{k\mu}\rfloor \equiv m' \Mod{q^{\rho}}\}\Big| \ll q^{\nu+1-\rho}.
\end{equation*}
We deduce that 
\begin{equation*}
 \sum_{m=0}^{q^{\nu+1}}|S_{10}| \ll q^{\nu+1-\rho}\sum_{0 \leq m'<q^{\rho}}\left |
\sum\limits_{0 \leq n'<q^{\rho}}\e\left(\dfrac{\ell}{b}\sum\limits_{\mathbf{w}=(w_{0},\dots,w_{k-1})\in \{0,1\}^{k}}(-1)^{s_{2}(w)}\som{\rho}\left(n'+m'w_0\sum\limits_{1 \leq i \leq k-1}w_{i}h'_{i}\right)\right)\right |.
\end{equation*}
Injecting this in~\eqref{majoS9}, we have (replacing $m'$ by $h_{0}$)
\begin{equation}
\label{estimateS9}
S_9\ll 
q^{3\gamma(k-1)+\nu+1-\rho}
\sum\limits_{0 \leq h_0, h_{1},\dots,h_{k-1}<q^{\rho}}|S_{10}(h_{0},\dots, h_{k})|+
q^{\nu+1+k\rho-3\gamma\log_{q}(P^{-}(q))},
\end{equation}
with
\begin{equation*}
S_{10}(h_{0},\dots, h_{k-1})=\sum\limits_{0 \leq n'<q^{\rho}}\e\left(\dfrac{\ell}{b}\sum\limits_{\mathbf{w}=(w_{0},\dots,w_{k-1})\in \{0,1\}^{k}}(-1)^{s_{2}(w)}\som{\rho}\left(n'+\sum\limits_{0 \leq i \leq k}w_{i}h_{i}\right)\right).
\end{equation*}

We choose
\begin{equation}
\label{majoh0}
 H_0 \leq q^{k\mu}. 
\end{equation}
We rewrote $h_{i}$ instead of $h'_{i}$ for clarity. Now, we want to remove the absolute values around $S_{10}$. For that purpose, we will use the following result.
\begin{lemme}
\label{Perio}

We have
\begin{equation*}
    |S_{10}(h_0,h_{1},\dots,h_{k-1})|^{2}=\sum_{0 \leq h_{k}<q^{\rho}}S_{10}(h_0,h_{1},\dots,h_{k-1},h_k).
\end{equation*}
\end{lemme}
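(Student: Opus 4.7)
The statement is the standard Gowers-norm-type identity that expresses the square of a multiplicative exponential-sum by introducing an extra difference parameter. The proof I have in mind is a direct computation in three ingredients: expanding $|S_{10}|^2$ as a double sum, performing a change of variable that turns the second summation variable into a new shift $h_k$, and reorganising the resulting signs so that the $k$ binary cube weights $(-1)^{s_2(w)}$ with $\mathbf{w}\in\{0,1\}^k$ combine with the new sign coming from complex conjugation into the $(k+1)$-dimensional cube weights $(-1)^{s_2(w')}$ with $\mathbf{w'}\in\{0,1\}^{k+1}$.

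More concretely, set
\[
F(n'):=\sum_{\mathbf{w}\in\{0,1\}^k}(-1)^{s_2(w)}\som{\rho}\!\left(n'+\sum_{0\le i\le k-1}w_ih_i\right),
\]
so that $S_{10}(h_0,\dots,h_{k-1})=\sum_{0\le n'<q^\rho}\e(\frac{\ell}{b}F(n'))$ and
\[
|S_{10}(h_0,\dots,h_{k-1})|^2=\sum_{0\le n',n''<q^\rho}\e\!\left(\frac{\ell}{b}(F(n')-F(n''))\right).
\]
The key observation is that $\som{\rho}$ is $q^\rho$-periodic, so for each fixed $n'$ the map $h_k\mapsto(n'+h_k)\bmod q^\rho$ is a bijection from $\{0,\dots,q^\rho-1\}$ to itself and $F((n'+h_k)\bmod q^\rho)=F(n'+h_k)$ (viewed as a function of its integer argument via periodicity). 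Substituting $n''=(n'+h_k)\bmod q^\rho$ rewrites the above as
\[
|S_{10}(h_0,\dots,h_{k-1})|^2=\sum_{0\le n',h_k<q^\rho}\e\!\left(\frac{\ell}{b}(F(n')-F(n'+h_k))\right).
\]

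The remaining step is bookkeeping on the cube $\{0,1\}^{k+1}$. Splitting the $\mathbf{w'}$-sum in the definition of $S_{10}(h_0,\dots,h_k)$ according to $w_k=0$ or $w_k=1$, and using $s_2(w_0,\dots,w_{k-1},1)=s_2(w)+1$, one gets
\[
\sum_{\mathbf{w'}\in\{0,1\}^{k+1}}(-1)^{s_2(w')}\som{\rho}\!\left(n'+\sum_{0\le i\le k}w_ih_i\right)=F(n')-F(n'+h_k).
\]
Substituting this identity termwise and summing over $h_k$ yields exactly $\sum_{0\le h_k<q^\rho}S_{10}(h_0,\dots,h_k)$, which is the claim.

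There is no genuine obstacle here: the only point requiring any care is making sure that the periodicity argument is applied at the level of $\som{\rho}$ (and hence $F$) before substitution, so that the change of variable $n''\mapsto h_k=(n''-n')\bmod q^\rho$ is legitimate; this is precisely why we work with the truncated digit sum $\som{\rho}$ rather than the full $s_q$. Once this is observed, the identity is an immediate consequence of expanding the square, relabelling, and recognising the cube structure.
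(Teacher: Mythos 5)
Your proof is correct and is essentially the same as the paper's: both expand $|S_{10}|^2$ as a double sum, introduce $h_k$ as the difference of the two summation variables, use the $q^\rho$-periodicity of the inner function ($g$ in the paper, $F$ in your notation, which are identical) to normalize the $h_k$-range to $\{0,\dots,q^\rho-1\}$, and then fold the resulting difference $F(n')-F(n'+h_k)$ into the $(k+1)$-dimensional cube sum via the sign $(-1)^{s_2(w')}$. Your write-up is a touch cleaner on the indices (the paper's displayed formula for $S_{10}$ has an off-by-one typo in $\sum_{0\le i\le k}w_ih_i$, which you correctly read as $\sum_{0\le i\le k-1}$), but there is no difference of substance.
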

\begin{proof}
For $n' \in \{0,\dots,q^{\rho}-1\}$, we define $g(n')$ by
\begin{equation*}
   g(n')=\sum\limits_{\mathbf{w}=(w_{0},\dots,w_{k-1})\in \{0,1\}^{k}}(-1)^{s_{2}(w)}\som{\rho}\left(n'+\sum\limits_{0 \leq i \leq k-1}w_{i}h_{i}\right).
\end{equation*}
\begin{align*}
    |S_{10}(h_{0},\dots,h_{k-1})|^{2}&=S_{10}(h_{0},\dots,h_{k-1})\overline{S_{10}(h_{0},\dots,h_{k-1})}\\
    &=\sum_{0 \leq n_{1}<q^{\rho}}\;\sum_{0 \leq n_{2}<q^{\rho}}\e\left(\dfrac{\ell}{b}(g(n_{1})-g(n_{2}))\right).
\end{align*}
For $n_{1} \in \{0,\dots,q^{\rho}-1\}$, we perform the linear transformation $h_{k}=n_{2}-n_{1}$ in the second sum. Thus, we have
\begin{equation*}
|S_{10}(h_{0},\dots,h_{k-1})|^{2}=\sum_{0 \leq n_{1}<q^{\rho}}\;\;\sum_{-n_{1} \leq h_{k}<q^{\rho}-n_{1}}\e\left(\dfrac{\ell}{b}(g(n_{1})-g(n_{1}+h_{k}))\right).
\end{equation*}
For $n_{1} \in \{0,\dots,q^{\rho}-1\}$, the function $h_{k} \mapsto \e\left(\dfrac{\ell}{b}(g(n_{1})-g(n_{1}+h_{k}))\right)$ is $q^{\rho}$-periodic, since $g$ is $q^{\rho}$-periodic. Furthermore, the support of the sum over $h_{k}$ has length $q^{\rho}$. We conclude that
\begin{equation*}
\sum_{-n_{1} \leq h_{k}<q^{\rho}-n_{1}}\e\left(\dfrac{\ell}{b}(g(n_{1})-g(n_{1}+h_{k}))\right)=\sum_{0 \leq h_{k}<q^{\rho}}\e\left(\dfrac{\ell}{b}(g(n_{1})-g(n_{1}+h_{k}))\right).    
\end{equation*}
Hence, by interchanging the sums, we deduce that
\begin{align*}
|S_{10}(h_{0},\dots,h_{k-1})|^{2}&=\sum_{0 \leq h_{k}<q^{\rho}}\;\sum_{0 \leq n_{1}<q^{\rho}}\e\left(\dfrac{\ell}{b}(g(n_{1})-g(n_{1}+h_{k}))\right)\\
&=\sum_{0 \leq h_{k}<q^{\rho}}\;\sum_{0 \leq n_{1}<q^{\rho}}\e\left(\dfrac{\ell}{b}\sum\limits_{\mathbf{w}=(w_{0},\dots,w_{k})\in \{0,1\}^{k+1}}(-1)^{s_{2}(w)}\som{\rho}\left(n'+\sum\limits_{1 \leq i \leq k}w_{i}h_{i}\right)\right)\\
&=\sum_{0 \leq h_{k}<q^{\rho}}S_{10}(h_{0},\dots,h_{k-1},h_{k}).
\end{align*}
\end{proof}
\noindent We come back to \eqref{estimateS9}. We use Cauchy--Schwarz inequality and Lemma~\ref{Perio}
to obtain
\begin{align*}
    \sum\limits_{0 \leq h_{0},\dots,h_{k-1}<q^{\rho}}|S_{10}(h_{0},\dots, h_{k-1})|&\leq  q^{k\rho/2} \left(\sum\limits_{0 \leq h_{0},\dots,h_{k-1}<q^{\rho}}|S_{10}(h_{0},\dots, h_{k-1})|^2\right)^{1/2}.\\
    &\leq q^{k\rho/2} \left(\sum\limits_{0 \leq h_{0},\dots,h_{k-1},h_k<q^{\rho}}S_{10}(h_{0},\dots, h_{k-1},h_k)\right)^{1/2}.
\end{align*}

Thus, by Theorem~\ref{NDGG} (for $k+1$ instead of $k$), we get
\begin{equation}
\label{theorembound}
 \sum\limits_{0 \leq h_{1},\dots,h_{k-1}<q^{\rho}}|S_{10}(h_{1},\dots, h_{k-1})|\leq q^{k\rho/2}\times q^{((k+2)\rho-\eta_{0}\rho)/2}=q^{k\rho +\rho-\rho \eta_0/2}.   
\end{equation}

Summing up, we finally get\footnote{By means of Theorem~\ref{NDGG} and Lemma~\ref{Perio} we succeed to compensate the term $q^{3\gamma (k-1)}$ by $q^{-\rho \eta_{0}/2}$. Indeed, Lemma~\ref{Perio} allows us to remove the absolute value without losing the full summation over $\{0,\dots,q^{\rho}-1\}$, which is necessary to apply Theorem~\ref{NDGG}.}
\begin{align*}
    S_{9} &\leq q^{3\gamma(k-1)+\nu+1-\rho}\sum\limits_{0 \leq h_{1},\dots,h_{k-1}<q^{\rho}}|S_{10}(h_{1},\dots, h_{k-1})|+q^{\nu+1+k\rho-3\gamma\log_{q}(P^{-}(q))}\\
    &\ll q^{3\gamma(k-1)+\nu+1+k\rho-\rho\eta_{0}/2}+q^{\nu+1+k\rho-3\gamma\log_{q}(P^{-}(q))}.\\
\end{align*}
Since $H_{i}=q^{\rho}$ for all $1 \leq i \leq k-1$ (cf. \eqref{exprHi}), we get
\begin{align*}
 \dfrac{1}{H_{0}\cdots H_{k-1}q^{\nu+1+\rho}T}\sum_{m=q^{\nu}}^{q^{\nu+1}}&\sum_{\substack{1 \leq h_{0}<H_{0}\\ 0\leq h_{i}<H_{i},\ 1 \leq i \leq k-1}}\sum_{t \in \Lambda}|S_{8}|\\
 & \leq \dfrac{1}{H_{0}\cdots H_{k-1}q^{\nu+1+\rho}T}\sum_{1 \leq h_{0}<H_{0}}\sum_{t \in \Lambda}S_{9}\\
 &\ll q^{3\gamma(k-1)-\rho\eta_{0}/2}+q^{-3\gamma \log_{q}(P^{-}(q))}. 
\end{align*}

Recall that (cf. Lemma~\ref{6.6})
\begin{align*}
\Big|\dfrac{S_{0}(N,q^{\nu},\xi)}{q^{\nu+1}N}\Big|^{2^{k}}
    &\leq 2^{k-1}(E_{0}+\dots+E_{k-1})+\dfrac{2^{2^{k+2}-2}}{T}+\dfrac{2^{2^{k+2}-2}}{H_{0}\cdots H_{k-1}q^{\nu+1+\rho}T}\sum_{m=q^{\nu}}^{q^{\nu+1}}\sum_{\substack{1 \leq h_{0}<H_{0}\\1 \leq i \leq k-1,\ 0\leq h_{i}<H_{i}}}\sum_{t \in \Lambda}|S_{8}|\\
    &+2^{2^{k+2}-2}O\left(\dfrac{T}{q^{\nu+1-\rho}}\sum_{m=q^{\nu}}^{q^{\nu+1}}D_{N}\left(\dfrac{m}{q^{\lambda}}\right)+\dfrac{1}{q^{\nu+1}}\sum_{m=q^{\nu}}^{q^{\nu+1}}\sum_{i=2}^{k-1}D_{N}\left(\dfrac{m}{q^{i\mu}}\right)+\dfrac{H_{1}+\dotsb+H_{k-1}}{q^{\sigma}}\right).
\end{align*}
Until the end of the proof, we suppose
\begin{equation}
    \label{minonumu}
    \nu +1 \geq k\mu.
\end{equation}
Using Proposition~\ref{DiscRes} we have, for all $2 \leq i \leq k-1$,
\begin{align*}
\sum_{m=q^{\nu}}^{q^{\nu+1}}D_{N}\left(\dfrac{m}{q^{i\mu}}\right)&\leq
q^{\nu+1-i\mu}\sum_{m=0}^{q^{i \mu}}D_{N}\left(\dfrac{m}{q^{i\mu}}\right)\\
&\ll q^{\nu+1-i\mu}\dfrac{N+q^{i\mu}}{N}(\log^{+}N)^{2}
\\
&\ll q^{\nu+1}(\log^{+}N)^{2}\left(\dfrac{1}{N}+\dfrac{1}{q^{2\mu}}\right).
\end{align*}
Moreover,
$$
\sum_{m=q^{\nu}}^{q^{\nu+1}}D_{N}\left(\dfrac{m}{q^{\lambda}}\right) \leq \sum_{m=0}^{q^{\lambda}}D_{N}\left(\dfrac{m}{q^{\lambda}}\right)
\ll \dfrac{N+q^{\lambda}}{N}(\log^{+}N)^{2}=q^{\lambda}(\log^{+}N)^{2}\left(\dfrac{1}{N}+\dfrac{1}{q^{\lambda}}\right).
$$
According to $(\ref{defLambda})$, we have $\lambda>\nu+1$.

Finally, using \eqref{exprE0}, \eqref{exprEi} and \eqref{exprHi}, we have
\begin{align}
\label{Equafinale}
    \nonumber \Big|\dfrac{S_{0}(N,q^{\nu},\xi)}{q^{\nu+1}N}\Big|^{2^{k}} &\ll 2^{k}\left(\dfrac{1}{H_{0}}+\dfrac{H_{0}}{q^{\lambda-\nu}}+\dfrac{H_{0}}{N}+k2^{2^{k+1}}\dfrac{q^{\rho+2\mu+3\sigma}}{N}\right)+\dfrac{2^{2^{k+2}}}{T}\\
    &+2^{2^{k+2}}(q^{-3\gamma\log_{q}P^{-}(q)}+q^{3\gamma (k-1) -\eta_{0}\rho/2 })\\ \nonumber 
    &+2^{2^{k+2}}\left( Tq^{\lambda-\nu-1+\rho}(\log^{+}N)^{2}\left(\dfrac{1}{N}+\dfrac{1}{q^{2\mu}}\right)+\dfrac{k}{q^{\sigma-\rho}}\right).
\end{align}

It remains to choose the parameters introduced in the proof in order to get Theorem~\ref{Thm1}  from~\eqref{Equafinale}. This is the aim of the next section. Since the principle of convexity applied for $H_{0}$ in~\eqref{Equafinale} gives similar results as in~\cite[p.2581]{Spi},  we follow the choice of Spiegelhofer. 
 
\medskip

In order to define $\mu$, we recall that, by hypothesis (see beginning of Section \ref{Theo2.5}), we have
\begin{equation}
N^{\rho_{1}} < q^{\nu} \leq qN^{\rho_{2}}.
\end{equation}

We set $k=3(\lfloor \rho_{2}\rfloor+1) \geq 3$ 
and define
\begin{equation*}
\mu=\Big\lfloor\dfrac{\nu-1}{k+1/8}\Big\rfloor,
\end{equation*}
and observe that $\mu \leq\nu/k$ (thus, this choice respects \eqref{minonumu}). 
Furthermore, we set
\begin{equation}
\sigma=\Big\lfloor\dfrac{\mu}{4}\Big\rfloor,\qquad  \Tilde{\rho}=\nu-k\mu.
\end{equation}
With the definition of $\Tilde{\rho}$, we set
\begin{equation}
\gamma=\Big\lfloor\dfrac{\eta_{0}\Tilde{\rho}}{12(k-1)}\Big\rfloor.
\end{equation}
Then, with the definition of $\gamma$, we define
\begin{equation*}
    T=q^{\gamma},\qquad H_{0}=\lfloor q^{\gamma/4}\rfloor.
\end{equation*}
With this definition of $H_0$, the condition~\eqref{majoh0} is satisfied for large enough $\gamma$ (we refer the reader to Corollary~\ref{MajoGR}). 
Moreover, we set
\begin{equation*}
\lambda=\nu+\Big\lfloor\dfrac{\gamma}{2}\Big\rfloor.
\end{equation*}
Condition \eqref{defLambda} is satisfied, since $\gamma \rightarrow +\infty$.
We recall again that
\begin{equation}
\rho=\lambda-k\mu.
\end{equation}
The reader will find the somewhat tedious calculations in Annexe~\ref{choosingsection} leading to the following estimate:

\begin{equation*}
\Big|\dfrac{S_{0}(N,q^{\nu},\xi)}{q^{\nu+1}N}\Big|^{2^{k}} \ll N^{-\frac{\rho_{1}\eta_{0}}{(384k+48)(k-1)}}+N^{-\frac{3\eta_{0}\log_{q}P^{-}(q)}{(k-1)(96k+12)}\rho_{1}}.
\end{equation*}
By noticing that $384k+48=4\times(96k+12)$, we conclude that
\begin{equation*}
\dfrac{S_{0}(N,q^{\nu},\xi)}{q^{\nu+1}N} \ll N^{-\eta_{1}},
\end{equation*}
where
\begin{equation}
\label{eta1def}
    \eta_{1}=\frac{\rho_{1}\eta_{0}\min\left(1/4,3\log_q\left(P^{-}(q)\right)\right)}{2^{k}(k-1)(96k+12)}.
\end{equation}
Now, we bound $\eta_{0}$ 

given by Theorem~\ref{NDGG} from below  (we recall that we applied this theorem for $k+1$ instead of $k$).
Using the fact that 
$$  k+1=3\left(\lfloor \rho_{2}\rfloor+1\right)+1
    \leq 3\rho_{2}+4,
$$
we have
$$
    K=\Big\lfloor\dfrac{\log(k+1)}{\log(q)}\Big\rfloor+1\\
     \leq \dfrac{\log((3\rho_{2}+4)q)}{\log(q)}.
$$
Hence, we get
\begin{align*}
    K+(k+2)b &\leq \dfrac{\log((3\rho_{2}+4)q)}{\log(q)}+b(3\rho_{2}+5)\\
    &\leq \dfrac{\log((3\rho_{2}+4)q)+b\log(q)(3\rho_{2}+5)}{\log(q)}.
\end{align*}

Therefore, we can bound from below $\eta_{0}$ by
\begin{align*}
    \eta_{0}&=\dfrac{1}{\log(q)(K+(k+2)b)q^{(k+2)(K+(k+2)b)}}\\
    &\geq \dfrac{1}{\log((3\rho_2+4)q)+b\log(q)(3\rho_2+5)}\exp\left(-(3\rho_2+5)\left(\log((3\rho_2+4)q)+b\log(q)(3\rho_2+5)\right)\right).
\end{align*}
Therefore, injecting all theses inequalities into \eqref{eta1def}, we have
\begin{align*}
    \eta_{1} &\geq \dfrac{\rho_{1}\eta_{0}\min\left(1/4,3\log_q\left(P^{-}(q)\right)\right)}{8^{1+\rho_{2}}(288\rho_{2}+300)(3\rho_{2}+2)}\\
    &\geq \dfrac{\rho_{1}\min\left(1/4,3\log_q\left(P^{-}(q)\right)\right)}{8^{1+\rho_{2}}(288\rho_{2}+300)(3\rho_{2}+2)\left(\log((3\rho_{2}+4)q)+b\log(q)(3\rho_{2}+5)\right)}\\
    &\times\exp\left({-(3\rho_{2}+5)\left(\log((3\rho_{2}+4)q)+(3\rho_{2}+5)b\log(q)\right)}\right)\\
    &=\eta(\rho_{1},\rho_{2}).
\end{align*}
This ends the proof of Theorem~\ref{Thm2}.

\section{Proof of Theorem~\ref{Thm1}}\label{secpreuveThm1}
Let $ 0< \delta_{1}\leq\delta_{2}<1$ be two real numbers and $0<\varepsilon_{1},\varepsilon_{2} <1$. Let $D$ be an integer such that 
$
x^{\delta_{1}} \leq D \leq x^{\delta_{2}}
$
and $r \in \{0,\dots,m-1\}$. Let $y,z$ be two real numbers such that $0\leq y<z$ and $z-y \leq x$. Recall \eqref{defN},
\begin{equation*}
    N_{y,z}(a,b;r,m)=\Big|\{y \leq n <z\ :\ n \equiv r \Mod{m},\quad s_{q}(n) \equiv a \Mod{b}\}\Big|.
\end{equation*}
We have
\begin{equation*}
   N_{y,z}(a,b;r,m)-\dfrac{z-y}{bm}= \sum_{\substack{y\leq n <z,\\n \equiv r \Mod{m}}}\mathbb{1}_{b\Z}(s_{q}(n)-a)-\dfrac{z-y}{bm}.
\end{equation*}
Writing $n=r+km$ and  detecting the congruence with an exponential sum, we get

\begin{align}
\label{EgalNyz}
   N_{y,z}(a,b;r,m)-\dfrac{z-y}{bm}&=\sum_{\frac{y-r}{m} \leq k < \frac{z-r}{m}}\mathbb{1}_{b\Z}(s_{q}(r+km)-a)-\dfrac{z-y}{bm} \nonumber\\
   &=\dfrac{1}{b}\sum_{\ell=1}^{b-1}\;\sum_{\frac{y-r}{m} \leq k < \frac{z-r}{m}}\e\left(\dfrac{\ell}{b}(s_{q}(r+km)-a)\right)+O(1).
\end{align}
With the change of variable
\begin{equation*}
    k'=k-\Big\lfloor \dfrac{y-r}{m}\Big\rfloor,
\end{equation*}
we get
\begin{equation*}
N_{y,z}(a,b;r,m)-\dfrac{z-y}{bm}=\dfrac{1}{b}\sum_{\ell=1}^{b-1}\sum_{0\leq k < \frac{z-y}{m}}\e\left(\dfrac{\ell}{b}\left(s_{q}\left(r+m\Big\lfloor\dfrac{y-r}{m}\Big\rfloor+km\right)-a\right)\right)+O(1).
\end{equation*}
By taking absolute values and applying the triangle inequality we have
\begin{align*}
    \Big|N_{y,z}(a,b;r,m)-\dfrac{z-y}{bm}\Big| &\leq \dfrac{1}{b}\sum_{\ell=1}^{b-1}\Big|\sum_{k \leq \frac{z-y}{m}}\e\left(\dfrac{\ell}{b}\left(s_{q}\left(r+m\Big\lfloor\dfrac{y-r}{m}\Big\rfloor+km\right)-a\right)\right)\Big|+O(1)\\
    &\leq \dfrac{1}{b}\sum_{\ell=1}^{b-1} \max_{u \leq x}\;\max_{r' \geq 0}\Big|\sum_{k \leq \frac{u}{m}}\e\left(\dfrac{\ell}{b}s_{q}(r'+km)\right)\Big|+O(1).
\end{align*}
Thus, by taking the maximum over $r$, we deduce that
\begin{equation*}
    \max_{r \geq 0}\Big|N_{y,z}(a,b;r,m)-\dfrac{z-y}{bm}\Big|\leq \dfrac{1}{b}\sum_{\ell=1}^{b-1} \max_{u \leq x}\;\max_{r \geq 0}\Big|\sum_{k \leq \frac{u}{m}}\e\left(\dfrac{\ell}{b}s_{q}(r+km)\right)\Big|+O(1).
\end{equation*}
Taking the maximum over $y,z$ and summing over $m \in \{D+1,\dots,qD\}$, we have
\begin{equation}
\label{SumoverD}
\sum_{D<m\leq qD}\;\max_{\substack{y,z\\0\leq y<z\\z-y\leq x}}\max_{r \geq 0}\Big|N_{y,z}(a,b;r,m)-\dfrac{z-y}{bm}\Big| \leq \dfrac{1}{b}\sum_{\ell=1}^{b-1}\sum_{D<m\leq qD} \max_{u \leq x}\;\max_{r \geq 0}\Big|\sum_{k \leq \frac{u}{m}}\e\left(\dfrac{\ell}{b}s_{q}(r+km)\right)\Big|+O(D).  
\end{equation}
For $D<m\leq qD$, we apply Lemma~\ref{Lemma37}, with $x=0,y=u/m$ and $z=x/D$, and we get
\begin{equation*}
\Big|\sum_{k \leq \frac{u}{m}}\e\left(\dfrac{\ell}{b}s_{q}(r+km)\right)\Big| \leq \int_{0}^{1}\min\left(\dfrac{u}{m}+1,\|\xi\|^{-1}\right)\Big|\sum_{k \leq \frac{x}{D}}\e\left(\dfrac{\ell}{b}s_{q}(r+km)\right)\e(n\xi)\Big|d\xi.
\end{equation*}

We use \begin{equation*}
    \min\left(\dfrac{u}{m}+1,\|\xi\|^{-1}\right) \leq \min\left(\dfrac{x}{D}+1,\|\xi\|^{-1}\right)
\end{equation*}
to get
\begin{equation*}
\max_{u \leq x}\Big|\sum_{k \leq \frac{u}{m}}\e\left(\dfrac{\ell}{b}s_{q}(r+km)\right)\Big| \leq \int_{0}^{1}\min\left(\dfrac{x}{D}+1,\|\xi\|^{-1}\right)\Big|\sum_{k \leq \frac{x}{D}}\e\left(\dfrac{\ell}{b}s_{q}(r+km)\right)\e(n\xi)\Big|d\xi.
\end{equation*}

Injecting this in~(\ref{SumoverD}), we deduce that
\begin{align*}
    \sum_{D<m\leq qD}\; \max_{\substack{y,z\\0\leq y<z\\z-y\leq x}}&\max_{r \geq 0}\Big|N_{y,z}(a,b;r,m)-\dfrac{z-y}{bm}\Big|\\
    &\leq \dfrac{1}{b}\sum_{1 \leq \ell<b}\int_{0}^{1}  \min\left(\dfrac{x}{D}+1,\|\xi\|^{-1}\right)\sum\limits_{D<m\leq qD}\max_{r \geq 0}\Big|\sum_{k \leq \frac{x}{D}}\e\left(\dfrac{\ell}{b}s_{q}(r+km)\right)\e(n\xi)\Big|d\xi+O(D).
 \end{align*}
Using Theorem~\ref{Thm2}, with $\rho_{1}=\dfrac{\delta_{1}}{1-\delta_{1}}$ and $\rho_{2}=\dfrac{\delta_{2}}{1-\delta_{2}}$ there exists $C=C(\eta)>0$ such that
\begin{equation*}
    \sum_{D<m\leq qD} \max_{r \geq 0} \Big|\sum_{k \leq \frac{x}{D}}\e\left(\dfrac{\ell}{b}s_{q}(r+km)\right)\e(n\xi)\Big| \leq CD\left(\dfrac{x}{D}\right)^{1-\eta},
\end{equation*}
with $\eta=\eta(\rho_{1},\rho_{2})$ defined in Theorem~\ref{Thm2}. Hence, it follows that
\begin{equation*}
 \sum_{D<m\leq qD} \max_{\substack{y,z\\0\leq y<z\\z-y\leq x}}\max_{r \geq 0}\Big|N_{y,z}(a,b;r,m)-\dfrac{z-y}{bm}\Big|\leq CD\left(\dfrac{x}{D}\right)^{1-\eta} \int_{0}^{1}  \min\left(\dfrac{x}{D}+1,\|\xi\|^{-1}\right)d\xi+O(D).
 \end{equation*}    

Since $D \leq x^{\delta_{2}}<x$, we have $x/D>1$ and
using 
\begin{equation*}
\int_{0}^{1}\min\left(A,\|\xi\|^{-1}\right)d\xi \ll \log(A),
\end{equation*}
there exists $C_{1}=C_1(\rho_1,\rho_2)>0$ such that 

\begin{equation}
\label{majoC'}
\sum_{D<m\leq qD} \max_{\substack{y,z\\0\leq y<z\\z-y\leq x}}\max_{r \geq 0}\Big|N_{y,z}(a,b;r,m)-\dfrac{z-y}{bm}\Big|\leq  C_{1}D\left(\dfrac{x}{D}\right)^{1-\eta}\log\left(\dfrac{x}{D}\right)+O(D).
\end{equation}

Moreover, by hypothesis, we have
\begin{equation*}
    x^{\delta_{1}} \leq D \leq x^{\delta_{2}}.
\end{equation*}
Injecting this in~(\ref{majoC'}), we have for any $0<\varepsilon_{2}<1$,
\begin{align*}
\sum_{D<m\leq qD} \max_{\substack{y,z\\0\leq y<z\\z-y\leq x}}\max_{r \geq 0}\Big|N_{y,z}(a,b;r,m)-\dfrac{z-y}{bm}\Big|&=C_{1}D^{\eta}x^{1-\eta}(\log(x)-\log(D))+O(D)\\
&\leq C_{1}x^{\delta_{2}\eta}x^{1-\eta}(1-\delta_{1})\log(x)+O(D)\\
&\leq C_{2}x^{1-(1-\delta_{2})\eta/(1+\varepsilon_{2})}+O(D)\\
&\leq C_{3}x^{1-(1-\delta_{2})\eta/(1+\varepsilon_{2})},
\end{align*}
since $D \leq x^{\delta_{2}}$ and $1-(1-\delta_{2})\eta/(1+\varepsilon_2)\geq 1-(1-\delta_2) = \delta_{2}$, because $\delta_{2}, \eta/(1+\varepsilon_2)<1$. 
\\In order to end the proof of Theorem~\ref{Thm1}, we are going to bound from below $(1-\delta_{2})\eta/(1+\varepsilon_2)$. We first use the explicit value of $\eta=\eta(\rho_{1},\rho_{2})$,
\begin{align}
    \eta&=\dfrac{\rho_{1}\min\left(1/4,3\log_q\left(P^{-}(q)\right)\right)}{8^{1+\rho_{2}}(288\rho_{2}+300)(3\rho_{2}+2)\left(\log((3\rho_{2}+4)q)+b\log(q)(3\rho_{2}+5)\right)} \nonumber\\
    &\times\exp\left({-(3\rho_{2}+5)\left(\log((3\rho_{2}+4)q)+(3\rho_{2}+5)b\log(q)\right)}\right) \nonumber.
\end{align}

Using the definition of $\rho_{1}=\dfrac{\delta_{1}}{1-\delta_{1}}$ and $\rho_{2}=\dfrac{\delta_{2}}{1-\delta_{2}}$, we have
\begin{align*}
\dfrac{(1-\delta_{2})\eta}{1+\varepsilon_{2}}&=\dfrac{\delta_{1}(1-\delta_{2})^{2}\min\left(1/4,3\log_q\left(P^{-}(q)\right)\right)}{(1-\delta_{1})8^{(1-\delta_{2})^{-1}}(300-12\delta_{2})(2+\delta_{2})\left(\log((4-\delta_2) q/(1-\delta_{2}))+b\log(q)(5-2\delta_{2})/(1-\delta_2)\right)(1+\varepsilon_{2})}\\
    &\quad \times \exp\left({-\dfrac{5-2\delta_{2}}{1-\delta_{2}}\left(\log(q(4-\delta_2)(1-\delta_{2})^{-1})+\dfrac{5-2\delta_{2}}{1-\delta_{2}}b\log(q)\right)}\right).
\end{align*}
Since $1-\delta_1 \leq 1$, $300-12\delta_2 \leq 300$, $2+\delta_2 \leq 3$, $4-\delta_{2} \leq 4$, $5-2\delta_{2} \leq 5$ and $1+\varepsilon_2 \leq 2$, in order to simplify the expression, we remove the dependence of the numerators of $\delta_2$ of the appearing fractions  at the cost of a slightly less precise bound:
\begin{align*}
\dfrac{(1-\delta_{2})\eta}{1+\varepsilon_{2}}&\geq\dfrac{\delta_{1}(1-\delta_{2})^{2}\min\left(1/4,3\log_q\left(P^{-}(q)\right)\right)}{1800\times 8^{(1-\delta_{2})^{-1}}\left(\log(4q/(1-\delta_{2}))+5b\log(q)/(1-\delta_2)\right)}\\
    &\quad \times \exp\left({-\dfrac{5}{1-\delta_{2}}\left(\log(4q(1-\delta_{2})^{-1})+\dfrac{5b\log(q)}{1-\delta_{2}}\right)}\right)\\
    &=:\eta(\delta_{1},\delta_2),
\end{align*}
as stated. This ends the proof of Theorem~\ref{Thm1}.

\section{Proof of Theorem~\ref{Thm0}}\label{secthm0}

Let $0<\varepsilon<1$. We set $\delta_{2}=1-\varepsilon$ and $\delta_{1}=\dfrac{\varepsilon}{2}$ (we note that the condition $\delta_{1} \leq \delta_{2}$ implies $\varepsilon \leq 2/3$.)

For $m \in [1,x^{1-\varepsilon}]\cap \N$, we set
\begin{equation*}
    E(m)=\max_{\substack{y,z\\0\leq y<z\\z-y\leq x}}\max_{r \geq 0}\Big|N_{y,z}(a,b;r,m)-\dfrac{z-y}{bm}\Big|.
\end{equation*}
We split 
\begin{equation*}
    \sum_{1 \leq m \leq x^{1-\varepsilon}}E(m)=\sum_{1 \leq m < x^{\delta_{1}}}E(m)+\sum_{x^{\delta_{1}} \leq m \leq x^{\delta_{2}}}E(m).
\end{equation*}

According to Theorem~\ref{Gelfond} we have for $y,z$ such that $0 \leq y<z$ and $z-y \leq x$, 
\begin{align*}
N_{y,z}(a,b;r,m)&=\Big|\{y \leq n <z\ :\ n \equiv r \Mod{m},\quad s_{q}(n) \equiv a \Mod{b}\}\Big|\\
&=\dfrac{z-y}{bm}+O\left((z-y)^{\lambda}\right).
\end{align*}
Since $z-y \leq x$, we have
\begin{equation*}
N_{y,z}(a,b;r,m)-\dfrac{z-y}{bm}=O\left(x^{\lambda}\right),
\end{equation*}
where the implied constant depends on $b$ and $q$. We deduce that 
\begin{equation*}
\sum\limits_{1 \leq m < x^{\delta_{1}}}E(m)=O(x^{\delta_{1}+\lambda}).
\end{equation*}
Concerning the second sum, we split the interval into $q$-adic intervals
\begin{equation*}
    [x^{\delta_{1}},x^{\delta_{2}}] \subset \biguplus_{\alpha=A_{1}}^{A_{2}}[q^{\alpha},q^{\alpha+1}],
\end{equation*}
where
\begin{equation*}
    A_{1}=\Big \lfloor\dfrac{\delta_{1}\log(x)}{\log(q)}\Big\rfloor, \qquad A_{2}=\Big \lfloor\dfrac{\delta_{2}\log(x)}{\log(q)}\Big\rfloor-1.
\end{equation*}
Thus,
\begin{equation*}
 \sum_{x^{\delta_{1}} \leq m \leq x^{\delta_{2}}}E(m) \leq \sum_{\alpha=A_{1}}^{A_{2}}\;\sum_{q^{\alpha}\leq m \leq q^{\alpha+1}}E(m).   
\end{equation*}
Moreover, for $\alpha \in \{A_{1},\dots,A_{2}\}$, and for large $x$, we have 
\begin{equation*}
    x^{\delta_{1}/2}\leq q^{\alpha} \leq x^{\delta_{2}}.
\end{equation*}
Applying Theorem~\ref{Thm1} for $D=q^{\alpha}$, with $\delta_{1}/2$ and $\delta_{2}$ in place of $\delta_{1}$ and $\delta_{2}$,
\begin{equation}
\label{Splitsum}
\sum_{1 \leq m \leq x^{1-\varepsilon}}E(m)=\sum_{1 \leq m < x^{\delta_{1}}}E(m)+\sum_{x^{\delta_{1}} \leq m \leq x^{\delta_{2}}}E(m)
\leq O\left(x^{\delta_1+\lambda}\right)+Cx^{1-\frac{\eta}{1+\varepsilon_{3}}},
\end{equation}
for all $0<\varepsilon_{3}<1$ and for $\eta=\eta(\delta_{1},\delta_{2})$ defined in Theorem~\ref{Thm1}. Recalling $\delta_2=1-\varepsilon$ and $\delta_1=\varepsilon/2$, we get with (\ref{defetadd}),
\begin{align}
\label{valeuretae3}
    \dfrac{\eta}{1+\varepsilon_3}&:=\dfrac{(\delta_{1}/2)(1-\delta_{2})^{2}\min\left(1/4,3\log_q\left(P^{-}(q)\right)\right)}{1800\times 8^{(1-\delta_{2})^{-1}}\left(\log(4q/(1-\delta_{2}))+5b\log(q)/(1-\delta_2)\right)(1+\varepsilon_3)} \nonumber \\
    &\quad\times \exp\left({-\dfrac{5}{1-\delta_{2}}\left(\log(4q(1-\delta_{2})^{-1})+\dfrac{5b\log(q)}{1-\delta_{2}}\right)}\right) \nonumber\\
    &=\dfrac{\varepsilon^{3}\min\left(1/4,3\log_q\left(P^{-}(q)\right)\right)}{3600\times 8^{1/\varepsilon}\left(\log(4q/\varepsilon)+5b\log(q)/\varepsilon\right)(1+\varepsilon_3)}\\
    &\quad \times \exp\left({-\dfrac{5}{\varepsilon}\left(\log(4q/\varepsilon)+\dfrac{5b\log(q)}{\varepsilon}\right)}\right)\nonumber.
\end{align}
In particular, $\dfrac{\eta}{1+\varepsilon_3}\leq \varepsilon$. Indeed, since $0<\varepsilon<1$, we have $\log(4q/\varepsilon)>0$ and $5b\log(q)/\varepsilon > 5b\log(q)>1$. Thus,
\begin{equation*}
    3600\times 8^{1/\varepsilon}(\log(4q/\varepsilon)+5b\log(q)/\varepsilon)>1
\end{equation*}
and $\varepsilon^3<\varepsilon$. Therefore, $$\dfrac{\eta}{1+\varepsilon_3}\leq \varepsilon.$$
Now, we want to compare the two exponents appearing in~\eqref{Splitsum}. We write
\begin{equation*}
   \delta_1+\lambda-\left(1-\dfrac{\eta}{1+\varepsilon_{3}}\right)\leq \dfrac{\varepsilon}{2}+\lambda-1+\varepsilon.
\end{equation*}
For $\varepsilon<\dfrac{2}{3}(1-\lambda)$, the last quantity is negative. Thus, we deduce that there exists $C'>0$ such that
\begin{equation*}
 \sum_{1 \leq m \leq x^{1-\varepsilon}}E(m) \leq C'x^{1-\frac{\eta}{1+\varepsilon_{3}}}.
\end{equation*}

In order to have the $\eta$ announced in Theorem \ref{Thm0}, we use the equation~\eqref{valeuretae3} and we use the fact that $1+\varepsilon_{3} \leq 2$.

Finally,
\begin{align*}
    \dfrac{\eta}{1+\varepsilon_3}&\geq\dfrac{\varepsilon^{3}\min\left(1/4,3\log_q\left(P^{-}(q)\right)\right)}{7200\times 8^{1/\varepsilon}\left(\log(4q/\varepsilon)+5b\log(q)/\varepsilon\right)}\\
    &\times \exp\left({-\dfrac{5}{\varepsilon}\left(\log(4q/\varepsilon)+\dfrac{5b\log(q)}{\varepsilon}\right)}\right)
\end{align*}
and we can therefore take
\begin{equation*}
  \eta\left(\varepsilon\right)=\dfrac{\varepsilon^{3}\min\left(1/4,3\log_q\left(P^{-}(q)\right)\right)}{7200\times 8^{1/\varepsilon}\left(\log(4q/\varepsilon)+5b\log(q)/\varepsilon\right)}\times \exp\left({-\dfrac{5}{\varepsilon}\left(\log(4q/\varepsilon)+\dfrac{5b\log(q)}{\varepsilon}\right)}\right)
\end{equation*}
as stated. This ends the proof of Theorem~\ref{Thm0}.
\begin{rem}
Instead of applying Theorem~\ref{Gelfond} for the first sum of~\eqref{Splitsum} we could also use a result on average in the spirit of Theorem~\ref{ThmFM2}. However, it requires a generalization of Theorem~\ref{ThmFM2} to any base $q$. This would give us another condition for $\varepsilon$ but the explicit value for $\eta$ would not change.

\end{rem}
\newpage
\section*{Annexes}
\appendix

\begin{appendices}
\section{Choosing the parameters}\label{choosingsection}
First, let us give an inventory of all the parameters used along this paper with their roles and restrictions. 
\medskip

\noindent \underline{Parameters introduced during the iterations of the van der Corput inequality}

\begin{itemize}
    \item $k\geq 3$ the number of iterations of the van der Corput inequality (Subsection \ref{furtheriterations}),
    \item $H_{0} \leq q^{k\mu}$ appearing in the first iteration of the van der Corput inequality (Lemma  \ref{VDC1}). The bound appears in~\eqref{majoh0},
    \item $\lambda > \nu+1$ used to apply the carry propagation lemma (\ref{defLambda}),
    \item $H_{1}=\cdots=H_{k-1}=q^{\rho}$ appearing in Subsection \ref{furtheriterations} (see \eqref{exprHi}); we will discuss the size of $E_i$ and \eqref{sumEi} later (see \eqref{twofoldbound}).
\end{itemize}

\underline{Parameters used to define the shifts $K_{1},\dots, K_{k-1}$}

\begin{itemize}
    \item $\mu>0$ such that $\nu+1 \geq k\mu$~(see (\ref{minonumu})),
    \item $\rho+1<\sigma<\mu$  (see  \eqref{rhosigmaj} and \eqref{defSigma}).
\end{itemize}

\underline{Parameters introduced during the various digit shifts}

\begin{itemize}
    \item $\rho=\lambda-k\mu$  (see \eqref{defRho}),
    \item $T \geq 2$ 
    defined in Lemma \ref{6.6},
    \item $0<\gamma\leq \rho$ used to define the set $B$ (see \eqref{defEnsB}).
\end{itemize}

In order to define $\mu$, we recall that, by hypothesis (see beginning of Section \ref{Theo2.5}), we have
\begin{equation}
\label{HypoEncadre}
N^{\rho_{1}} < q^{\nu} \leq qN^{\rho_{2}}.
\end{equation}
We set $k=3(\lfloor \rho_{2}\rfloor+1) \geq 3$ 
and define
\begin{equation*}
\mu=\Big\lfloor\dfrac{\nu-1}{k+1/8}\Big\rfloor,
\end{equation*}
and observe that $\mu \leq\nu/k$ (thus, this choice respects \eqref{minonumu}). 
Furthermore, we set
\begin{equation}\label{sigmavalue}
\sigma=\Big\lfloor\dfrac{\mu}{4}\Big\rfloor,\qquad  \Tilde{\rho}=\nu-k\mu.
\end{equation}
With the definition of $\Tilde{\rho}$, we set
\begin{equation}\label{gammavalue}
\gamma=\Big\lfloor\dfrac{\eta_{0}\Tilde{\rho}}{12(k-1)}\Big\rfloor.
\end{equation}
Then, with the definition of $\gamma$, we define
\begin{equation*}
    T=q^{\gamma},\qquad H_{0}=\lfloor q^{\gamma/4}\rfloor.
\end{equation*}
Moreover, we set
\begin{equation}
\label{defLambda2}
\lambda=\nu+\Big\lfloor\dfrac{\gamma}{2}\Big\rfloor.
\end{equation}
Condition \eqref{defLambda} is satisfied, since $\gamma \rightarrow +\infty$.
We recall again that
\begin{equation}\label{rhovalue}
\rho=\lambda-k\mu.
\end{equation}

\bigskip
We first give some asymptotic equivalents of our parameters which will be of importance to prove that our choice of parameters is admissible. Since $\nu\to +\infty$, these equivalents will be useful to determine the rate of convergence for every term in~(\ref{Equafinale}). This will be sufficient to conclude.
\begin{lemme}
\label{EquivPara}
For $\mu \rightarrow +\infty$, we have the following equivalents:
\begin{enumerate}
    \item[(1)] 
    \begin{equation}
    \label{rhotildapp}
        \Tilde{\rho} \sim \dfrac{\mu}{8},
    \end{equation}
    \item[(2)] 
    \begin{equation}
    \label{gammaapp}
         \gamma \sim \dfrac{\eta_{0}\mu}{96(k-1)},
    \end{equation}
    \item[(3)]
    \begin{equation}
    \label{rhoapp}
      \rho \sim \dfrac{\mu}{8}+\dfrac{\eta_{0}\mu}{192(k-1)}.
    \end{equation}
\end{enumerate}
\end{lemme}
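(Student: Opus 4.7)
The plan is to substitute the definitions of the parameters directly and use the elementary fact that $\lfloor x \rfloor = x + O(1)$ to pass from floors to real-valued identities. Since $k$ is fixed (depending only on $\rho_2$) and $\eta_0$ is fixed (depending on $k$, $q$, $b$ through Theorem~\ref{NDGG}), all the $O(1)$ errors incurred are negligible compared to quantities of size $\mu$, so each asymptotic $\sim$ will follow by dividing by the leading term.

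First I would prove (1). From $\mu = \lfloor (\nu-1)/(k+1/8)\rfloor$ we obtain $\nu = \mu(k+1/8) + O_k(1)$, and therefore $\tilde{\rho} = \nu - k\mu = \mu/8 + O_k(1) \sim \mu/8$ as $\mu\to\infty$. Next for (2), the definition $\gamma = \lfloor \eta_0\tilde{\rho}/(12(k-1))\rfloor$ combined with (1) gives
\[
\gamma = \frac{\eta_0 \tilde{\rho}}{12(k-1)} + O(1) = \frac{\eta_0}{12(k-1)}\Big(\frac{\mu}{8} + O_k(1)\Big) + O(1) = \frac{\eta_0\mu}{96(k-1)} + O_k(1),
\]
which yields the claimed equivalent since $\eta_0/(k-1)$ is a fixed positive constant. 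Finally for (3), the definitions $\lambda = \nu + \lfloor \gamma/2\rfloor$ and $\rho = \lambda - k\mu$ give $\rho = \tilde{\rho} + \lfloor \gamma/2\rfloor$, and substituting (1) and (2) we get $\rho = \mu/8 + \eta_0\mu/(192(k-1)) + O_k(1)$, which is the stated equivalent.

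There is no real obstacle here: the only subtlety is keeping track of how the implicit constants in the $O(1)$ terms depend on $k$ (and hence ultimately on $\rho_2$) rather than on $\mu$, so that as $\mu\to\infty$ they are genuinely absorbed into the $\sim$ relation. Once this bookkeeping is done, each of the three equivalences is immediate from the chain of substitutions above.
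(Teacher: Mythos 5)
Your proof is correct and follows essentially the same route as the paper: both substitute the definitions, drop the floor functions at the cost of an $O_k(1)$ error, and observe that since $k$ and $\eta_0$ are fixed, this error is negligible against a term of order $\mu$. The bookkeeping on the dependence of the implied constants on $k$ is indeed the only point requiring care, and you have handled it correctly.
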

\begin{proof}

For~(\ref{rhotildapp}), since $k$ is constant, we have

$$
\Tilde{\rho}=\nu-k\Big\lfloor \dfrac{\nu-1}{k+1/8}\Big\rfloor
    \sim \nu-k \dfrac{\nu-1}{k+1/8}
    \sim \dfrac{1}{8}\left(\dfrac{\nu-1+1+8k}{k+1/8}\right)
    \sim \dfrac{\mu}{8}.
$$
For~(\ref{gammaapp}), we use the definition of $\gamma$ and \eqref{rhotildapp}.
 We get 
$$
\gamma
\sim \dfrac{\eta_{0}\Tilde{\rho}}{12(k-1)}
\sim \dfrac{\eta_{0}\mu}{96(k-1)}.
$$
As for~(\ref{rhoapp}), we use~(\ref{rhotildapp}),~(\ref{gammaapp}) and the definition~\eqref{defLambda2} of $\lambda$, to get 
$$
    \rho=\lambda-k\mu
    =\Tilde{\rho}+\Big\lfloor\dfrac{\gamma}{2}\Big\rfloor
    \sim \dfrac{\mu}{8}+\dfrac{\eta_{0}\mu}{192(k-1)}.
$$
\end{proof}

Since $k\geq 3$ and $0<\eta_{0}<1$ we get the following explicit inequalities.
\begin{cor}
\label{MajoGR}
For sufficiently large $\mu$,
$$\gamma \leq \mu/192\quad\text{and}\qquad \rho \leq \mu/8+\mu/384.$$ 
\end{cor}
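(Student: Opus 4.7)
The plan is to deduce both inequalities directly from the asymptotic equivalents established in Lemma~\ref{EquivPara}, combined with the uniform bounds $k \geq 3$ and $0 < \eta_0 < 1$ that apply in our context.

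First I would treat the bound on $\gamma$. From part~(2) of Lemma~\ref{EquivPara}, we have $\gamma \sim \frac{\eta_0 \mu}{96(k-1)}$ as $\mu \to +\infty$. Since $k \geq 3$ we have $k-1 \geq 2$, and since $\eta_0 < 1$ (recall that $\eta_0 \leq \frac{1}{(\log q)(K+b(k+1))q^{(k+1)(K+b(k+1))}} < 1$ for the parameter regime of Theorem~\ref{NDGG}), we get
\begin{equation*}
\frac{\eta_0}{96(k-1)} \leq \frac{1}{192}.
\end{equation*}
In fact the inequality is strict, so there is room to absorb the lower-order error terms hidden in the asymptotic equivalence, and we conclude $\gamma \leq \mu/192$ for $\mu$ large enough.

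The argument for $\rho$ is entirely analogous. By part~(3) of Lemma~\ref{EquivPara}, we have $\rho \sim \frac{\mu}{8} + \frac{\eta_0 \mu}{192(k-1)}$. Using once more $k-1 \geq 2$ and $\eta_0 < 1$, we obtain
\begin{equation*}
\frac{\eta_0}{192(k-1)} \leq \frac{1}{384},
\end{equation*}
again with strict inequality, so for $\mu$ large enough the error terms in the equivalence can be absorbed and we reach $\rho \leq \mu/8 + \mu/384$.

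There is no real obstacle here: the corollary is essentially a bookkeeping consequence of Lemma~\ref{EquivPara} and the strict bounds $k \geq 3$, $\eta_0 < 1$. The only mild subtlety is that an asymptotic equivalence $a_\mu \sim b_\mu$ does not imply $a_\mu \leq b_\mu$ exactly, which is why the statement is formulated for sufficiently large $\mu$; the strict inequalities above provide the required buffer.
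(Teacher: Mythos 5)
Your proof is correct and takes essentially the same route as the paper: the corollary is presented there as an immediate consequence of Lemma~\ref{EquivPara} together with the bounds $k \geq 3$ and $0 < \eta_0 < 1$. You have made the implicit reasoning explicit, including the observation that the strict inequalities $\frac{\eta_0}{96(k-1)} < \frac{1}{192}$ and $\frac{\eta_0}{192(k-1)} < \frac{1}{384}$ provide the buffer needed to absorb the error terms in the asymptotic equivalences for $\mu$ large.
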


We first note that condition~\eqref{condlaga} is satisfied. Indeed, by \eqref{sigmavalue} and \eqref{gammavalue} we have
$$
    3\gamma\leq \frac{\Tilde{\rho}}{8}<\frac{\nu}{8}<\nu+1.
$$
Moreover, the condition~\eqref{majoh0} is satisfied for $H_0$ for sufficiently large $\mu$.
In order to proceed with the proof of Theorem~\ref{Thm2}, we need bounds for $N$ in terms of $q^{\mu}$. 
\begin{lemme}
We have the following bounds for $N$ and $q^{\mu}$: 
\begin{enumerate}
    \item[(1)] \begin{equation}
    \label{MinoN}
N \geq q^{3\mu},
\end{equation}
\item[(2)] \begin{equation}
\label{MinoQmu}
q^{\mu} \gg N^{\frac{\rho_{1}}{k+1/8}},
\end{equation}
where the implied constant depends only on $k$ and $q$.
\end{enumerate}
\end{lemme}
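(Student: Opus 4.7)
The plan is to derive both inequalities directly from the definition $\mu=\lfloor(\nu-1)/(k+1/8)\rfloor$ together with the two-sided bound
\[
    N^{\rho_1}\le D<q^\nu\le qN^{\rho_2},
\]
which follows from the hypothesis and the choice of $\nu$ at the very start of Section~\ref{Theo2.5}.

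For (1), I would first write $\mu\le (\nu-1)/(k+1/8)$, and bound $\nu-1\le \rho_2\log_q N$ (which comes from $q^{\nu-1}\le N^{\rho_2}$). This gives $3\mu\le 3\rho_2\log_q N/(k+1/8)$. The key observation is that the choice $k=3(\lfloor\rho_2\rfloor+1)$ yields $k>3\rho_2$ strictly, and hence $(k+1/8)-3\rho_2\ge 1/8>0$, so $3\rho_2/(k+1/8)<1$. Therefore $3\mu\le\log_q N$, i.e., $q^{3\mu}\le N$ as required. This is precisely the reason for inserting the harmless extra term $1/8$ into the denominator of the definition of $\mu$: it produces a uniform slack independent of how close $\rho_2$ is to an integer.

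For (2), I would use the opposite side of the floor, namely $\mu\ge(\nu-1)/(k+1/8)-1$, combined with $\nu>\rho_1\log_q N$ (obtained from $N^{\rho_1}\le D<q^\nu$). Combining these,
\[
\mu\ge \frac{\rho_1\log_q N - 1}{k+1/8} - 1,
\]
so that
\[
q^{\mu}\ge q^{-1-1/(k+1/8)}\,N^{\rho_1/(k+1/8)}.
\]
The prefactor $q^{-1-1/(k+1/8)}$ depends only on $q$ and on $k$, which in turn depends only on $\rho_2$, so it qualifies as the implied constant in the claimed $\gg$.

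Neither step presents a serious obstacle; both are routine manipulations from the defining inequalities for $\mu$ and $\nu$. The only subtle point is the strict inequality $k>3\rho_2$ in part (1), which rests on the $+1$ in the definition $k=3(\lfloor\rho_2\rfloor+1)$ together with the $+1/8$ margin in the denominator of $\mu$, and explains why these precise numerical choices were made earlier.
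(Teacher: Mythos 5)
Your proof is correct and follows essentially the same route as the paper: (1) comes from $q^{\nu-1}\le N^{\rho_2}$ together with $\mu\le(\nu-1)/(k+1/8)$ and $\rho_2\le\lfloor\rho_2\rfloor+1=k/3$, and (2) comes from $N^{\rho_1}<q^\nu$ and $\mu\ge(\nu-1)/(k+1/8)-1$, yielding the same implied constant $q^{-1-1/(k+1/8)}$. One small remark on your commentary rather than your proof: the $+1/8$ is not actually what makes (1) work --- with $\mu=\lfloor(\nu-1)/k\rfloor$ one would still have $3\mu\le 3(\nu-1)/k\le(\nu-1)/\rho_2\le\log_q N$; the $1/8$ is there so that $\tilde\rho=\nu-k\mu$ stays comparable to $\mu$ (cf.\ Lemma~\ref{EquivPara}), which is needed for the subsequent parameter choices but plays no role in the present lemma.
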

\begin{proof}
Using (\ref{HypoEncadre}) we get
$$
    N \geq q^{\frac{\nu-1}{\rho_{2}}}
    \geq q^{\frac{\nu-1}{\lfloor\rho_{2}\rfloor+1}}
    =q^{3\frac{\nu-1}{k}}
    \geq q^{3\mu},
$$
which is~\eqref{MinoN}.
Also, again using \eqref{HypoEncadre}, we get
$N^{\rho_{1}}<q^{\nu}$ and
$$
    \dfrac{1}{q^{(k+1/8)^{-1}}}N^{\frac{\rho_{1}}{k+1/8}}<q^{\frac{\nu-1}{k+1/8}}
    < q^{\lfloor\frac{\nu-1}{k+1/8}\rfloor+1}
    =q^{\mu+1}.
$$
We conclude that
\begin{equation*}
    q^{\mu}>C N^{\frac{\rho_{1}}{k+1/8}},
\end{equation*}
with
$
    C=1/(q^{(k+1/8)^{-1}+1}).
$
\end{proof}

\bigskip

After this preliminary work, we now 
focus on bounding each term in~(\ref{Equafinale}).

\medskip

$\bullet$ \underline{Bounding $\dfrac{1}{H_{0}}$.}
\vspace{0.2cm}
\\To begin with, we note that 
$$
    H_{0}=\lfloor q^{ \frac{\gamma}{4}}\rfloor
    \leq q^{\mu/768}\leq q^{3\mu}\leq N,
$$
a condition that we imposed on $H_0$ in Lemma \ref{VDC1}. Now, we use~(\ref{gammaapp}) and~(\ref{MinoQmu}),
$$
\dfrac{1}{H_{0}}=\dfrac{1}{\lfloor q^{\gamma/4} \rfloor}
\sim q^{-\gamma/4}
\ll q^{-\frac{\eta_{0}\mu}{384(k-1)}}
\ll N^{\frac{-\rho_{1}\eta_{0}}{(384k+48)(k-1)}} .
$$

\medskip

$\bullet$ \underline{Bounding $\dfrac{H_{0}}{q^{\lambda-\nu}}$.}
\vspace{0.2cm}
$$
\dfrac{H_{0}}{q^{\lambda-\nu}}=\dfrac{\lfloor q^{ \gamma/4}\rfloor}{q^{\lfloor \gamma/2 \rfloor}}\\
\ll q^{-\gamma/4}.
$$
We note that $\dfrac{1}{H_{0}}$ and $\dfrac{H_{0}}{q^{\lambda-\nu}}$ have the same rate of convergence. 

\medskip

$\bullet$ \underline{Bounding $\dfrac{H_{0}}{N}$.}
\vspace{0.2cm}
\\We notice that $\dfrac{H_{0}}{N} <\dfrac{1}{H_{0}}$. Indeed, by~(\ref{MinoN}) we have
\begin{equation*}
    \dfrac{H_{0}}{N} \leq \dfrac{q^{\gamma/4}}{q^{3\mu}}.
\end{equation*}
Using the fact that $\mu \geq 192\gamma$  (see Corollary~(\ref{MajoGR})), we have
\begin{equation*}
    \dfrac{H_{0}}{N} \leq q^{\frac{-2303}{4}\gamma}.
\end{equation*}
The above estimates show that $E_{0}$ defined in Lemma~\ref{VDCK} is smaller than 1 for sufficiently large $\nu$.

\medskip

$\bullet$ \underline{Bounding $\dfrac{q^{\rho+2\mu+3\sigma}}{N}$.}
\vspace{0.2cm}
\\Using~(\ref{MinoN}) we have
\begin{equation*}
\dfrac{q^{\rho+2\mu+3\sigma}}{N} \leq q^{\rho-\mu+3\sigma}.
\end{equation*}
\\Furthermore, by Corollary~(\ref{MajoGR}), we get, for sufficiently large $\mu$,
\begin{equation*}
\rho-\mu+3\sigma \leq \dfrac{-335\mu}{384}+3\sigma.
\end{equation*}
By the fact that $\mu \geq 4\sigma$ (see \eqref{sigmavalue}), we find
\begin{equation*}
\rho-\mu+3\sigma \leq \dfrac{-47}{384}\mu.
\end{equation*}
The inequality~(\ref{MinoQmu}) then gives
\begin{equation}\label{twofoldbound}
\dfrac{q^{\rho+2\mu+3\sigma}}{N} \leq q^{-\frac{47}{384}\mu}
\ll N^{-\frac{47\rho_{1}}{384k+48}}.
\end{equation}
This estimate also assures that  $E_{1},\dots,E_{k-1}$ defined in~Lemma \ref{VDCK} satisfy the condition \eqref{sumEi} for sufficiently large $\nu$.

\medskip

$\bullet$ \underline{Bounding $\dfrac{1}{T}$.} 
\vspace{0.2cm}
\\As before, we use~(\ref{gammaapp}) and~(\ref{MinoQmu}), and get

$$
\dfrac{1}{T}=\dfrac{1}{q^{\gamma}} \ll q^{-\frac{\eta_{0}}{96(k-1)}\mu}
\ll N^{-\frac{\eta_{0} \rho_{1}}{96(k-1)(k+1/8)}} = N^{-\frac{\eta_{0} \rho_{1}}{(96k+12)(k-1)}} .
$$

\medskip

$\bullet$ \underline{Bounding $q^{-3\gamma\log_{q}P^{-}(q)}$.} 
\vspace{0.2cm}
\\Once again, we use~(\ref{gammaapp}) and~(\ref{MinoQmu}) and get
$$
 q^{-3\gamma\log_{q}P^{-}(q)} \ll q^{-\frac{3\eta_{0}\log_{q}P^{-}(q)}{96(k-1)}\mu}
 \ll N^{-\frac{3\eta_{0}\log_{q}P^{-}(q)}{(96k+12)(k-1)}\rho_{1}}.
$$

\medskip

$\bullet$ \underline{Bounding $q^{3\gamma(k-1)-\eta_{0}/2 \rho}$.} 
\vspace{0.2cm}
\\We first bound the exponent. Since $\lambda > \nu$, we note that $\Tilde{\rho} \leq \rho$ (see \eqref{sigmavalue} and \eqref{rhovalue}). We use the definition of $\gamma$ and get
$$
3\gamma(k-1)-\eta_{0}\rho/2 \leq 3(k-1) \dfrac{\eta_{0}\rho}{12(k-1)}-\eta_{0}\rho/2
= -\dfrac{\eta_{0}\rho}{4}.
$$
We use~(\ref{rhoapp}) together with~\eqref{MinoQmu} to get
$$
q^{3\gamma(k-1)-\eta_{0} \rho/2} \leq q^{-\frac{\eta_{0}\rho}{4}}
\ll N^{-\frac{\eta_{0}}{4}(\frac{1}{8}+\frac{\eta_{0}}{192(k-1)})\frac{\rho_{1}}{k+1/8}}
\ll N^{-\frac{\eta_{0}\rho_{1}}{32(k+1/8)}}\\.
$$

\medskip

$\bullet$ \underline{Bounding $Tq^{\lambda-\nu-1+\rho}(\log^{+}N)^{2}\left(\dfrac{1}{N}+\dfrac{1}{q^{2\mu}}\right)$.} 
\vspace{0.2cm}
\\Since by \eqref{MinoN} we have $N \geq q^{2\mu}$, we just need to estimate the term
\begin{equation*}
Tq^{\lambda-\nu-1+\rho-2\mu}(\log^{+}N)^{2}.
\end{equation*}
Since $T=q^{\gamma}$ we get
\begin{equation*}
Tq^{\lambda-\nu-1+\rho-2\mu}(\log^{+}N)^{2}\leq q^{\lambda-\nu+\rho+\gamma-2\mu}(\log^{+}N)^{2}.\\
\end{equation*}
Using Corollary~(\ref{MajoGR}) we have, for sufficiently large $\mu$,
\begin{align*}
\lambda-\nu+\rho+\gamma-2\mu&=\Big\lfloor \dfrac{\gamma}{2}\Big\rfloor +\rho +\gamma-2\mu \\
&\leq \dfrac{\mu}{384}+\dfrac{\mu}{8}+\dfrac{\mu}{384}+\dfrac{\mu}{192}-2\mu \\
&\leq \dfrac{-179}{96}\mu.
\end{align*}
Thus,
\begin{equation*}
Tq^{\lambda-\nu-1+\rho-2\mu}(\log^{+}N)^{2} \leq \dfrac{(\log N)^{2}}{q^{(179/96)\mu}}.
\end{equation*}
By the inequality~(\ref{MinoQmu}) and since $\log(N) \leq N^{1/4}$, we get
\begin{align*}
Tq^{\lambda-\nu-1+\rho-2\mu}(\log^{+}N)^{2} &\ll (\log N)^{2}N^{-\frac{179\rho_{1}}{96k+12}}\\
&\ll N^{-\frac{179\rho_{1}}{192k+24}}.
\end{align*}
$\bullet$ \underline{Bounding $\dfrac{1}{q^{\sigma-\rho}}$.}

\medskip

We use $\sigma=\Big\lfloor\dfrac{\mu}{4}\Big\rfloor$ and Corollary~(\ref{MajoGR}). This yields
$$
\dfrac{1}{q^{\sigma-\rho}} \leq \dfrac{1}{q^{\lfloor \mu/4\rfloor-\mu/8-\mu/384}}
\ll q^{-(47/384)\mu}
\ll  N^{-\frac{47\rho_{1}}{384k+48}},
$$
which is the same bound as in \eqref{twofoldbound}.

\bigskip
\bigskip

\hspace*{1cm} Finally, we collect all these bounds and reconsider \eqref{Equafinale}. We have
\begin{equation*}
\Big|\dfrac{S_{0}(N,q^{\nu},\xi)}{q^{\nu+1}N}\Big|^{2^{k}} \ll N^{-\frac{\rho_{1}\eta_{0}}{(384k+48)(k-1)}}+N^{-\frac{47\rho_{1}}{384k+48}}+N^{-\frac{\rho_{1}\eta_{0} }{(96k+12)(k-1)}}+N^{-\frac{3\eta_{0}\log_{q}P^{-}(q)}{(96k+12)(k-1)}\rho_{1}}+N^{-\frac{\rho_{1}\eta_{0}}{32k+4}}+N^{-\frac{179\rho_{1}}{192k+24}}.
\end{equation*}
By comparing the summands, we get
\begin{equation*}
\Big|\dfrac{S_{0}(N,q^{\nu},\xi)}{q^{\nu+1}N}\Big|^{2^{k}} \ll N^{\frac{-\rho_{1}\eta_{0}}{(384k+48)(k-1)}}+N^{-\frac{3\eta_{0}\log_{q}P^{-}(q)}{(k-1)(96k+12)}\rho_{1}}.
\end{equation*}
\section{Farey sequences}\label{Appendice}
\label{appendix1}
The following  results are standard and can be found in~\cite{Hardy}.
\begin{de}[Farey sequence]
Let $n \geq 1$ be an integer. We call the Farey sequence of order $n$, denoted by $\mathcal{F}_{n}$, the sequence formed by irreducible fractions $0\leq \frac{a}{b}\leq 1$ with $b \leq n$. Two fractions are said to be neighbours in the sequence $\mathcal{F}_{n}$ if they are consecutive in $\mathcal{F}_{n}$.
\end{de}

\begin{rem}
By translation, we define the Farey sequence of order $n$ over $\mathbb{R}$, which we will continue to denote as $\mathcal{F}_{n}$ hereafter.
\end{rem}

\begin{lemme}[Midpoint lemma]
Let $\frac{a}{b}<\frac{c}{d}$ be two irreducible fractions. Then
\begin{equation*}
    \dfrac{a}{b}<\dfrac{a+c}{b+d}<\dfrac{c}{d}.
\end{equation*}
The fraction $\frac{a+c}{b+d}$ is called the midpoint of the fractions $\frac{a}{b}$ and $\frac{c}{d}$.
\end{lemme}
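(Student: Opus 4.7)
The plan is to verify the two strict inequalities directly by cross-multiplication, which amounts to isolating the single positivity statement that drives both halves. Since the symbols $a/b$ and $c/d$ denote fractions, I may assume $b,d>0$; then the hypothesis $a/b<c/d$ is equivalent (after clearing the positive denominator $bd$) to $bc-ad>0$, and this is the only fact I will need about the input.

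For the left inequality $a/b<(a+c)/(b+d)$, I would put both sides over the common denominator $b(b+d)$ and compute
\[
\dfrac{a+c}{b+d}-\dfrac{a}{b}=\dfrac{b(a+c)-a(b+d)}{b(b+d)}=\dfrac{bc-ad}{b(b+d)}.
\]
Since $b>0$ and $b+d>0$, the denominator is positive, and the numerator $bc-ad$ is positive by the hypothesis, so the difference is strictly positive. The right inequality $(a+c)/(b+d)<c/d$ is handled by the mirror computation over the common denominator $d(b+d)$:
\[
\dfrac{c}{d}-\dfrac{a+c}{b+d}=\dfrac{c(b+d)-d(a+c)}{d(b+d)}=\dfrac{bc-ad}{d(b+d)},
\]
which is again positive for the same reason.

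Note that the irreducibility hypothesis plays no role in these two strict inequalities; it only matters in the broader Farey framework when one additionally wants to know that the mediant $(a+c)/(b+d)$ is already in lowest terms, a claim not made in the statement. Consequently there is no genuine obstacle here: the whole argument collapses to a single elementary positivity observation $bc-ad>0$, inherited directly from the assumption $a/b<c/d$, and applied twice with the appropriate positive denominator.
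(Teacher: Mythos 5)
The paper does not give a proof of this lemma; it simply notes in Annex B that the Farey results are standard and cites Hardy and Wright. Your proof is correct and is exactly the standard argument: both differences reduce to $\frac{bc-ad}{b(b+d)}$ and $\frac{bc-ad}{d(b+d)}$, and the single hypothesis $a/b<c/d$ (with $b,d>0$) delivers $bc-ad>0$, which makes both positive. Your remark that irreducibility plays no role in this particular inequality is also accurate; it is only relevant elsewhere in the Farey machinery (e.g.\ in Proposition~\ref{CritFarey}, where neighbouring fractions in $\mathcal{F}_n$ satisfy $bc-ad=1$).
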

\begin{prop}[Farey's criterion]
\label{CritFarey}
Let $\frac{a}{b}<\frac{c}{d}$ be two neighbouring fractions in $\mathcal{F}_{n}$. Then,
\begin{equation*}
\Delta=bc-ad=1 \ \text{ and }\  b+d>n.
\end{equation*}
Additionally,
\begin{equation*}
    \dfrac{a+c}{b+d}-\dfrac{a}{b}<\dfrac{1}{bn}
\end{equation*}
and 
\begin{equation*}
    \dfrac{c}{d}-\dfrac{a+c}{b+d}<\dfrac{1}{dn}.
\end{equation*}

\end{prop}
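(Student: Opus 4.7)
The plan is to prove all three parts together by induction on $n$, supported by a single auxiliary lemma on fractions lying between $a/b$ and $c/d$. I would first establish the determinant identity $bc - ad = 1$. The base case $\mathcal{F}_1 = \{0/1, 1/1\}$ is immediate. For the inductive step, I would use the fact that $\mathcal{F}_n$ arises from $\mathcal{F}_{n-1}$ by inserting the fractions $p/n$ with $\gcd(p, n) = 1$: a pair of neighbours in $\mathcal{F}_n$ is either still a pair of neighbours in $\mathcal{F}_{n-1}$, in which case the hypothesis applies, or one of the two fractions has denominator exactly $n$.

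The auxiliary lemma I need is: if $a/b < c/d$ are irreducible with $bc - ad = 1$ and $p/q$ is an irreducible fraction strictly between them, then $q \geq b + d$. This is a one-line consequence of the identity
\[
b(cq - dp) + d(bp - aq) = (bc - ad)q = q,
\]
combined with the fact that $cq - dp$ and $bp - aq$ are both positive integers. Applied in the inductive step to the old neighbour pair $a/b, c'/d'$ in $\mathcal{F}_{n-1}$ and a newly inserted fraction $p/n$, the lemma forces $n \geq b + d'$; the reverse inequality holds because if $n > b + d'$, then the mediant $(a+c')/(b+d')$ would already lie in $\mathcal{F}_{n-1}$ strictly between $a/b$ and $c'/d'$, contradicting their neighbourhood there. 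Hence $n = b + d'$ and $p = a + c'$, and a direct calculation yields $bp - an = bc' - ad' = 1$.

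Finally, the inequality $b + d > n$ falls out from the midpoint lemma: if $b + d \leq n$, the mediant $(a+c)/(b+d)$ is an element of $\mathcal{F}_n$ strictly between $a/b$ and $c/d$ (its irreducibility follows because any common factor would divide $b(a+c) - a(b+d) = bc - ad = 1$), contradicting neighbourhood. The distance bounds then reduce to the elementary calculation
\[
\frac{a+c}{b+d} - \frac{a}{b} = \frac{bc - ad}{b(b+d)} = \frac{1}{b(b+d)} < \frac{1}{bn},
\]
using $b + d > n$, and symmetrically for $c/d - (a+c)/(b+d)$. The only genuine subtlety is the auxiliary lemma and its use to pin down that each new fraction enters exactly at a mediant position; once that is in place, all three parts of the statement drop out essentially by arithmetic.
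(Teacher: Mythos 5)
The paper does not actually prove this proposition: the Farey appendix states the standard facts and refers the reader to Hardy and Wright. Your proof is the classical inductive argument (essentially the one in that reference, reconstructed from scratch). The auxiliary lemma is proved correctly via the identity $b(cq-dp)+d(bp-aq)=(bc-ad)q=q$, using that $cq-dp$ and $bp-aq$ are positive integers for any $p/q$ strictly between $a/b$ and $c/d$; and the two-sided squeeze $n\ge b+d'$ (from the lemma) and $n\le b+d'$ (since otherwise the irreducible mediant would already lie in $\mathcal{F}_{n-1}$ between the old neighbours) correctly pins every newly inserted fraction of denominator $n$ to the mediant of the old pair it falls into, from which $bp-an=1$ follows. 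One small point deserves a sentence in a fully written-out version: your case split treats ``both fractions old in $\mathcal{F}_{n-1}$'' and ``exactly one new fraction of denominator $n$'', but a priori two consecutive elements of $\mathcal{F}_n$ could both have denominator $n$. Your own analysis forbids this (each old neighbouring pair admits at most one inserted fraction, namely its mediant, so between any two fractions of denominator $n$ an older fraction intervenes), but the exclusion should be stated rather than left implicit. With that remark added, the irreducibility of the mediant (any common factor of $a+c$ and $b+d$ divides $bc-ad=1$), the inequality $b+d>n$, and the two distance bounds $\frac{1}{b(b+d)}<\frac{1}{bn}$ and $\frac{1}{d(b+d)}<\frac{1}{dn}$ all follow exactly as you indicate.
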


\begin{de}
\label{DefFarey}
Let $\alpha \in \mathbb{R}$ and let $\frac{a}{b}, \frac{c}{d} \in \mathcal{F}_{n}$ be two neighbouring fractions such that
\begin{equation*}
    \dfrac{a}{b} \leq \alpha < \dfrac{c}{d}.
\end{equation*}

If $\alpha < \frac{a+c}{b+d}$, then we define
\begin{equation*}
    \dfrac{P_{n}(\alpha)}{Q_{n}(\alpha)}=\dfrac{a}{b}.
\end{equation*}
Otherwise, we define
\begin{equation*}
    \dfrac{P_{n}(\alpha)}{Q_{n}(\alpha)}=\dfrac{c}{d}.
\end{equation*}
\end{de}

\begin{prop}[Farey approximation]

\label{ApproxFarey}
For $\alpha \in \mathbb{R}$ and for $n \in \mathbb{N}$, we have
\begin{equation*}
    |Q_{n}(\alpha)\alpha-P_{n}(\alpha)|<\dfrac{1}{n}.
\end{equation*}
\end{prop}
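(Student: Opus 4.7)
The plan is to unfold the definition of $P_n(\alpha)/Q_n(\alpha)$ and reduce the claim to a short computation using the Midpoint lemma and Farey's criterion (Proposition~\ref{CritFarey}). Fix $\alpha\in\mathbb{R}$; by a translation (and the Remark after the definition of $\mathcal{F}_n$) we may assume that $\alpha$ lies in an interval bounded by two neighbouring Farey fractions $a/b<c/d$ in $\mathcal{F}_n$ with $a/b\le \alpha < c/d$. According to Definition~\ref{DefFarey}, there are exactly two cases, depending on whether $\alpha$ lies to the left or to the right of the midpoint $(a+c)/(b+d)$.

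First I would handle the case $\alpha < (a+c)/(b+d)$, in which $P_n(\alpha)/Q_n(\alpha)=a/b$. Since $a/b\le \alpha$, the quantity $b\alpha-a$ is nonnegative, and
\[
|Q_n(\alpha)\alpha-P_n(\alpha)|=b\alpha-a < b\cdot\frac{a+c}{b+d}-a=\frac{bc-ad}{b+d}.
\]
By Farey's criterion $bc-ad=1$ and $b+d>n$, hence the right-hand side is $1/(b+d)<1/n$, as required.

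Next I would treat the symmetric case $\alpha\ge (a+c)/(b+d)$, where $P_n(\alpha)/Q_n(\alpha)=c/d$. Here $c-d\alpha>0$ (using $\alpha<c/d$), and
\[
|Q_n(\alpha)\alpha-P_n(\alpha)|=c-d\alpha\le c-d\cdot\frac{a+c}{b+d}=\frac{bc-ad}{b+d}=\frac{1}{b+d}<\frac{1}{n},
\]
again by Proposition~\ref{CritFarey}. The two cases together yield the claimed bound.

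There is essentially no obstacle: the only thing to be slightly careful about is the endpoint behaviour (the inequality in the first case uses the strict upper bound $\alpha<(a+c)/(b+d)$, while in the second case the closed inequality $\alpha\ge(a+c)/(b+d)$ is what guarantees that $c-d\alpha$ dominates $d\alpha-c$; in both cases the strict inequality $b+d>n$ from Farey's criterion is what produces the strict bound $1/n$). The entire argument is just two lines once the definition is unpacked.
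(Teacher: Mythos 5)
Your proof is correct. The paper does not actually supply a proof of this proposition: it is stated in the appendix as a standard fact with a reference to Hardy and Wright, so there is nothing to compare against. Your two-case argument, unfolding Definition~\ref{DefFarey} and then combining the Midpoint lemma with both conclusions of Farey's criterion ($bc-ad=1$ and $b+d>n$), is exactly the intended short derivation, and the algebra in each case checks out (including the degenerate case $\alpha=a/b$, where $b\alpha-a=0$).
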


\section{Graph of the function $\varepsilon \mapsto \log(\eta(\varepsilon))$}\label{appendixeta}
A straightforward Python program allows to plot the graph of the quantity defined in~(\ref{value-eta}), seen as a function of $\varepsilon$.
\begin{center}
    \includegraphics[scale=0.75]{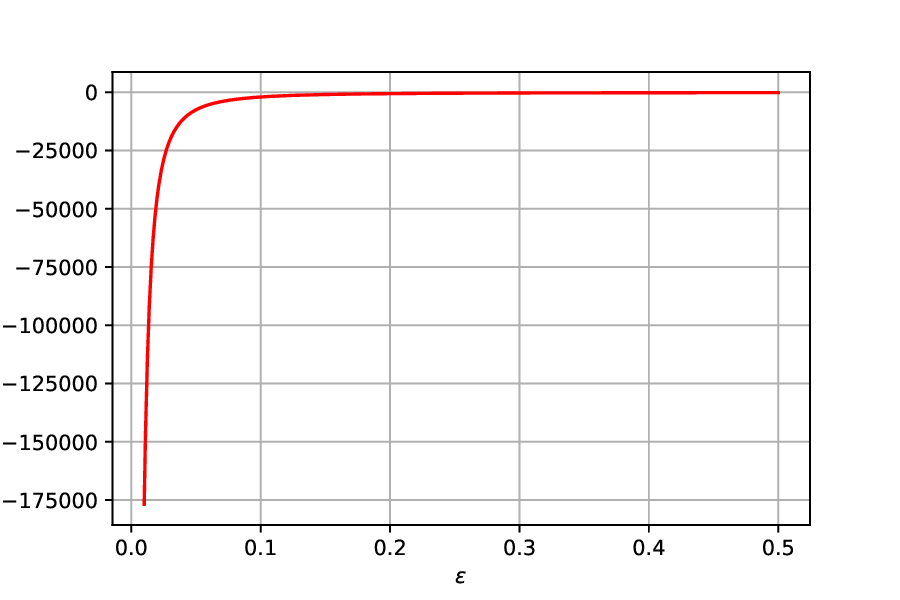}
\end{center}
\begin{center}
    \textsc{Figure 1.}
    The graph of the function $\varepsilon \mapsto \log\eta(\varepsilon)$ in $]0.01,0.5[$ for $q=b=2$.
\end{center}
\begin{center}
    \includegraphics[scale=0.75]{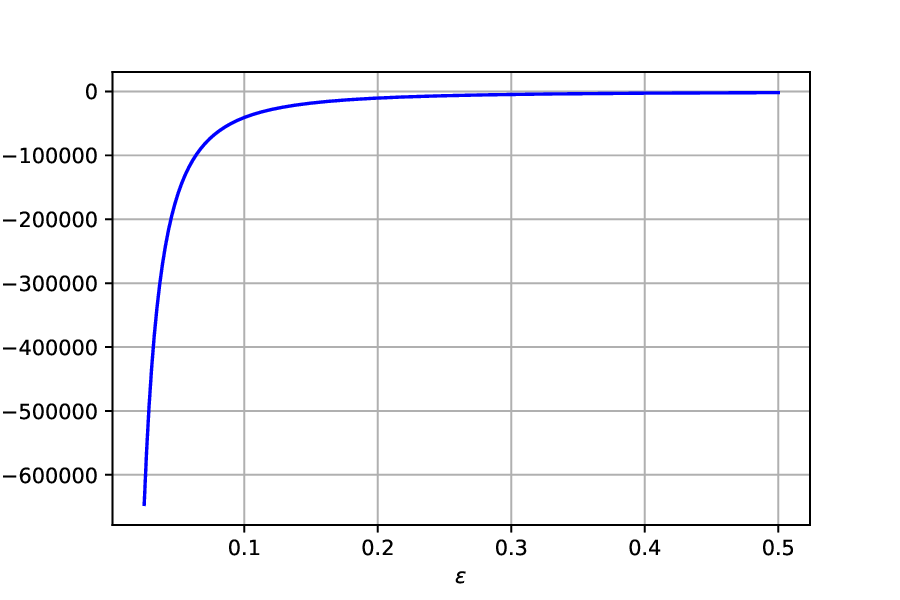}
\end{center}
\begin{center}
    \textsc{Figure 2.}
    The graph of the function $\varepsilon \mapsto \log\eta(\varepsilon)$ in $]0.025,0.5[$ for $q=10$ and $b=7$.
\end{center}
We remark that $\eta({\varepsilon})$ becomes extremely small as $\varepsilon$ approaches 0. Furthermore, we observe that for larger parameters $q$ and $b$, it becomes very difficult to calculate $\eta({\varepsilon})$ for small values of $\varepsilon$. Indeed, the domain taken for the simulation of the second graph is more restricted than that of the first because of the increased complexity of the computation.
The following table gives (for $q=b=2$)  some values for the exponents $\eta(\varepsilon)$ defined by \eqref{value-eta}, and  $\xi'_{2,\varepsilon}$ defined by \eqref{xi2eps}.
\begin{table}[h]
\centering
\begin{tabular}{|c|c|c|}
\hline
$\varepsilon$ & $\log\eta(\varepsilon)$ &$\log\xi'_{2,\varepsilon}$ \\
\hline
$0.3$ & $-270.77$ & $-5.85$\\
$0.2$& $-553.97$ & $-6.26$\\
$0.1$ & $-1993.60$ & $-6.95$\\
$0.05$& $-7504.14$ & $-7.65$\\
$0.01$ & $-176~866.99$ & $-9.25$\\
$0.005$ & $-700~973.54$ &$-9.95$\\
$0.001$ & $-17~375~734.08$ & $-11.56$\\
\hline
\end{tabular}
\begin{center}
    \textsc{Figure 3.} Few values of $\log \eta(\xi)$ and $\log \xi'_{2,\varepsilon}$ (case $b=q=2$).
\end{center}
\label{tab:valeurs}
\end{table}
We notice that the exponent provided in~\cite{Mart2014} is much larger than ours. One of  reasons is the fact that we handle the $\max$ over the residue classes, while~\cite{Mart2014} deals with individual bounds.
\end{appendices}

\section*{Acknowledgements}
The autor would like to greatly thank my thesis supervisors Cécile Dartyge and Thomas Stoll for their teaching and advice, which have greatly helped me throughout my work. Moreover, he is very grateful to the referee for the careful reading of the paper and for the comments and suggestions.
The author is supported by the French projects ANR-18-CE40-0018 (EST) and ANR-20-CE91-0006 (ArithRand).

\end{document}